\theoremstyle{definition}
\newtheorem{definition}{Definition}[section]
\theoremstyle{theorem}
 \newtheorem{theorem}[definition]{Theorem}
 \newtheorem{lemma}[definition]{Lemma}
 \newtheorem{proposition}[definition]{Proposition} 
 \newtheorem{corollary}[definition]{Corollary}
 \newtheorem*{theorem*}{Theorem}
\newtheorem*{proposition*}{Proposition}
\newtheorem*{lemma*}{Lemma}
 \theoremstyle{remark}
 \newtheorem{example}[definition]{Example}
 \newtheorem{remark}[definition]{Remark}
   \newtheorem*{claim*}{Claim}
\newcommand{\op}[1]{\operatorname{#1}}
\newcommand{\norm}[1]{\ensuremath{\|{#1}\|}}
\newcommand{\acou}[2]{\ensuremath{\left\langle #1 , #2 \right\rangle}}
\newcommand{\brak}[1]{\ensuremath{\langle #1\rangle}}
\newcommand{\acoup}[2]{\ensuremath{\left(#1|#2\right)}}
\newcommand{\Tr}{\ensuremath{\op{Tr}}}
\newcommand{\Tra}{\ensuremath{\op{Tr}}}
\newcommand{\Trace}{\ensuremath{\op{Tr}}}
\def\XXint#1#2#3{{\setbox0=\hbox{$#1{#2#3}{\int}$}
\vcenter{\hbox{$#2#3$}}\kern-.5\wd0}}
\newcommand{\Hol}{\op{Hol}}
\newcommand{\C}{\ensuremath{\mathbb{C}}} 
\newcommand{\N}{\ensuremath{\mathbb{N}}} 
\newcommand{\R}{\ensuremath{\mathbb{R}}} 
\newcommand{\bS}{\ensuremath{\mathbb{S}}} 
\newcommand{\T}{\ensuremath{\mathbb{T}}} 
\newcommand{\Z}{\ensuremath{\mathbb{Z}}}
\newcommand{\Rn}{\ensuremath{\R^{n}}}
\newcommand{\fp}{\ensuremath{\mathfrak{p}}}
\newcommand{\Ca}[1]{\ensuremath{\mathcal{#1}}}
\newcommand{\cA}{\mathscr{A}}
\newcommand{\cE}{\Ca{E}}
\newcommand{\cF}{\ensuremath{\mathscr{F}}}
\newcommand{\cH}{\ensuremath{\mathscr{H}}}
\newcommand{\cK}{\ensuremath{\mathscr{K}}}
\newcommand{\cL}{\ensuremath{\mathscr{L}}}
\newcommand{\cS}{\ensuremath{\mathscr{S}}}
\newcommand{\cU}{\ensuremath{\mathscr{U}}}
\newcommand{\sB}{\mathscr{B}}
\newcommand{\sE}{\mathscr{E}}
\newcommand{\sF}{\mathscr{F}}
\newcommand{\sU}{\mathscr{U}}
\newcommand{\psido}{$\Psi$DO} 
\newcommand{\psidos}{$\Psi$DOs}
\newcommand{\stS}{\mathbb{S}}
\def\dba{{\mathchar'26\mkern-12mu d}}
\newcommand{\dbar}{{\, \dba}}
\newcommand{\dist}{\op{dist}}
\newcommand{\Sp}{\op{Sp}}
\newcommand{\subsubset}{\subset\!\subset}
\numberwithin{equation}{section}
\begin{document}

\title{Functional Calculus for Elliptic Operators on Noncommutative Tori, I}

\author{Gihyun Lee}
 \address{Department of Mathematical Sciences, Seoul National University, Seoul, South Korea}
 \email{gihyun.math@gmail.com}

\author{Rapha\"el Ponge}
 \address{School of Mathematics, Sichuan University, Chengdu, China}
 \email{ponge.math@icloud.com}

 \thanks{The research for this article was partially supported by 
  NRF grants 2013R1A1A2008802 and 2016R1D1A1B01015971 (South Korea).}
 
\begin{abstract}
 In this paper, we introduce a parametric pseudodifferential calculus on noncommutative $n$-tori which is a natural nest for resolvents of elliptic pseudodifferential operators. Unlike in some previous approaches to parametric pseudodifferential calculi, our parametric pseudodifferential calculus contains resolvents of elliptic pseudodifferential operators that need not be differential operators. As an application we show that complex powers of positive elliptic pseudodifferential operators on noncommutative $n$-tori are pseudodifferential operators. This confirms a claim of Fathi-Ghorbanpour-Khalkhali.
\end{abstract}

\maketitle 

\section{Introduction}
Noncommutative tori are among the most well known examples of noncommutative spaces. For instance, noncommutative 2-tori naturally arise from actions on circles by irrational rotations. In general, the (smooth) noncommutative torus associated with a given anti-symmetric real $n\times n$-matrix $\theta=(\theta_{jk})$ is a (Fr\'echet) $*$-algebra $\cA_\theta$ that is generated by unitaries $U_1,\ldots, U_n$ subject to the relations, 
\begin{equation*}
 U_kU_j = e^{2i\pi \theta_{jk}} U_jU_k, \qquad j,k=1,\ldots, n. 
\end{equation*}
 When $\theta=0$ we recover the algebra $C^\infty(\T^n)$ of smooth functions on the ordinary torus $\T^n=(\R\slash \Z)^n$. (We refer to Section~\ref{sec:NCtori} for a summary of the main definitions and properties regarding noncommutative tori.) The recent work of Connes-Tretkoff~\cite{CT:Baltimore11} and Connes-Moscovici~\cite{CM:JAMS14} is currently spurring on an intensive research activity on the construction of a full differential geometric apparatus on noncommutative tori (see, e.g., \cite{CF:MJM19, CM:JAMS14, CT:Baltimore11, DS:SIGMA15, FK:JNCG12,  FK:JNCG15, FGK:arXiv16, LM:GAFA16, LNP:TAMS16, Liu:arXiv18b, Ro:SIGMA13}). One specific issue is the taking into account of the non-triviality of the Takesaki-Tomita modular automorphism group. This is a purely noncommutative phenomenon. 

An important tool in this line of research is the (local) short time asymptotic for heat semi-group traces $\Tr[ae^{-tP}]$, where $P$ is some Laplace-type operator and $a\in \cA_\theta$. In particular, the second coefficient in this asymptotic is naturally interpreted as the so-called \emph{modular} scalar curvature (see~\cite{CM:JAMS14, CT:Baltimore11}). The derivation of this asymptotic heavily relies on the pseudodifferential calculus for $C^*$-dynamical systems announced in the notes of Connes~\cite{Co:CRAS80} and Baaj~\cite{Ba:CRAS88}. It was only recently that detailed accounts on this \psido\ calculus became available~(see~\cite{HLP:Part1, HLP:Part2, LM:GAFA16}; see also~\cite{Ta:JPCS18}). Nevertheless, there is a general understanding that the needed asymptotics can be obtained by implementing in the setting of noncommutative tori the approach to the heat kernel asymptotic of Gilkey~\cite{Gi:CRC95} (see~\cite{ CM:JAMS14, CT:Baltimore11, LM:GAFA16}). This actually requires a version with parameter of the pseudodifferential calculus on noncommutative tori that is similar to the parametric \psido\ calculus of Gilkey~\cite{Gi:CRC95} and Shubin~\cite{Sh:Springer01} (see~\cite{LM:GAFA16}). 

The parametric \psido\ calculus of~\cite{Gi:CRC95, Sh:Springer01} is a natural nest for the resolvent $(P-\lambda)^{-1}$ when $P$ is an elliptic \emph{differential} operator. However, when $P$ is more generally a non-differential elliptic \psido\ the resolvent   $(P-\lambda)^{-1}$ falls out this parametric calculus. In fact, the calculus does not contain non-differential \psidos, and so in this case even $P$ and $P-\lambda$ are not in this parametric calculus. As a result, the approach of~\cite{Gi:CRC95} does not allow us to derive short-time asymptotics for traces $\Tr[Ae^{-tP}]$ when $P$ or $A$ are not differential operators. Similarly, the approach of~\cite{Sh:Springer01} does not allow us to construct complex powers $P^s$, $s\in \C$, when $P$ is a ``non-differential'' elliptic \psido\ (compare~\cite{Se:PSPM67}). One approach to remedy is to use the weakly parametric pseudodifferential calculus of Grubb-Seeley~\cite{Gr:Birkhauser96, GS:IM95}.

The aim of this paper is to construct a parametric \psido\ calculus on noncommutative tori which is a natural receptacle for the resolvent $(P-\lambda)^{-1}$, where $P$ is an elliptic \psido\ that need not be a differential operator. This parametric \psido\ calculus is inspired by the parametric Heisenberg calculus of~\cite{Po:PhD, Po:CRAS01}. Its construction avoids some of the technical difficulties of the construction of the weakly parametric pseudodifferential calculus in~\cite{Gr:Birkhauser96, GS:IM95}. In addition, it contains as a sub-calculus the version for noncommutative tori of the parametric pseudodifferential calculus of~\cite{Gi:CRC95, Sh:Springer01} (see~\cite{Po:Heat}). This paper can considered as a first step toward a general functional calculus for elliptic pseudodifferential operators on noncommutative tori. In forthcoming papers we are planning on presenting applications of the parametric \psido\ calculus of this paper toward the following objectives:
\begin{enumerate}
 \item[(i)] Short-time asymptotics for traces $\Tr[a e^{-tP}]$, where $P$ is an elliptic \emph{differential} operator and $a\in \cA_\theta$. 
  
 \item[(ii)] Construction of complex powers $P^s$, $s\in \C$, of elliptic pseudodifferential operators $P$ as holomorphic families of pseudodifferential operators. 
 
 \item[(iii)] Precise study of the singularities of zeta functions $\zeta(P;s)=\Tr [P^{-s}]$ of elliptic pseudodifferential operators $P$ together with corresponding variation formulas when we let $P$ vary. 
 
 \item[(iv)] Extension of (i) to short-time asymptotics for traces $\Tr[A e^{-tP}]$, where $A$ and $P$ are allowed to be \emph{non-differential} \psidos.  
 \end{enumerate}

In~\cite{Gi:CRC95, Sh:Springer01} the parametric \psidos\ are associated with symbols with parameter, 
\begin{equation}
 p(x,\xi;\lambda)\sim \sum_{j\geq 0} p_{m-j}(x,\xi;\lambda), \qquad  p_{m-j}(x,t\xi;t^w\lambda)=t^{m-j} p_{m-j}(x,\xi;\lambda),
 \label{eq:Intro.asymptotic-expansion-parametric-symbol-GS}  
\end{equation}
where $w$ is a given positive number and the asymptotic expansion is meant in the sense that the remainder terms are $\op{O}((|\xi|+|\lambda|^{\frac1{w}})^{-N})$ as $|\xi|+|\lambda|\rightarrow \infty$ with $N$ arbitrarily large. 
Symbols of differential operators $p(x,\xi)= \sum a_\alpha(x) \xi^\alpha$ fit into this framework, but the symbols of classical \psidos\ $p(x,\xi)\sim \sum_{j\geq 0} p_{m-j}(x,\xi)$ do not fit in as soon as there are infinitely many non-zero homogeneous components $p_{m-j}(x,\xi)$. 

The observation in~\cite{Po:PhD, Po:CRAS01} is that we may afford to be somewhat loose with $\lambda$-decay. 
Namely, it is enough to require the remainder terms in the asymptotic expansion~(\ref{eq:Intro.asymptotic-expansion-parametric-symbol-GS}) to be $\op{O}(|\lambda|^d|\xi|^{-N})$ where $N$ is still arbitrarily large, but $d$ is some fixed real number. This allows us to construct complex powers $P^s$, $s\in \C$, of a given elliptic \psido\ $P$ as holomorphic families of \psidos, including when $P$ is not a differential operator. With some additional work this allows us to study the singularities of the zeta functions $\zeta(P; s)=\Tr[P^{-s}]$ and $\zeta(P, A; s)=\Tr[AP^{-s}]$ (where $A$ is a \psido) together with establishing uniform boundedness along vertical lines (at least when $P$ is strongly elliptic). 
A standard Mellin transform argument (see, e.g., \cite{GS:JGA96}) then provides with short time asymptotics for $\Tr[Ae^{-tP}]$. Therefore, this approach provides us with a  sensible alternative to using the weakly parametric pseudodifferential calculus of~\cite{Gr:Birkhauser96, GS:IM95}. 

The parametric pseudodifferential calculus presented in this paper implements this approach on noncommutative tori. We refer to Section~\ref{sec:PsiDOs} for a review of the main definitions and properties of the pseudodifferential calculus on noncommutative tori. Given any $m\in \R$, the space of $m$-th order (standard) symbols $\stS^m(\R^n; \cA_\theta)$ consists of maps $\rho(\xi)\in C^\infty(\R^n;\cA_\theta)$ such that $\partial_\xi^\beta \rho(\xi)$ is $\op{O}(|\xi|^{m-|\beta|})$ in $\cA_\theta$. The \psido\ associated with a symbol $\rho(\xi)\in \stS^m(\R^n; \cA_\theta)$ is the (continuous) linear operator $P_\rho:\cA_\theta\rightarrow \cA_\theta$ given by
\begin{equation}
 P_\rho u = (2\pi)^{-n} \iint e^{is\cdot\xi} \rho(\xi) \alpha_{-s}(u) dsd\xi, \qquad u \in \cA_\theta,
 \label{eq:Intro.quantization}
\end{equation}
where the above integral is meant as an oscillating integral over $\R^n\times \R^n$ and $\R^n\times \cA_\theta \ni  (s,u)\rightarrow \alpha_s(u)$ is the 
$*$-action of $\R^n$ on $\cA_\theta$ such that $\alpha_s(U^m)=e^{is\cdot m}U^m$, $m\in \Z^n$. We are more especially interested in \psidos\ associated with classical symbols. They are symbols with an asymptotic expansion $\rho(\xi)\sim \sum_{j\geq 0} \rho_{m-j}(\xi)$, where $ \rho_{m-j}(\xi)$ is homogeneous of degree~$m-j$. 

A first issue to address in the construction of the parametric pseudodifferential calculus is the general shape of the domains of the parameters $\lambda$. For deriving short-time heat trace asymptotics we may take these domains to be cones (where by a cone we shall always mean a cone with vertex at the origin). However, the construction of complex powers of elliptic operators involves integrating over contours in the complex plane  that wind around the origin.  Such contours cannot be inside a cone with vertex at the origin. We take the domains to be (open) \emph{pseudo-cones}, where by a pseudo-cone we mean a connected set that agrees with a cone  off some open disk about the origin (see Section~\ref{sec:Parametric-symbols} for the precise definition). For instance, we obtain pseudo-cones by pasting/cutting disks or annuli to/from cones. 

Given a pseudo-cone $\Lambda$ and $d\in \R$, a $\Hol^d(\Lambda)$-family in a locally convex space $\sE$ is a holomophic family that is $\op{O}(|\Lambda|^d)$ in $\sE$ as $\lambda$ goes to $\infty$ (the precise definition requires some uniformity condition; see Definition~\ref{def:symbols.Hold-sE}). Intuitively speaking, the parametric \psido\ calculus of this paper is meant to be a $\Hol^d$-family version of the \psido\ calculus on noncommutative tori. 
The spaces of standard symbols $\stS^m(\R^n;\cA_\theta)$ mentioned above are Fr\'echet spaces. 
Given a pseudo-cone $\Lambda$ and $m,d\in \R$, the space of standard parametric symbols $\stS^{m,d}(\R^n\times \Lambda;\cA_\theta)$ is just the space of $\Hol^d(\Lambda)$-families in $\stS^m(\R^n;\cA_\theta)$. 
The space of (classical) parametric symbols $S^{m,d}(\R^n\times \Lambda; \cA_\theta)$ consists of parametric symbols that have an asymptotic expansion, 
\begin{equation*}
 \rho(\xi;\lambda)\sim \sum_{j\geq 0} \rho_{m-j}(\xi;\lambda), \qquad \rho_{m-j}(t\xi;t^w\lambda)=t^{m-j} \rho_{m-j}(\xi;\lambda), \quad t>0, 
\end{equation*}
where $w$ is some positive number and the asymptotic expansion is meant in the sense that the remainder terms can be made to be $\op{O}(|\lambda|^d)\op{O}(|\xi|^{-N})$ in $\cA_\theta$ with $N$ arbitrary large (see Section~\ref{sec:Parametric-symbols} for the precise meaning). 

 If we denote by $\Theta$ the conical part of $\Lambda$ (i.e., $\Lambda$ agrees with $\Theta$ off some open disk about the origin), then the homogeneous symbols $\rho_{m-j}(\xi;\lambda)$ are defined on open sets of the form, 
\begin{equation*}
 \Omega_c(\Theta)=\big\{ (\xi,\lambda)\in(\Rn\setminus 0)\times\C; \ \text{$\lambda\in\Theta$ or $|\lambda|<c|\xi|^w$} \big\} .
\end{equation*}
Suitably cut-off homogeneous symbols give rise to standard parametric symbols (Lemma~\ref{lem:Parameter.homogeneous-symbol-estimate}). 
This implies that classical  parametric symbols are standard parametric symbols (see Proposition~\ref{symbols:inclusion-classical-standard} for the precise relationship). 
There is also a version of Borel's lemma for parametric symbols (see Proposition~\ref{prop:Parameter.Borel-for-classical-symbol}). 

A parametric \psido\ is a family of \psidos\ $P_\rho(\lambda)$ associated via~(\ref{eq:Intro.quantization}) with a parametric symbol $\rho(\xi;\lambda)$. We denote by $\Psi^{m,d}(\cA_\theta; \Lambda)$ the class of parametric \psidos\ that have a classical parametric symbol in $S^{m,d}(\R^n\times \Lambda; \cA_\theta)$. The $\Hol^d$-approach makes it simple to extend to parametric \psidos\ the main properties of (non-parametric) \psidos\ via continuity arguments. 
This includes composition (Proposition~\ref{prop:Parameter.composition-PsiDOs}), Sobolev regularity properties (Proposition~\ref{prop:PsiDOs-parameter.classical-PsiDO-Sobolev-mapping-properties}), and Schatten-class properties, including trace-class properties (see Proposition~\ref{prop:PsiDOs-parameter.Schatten-classical} and Proposition~\ref{prop:PsiDOs-parameter.trace-class}). 
For sake of completeness we also introduce a class of  parametric toroidal \psidos\ and show it agrees with our original class of parametric \psidos\ (Proposition~\ref{prop:Parametric-PsiDOs.equality-toroidal}). This leads us to a characterization of parametric smoothing operators (Proposition~\ref{prop:PsiDOs-parameter.smoothing-operators-with-parameter-characterization}). 

The heart of the matter of the paper is Section~\ref{sec:Resolvent}. Given an elliptic operator $P$ of order $w>0$ with (classical) symbol $\rho(\xi)\sim \sum_{j\geq 0} \rho_{w-j}(\xi)$ we introduce its \emph{elliptic parameter cone}, 
\[
      \Theta(P)= \bigcup_{\xi \in \R^n\setminus 0} [\C^*\setminus \Sp(\rho_w(\xi))].
\] 
We shall say that $P$ is \emph{elliptic with parameter} when $\Theta(P)\neq \emptyset$ (which we assume thereon). We may regard $P-\lambda$ as an element of $\Psi^{w,1}(\cA_\theta;\Lambda)$, even when $P$ is not a differential operator. When $P$ is elliptic with parameter the parametric symbolic calculus of this paper allows us to get an explicit construction of a parametrix for $P-\lambda$ as an element of $\Psi^{-w,-1}(\cA_\theta; \Lambda)$, where $\Lambda$ is any pseudo-cone that is obtained by glueing to $\Theta(P)$ a disk about the origin (Theorem~\ref{thm:Resolvent.parametrix-with-parameter}).  

The existence of such a parametrix allows to give a somewhat precise localization of the spectrum of $P$ (see Theorem~\ref{thm:Resolvent.P-has-discrete-spectrum-resolvent-estimate}). More precisely, given any cone $\Theta$ such that $\overline{\Theta}\setminus 0\subset \Theta(P)$, there are at most finitely many eigenvalues of $P$ that can be contained in $\Theta$, and, furthermore, $\|(P-\lambda)^{-1}\|=\op{O}(|\lambda|^{-1})$ as $\lambda\rightarrow \infty$ within $\Theta$. This implies that any ray contained in $\Theta(P)$ that does not cross the spectrum of $P$ is automatically a ray of minimal growth (see Corollary~\ref{cor:Resolvent.ray-with-no-eigenvalue-is-a-ray-of-minimal-growth}). The existence of such a ray is an important ingredient in the complex powers of elliptic operators (see, e.g., \cite{Se:PSPM67}). 

If we let $\check{\Theta}(P)$ be the cone obtained from $\Theta(P)$ by deleting rays through eigenvalues of $P$, then $\check{\Theta}(P)$ is still a non-empty open cone (see Lemma~\ref{lem:Resolvent.checkThetaP-open}). The main result of Section~\ref{sec:Resolvent} states that the resolvent $(P-\lambda)^{-1}$ is an element of $\Psi^{-w,-1}(\cA_\theta; \Lambda(P))$, where $\Lambda(P)$ is obtained by glueing to $\check{\Theta}(P)$ any disk or marked disk about the origin that does not contain any eigenvalue of $P$ (Theorem~\ref{thm:Resolvent.resolvent-is-psido-with-parameter}).  
Combining this result with the properties of parametric \psidos\ provides us with sharp Sobolev regularity properties and Schatten-class properties for the resolvent $(P-\lambda)^{-1}$. Namely, for any $s\in \R$, the resolvent $(P-\lambda)^{-1}$ is an $\Hol^{-1+a}(\Lambda(P))$-family in $\cL(\cH_\theta^{(s)},\cH_\theta^{(s+aw)})$ for every $a\in [0,1]$ (Proposition~\ref{prop:Resolvent.Sobolev}) and, if we set $p=nw^{-1}$, then this is also an $\Hol^{-1+pq^{-1}}(\Lambda(P))$-family in $\cL^{(q,\infty)}$ for every $q\geq p$ (Proposition~\ref{prop:Resolvent.Schatten}). 
Furthermore, we have sharp asymptotic estimates for the traces of operators of the form, 
\begin{equation*}
A_0 (P-\lambda)^{-1}A_1 \cdots  A_{N-1}(P-\lambda)^{-1} A_N, \qquad A_j\in \Psi^{a_j}(\cA_\theta),
\end{equation*}
provided that $-Nw+a_0+\cdots + a_N<-n$ (see Proposition~\ref{prop:Resolvent.trace-Q}). Such operators often appear in resolvent expansions. 

 In~\cite{LP:Powers} we shall apply the above results to construct complex powers $P^{s}$ of an elliptic \psido\ as holomorphic families of \psidos. As a preview of this we show when $P\in \Psi^{w}(\cA_\theta)$ is an elliptic \psido\ with positive principal symbol  and  positive spectrum its complex powers $P^{z}$, $z\in \C$,  are \psidos\  (Theorem~\ref{thm:Powers.powers}). This implies that, for any elliptic \psido\ $P\in \Psi^{w}(\cA_\theta)$ the absolute value $|P|=\sqrt{P^*P}$ and its  
complex powers $|P|^z$, $z\in \C$, are \psidos\ as well (Theorem~\ref{thm:Powers.powers-absolute-value}). It was mentioned without proof in~\cite{FGK:MPAG17} that complex powers of positive elliptic \psidos\ on noncommutative 2-tori are \psidos. Thus, our results confirm the claim of~\cite{FGK:MPAG17}. In any case, we postpone to~\cite{LP:Powers} a more thorough account on complex powers of elliptic \psidos\ on noncommutative tori.

The paper is organized as follows. In Section~\ref{sec:NCtori}, we give an overview the main facts on noncommutative tori. In Section~\ref{sec:PsiDOs}, we review the main definitions and properties regarding pseudodifferential operators on noncommutative tori. In Section~\ref{sec:Parametric-symbols}, we introduce our classes of symbols with parameter and derive some of their properties. In Section~\ref{sec:parametric-PsiDOs}, we define our parametric pseudodifferential operators and establish their main properties. In Section~\ref{sec:Resolvent}, given any \psido\ $P$ that is elliptic with parameter, we construct a parametrix for $P-\lambda$ in our parametric \psido\ calculus and show that the resolvent $(P-\lambda)^{-1}$ is an element of this calculus.  These results are used in Section~\ref{sec:square-roots} to show that the complex powers of positive elliptic \psidos\ are \psidos.
In Appendix~\ref{sec:Improper-Integrals}, we review basic facts on improper integrals with values in locally convex spaces. In Appendix~\ref{app:topo-smoothing}, we describe the strong topology of $\cL(\cA_\theta',\cA_\theta)$ in terms of Sobolev norms. This includes a characterization of bounded sets in $\cA_\theta'$.

\subsection*{Acknowlegements} G.L.~acknowledges the support of BK21 PLUS SNU, Mathematical Sciences Division (South Korea). R.P.~wishes to thank University of New South Wales (Sydney, Australia) and University of Qu\'ebec at Montr\'eal (Montr\'eal, Canada) for their hospitality during the preparation of this manuscript. 

\section{Noncommutative Tori} \label{sec:NCtori}
In this section, we review the main definitions and properties of noncommutative $n$-tori, $n\geq 2$. We refer to~\cite{HLP:Part1}, and the references therein (more especially~\cite{Co:NCG, Ri:PJM81, Ri:CM90}), for more comprehensive accounts, including proofs of the results mentioned in this section.
 
Throughout this paper, we let $\theta =(\theta_{jk})$ be a real anti-symmetric $n\times n$-matrix ($n\geq 2$). We denote by $\theta_1, \ldots, \theta_n$ its column vectors. In what follows we also let $\T^n=\R^n\slash 2\pi \Z^n$ be the ordinary $n$-torus, and we equip $L^2(\T^n)$ with the  inner product, 
\begin{equation} \label{eq:NCtori.innerproduct-L2}
 \acoup{\xi}{\eta}= \int_{\T^n} \xi(x)\overline{\eta(x)}\dbar x, \qquad \xi, \eta \in L^2(\T^n), 
\end{equation}
 where we have set $\dbar x= (2\pi)^{-n} dx$.  For $j=1,\ldots, n$, let $U_j:L^2(\T^n)\rightarrow L^2(\T^n)$ be the unitary operator defined by 
 \begin{equation*}
 \left( U_j\xi\right)(x)= e^{ix_j} \xi\left( x+\pi \theta_j\right), \qquad \xi \in L^2(\T^n). 
\end{equation*}
 We then have the relations, 
 \begin{equation} \label{eq:NCtori.unitaries-relations}
 U_kU_j = e^{2i\pi \theta_{jk}} U_jU_k, \qquad j,k=1, \ldots, n. 
\end{equation}
The \emph{noncommutative torus} $A_\theta$ is the $C^*$-algebra generated by the unitary operators $U_1, \ldots, U_n$.  For $\theta=0$ we obtain the $C^*$-algebra $C^{0}(\T^n)$ of continuous functions on the ordinary $n$-torus $\T^n$. Note that~(\ref{eq:NCtori.unitaries-relations}) implies that $A_\theta$ is the closure in $\cL(L^2(\T^n))$ of the algebra $\cA_\theta^0$, where $\cA_\theta^0$ is the span of the unitary operators, 
 \begin{equation*}
 U^k:=U_1^{k_1} \cdots U_n^{k_n}, \qquad k=(k_1,\ldots, k_n)\in \Z^n. 
\end{equation*}

 Let $\tau:\cL(L^2(\T^n))\rightarrow \C$ be the state defined by the constant function $1$, i.e., 
 \begin{equation*}
 \tau (T)= \acoup{T1}{1}=\int_{\T^n} (T1)(x) \dbar x, \qquad T\in \cL\left(L^2(\T^n)\right).
\end{equation*}
In particular, we have
 \begin{equation*} 
 \tau\left( U^k\right) = \left\{
\begin{array}{ll}
 1 &  \text{if $k=0$},  \\
 0 &  \text{otherwise.}
 \end{array}\right. 
\end{equation*}
This induces a continuous linear trace on the $C^*$-algebra $A_\theta$.

The GNS construction (see, e.g., \cite{Ar:Springer81}) allows us to associate with $\tau$ a $*$-representation of $A_\theta$ as follows. Let $\acoup{\cdot}{\cdot}$ be the sesquilinear form on $A_\theta$ defined by
\begin{equation}
 \acoup{u}{v} = \tau\left( uv^* \right), \qquad u,v\in A_\theta. 
 \label{eq:NCtori.cAtheta-innerproduct}
\end{equation}
The family $\{ U^k; k \in \Z^n\}$ is orthonormal with respect to this sesquilinear form. In particular, we have a pre-inner product on the dense subalgebra $\cA_\theta^0$. 

\begin{definition}
 $\cH_\theta$ is the Hilbert space arising from the completion of $\cA_\theta^0$ with respect to the pre-inner product~(\ref{eq:NCtori.cAtheta-innerproduct}). 
\end{definition}

When $\theta=0$ we recover the Hilbert space $L^2(\T^n)$ with the inner product~(\ref{eq:NCtori.innerproduct-L2}). In what follows we shall denote by $\|\cdot\|_0$ the norm of $\cH_\theta$. This notation allows us to distinguish it from the norm of $A_\theta$, which we denote by $\|\cdot\|$.

By construction $(U^k)_{k \in \Z^n}$ is an orthonormal basis of $\cH_\theta$. Thus, every $u\in \cH_\theta$ can be uniquely written as 
\begin{equation} \label{eq:NCtori.Fourier-series-u}
 u =\sum_{k \in \Z^n} u_k U^k, \qquad u_k=\acoup{u}{U^k}, 
\end{equation}
where the series converges in $\cH_\theta$. When $\theta =0$ we recover the Fourier series decomposition in  $L^2(\T^n)$. By analogy with the case $\theta=0$ we shall call the series $\sum_{k \in \Z^n} u_k U^k$ in~(\ref{eq:NCtori.Fourier-series-u}) the Fourier series of $u\in \cH_\theta$. 

\begin{proposition}\label{prop:NCTori.GNS-representation}
The following holds.
\begin{enumerate}
\item The multiplication of $\cA_\theta^0$ uniquely extends to a continuous bilinear map $A_\theta\times \cH_\theta \rightarrow \cH_\theta$. This provides us with a 
         unital $*$-representation of $A_\theta$. In particular, we have
          \begin{equation*} 
                      \left\| u \right\| = \sup_{\|v\|_0=1} \|uv\|_0 \qquad \forall u \in A_\theta. 
           \end{equation*}
\item The inclusion of $\cA_\theta^0$ into $\cH_\theta$ uniquely extends to a continuous embedding of $A_\theta$ into $\cH_\theta$. 
\end{enumerate}
\end{proposition}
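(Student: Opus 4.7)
The plan is to reduce both statements to a single observation: the GNS Hilbert space $\cH_\theta$ is naturally isometric to $L^2(\T^n)$ via the evaluation map $\iota : u \mapsto u \cdot 1$, under which left multiplication on $\cH_\theta$ coincides with the defining action of $A_\theta \subset \cL(L^2(\T^n))$ on $L^2(\T^n)$. With this identification in hand, parts (1) and (2) become essentially tautological, except for one injectivity statement that is the real content of the proposition.

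First I would construct the isometric isomorphism $\iota : \cH_\theta \simeq L^2(\T^n)$. The identity
\[
\|\iota(u)\|_{L^2}^2 = \tau(u^*u) = \tau(uu^*) = \|u\|_0^2, \qquad u \in \cA_\theta^0,
\]
which uses the tracial property, shows that $\iota$ is an isometry on $\cA_\theta^0$; and a short direct computation gives $\iota(U^k) = U^k\cdot 1 = e^{i\phi_k}e^{ik\cdot x}$ for suitable phases $\phi_k \in \R$, so the range of $\iota$ contains all trigonometric monomials and is therefore dense in $L^2(\T^n)$. From $\iota(uv) = u \cdot \iota(v)$ for $u,v\in \cA_\theta^0$ one then reads off $\|uv\|_0 \leq \|u\|\cdot\|v\|_0$, so the multiplication extends continuously to $A_\theta \times \cH_\theta \to \cH_\theta$, and under $\iota$ this extension becomes the spatial action of $A_\theta$ on $L^2(\T^n)$. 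The existence of this bilinear map, the $*$-representation property, and the norm identity $\|u\| = \sup_{\|v\|_0 = 1}\|uv\|_0$ of part (1) are immediate consequences of the inclusion $A_\theta \subset \cL(L^2(\T^n))$. Likewise, continuity of the inclusion $A_\theta \to \cH_\theta$ in part (2) follows from $\|\iota(u)\|_{L^2}\leq \|u\|\cdot \|1\|_{L^2} = \|u\|$.

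The one nontrivial step is the injectivity of the extension $A_\theta \to \cH_\theta$ in part (2), equivalently the separating property of $1 \in L^2(\T^n)$ for $A_\theta$, or equivalently faithfulness of $\tau$ on $A_\theta$. My plan here is to exploit the dual action $\alpha_t$ of $\T^n$ on $A_\theta$, which is implemented on $L^2(\T^n)$ by translations $V_t$ that fix the constant function $1$. If $u \cdot 1 = 0$ then $\alpha_t(u) \cdot 1 = V_t u V_t^{-1} \cdot 1 = V_t (u \cdot 1) = 0$ for all $t \in \T^n$, so the Bochner-integral spectral projection
\[
P_k(u) = \int_{\T^n} e^{-it\cdot k}\, \alpha_t(u)\, \frac{dt}{(2\pi)^n} \in A_\theta
\]
also annihilates $1$ for every $k \in \Z^n$. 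Ergodicity of the dual action forces $P_k(u) = c_k U^k$ for some $c_k \in \C$; evaluating at $1$ gives $c_k = 0$; and Fej\'er-style convergence of the Ces\`aro means of $\sum_k P_k(u)$ to $u$ in $A_\theta$-norm then forces $u = 0$. The faithfulness of $\tau$ being a standard fact, in the final write-up I would likely just cite it from the references listed at the start of the section.
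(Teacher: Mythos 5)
The paper states this proposition without proof, citing general references at the start of Section~\ref{sec:NCtori}, so there is no in-paper argument to compare against; I will simply assess your sketch. Your central step is correct and is the standard route: $\iota(u)=u\cdot 1$ is an isometry on $(\cA_\theta^0,\|\cdot\|_0)$ via the trace identity $\tau(u^*u)=\tau(uu^*)$, it has dense range because $U^k\cdot 1$ is a unimodular constant times $e^{ik\cdot x}$, and the resulting unitary $\cH_\theta\simeq L^2(\T^n)$ intertwines the GNS action with the defining inclusion $A_\theta\subset\cL(L^2(\T^n))$. This yields part~(1) and the continuity in part~(2) at once; the one thing worth making explicit is that $\|u\|$ is, by definition of $A_\theta$, the operator norm of $u$ on $L^2(\T^n)$, which is what makes the norm formula in~(1) tautological after the identification.

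Your ergodicity/Fej\'er argument for the injectivity in part~(2) is correct and not circular (the claim $A_\theta^{(k)}=\C U^k$ follows from continuity of the maps $P_k$ together with $P_k(\cA_\theta^0)=\C U^k$), but it is heavier than needed. Once $\iota$ is in hand, the GNS representation $\pi_\tau$ is unitarily equivalent to the faithful inclusion $A_\theta\subset\cL(L^2(\T^n))$, hence faithful; and for a tracial state $\ker\pi_\tau$ equals $\{u\in A_\theta:\tau(uu^*)=0\}$. Indeed, the nontrivial inclusion is the one-line estimate $\tau(u^*u\,vv^*)\leq\|v\|^2\tau(u^*u)$ together with $\tau(uu^*)=\tau(u^*u)$, and the converse inclusion follows by evaluating $\pi_\tau(u)$ at the cyclic vector. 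So $\tau(uu^*)=0$ forces $u=0$, which is exactly the injectivity of $A_\theta\to\cH_\theta$, with no appeal to the gauge action or Ces\`aro summability. Either route proves the proposition; the trace-ideal argument is tighter and avoids the ergodicity machinery.
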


In particular, the 2nd part  allows us to identify any $u \in A_\theta$ with the sum of its Fourier series in $\cH_\theta$. In general this Fourier series need not converge in $A_\theta$.

The natural action of $\R^n$ on $\T^n$ by translation gives rise to an action on $\cL(L^2(\T^n))$ which induces a $*$-action $(s,u)\rightarrow \alpha_s(u)$ on $A_\theta$ such that
\begin{equation*}
\alpha_s(U^k)= e^{is\cdot k} U^k, \qquad  \text{for all $k\in \Z^n$ and $s\in \R^n$}. 
\end{equation*}
This action is strongly continuous, and so we obtain a $C^*$-dynamical system $(A_\theta, \R^n, \alpha)$. We are especially interested in the subalgebra of the smooth elements of this $C^*$-dynamical system, i.e., the \emph{smooth noncommutative torus}, 
\begin{equation*}
 \cA_\theta:=\left\{ u \in A_\theta; \ \alpha_s(u) \in C^\infty(\R^n; A_\theta)\right\}. 
\end{equation*}
All the unitaries $U^k$, $k\in \Z^n$, are contained in $\cA_\theta$, and so $\cA_\theta$ is a dense subalgebra of $A_\theta$. When $\theta=0$ we recover the algebra $C^\infty(\T^n)$ of smooth functions on the ordinary torus $\T^n$.

For $j=1,\ldots, n$, let $\delta_j:\cA_\theta\rightarrow \cA_\theta$ be the  derivation defined by 
\begin{equation*}
 \delta_j(u) = D_{s_j} \alpha_s(u)|_{s=0}, \qquad u\in \cA_\theta, 
\end{equation*}
where we have set $D_{s_j}=\frac{1}{i}\partial_{s_j}$. When $\theta=0$ the derivation $\delta_j$ is just the derivation $D_{x_j}=\frac{1}{i}\frac{\partial}{\partial x_j}$ on $C^\infty(\T^n)$. In general, for $j,l=1,\ldots, n$, we have
\begin{equation*}
 \delta_j(U_l) = \left\{ 
 \begin{array}{ll}
 U_j & \text{if $l=j$},\\
 0 & \text{if $l\neq j$}. 
\end{array}\right.
\end{equation*}
More generally, given $u\in \cA_\theta$ and $\beta \in \N_0^n$, $|\beta|=N$, we define 
\begin{equation*}
 \delta^\beta(u) = D_s^\beta \alpha_s(u)|_{s=0} = \delta_1^{\beta_1} \cdots \delta_n^{\beta_n}(u). 
\end{equation*}
In what follows, we endow $\cA_\theta$ with the locally convex topology defined by the semi-norms,
\begin{equation}
 \cA_\theta \ni u \longrightarrow \left\|\delta^\beta (u)\right\| ,  \qquad \beta\in \N_0^n. 
\label{eq:NCtori.cAtheta-semi-norms}
\end{equation}
With the involution inherited from $A_\theta$ this turns $\cA_\theta$ into a (unital) Fr\'echet $*$-algebra. It can be further shown that $\cA_\theta$ is even a nuclear Fr\'echet-Montel space (see, e.g.,~\cite{HLP:Part1}). 
 
In what follows, we denote by $\cA_\theta^{-1}$ (resp., $A_\theta^{-1}$) the group of invertible elements of $\cA_\theta$ (resp., $A_\theta$). Given any $u\in \cA_\theta$ we shall denote by $\Sp(u)$ its \emph{spectrum}, i.e., 
\begin{equation*}
 \Sp(u)=\left\{\lambda \in \C;\  u-\lambda \not\in \cA_\theta^{-1}\right\}. 
\end{equation*}

\begin{proposition}[see \cite{Co:AdvM81, HLP:Part1}] \label{prop:NCtori.invertibility-cAtheta}
The following holds. 
\begin{enumerate}
 \item We have $\cA_\theta^{-1}= A_\theta^{-1} \cap \cA_\theta$. 

 \item $\cA_\theta$ is stable under holomorphic functional calculus.
 
 \item For all $u\in \cA_\theta$, we have
\begin{equation*}
 \Sp (u) =\left\{\lambda \in \C; \ \text{$u-\lambda:\cH_\theta\rightarrow \cH_\theta$ is not a bijection}\right\}. 
\end{equation*}
\end{enumerate}
\end{proposition}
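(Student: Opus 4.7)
The three parts are linked: (1) is needed for (2), and together they reduce (3) to a standard $C^*$-algebra fact. I would prove them in order.

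For part (1), the inclusion $\cA_\theta^{-1}\subseteq A_\theta^{-1}\cap \cA_\theta$ is immediate. Conversely, let $u\in\cA_\theta$ be invertible in $A_\theta$ with inverse $v\in A_\theta$. Since $\alpha_s$ is a $*$-automorphism of $A_\theta$, we have $\alpha_s(v)=\alpha_s(u)^{-1}$ for every $s\in\R^n$. The map $s\mapsto \alpha_s(u)$ is smooth with values in $A_\theta$ by assumption, and the inversion map is smooth on the open set of invertible elements of the Banach algebra $A_\theta$ (with derivatives expressed via the Neumann formula $\partial_{s_j}(\alpha_s(u)^{-1})=-\alpha_s(u)^{-1}(\partial_{s_j}\alpha_s(u))\alpha_s(u)^{-1}$, and iterated Leibniz expressions for higher derivatives). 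Composing, $s\mapsto \alpha_s(v)$ is smooth, hence $v\in\cA_\theta$, using the characterization of $\cA_\theta$ via the seminorms in~(\ref{eq:NCtori.cAtheta-semi-norms}).

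For part (2), let $u\in\cA_\theta$ and let $f$ be holomorphic in an open neighborhood $U\supset\Sp(u)$. By part~(1), the spectrum of $u$ in $\cA_\theta$ coincides with its spectrum in $A_\theta$, so we may define
\begin{equation*}
 f(u)=\frac{1}{2i\pi}\oint_\Gamma f(\lambda)(\lambda-u)^{-1}d\lambda\in A_\theta,
\end{equation*}
where $\Gamma\subset U$ is a positively oriented contour encircling $\Sp(u)$. Applying the automorphism $\alpha_s$ under the integral gives $\alpha_s(f(u))=(2i\pi)^{-1}\oint_\Gamma f(\lambda)(\lambda-\alpha_s(u))^{-1}d\lambda$. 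Since $\Gamma$ is compact and stays at positive distance from $\Sp(\alpha_s(u))=\Sp(u)$, a uniform Neumann argument (for $s$ in a small neighborhood of each point) together with smoothness of $s\mapsto\alpha_s(u)$ shows the integrand is smooth in $s$ with derivatives bounded uniformly on $\Gamma$. Differentiating under the integral sign yields $s\mapsto \alpha_s(f(u))\in C^\infty(\R^n;A_\theta)$, so $f(u)\in\cA_\theta$.

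For part (3), the GNS construction (Proposition~\ref{prop:NCTori.GNS-representation}) gives a unital $*$-representation $\pi:A_\theta\to\cL(\cH_\theta)$. The trace $\tau$ is faithful on $A_\theta$: indeed $\tau(u^*u)=\|u\|_0^2$, and the embedding $A_\theta\hookrightarrow\cH_\theta$ of Proposition~\ref{prop:NCTori.GNS-representation}(2) is injective, so $\tau(u^*u)=0$ implies $u=0$. Hence $\pi$ is an injective $*$-homomorphism between $C^*$-algebras, and is therefore isometric with closed image, making $\pi(A_\theta)$ a $C^*$-subalgebra of $\cL(\cH_\theta)$. Such a subalgebra is inverse-closed, so for $\lambda\in\C$, $u-\lambda$ is invertible in $A_\theta$ if and only if $\pi(u-\lambda)=\pi(u)-\lambda$ is invertible in $\cL(\cH_\theta)$, equivalently (by the open mapping theorem) if and only if it is a bijection $\cH_\theta\to\cH_\theta$. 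Combined with part~(1) this gives the claimed equality of $\Sp(u)$ with the set of $\lambda$ for which $u-\lambda$ fails to be a bijection on $\cH_\theta$.

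The only real obstacle is the interchange of limits in part~(2): one needs the uniform Neumann estimate $\|(\lambda-\alpha_s(u))^{-1}\|\leq C$ for $\lambda\in\Gamma$ and $s$ in a neighborhood, together with the smoothness of $\alpha_s(u)$ in the Banach norm of $A_\theta$, to justify differentiating under the contour integral. Everything else is formal once part~(1) is established.
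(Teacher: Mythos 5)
The paper cites this proposition to \cite{Co:AdvM81, HLP:Part1} and gives no proof of its own, so there is no in-paper argument to compare against; I can only assess yours on its merits. Your proof is correct and follows the standard line of reasoning for smooth subalgebras of $C^*$-dynamical systems.

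A few small remarks on economy. In part~(2), once (1) is established you can argue more directly: differentiating $\alpha_s\big((\lambda-u)^{-1}\big)=(\lambda-\alpha_s(u))^{-1}$ as in your proof of (1) shows that $\lambda\mapsto(\lambda-u)^{-1}$ is a continuous (in fact holomorphic) map from $\C\setminus\Sp(u)$ into the Fr\'echet space $\cA_\theta$, and then the Cauchy integral $\frac{1}{2i\pi}\oint_\Gamma f(\lambda)(\lambda-u)^{-1}\,d\lambda$ converges directly in $\cA_\theta$ by completeness. Your route of pushing $\alpha_s$ through the contour integral and differentiating under the integral sign reaches the same conclusion but re-derives the uniform Neumann bounds that are already implicit in (1). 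In part~(3), you may skip the faithfulness-of-$\tau$ argument: Proposition~\ref{prop:NCTori.GNS-representation}(1) already asserts $\|u\|=\sup_{\|v\|_0=1}\|uv\|_0$, which says outright that the GNS representation $\pi$ is isometric; hence $\pi(A_\theta)$ is a $C^*$-subalgebra of $\cL(\cH_\theta)$ and spectral permanence plus the bounded inverse theorem finish the job. Your faithfulness detour is correct (noting $\|u\|_0^2=\tau(uu^*)=\tau(u^*u)$ by traciality), just not needed.
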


Let $\cA_\theta'$ be the topological dual of $\cA_\theta$. We equip it with its {strong topology}, i.e., the locally convex topology generated by the semi-norms,
\begin{equation*}
v\longrightarrow\sup_{u\in B}|\acou{v}{u}| , \qquad \text{$B\subset \cA_\theta$ bounded}. 
\end{equation*}
It is tempting  to think of elements of $\cA_\theta'$ as distributions on $\cA_\theta$. This is consistent with the definition of distributions on $\mathbb{T}^{n}$ as continuous linear forms on $C^\infty(\mathbb{T}^n)$. 
 Any $u \in \cA_\theta$ defines a linear form on $\cA_\theta$ by
\begin{equation*}
 \acou{u}{v} =\tau(uv) \qquad \text{for all $v\in \cA_\theta$}.  
\end{equation*}
 Note that, for all $u,v \in \cA_\theta$, we have 
 \begin{equation} \label{eq:NCtori.distrb-innerproduct-eq}
  \acou{u}{v} =\acoup{v}{u^*}=\acoup{u}{v^*}.
\end{equation}
 In particular, given any $u \in \cA_\theta$, the map $v\rightarrow \acou{u}{v}$ is a continuous linear form on $\cA_\theta$. This gives rise to a continuous embedding of $\cA_\theta$ into $\cA_\theta'$. In view of~(\ref{eq:NCtori.distrb-innerproduct-eq}) it uniquely extends to a continuous embedding of $\cH_\theta$ into $\cA_\theta'$. This allows us to extend the definition of Fourier series to $\cA_\theta'$. Namely, given any $v\in \cA_\theta'$ its Fourier series is the series, 
\begin{equation}
 \sum_{k\in \Z^n} v_k U^k, \qquad \text{where}\ v_k:= \acou{v}{(U^k)^*}. 
 \label{eq:NCtori.Fourier-series}
\end{equation}
Here the unitaries $U^k$, $k\in \Z^n$, are regarded as elements of $\cA_\theta'$. It can be shown that every $u\in \cA_\theta'$ is the sum of its Fourier series in $\cA_\theta'$ (see~\cite{HLP:Part1}). 

\section{Pseudodifferential Operators on Noncommutative Tori} \label{sec:PsiDOs}
In this section, we recall the main definitions and properties of the pseudodifferential calculus on noncommutative tori~\cite{Ba:CRAS88, Co:CRAS80,  HLP:Part1, HLP:Part2}. The exposition closely follows~\cite{HLP:Part1, HLP:Part2} (see also~\cite{Ta:JPCS18}). 

\subsection{Symbols on noncommutative tori}
There are various classes of symbols on noncommutative tori. First, we have the \emph{standard symbols}.  

\begin{definition}[Standard Symbols; see~\cite{Ba:CRAS88, Co:CRAS80}]
$\stS^m (\Rn ; \cA_\theta)$, $m\in\R$, consists of maps $\rho(\xi)\in C^\infty (\Rn ; \cA_\theta)$ such that, for all multi-orders $\alpha$ and $\beta$, there exists $C_{\alpha \beta} > 0$ such that
\begin{equation*} 
\norm{\delta^\alpha \partial_\xi^\beta \rho(\xi)} \leq C_{\alpha \beta} \left( 1 + | \xi | \right)^{m - | \beta |} \qquad \forall \xi \in \R^n .
\end{equation*}
\end{definition}

\begin{remark}[see~\cite{HLP:Part1}] \label{rmk:PsiDOs.Frechet-space-symbols}
$\stS^m (\Rn ; \cA_\theta)$ is a Fr\'echet space with respect to the topology generated by the semi-norms, 
\begin{equation} \label{eq:Symbols.standard-symbol-semi-norm-definition}
p_N^{(m)}(\rho):=\sup_{|\alpha|+|\beta|\leq N} \sup_{\xi\in\Rn}(1+|\xi|)^{-m+|\beta|}\norm{\delta^\alpha\partial_\xi^\beta\rho(\xi)}, \qquad N\in\N_0 .
\end{equation}
\end{remark}

\begin{remark}
 Let $(m_j)_{j\geq 0}\subset \R$ be a decreasing sequence such that $m_j\rightarrow -\infty$. Given $\rho(\xi)$ in $C^\infty(\R^n;\cA_\theta)$ and $\rho_j(\xi)\in \stS^{m_j}(\R^n;\cA_\theta)$, $j\geq 0$, we shall write $\rho(\xi) \sim \sum_{j \geq 0} \rho_j(\xi)$ when 
\begin{equation}
 \rho(\xi)-\sum_{j<N} \rho_j(\xi)\in \stS^{m_N}(\R^n;\cA_\theta) \qquad \forall N\geq 0.
 \label{eq:PsiDOs.asymptotic-expansion-standard} 
\end{equation}
Note this implies that $\rho(\xi)\in \stS^{m_0}(\R^n; \cA_\theta)$. 
\end{remark}

\begin{definition} 
  $\cS(\R^n; \cA_\theta)$ consists of maps $\rho(\xi)\in C^\infty (\Rn ; \cA_\theta)$ such that, for all  $N\geq 0$ and multi-orders $\alpha$, $\beta$, there exists $C_{N\alpha \beta} > 0$ such that
\begin{equation*} 
\norm{\delta^\alpha \partial_\xi^\beta \rho(\xi)} \leq C_{N\alpha \beta} \left( 1 + | \xi | \right)^{-N} \qquad \forall \xi \in \R^n .
\end{equation*}
\end{definition}

\begin{remark}\label{rmk:Symbols.symbols-intersection}
$\cS(\R^n; \cA_\theta)=\bigcap_{m\in\R}\stS^m( \R^n;\cA_\theta)$.
\end{remark}

\begin{definition}[Homogeneous Symbols] 
$S_q (\R^n; \cA_\theta )$, $q \in \C$, consists of  maps $\rho(\xi) \in C^\infty(\R^n\backslash 0;\cA_\theta)$ that are homogeneous of degree $q$, i.e., 
$\rho( t \xi ) =t^q \rho(\xi)$ for all $\xi \in \R^n \backslash 0$ and $t > 0$. 
\end{definition}

\begin{remark}
 If $\rho(\xi)\in S_q(\R^n;\cA_\theta)$ and $\chi(\xi)\in C^\infty_c(\R^n)$ is such that $\chi(\xi)=1$ near $\xi=0$, then $(1-\chi(\xi))\rho(\xi)\in \stS^{\Re q}( \R^n;\cA_\theta)$. 
\end{remark}

\begin{definition}[Classical Symbols; see \cite{Ba:CRAS88}]\label{def:Symbols.classicalsymbols}
$S^q (\R^n; \cA_\theta )$, $q \in \C$, consists of maps $\rho(\xi)\in C^\infty(\R^n;\cA_\theta)$ that admit an asymptotic expansion,
\begin{equation*}
\rho(\xi) \sim \sum_{j \geq 0} \rho_{q-j} (\xi),  \qquad \rho_{q-j} \in S_{q-j} (\R^n; \cA_\theta ), 
\end{equation*}
where $\sim$ means that, for all $N\geq 0$ and multi-orders $\alpha$, $\beta$, there exists $C_{N\alpha\beta} >0$ such that, for all $\xi \in \R^n$ with $| \xi | \geq 1$, we have
\begin{equation} \label{eq:Symbols.classical-estimates}
\Big\| \delta^\alpha \partial_\xi^\beta \big( \rho - \sum_{j<N} \rho_{q-j} \big)(\xi)\Big\| \leq C_{N\alpha\beta} | \xi |^{\Re{q}-N-| \beta |} .
\end{equation}
\end{definition}

\begin{remark} \label{rmk:Symbols.classical-inclusion}
$S^q(\R^n;\cA_\theta)\subset \stS^{\Re{q}}(\R^n;\cA_\theta)$. 
\end{remark}

\begin{remark}
 Let $\chi(\xi)\in C^\infty_c(\R^n)$ be such that $\chi(\xi)=1$ near $\xi=0$. Then $\rho(\xi)\sim \sum_{j\geq 0} \rho_{q-j}(\xi)$ in the sense of~(\ref{eq:Symbols.classical-estimates}) if and only if 
 $\rho(\xi)\sim \sum_{j\geq 0} (1-\chi(\xi))\rho_{q-j}(\xi)$ in the sense of~(\ref{eq:PsiDOs.asymptotic-expansion-standard}). 
\end{remark}

\begin{example}
 Any polynomial map $\rho(\xi)=\sum_{|\alpha|\leq m} a_\alpha \xi^\alpha$, $a_\alpha\in \cA_\theta$, $m\in \N_0$, is in $S^m(\R^n;\cA_\theta)$.  
\end{example}

\begin{example} \label{ex:Symbols.example-symbol}
 Set $\brak{\xi}=(1+|\xi|^2)^{\frac12}$, $\xi\in \R^n$. Then $\brak{\xi}^q \in S^q(\R^n;\cA_\theta)$ for all $q\in \C$ (see, e.g., \cite{HLP:Part1}). 
\end{example}

\subsection{Amplitudes and oscillating integrals}
We briefly recall the construction of the $\cA_\theta$-valued oscillating integral in~\cite{HLP:Part1} (see also~\cite{Ri:MAMS93}). In this setting the oscillating integral is defined on $\cA_\theta$-valued amplitudes.   

\begin{definition}[Amplitudes; see~\cite{HLP:Part1}]
$A^m ( \Rn \times \Rn ; \cA_\theta )$, $m \in \R$, consists of maps $a(s,\xi)$ in $C^{\infty} (\Rn \times \Rn ; \cA_\theta )$ such that, for all multi-orders $\alpha$, $\beta$, $\gamma$, there is $C_{\alpha \beta \gamma} > 0$ such that
\begin{equation} \label{eq:Amplitudes.amplitudes-estimates}
\norm{\delta^\alpha \partial_s^\beta \partial_\xi^\gamma  a(s,\xi)} \leq C_{\alpha \beta \gamma} \left( 1 + |s| + |\xi| \right)^m \quad \forall (s,\xi) \in \Rn \times \Rn .
\end{equation}
\end{definition}

Each space $A^m(\Rn\times\Rn;\cA_\theta)$, $m\in \R$, is a Fr\'echet space with respect to the locally convex topology generated by the semi-norms,
\begin{equation} \label{eq:Amplitudes.amplitudes-semi-norms}
q_N^{(m)} (a) := \sup_{|\alpha|+|\beta|+|\gamma| \leq N}\sup_{(s,\xi) \in \Rn \times \Rn} (1 + |s| + |\xi| )^{-m} \norm{\delta^\alpha \partial_s^\beta \partial_\xi^\gamma a(s,\xi)}, \quad N\in\N_0 .
\end{equation}

If $a(s,\xi)\in A^m (\Rn\times\Rn;\cA_\theta)$ with $m<-2n$, then the estimates~(\ref{eq:Amplitudes.amplitudes-estimates}) ensure us that the $\cA_\theta$-valued map 
$(s,\xi)\rightarrow e^{is\cdot\xi} a(s,\xi)$ is absolutely integrable on $\R^{n}\times \R^n$ (see~\cite{HLP:Part1}). Therefore,  we may define
\begin{equation} \label{eq:Amplitudes.definition-J0}
J_0(a) := \iint e^{is\cdot\xi} a(s,\xi) ds\dbar\xi , 
\end{equation}
where we have set $\dbar\xi=(2\pi)^{-n}d\xi$. This defines a continuous linear map from $A^m (\Rn\times\Rn;\cA_\theta)$ to $\cA_\theta$ for every $m<-2n$ (see~\cite{HLP:Part1}). 

In the following we set  $A^{+\infty}(\Rn\times\Rn;\cA_\theta):=\bigcup_{m\in\R} A^m(\Rn\times\Rn;\cA_\theta)$. 

\begin{proposition}[see~\cite{HLP:Part1}] \label{prop:Amplitudes.extension-J0}
$J_0$ uniquely extends to a linear map $J:A^{+\infty}(\Rn\times\Rn;\cA_\theta)\rightarrow \cA_\theta$ that induces a continuous linear map on each amplitude space $A^m(\Rn\times\Rn;\cA_\theta)$, $m\in\R$. 
\end{proposition}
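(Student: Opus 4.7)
The plan is the classical oscillating-integral extension, adapted here to the $\cA_\theta$-valued setting. I would introduce a differential operator $L$ in the variables $(s,\xi)$ which (i) preserves the oscillating factor, $L(e^{is\cdot\xi}) = e^{is\cdot\xi}$, and (ii) whose formal transpose $L^t$ strictly decreases the amplitude order. A convenient choice is
\[
  L := \frac{1 - \Delta_s - \Delta_\xi}{1 + |s|^2 + |\xi|^2},
\]
for which a direct computation yields $L(e^{is\cdot\xi}) = e^{is\cdot\xi}$, while the transpose
\[
  L^t a = (1-\Delta_s-\Delta_\xi)\bigl[(1+|s|^2+|\xi|^2)^{-1}a\bigr]
\]
is, by Leibniz and the semi-norm bounds~(\ref{eq:Amplitudes.amplitudes-semi-norms}), a continuous linear map $A^m(\Rn\times\Rn;\cA_\theta) \to A^{m-2}(\Rn\times\Rn;\cA_\theta)$ for every $m\in\R$ (the scalar multiplier $(1+|s|^2+|\xi|^2)^{-1}$ lies in $A^{-2}$ and the constant-coefficient Laplacian preserves the amplitude order).

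For $a\in A^m(\Rn\times\Rn;\cA_\theta)$ I would then pick $k\in\N_0$ with $m-2k<-2n$ and set $J(a) := J_0\bigl((L^t)^k a\bigr)$. As a composition $A^m \to A^{m-2k} \to \cA_\theta$ of continuous maps, this rule automatically induces a continuous linear map on each $A^m(\Rn\times\Rn;\cA_\theta)$. What requires care is independence of $k$, which reduces to proving the identity
\[
  J_0(L^t b) = J_0(b) \qquad \forall b\in A^{m'}(\Rn\times\Rn;\cA_\theta),\ m'<-2n,
\]
for then iteration with $b = (L^t)^k a$ gives $J_0((L^t)^{k+1}a)=J_0((L^t)^k a)$ whenever $m-2k<-2n$. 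I would establish the identity by introducing a dilation cut-off $\chi_R(s,\xi):=\chi(s/R,\xi/R)$ with $\chi\in C_c^\infty(\Rn\times\Rn)$ and $\chi(0,0)=1$, running the honest integration by parts on the compactly supported amplitude $\chi_R b$ (where no boundary terms arise), and passing to the limit $R\to\infty$. For any $\delta>0$ small enough that $m'+\delta<-2n$, a direct semi-norm estimate gives $\chi_R b \to b$ in $A^{m'+\delta}(\Rn\times\Rn;\cA_\theta)$; continuity of $J_0$ and of $L^t$ on the relevant amplitude spaces then yields both $J_0(\chi_R b)\to J_0(b)$ and $J_0(L^t(\chi_R b))\to J_0(L^t b)$.

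Uniqueness of the extension under the continuity requirement is a density argument along the same lines: for any other continuous extension $\widetilde J$ and any $a\in A^m$, the cut-off family $a_R:=\chi_R a$ is compactly supported and so $\widetilde J(a_R) = J_0(a_R) = J(a_R)$; since $a_R\to a$ in $A^{m+\delta}(\Rn\times\Rn;\cA_\theta)$ for every $\delta>0$, continuity of $\widetilde J$ and $J$ on $A^{m+\delta}$ forces $\widetilde J(a) = J(a)$. The main obstacle in this plan is the rigorous integration-by-parts step: although the identity $J_0(L^t b) = J_0(b)$ is formally trivial, in the $\cA_\theta$-valued setting one must argue that the boundary-at-infinity contributions vanish in the topology of $\cA_\theta$. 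This ultimately reduces, via continuity of $J_0$ on the Fr\'echet spaces $A^{m''}$, to the mild approximation $\chi_R b \to b$ discussed above, which is really the core analytic content of the proposition.
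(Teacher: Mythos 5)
Your proof is correct and follows essentially the same route as the paper (which defers the detailed verification to~\cite{HLP:Part1}): define $J(a)=J_0\big((L^t)^k a\big)$ for a differential operator $L$ fixing $e^{is\cdot\xi}$ whose transpose strictly lowers the amplitude order, and establish both well-definedness and uniqueness by cutting off with $\chi_R$, integrating by parts honestly on the compactly supported approximant, and passing to the limit $\chi_R b\to b$ in $A^{m+\delta}(\R^n\times\R^n;\cA_\theta)$. The only real difference is the choice of $L$: you use the everywhere-smooth constant-coefficient quotient $L=(1-\Delta_s-\Delta_\xi)/(1+|s|^2+|\xi|^2)$, whose transpose drops the amplitude order by two per application, whereas the paper uses the first-order radial operator $L=\chi(s,\xi)+\frac{1-\chi(s,\xi)}{|s|^2+|\xi|^2}\sum_{j}(\xi_jD_{s_j}+s_jD_{\xi_j})$, where the cutoff $\chi$ avoids the singularity at the origin and $L^t$ drops the order by one per application; both are standard and yield the same extension.
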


More specifically, given $\chi(s,\xi) \in C_c^{\infty} (\Rn \times \Rn)$ such that $\chi(s,\xi)= 1$ near $(s,\xi)=(0,0)$, let $L$ be the differential operator on $\R^{2n}$ defined by 
\begin{equation*}
L:= \chi(s,\xi) + \frac{1-\chi(s,\xi)}{|s|^2 + |\xi|^2} \sum_{1 \leq j \leq n} (\xi_j D_{s_j} + s_j D_{\xi_j}) , 
\end{equation*}
where we have set $D_{x_j}=\frac{1}{i}\partial_{x_j}$, $j=1, \ldots, n$. Note that $L(e^{is\cdot\xi}) = e^{is\cdot\xi}$. Moreover, given any $m\in \R$, it can be shown that the transpose  $L^t$ (in the sense of differential operators) gives rise to a continuous linear map $L^t : A^m (\Rn \times \Rn ; \cA_\theta) \rightarrow A^{m-1} (\Rn \times \Rn ; \cA_\theta)$ (see~\cite{HLP:Part1}). Successive integrations by parts show that, given any $a(s,\xi)\in A^m(\Rn\times\Rn;\cA_\theta)$, $m\in \R$, we have
\begin{equation*} 
J(a)=\iint e^{is\cdot\xi}(L^t)^N [a(s,\xi)]ds\dbar\xi , 
\end{equation*}
where $N$ is any non-negative integer $>m+2n$. 

We call $J(a)$ the \emph{oscillating integral} with amplitude $a(s,\xi)$. It is convenient write it as an integral $\iint e^{is\cdot\xi}a(s,\xi)ds\dbar\xi$. 

\subsection{\psidos~on noncommutative tori}
Let $\rho(\xi)\in \stS^m(\R^n;\cA_\theta)$, $m\in \R$. Given any $u\in \cA_\theta$, it can be shown that $\rho(\xi)\alpha_{-s}(u)$ is an amplitude in 
$A^{m_+}(\Rn\times\Rn;\cA_\theta)$, where $m_+=\op{max}(m,0)$ (see~\cite{HLP:Part1}). Therefore, we can define
\begin{equation*} 
P_\rho u = \iint e^{is\cdot\xi}\rho(\xi)\alpha_{-s}(u)ds\dbar\xi,
\end{equation*}
where the above integral is meant as an oscillating integral. This defines a continuous linear operator $P_\rho:\cA_\theta \rightarrow \cA_\theta$ (see~\cite{HLP:Part1}). 

\begin{definition}
$\Psi^q(\cA_\theta)$,  $q\in \C$, consists of all linear operators $P_\rho:\cA_\theta\rightarrow \cA_\theta$ with $\rho(\xi)$ in $S^q(\R^n; \cA_\theta)$.
\end{definition}

\begin{remark} \label{rem:PsiDOs.symbol-uniqueness}
If $P=P_\rho$ with $\rho(\xi)$ in $S^q(\R^n; \cA_\theta)$, $\rho(\xi)\sim \sum \rho_{q-j}(\xi)$, then $\rho(\xi)$ is called a \emph{symbol} for $P$. This symbol is not unique, but its restriction to $\Z^n$ and its class modulo $\cS(\R^n;\cA_\theta)$ are unique (see Remark~\ref{rmk:PsiDOs.rho(k)-P}). As a result, the homogeneous symbols $\rho_{q-j}(\xi)$ are uniquely determined by $P$. The leading symbol $\rho_q(\xi)$ is called the \emph{principal symbol} of $P$. 
\end{remark}

\begin{example}
 A differential operator on $\cA_\theta$ is of the form $P=\sum_{|\alpha|\leq m}a_\alpha\delta^\alpha$, $a_\alpha\in\cA_\theta$ (see~\cite{Co:CRAS80, Co:NCG}). This is a \psido\ of order $m$ with symbol $\rho(\xi)= \sum a_\alpha \xi^\alpha$ (see~\cite{HLP:Part1}).
\end{example}

\begin{example} \label{eq:PsiDOs.Lambda-to-the-s-PsiDO}
 The  Laplacian $\Delta:= \delta_1^2 + \cdots + \delta_n^2$ is a selfadjoint unbounded operator on $\cH_\theta$ with domain 
 $\op{Dom}(\Delta):=\{ u=\sum_{k\in \Z^n} u_kU^k\in \cH_\theta; \ \sum_{k\in \Z^n} |k|^2 |u_k|^2<\infty\}$. It is isospectral to the Laplacian on $\T^n$. For all $s\in \C$ set $\Lambda^s=(1+\Delta)^s$. Then $\Lambda^s$ is the \psido\ with symbol $\brak{\xi}^s$, and so $\Lambda^s\in \Psi^s(\cA_\theta)$ (see~\cite{HLP:Part1}). In fact, the family $(\Lambda^s)_{s\in \C}$ form a 1-parameter group of \psidos. 
\end{example}

\subsection{Composition of \psidos}
Suppose we are given symbols $\rho_1(\xi)\in\stS^{m_1}(\Rn;\cA_\theta)$, $m_1\in\R$, and $\rho_2(\xi)\in\stS^{m_2}(\Rn;\cA_\theta)$, $m_2\in\R$. As $P_{\rho_1}$ and $P_{\rho_2}$ are linear operators on $\cA_\theta$, the composition $P_{\rho_1}P_{\rho_2}$ makes sense as such an operator.  
In addition, we define the map $\rho_1\sharp\rho_2:\Rn\rightarrow \cA_\theta$ by
\begin{equation} \label{eq:Composition.symbol-sharp}
\rho_1\sharp\rho_2(\xi) = \iint e^{it\cdot\eta}\rho_1(\xi+\eta)\alpha_{-t}[\rho_2(\xi)]dt\dbar\eta , \qquad \xi\in\Rn .
\end{equation}
Here, for every $\xi\in\Rn$, the map $(t,\eta)\rightarrow\rho_1(\xi+\eta)\alpha_{-t}[\rho_2(\xi)]$ is an amplitude, and so the right-hand side makes sense as an oscillating integral (see~\cite{Ba:CRAS88, HLP:Part2}). 

\begin{proposition}[see \cite{Ba:CRAS88, Co:CRAS80, HLP:Part2}] \label{prop:Composition.sharp-continuity-standard-symbol}
Let $\rho_1(\xi)\in \stS^{m_1}(\R^n; \cA_\theta)$ and  $\rho_2(\xi)\in \stS^{m_2}(\R^n; \cA_\theta)$, $m_1,m_2\in \R$. 
\begin{enumerate}
 \item $\rho_1\sharp\rho_2(\xi)\in \stS^{m_1+m_2}(\Rn;\cA_\theta)$, and we have $ \rho_1\sharp\rho_2(\xi) \sim \sum\frac{1}{\alpha !}\partial_\xi^\alpha\rho_1(\xi)\delta^\alpha\rho_2(\xi)$. 

 \item The operators $P_{\rho_1}P_{\rho_2}$ and $P_{\rho_1\sharp \rho_2}$ agree.           
\end{enumerate}
\end{proposition}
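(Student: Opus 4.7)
The plan is to handle the two parts of the proposition separately: the symbolic statement (1) and the operator identity (2). Both proofs are essentially a matter of rigorously manipulating oscillating integrals, where the machinery of $A^m(\R^n\times \R^n;\cA_\theta)$ recalled just above does most of the work.

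For part (1), the starting point is the Taylor expansion
\begin{equation*}
 \rho_1(\xi+\eta)=\sum_{|\alpha|<N}\frac{\eta^\alpha}{\alpha!}\partial_\xi^\alpha\rho_1(\xi)+r_N(\xi,\eta),
\end{equation*}
with $r_N(\xi,\eta)=N\sum_{|\alpha|=N}\frac{\eta^\alpha}{\alpha!}\int_0^1(1-\theta)^{N-1}\partial_\xi^\alpha\rho_1(\xi+\theta\eta)d\theta$. Substituting into the defining oscillating integral~(\ref{eq:Composition.symbol-sharp}), the key reduction is that for each multi-order $\alpha$,
\begin{equation*}
 \iint e^{it\cdot\eta}\eta^\alpha \alpha_{-t}[\rho_2(\xi)]dt\dbar\eta = \iint e^{it\cdot\eta}\alpha_{-t}[\delta^\alpha\rho_2(\xi)]dt\dbar\eta = \delta^\alpha\rho_2(\xi).
\end{equation*}
The first equality is integration by parts in $t$ after writing $\eta^\alpha e^{it\cdot\eta}=D_t^\alpha e^{it\cdot\eta}$ and using $D_t\alpha_{-t}(v)=-\alpha_{-t}(\delta v)$ so that signs cancel; the second is Fourier inversion, realized here through the defining property of $J$ evaluated on a constant-in-$(t,\eta)$ amplitude. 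This shows that the polynomial-in-$\eta$ terms in the Taylor expansion contribute exactly $\sum_{|\alpha|<N}\frac{1}{\alpha!}\partial_\xi^\alpha\rho_1(\xi)\delta^\alpha\rho_2(\xi)$ to $\rho_1\sharp\rho_2(\xi)$.

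It then remains to prove that the remainder $R_N(\xi):=\iint e^{it\cdot\eta}r_N(\xi,\eta)\alpha_{-t}[\rho_2(\xi)]dt\dbar\eta$ defines an element of $\stS^{m_1+m_2-N}(\R^n;\cA_\theta)$. Writing $r_N$ as above, each summand yields an amplitude $a_\alpha(t,\eta;\xi)$ in $t,\eta$ (with $\xi$ a parameter), and one differentiates under the integral in $\xi$ using continuity of $J$ on $A^\bullet(\R^n\times\R^n;\cA_\theta)$ together with the explicit $L^t$-formula displayed before the proposition. Applying $L^t$ sufficiently many times both produces the decay $(1+|t|+|\eta|)^{-K}$ needed for absolute convergence and exhibits the expected $|\xi|^{m_1+m_2-N-|\beta|}$ bounds after $\partial_\xi^\beta\delta^\gamma$, splitting the $\eta$-integral into $|\eta|\le\frac12|\xi|$ (where $|\xi+\theta\eta|\sim|\xi|$) and $|\eta|\ge\frac12|\xi|$ (where the $L^t$-gained decay in $\eta$ dominates). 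Matching this against the definition~(\ref{eq:Symbols.classical-estimates}) (used for the standard Fr\'echet semi-norms~(\ref{eq:Symbols.standard-symbol-semi-norm-definition})) gives the asserted asymptotic expansion.

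For part (2), I apply $P_{\rho_1}P_{\rho_2}$ to $u\in\cA_\theta$, write out both layers as oscillating integrals, and use the group property $\alpha_{-t}\circ\alpha_{-s}=\alpha_{-(t+s)}$ to rewrite
\begin{equation*}
 P_{\rho_1}P_{\rho_2}u=\iiiint e^{it\cdot\eta}e^{is\cdot\xi}\rho_1(\eta)\alpha_{-t}[\rho_2(\xi)]\alpha_{-(t+s)}(u)\,ds\dbar\xi\,dt\dbar\eta.
\end{equation*}
Changing variables $s\mapsto s-t$ and $\eta\mapsto\eta-\xi$, the phase becomes $e^{it\cdot\eta}e^{is\cdot\xi}$ and the integrand in $\xi$ becomes precisely the amplitude defining $\rho_1\sharp\rho_2(\xi)$, so the inner double integral in $(t,\eta)$ produces $(\rho_1\sharp\rho_2)(\xi)$ and the outer one is $P_{\rho_1\sharp\rho_2}u$. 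The main obstacle in this part is that these are oscillating, not absolutely convergent, integrals, so Fubini and the linear change of variables have to be justified through the continuity of $J$ on amplitude spaces (Proposition~\ref{prop:Amplitudes.extension-J0}); the standard way to do this is to introduce a cut-off $\chi_\varepsilon(t,\eta,s,\xi)\to 1$ that makes everything Schwartz, perform the manipulations there, and pass to the limit using the continuity bounds~(\ref{eq:Amplitudes.amplitudes-semi-norms}). The technical subtlety of tracking all weighted seminorm estimates through these limits, especially in part~(1), is the part I expect to consume the most effort.
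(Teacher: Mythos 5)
The paper states this proposition without proof, citing \cite{Ba:CRAS88, Co:CRAS80, HLP:Part2}; there is no in-text argument to compare against. Your outline is correct and is essentially the standard argument from those references: Taylor-expanding $\rho_1(\xi+\eta)$ about $\xi$, reducing each polynomial-in-$\eta$ term via the identity $\iint e^{it\cdot\eta}\eta^\alpha\alpha_{-t}(v)\,dt\dbar\eta=\delta^\alpha v$ (integration by parts in $t$ followed by Fourier inversion applied to the oscillating integral), bounding the Taylor remainder by splitting $|\eta|\le\frac12|\xi|$ from its complement and using $L^t$-iteration for decay in $(t,\eta)$, and for part (2) rewriting the iterated oscillating integral and applying the shear change of variables after regularization by cutoffs. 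One small slip: the asymptotic expansion in part (1) is to be verified in the sense of~(\ref{eq:PsiDOs.asymptotic-expansion-standard}) for standard symbols (i.e.\ $R_N\in\stS^{m_1+m_2-N}(\R^n;\cA_\theta)$), not in the sense of the classical-symbol estimates~(\ref{eq:Symbols.classical-estimates}) you cite; the bounds you describe prove the former, which is what is required.
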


In the case of classical \psidos\ we further have the following result.

\begin{proposition}[see \cite{Ba:CRAS88, Co:CRAS80, HLP:Part2}] \label{prop:Composition.composition-PsiDOs}
Let  $P_1\in \Psi^{q_1}(\cA_\theta)$, $q_1\in \C$, have symbol $\rho_1(\xi)\sim\sum_{j\geq 0}\rho_{1,q_1-j}(\xi)$, and let  $P_2\in \Psi^{q_2}(\cA_\theta)$, $q_2\in \C$, have symbol $\rho_2(\xi)\sim\sum_{j\geq 0}\rho_{2,q_2-j}(\xi)$.
\begin{enumerate}
 \item $\rho_1 \sharp \rho_2(\xi)\in S^{q_1+q_2}(\R^n; \cA_\theta)$ with $\rho_1\sharp\rho_2(\xi) \sim\sum (\rho_1\sharp\rho_2)_{q_1+q_2-j}(\xi)$, where 
 \begin{equation} \label{eq:Composition-Sym.sharp-homo-asymptotics}
(\rho_1\sharp\rho_2)_{q_1+q_2-j}(\xi)=\sum_{k+l+|\alpha|=j}\frac{1}{\alpha !}\partial_\xi^\alpha \rho_{1,q_1-k}(\xi)\delta^\alpha \rho_{2,q_2-l}(\xi), \qquad j\geq 0.
\end{equation}

\item The composition $P_1P_2= P_{\rho_1 \sharp \rho_2}$ is contained in $\Psi^{q_1+q_2}(\cA_\theta)$.  
\end{enumerate}
\end{proposition}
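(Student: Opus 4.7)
The plan is to reduce Proposition~\ref{prop:Composition.composition-PsiDOs} to Proposition~\ref{prop:Composition.sharp-continuity-standard-symbol} by reorganising the standard-symbol asymptotic expansion by total degree. Since $S^{q_i}(\R^n;\cA_\theta)\subset \stS^{\Re q_i}(\R^n;\cA_\theta)$ (Remark~\ref{rmk:Symbols.classical-inclusion}), part~(1) of the previous proposition already tells us that $\rho_1\sharp\rho_2(\xi)\in \stS^{\Re(q_1+q_2)}(\R^n;\cA_\theta)$ and
\begin{equation*}
\rho_1\sharp\rho_2(\xi) \sim \sum_{\alpha} \frac{1}{\alpha!}\,\partial_\xi^\alpha \rho_1(\xi)\,\delta^\alpha \rho_2(\xi),
\end{equation*}
while part~(2) gives the operator identity $P_1 P_2 = P_{\rho_1\sharp \rho_2}$. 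So the only substantive work is upgrading this standard expansion to a classical one with the homogeneous components prescribed by~(3.3); once this is done, part~(2) of the statement follows immediately.

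First I would verify the homogeneity of the proposed components. The derivations $\delta_j$ act on coefficients in $\cA_\theta$ and commute with dilations in $\xi$, so $\delta^\alpha \rho_{2,q_2-l}\in S_{q_2-l}(\R^n;\cA_\theta)$. By Euler's relation, $\partial_\xi^\alpha \rho_{1,q_1-k}\in S_{q_1-k-|\alpha|}(\R^n;\cA_\theta)$. Thus each summand $\frac{1}{\alpha!}\partial_\xi^\alpha \rho_{1,q_1-k}(\xi)\,\delta^\alpha \rho_{2,q_2-l}(\xi)$ is homogeneous of degree $q_1+q_2-j$ whenever $k+l+|\alpha|=j$, and the finite sum defining $(\rho_1\sharp\rho_2)_{q_1+q_2-j}$ lies in $S_{q_1+q_2-j}(\R^n;\cA_\theta)$.

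Next I would establish the classical remainder estimates~(3.2). Fix $N\geq 1$ and a cutoff $\chi\in C^\infty_c(\R^n)$ with $\chi=1$ near the origin. Picking $M=M(N)$ large enough, Proposition~\ref{prop:Composition.sharp-continuity-standard-symbol}(1) gives
\begin{equation*}
\rho_1\sharp\rho_2(\xi) - \sum_{|\alpha|<M}\frac{1}{\alpha!}\,\partial_\xi^\alpha \rho_1(\xi)\,\delta^\alpha \rho_2(\xi) \;\in\; \stS^{\Re(q_1+q_2)-N}(\R^n;\cA_\theta).
\end{equation*}
In each remaining term, replace $\rho_1$ and $\rho_2$ by their classical expansions truncated to sufficiently many terms, using the classical estimates~(3.2) for $\rho_1,\rho_2$ to control the tails in $\stS^{\Re(q_1+q_2)-N}$. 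What survives is precisely $\sum_{j<N}(1-\chi(\xi))(\rho_1\sharp\rho_2)_{q_1+q_2-j}(\xi)$ plus a finite sum of homogeneous symbols of total degree $\leq \Re(q_1+q_2)-N$ (those with $k+l+|\alpha|\geq N$ but $|\alpha|<M$); after multiplication by $(1-\chi)$, each such summand is again in $\stS^{\Re(q_1+q_2)-N}(\R^n;\cA_\theta)$. Since $\chi(\xi)(\rho_1\sharp\rho_2)_{q_1+q_2-j}$ is smooth and compactly supported, hence in $\cS(\R^n;\cA_\theta)\subset \stS^{\Re(q_1+q_2)-N}$, this yields the desired estimate~(3.2) and proves $\rho_1\sharp\rho_2\in S^{q_1+q_2}(\R^n;\cA_\theta)$.

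The main obstacle is the bookkeeping in the third paragraph: one must balance the truncation order $M$ in $\alpha$ against the truncation orders in the classical expansions of $\rho_1$ and $\rho_2$, so that every cross term either lies in a sufficiently small standard-symbol class or matches a piece of $(\rho_1\sharp\rho_2)_{q_1+q_2-j}$ for $j<N$. The cutoff $\chi$ is needed because the $\rho_{i,q_i-j}$ are only defined on $\R^n\setminus 0$, but the contribution of $\chi$ is Schwartz-class and therefore harmless.
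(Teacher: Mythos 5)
The paper does not give a proof of Proposition~\ref{prop:Composition.composition-PsiDOs}: it is recalled from the cited references \cite{Ba:CRAS88, Co:CRAS80, HLP:Part2} as background material, so there is no in-text argument to compare against. On its own terms your outline is the standard and correct one: use Proposition~\ref{prop:Composition.sharp-continuity-standard-symbol} for the expansion in the sense of~(\ref{eq:PsiDOs.asymptotic-expansion-standard}), regroup the double sum over $\alpha$ and the classical expansions of $\rho_1,\rho_2$ by total degree $j=k+l+|\alpha|$, cut off near $\xi=0$, and pass back to the classical sense of~(\ref{eq:Symbols.classical-estimates}) via the equivalence recorded in Remark~3.11. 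Part~(2) then indeed follows from Proposition~\ref{prop:Composition.sharp-continuity-standard-symbol}(2) combined with part~(1). Two small remarks. First, the homogeneity degree drop under $\partial_\xi^\alpha$ is obtained by differentiating $\rho_{1,q_1-k}(t\xi)=t^{q_1-k}\rho_{1,q_1-k}(\xi)$, not from Euler's identity; the conclusion you state is correct, the attribution is not. Second, the bookkeeping you flag as ``the main obstacle'' is precisely the content of the paper's parametric analogue, Proposition~\ref{prop:Parameter.composition-PsiDOs}, whose proof is factored through Lemma~\ref{lem:Parameter.classical-homogeneouspart} (reorganising a sum of classical symbols by total degree) and Lemma~\ref{lem:Parameter.classical-symbol-product} (product of classical symbols); specialising those two lemmas to the trivial parameter $d=0$ is the cleanest way to make your third paragraph rigorous without re-deriving the combinatorics by hand.
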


\subsection{Adjoints of \psidos\ and action on $\cA_\theta'$}
Given a linear operator $P: \cA_\theta \rightarrow \cA_\theta$, a formal adjoint is any linear operator $P^*: \cA_\theta \rightarrow \cA_\theta$ such that
\begin{equation*} 
\acoup{Pu}{v} = \acoup{u}{P^* v} \qquad \forall u,v \in \cA_\theta,
\end{equation*}
where $\acoup{\cdot}{\cdot}$ is the inner product~(\ref{eq:NCtori.cAtheta-innerproduct}). When it exists a formal adjoint must be unique.

Let $\rho(\xi)\in\stS^m(\Rn;\cA_\theta)$, $m\in\R$, and set
\begin{equation} \label{eq:Adjoints.symbol-star}
\rho^\star(\xi) = \iint e^{it\cdot\eta}\alpha_{-t}[\rho(\xi+\eta)^*]dt\dbar\eta , \qquad \xi\in\Rn .
\end{equation}
For every $\xi\in\Rn$, the map $(t,\eta)\rightarrow\alpha_{-t}[\rho(\xi+\eta)^*]$ is an amplitude, and so the right-hand side makes sense as an oscillating integral (see~\cite{HLP:Part2}). 

\begin{proposition}[see \cite{Ba:CRAS88, HLP:Part2}] \label{prop:Adjoints.rhostar-formal-adjoint}
Let $\rho(\xi)\in\stS^m(\Rn;\cA_\theta)$, $m\in\R$.
\begin{enumerate}
 \item $\rho^\star(\xi) \in \stS^m(\Rn;\cA_\theta)$, and we have $\rho^\star(\xi) \sim \sum \frac{1}{\alpha !}\delta^\alpha\partial_\xi^\alpha [\rho(\xi)^*]$. 
        
\item $P_{\rho^\star}$ is the formal adjoint of $P_\rho$.     
\end{enumerate}
\end{proposition}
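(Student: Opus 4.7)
The approach parallels that of Proposition~\ref{prop:Composition.sharp-continuity-standard-symbol}, since the definition~(\ref{eq:Adjoints.symbol-star}) of $\rho^\star$ has the same oscillating-integral structure as the composition formula~(\ref{eq:Composition.symbol-sharp}). For part~(1), the plan is to Taylor-expand the $\cA_\theta$-valued map $\eta\mapsto\rho(\xi+\eta)^*$ at $\eta=0$,
\[
\rho(\xi+\eta)^* = \sum_{|\alpha|<N}\frac{\eta^\alpha}{\alpha!}\partial_\xi^\alpha[\rho(\xi)^*] + R_N(\xi,\eta),
\]
and plug this back into~(\ref{eq:Adjoints.symbol-star}). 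Each polynomial term contributes $\frac{1}{\alpha!}\iint \eta^\alpha e^{it\cdot\eta}\alpha_{-t}[\partial_\xi^\alpha[\rho(\xi)^*]]dt\dbar\eta$. Using $\eta^\alpha e^{it\cdot\eta}=D_t^\alpha e^{it\cdot\eta}$ and integrating by parts $|\alpha|$ times, together with the identity $D_{t_j}\alpha_{-t}(u)=-\alpha_{-t}(\delta_j u)$, the integrand becomes $e^{it\cdot\eta}\alpha_{-t}[\delta^\alpha\partial_\xi^\alpha[\rho(\xi)^*]]$ (the two $(-1)^{|\alpha|}$ signs cancel). The Fourier-inversion identity $\iint e^{it\cdot\eta}\alpha_{-t}(w)dt\dbar\eta=w$, valid for any $w\in\cA_\theta$, then collapses the $\alpha$-th contribution to $\frac{1}{\alpha!}\delta^\alpha\partial_\xi^\alpha[\rho(\xi)^*]$, and summing over $|\alpha|<N$ produces the claimed $N$-th partial sum.

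What remains in part~(1) is to show that the residual $\tilde R_N(\xi):=\iint e^{it\cdot\eta}\alpha_{-t}[R_N(\xi,\eta)]dt\dbar\eta$ lies in $\stS^{m-N}(\R^n;\cA_\theta)$. I would view $(t,\eta)\mapsto\alpha_{-t}[R_N(\xi,\eta)]$ as a smooth $\xi$-family of amplitudes, differentiate under $J$ to access $\partial_\xi^\gamma\delta^\beta\tilde R_N$, and use the continuity of $J$ on amplitude spaces (Proposition~\ref{prop:Amplitudes.extension-J0}) to reduce the required bounds to amplitude semi-norms of $R_N$ and its $(\xi,\delta)$-derivatives. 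The essential gain of $(1+|\xi|)^{-N}$ arises from the factor $\eta^\alpha$ with $|\alpha|=N$ carried by every summand of $R_N$, together with the symbol estimates on $\partial_\xi^\alpha[\rho(\xi+t\eta)^*]$; adapting the $L^t$-regularization from Proposition~\ref{prop:Amplitudes.extension-J0} to the $(t,\eta)$ variables lets one trade $\eta$-growth for $(1+|\xi|)^{-N}$-decay, yielding $\|\partial_\xi^\gamma\delta^\beta\tilde R_N(\xi)\|=O((1+|\xi|)^{m-N-|\gamma|})$. I expect this residual estimate to be the main technical step, and it closely mirrors the analogous argument for the composition formula.

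For part~(2), I would verify directly that $\acoup{P_\rho u}{v}=\acoup{u}{P_{\rho^\star}v}$ for all $u,v\in\cA_\theta$. Writing the left-hand side as
\[
\tau\bigl((P_\rho u)v^*\bigr) = \iint e^{is\cdot\xi}\tau\bigl(\rho(\xi)\alpha_{-s}(u)v^*\bigr)ds\dbar\xi,
\]
the trace property of $\tau$ and its $\alpha$-invariance allow one to replace the integrand by $\tau(u\,\alpha_s(v^*)\alpha_s[\rho(\xi)])$; the substitution $s\mapsto -s$ then gives $\iint e^{-is\cdot\xi}\tau(u\,\alpha_{-s}(v^*)\alpha_{-s}[\rho(\xi)])ds\dbar\xi$. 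For the right-hand side, I insert the oscillating-integral formula $\rho^\star(\xi)^*=\iint e^{-it\cdot\eta}\alpha_{-t}[\rho(\xi+\eta)]dt\dbar\eta$ into $(P_{\rho^\star}v)^*$, producing a quadruple oscillating integral in $(s,\xi,t,\eta)$; translating $\xi$ by $-\eta$ rewrites the phase as $e^{-is\cdot\xi}e^{i(s-t)\cdot\eta}$, and then successive integration in $\eta$ and in $t$ via $\iint e^{i(s-t)\cdot\eta}f(t)dt\dbar\eta=f(s)$ collapses the $(t,\eta)$-integration to recover exactly the same expression as above. All interchanges are legitimate by the continuity of $J$ and standard Fubini-type arguments for $\cA_\theta$-valued oscillating integrals. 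This establishes $(P_\rho)^*=P_{\rho^\star}$.
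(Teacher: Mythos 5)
The paper does not itself prove this proposition; it is recalled without proof and attributed to \cite{Ba:CRAS88, HLP:Part2}, so there is no in-text argument to compare against. Your proposal follows the standard route (and, as far as one can tell, the one in the cited reference): Taylor expansion of $\eta\mapsto\rho(\xi+\eta)^*$ at $\eta=0$, integration by parts in $t$ via the identity $D_{t_j}\alpha_{-t}(u)=-\alpha_{-t}(\delta_j u)$, and the Fourier-inversion identity $\iint e^{it\cdot\eta}\alpha_{-t}(w)\,dt\,\dbar\eta=w$ for part~(1); and a direct verification of $\acoup{P_\rho u}{v}=\acoup{u}{P_{\rho^\star}v}$ via trace- and $\alpha$-invariance of $\tau$, followed by the change of variables $\xi\mapsto\xi-\eta$ and collapse of the inner $(t,\eta)$-integration for part~(2). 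The sign bookkeeping is correct (the two factors $(-1)^{|\alpha|}$ from integration by parts and from $D_t^\alpha\alpha_{-t}$ cancel), and the inner-integral computation $\iint e^{i(s-t)\cdot\eta}\alpha_{-t}[\rho(\xi)]\,dt\,\dbar\eta=\alpha_{-s}[\rho(\xi)]$ is right. The remainder estimate placing $\tilde R_N$ in $\stS^{m-N}(\R^n;\cA_\theta)$ and the Fubini-type interchanges for the quadruple oscillating integral in part~(2) are left at sketch level, as you acknowledge; these are exactly the technical points settled in \cite{HLP:Part2} for the composition formula, and your description of the mechanism (regularization trading $\eta$-growth for $\xi$-decay, continuity of $J$ plus approximation) identifies the right tools even if it does not carry them out. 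In short: correct in outline, with the genuinely hard quantitative estimates deferred to the same arguments that handle $\rho_1\sharp\rho_2$.
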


We have the following version of this result for classical \psidos. 

\begin{proposition}[see \cite{Ba:CRAS88, HLP:Part2}] \label{prop:Adjoints.adjoint-classical-pdos}
Let $P\in \Psi^q(\cA_\theta)$, $q\in \C$, have symbol $\rho(\xi)\sim\sum_{j\geq 0}\rho_{q-j}(\xi)$.  
\begin{enumerate}
 \item $\rho^\star(\xi) \in S^{\widebar{q}}(\Rn;\cA_\theta)$, and we have  $\rho^\star(\xi)\sim \sum \rho_{\widebar{q}-j}^\star(\xi)$, where 
 \begin{equation} \label{eq:PsiDOs.symbol-adjoint-classical}
\rho_{\widebar{q}-j}^\star(\xi)=\sum_{k+|\alpha|=j}\frac{1}{\alpha !}\delta^\alpha\partial_\xi^\alpha \left[\rho_{q-k}(\xi)^*\right] , \qquad j\geq 0. 
\end{equation}

\item The formal adjoint $P^*=P_{\rho^\star}$ is contained in $\Psi^{\widebar{q}}(\cA_\theta)$.
\end{enumerate}
\end{proposition}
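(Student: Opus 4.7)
The plan is to bootstrap from the standard-symbol statement in Proposition~\ref{prop:Adjoints.rhostar-formal-adjoint} by reorganizing the Taylor-type expansion of $\rho^\star$ according to total homogeneity degree. Item~(2) of the proposition will then follow immediately: Proposition~\ref{prop:Adjoints.rhostar-formal-adjoint} already gives $P^*=P_{\rho^\star}$ as an operator identity, so once we prove $\rho^\star\in S^{\widebar q}(\R^n;\cA_\theta)$ we conclude $P^*\in\Psi^{\widebar q}(\cA_\theta)$ by the very definition of this class.

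For item~(1), since $S^q(\R^n;\cA_\theta)\subset\stS^{\Re q}(\R^n;\cA_\theta)$ by Remark~\ref{rmk:Symbols.classical-inclusion}, Proposition~\ref{prop:Adjoints.rhostar-formal-adjoint} applies to yield $\rho^\star(\xi)\in\stS^{\Re q}(\R^n;\cA_\theta)$ with the standard-symbol expansion
\[
\rho^\star(\xi)\sim\sum_{\alpha}\frac{1}{\alpha!}\delta^\alpha\partial_\xi^\alpha\bigl[\rho(\xi)^*\bigr].
\]
Each homogeneous component $\rho_{q-k}(\xi)$ is in $S_{q-k}(\R^n;\cA_\theta)$, so $\rho_{q-k}(\xi)^*$ is homogeneous of degree $\widebar q-k$ on $\R^n\setminus 0$, and therefore $\frac{1}{\alpha!}\delta^\alpha\partial_\xi^\alpha[\rho_{q-k}(\xi)^*]$ is homogeneous of degree $\widebar q-k-|\alpha|$ on $\R^n\setminus 0$. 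This motivates the definition of $\rho^\star_{\widebar q-j}(\xi)$ as the sum of all such terms with $k+|\alpha|=j$, which is manifestly in $S_{\widebar q-j}(\R^n;\cA_\theta)$.

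To verify that $\rho^\star(\xi)\sim\sum_j\rho^\star_{\widebar q-j}(\xi)$ in the sense of Definition~\ref{def:Symbols.classicalsymbols}, I would fix $J\geq 0$ and perform a double truncation. Namely, for $|\xi|\geq 1$ I would write
\[
\rho^\star(\xi)-\sum_{j<J}\rho^\star_{\widebar q-j}(\xi)
=\biggl(\rho^\star(\xi)-\sum_{|\alpha|<J}\frac{1}{\alpha!}\delta^\alpha\partial_\xi^\alpha[\rho(\xi)^*]\biggr)
+\sum_{|\alpha|<J}\frac{1}{\alpha!}\delta^\alpha\partial_\xi^\alpha\biggl[\rho(\xi)^*-\sum_{k<J-|\alpha|}\rho_{q-k}(\xi)^*\biggr].
\]
The first bracket is the standard-symbol remainder provided by Proposition~\ref{prop:Adjoints.rhostar-formal-adjoint}; it is $\op{O}((1+|\xi|)^{\Re q-J-|\beta|})$ after applying any $\delta^{\beta_0}\partial_\xi^\beta$, which on $|\xi|\geq 1$ is $\op{O}(|\xi|^{\Re q-J-|\beta|})$. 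Each summand of the second bracket is $\delta^\alpha\partial_\xi^\alpha$ applied to a classical remainder of order $\Re q-(J-|\alpha|)$ (by the estimates~(\ref{eq:Symbols.classical-estimates}) for $\rho$), hence is $\op{O}(|\xi|^{\Re q-J-|\beta|})$ for $|\xi|\geq 1$ as well, with analogous bounds on all further $\delta^{\beta_0}\partial_\xi^\beta$ derivatives.

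The main obstacle is precisely this bookkeeping: one must choose the truncation orders in the two expansions compatibly so that the standard-symbol decay from Proposition~\ref{prop:Adjoints.rhostar-formal-adjoint} and the sharper classical-symbol decay from~(\ref{eq:Symbols.classical-estimates}) combine to give the stronger classical bound $\op{O}(|\xi|^{\Re q-J-|\beta|})$ off the origin, rather than merely the global standard bound. Once this is carried out, $\rho^\star\in S^{\widebar q}(\R^n;\cA_\theta)$ with the announced homogeneous components, completing item~(1) and, by the remark above, item~(2).
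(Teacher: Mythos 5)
This proposition is recalled from the cited references~\cite{Ba:CRAS88, HLP:Part2} and is not proved in the present paper, so there is no internal proof to compare against. Your argument is nonetheless correct and is the standard bootstrap from the standard-symbol statement: the double truncation --- cutting the Proposition~\ref{prop:Adjoints.rhostar-formal-adjoint} expansion at $|\alpha|<J$ and the classical expansion of $\rho(\xi)^*$ at $k<J-|\alpha|$ --- is precisely what makes the decay exponents of the two pieces both collapse to $\Re q-J-|\beta|$ on $|\xi|\geq 1$, and your algebraic decomposition of $\rho^\star-\sum_{j<J}\rho^\star_{\widebar{q}-j}$ is exact. One step worth recording explicitly is that $\rho(\xi)^*\in S^{\widebar{q}}(\R^n;\cA_\theta)$ with homogeneous components $\rho_{q-k}(\xi)^*\in S_{\widebar{q}-k}(\R^n;\cA_\theta)$: this is not only the homogeneity of the conjugates but also the remainder estimates~(\ref{eq:Symbols.classical-estimates}) for $\rho^*$, which carry over because $\delta^\alpha\partial_\xi^\beta\bigl(u^*\bigr)=(-1)^{|\alpha|}\bigl(\delta^\alpha\partial_\xi^\beta u\bigr)^*$ has the same norm as $\delta^\alpha\partial_\xi^\beta u$. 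With that point in place item~(1) holds, and item~(2) then follows from item~(1) together with the identity $P^*=P_{\rho^\star}$ of Proposition~\ref{prop:Adjoints.rhostar-formal-adjoint}, exactly as you observe.
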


An application of Proposition~\ref{prop:Adjoints.rhostar-formal-adjoint} is the following extension result. 

\begin{proposition}[see \cite{HLP:Part2}] \label{Adjoints.PsiDOs-extension}
 Let $\rho(\xi)\in\stS^m(\Rn;\cA_\theta)$, $m\in\R$. Then $P_\rho$ uniquely extends to a continuous linear operator $P_\rho:\cA_\theta' \rightarrow \cA_\theta'$.
\end{proposition}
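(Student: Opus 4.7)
The plan is to extend $P_\rho$ to $\cA_\theta'$ by duality, using the formal adjoint already constructed in Proposition~\ref{prop:Adjoints.rhostar-formal-adjoint}. Because the pairing $\acou{\cdot}{\cdot}$ defining $\cA_\theta'$ is bilinear rather than sesquilinear, I first replace the formal adjoint $P_{\rho^\star}$ by the conjugate-twisted transpose
\begin{equation*}
P_\rho^t : \cA_\theta \longrightarrow \cA_\theta, \qquad P_\rho^t v := \bigl(P_{\rho^\star}(v^*)\bigr)^*.
\end{equation*}
Since the involution on $\cA_\theta$ is continuous and $P_{\rho^\star}\in \Psi^m(\cA_\theta)$ maps $\cA_\theta$ continuously to itself, $P_\rho^t$ is a continuous linear operator on $\cA_\theta$. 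Using the identity $\acou{u}{v} = \acoup{u}{v^*}$ from~(\ref{eq:NCtori.distrb-innerproduct-eq}) together with the defining property of the formal adjoint, one checks
\begin{equation*}
\acou{P_\rho u}{v} = \acoup{P_\rho u}{v^*} = \acoup{u}{P_{\rho^\star}(v^*)} = \acou{u}{P_\rho^t v}, \qquad u,v \in \cA_\theta.
\end{equation*}

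Next, I would define $\widetilde{P}_\rho : \cA_\theta' \to \cA_\theta'$ by setting
\begin{equation*}
\acou{\widetilde{P}_\rho w}{v} := \acou{w}{P_\rho^t v}, \qquad w \in \cA_\theta', \ v \in \cA_\theta.
\end{equation*}
The continuity of $P_\rho^t$ on $\cA_\theta$ ensures that the right-hand side is a continuous linear form in $v$, so $\widetilde{P}_\rho w$ genuinely lies in $\cA_\theta'$. Strong continuity of $\widetilde{P}_\rho$ then follows from the standard fact that transposes of continuous linear maps on locally convex spaces send bounded sets to bounded sets: if $B\subset \cA_\theta$ is bounded, so is $P_\rho^t(B)$, and the seminorm $w \mapsto \sup_{v\in B} |\acou{\widetilde{P}_\rho w}{v}|$ is then dominated by the seminorm $w \mapsto \sup_{v\in P_\rho^t(B)} |\acou{w}{v}|$.

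It remains to verify that $\widetilde{P}_\rho$ agrees with $P_\rho$ on $\cA_\theta$ and is unique. For $u \in \cA_\theta$, viewed as the functional $v\mapsto \tau(uv)$, the displayed identity gives $\acou{\widetilde{P}_\rho u}{v} = \acou{u}{P_\rho^t v} = \acou{P_\rho u}{v}$, so the two operators coincide on the image of the embedding $\cA_\theta \hookrightarrow \cA_\theta'$. Uniqueness is immediate from the density of $\cA_\theta$ in $\cA_\theta'$, itself a consequence of the convergence of Fourier series $w = \sum_k w_k U^k$ in $\cA_\theta'$ mentioned at the end of Section~\ref{sec:NCtori}. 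The only non-routine input is Proposition~\ref{prop:Adjoints.rhostar-formal-adjoint}, which supplies the formal adjoint; everything else is a clean duality argument, so I do not anticipate any substantial obstacle.
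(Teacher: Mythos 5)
Your proof is correct and follows essentially the approach the paper intends: the statement is explicitly presented as "an application of Proposition~\ref{prop:Adjoints.rhostar-formal-adjoint}," and your argument extends $P_\rho$ to $\cA_\theta'$ by transposing the formal adjoint, carefully passing from the sesquilinear form $\acoup{\cdot}{\cdot}$ to the bilinear pairing $\acou{\cdot}{\cdot}$ via the involution twist $P_\rho^t v = (P_{\rho^\star}(v^*))^*$. The verification of strong continuity via bounded sets and the uniqueness via density of $\cA_\theta$ in $\cA_\theta'$ are both handled correctly.
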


\subsection{Sobolev spaces}
 Let $\Delta=\delta_1^2+\cdots+\delta_n^2$ be the Laplacian on $\cA_\theta$. Given any $s\in \R$, the operator $\Lambda^s=(1+\Delta)^{\frac{s}2}$ is a (classical) \psido\ of order $s$ (\emph{cf}.\ Example~\ref{eq:PsiDOs.Lambda-to-the-s-PsiDO}). Therefore, by Proposition~\ref{Adjoints.PsiDOs-extension} it uniquely extends to a continuous linear operator $\Lambda^s:\cA_\theta'\rightarrow \cA_\theta'$.
 
\begin{definition}[see \cite{HLP:Part2, Sp:Padova92, XXY:MAMS18}] 
The Sobolev space $\cH_\theta^{(s)}$, $s\in \R$, consists of all $u\in \cA_\theta'$ such that $\Lambda^su \in \cH_\theta$. It is equipped with the norm, 
\begin{equation*}
 \| u\|_s= \|\Lambda^s u\|_0, \qquad u \in \cH^{(s)}_{\theta}.
\end{equation*}
\end{definition}

\begin{proposition}
$\cH_\theta^{(s)}$ is a Hilbert space.  
\end{proposition}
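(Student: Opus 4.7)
The plan is to establish that $\cH_\theta^{(s)}$ carries a natural inner product whose associated norm is $\|\cdot\|_s$, and then to prove completeness by transporting Cauchy sequences to $\cH_\theta$ via $\Lambda^s$ and pulling the limit back via $\Lambda^{-s}$.

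First, I would define the sesquilinear form
\[
 \acoup{u}{v}_s := \acoup{\Lambda^s u}{\Lambda^s v}, \qquad u,v\in \cH_\theta^{(s)},
\]
where the right-hand side is the inner product of $\cH_\theta$. By definition of $\cH_\theta^{(s)}$, both $\Lambda^s u$ and $\Lambda^s v$ lie in $\cH_\theta$, so the expression is well defined, and it is clearly sesquilinear and positive. The only non-trivial point is non-degeneracy: if $\|u\|_s=0$ then $\Lambda^s u=0$ in $\cH_\theta$, hence in $\cA_\theta'$. Because $(\Lambda^s)_{s\in\C}$ forms a $1$-parameter group on $\cA_\theta'$ (Example~\ref{eq:PsiDOs.Lambda-to-the-s-PsiDO} combined with Proposition~\ref{Adjoints.PsiDOs-extension}), $\Lambda^s$ is a bijection of $\cA_\theta'$ with inverse $\Lambda^{-s}$, and so $u=\Lambda^{-s}(\Lambda^s u)=0$. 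Thus $\acoup{\cdot}{\cdot}_s$ is an inner product inducing the norm $\|\cdot\|_s$.

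For completeness, let $(u_n)_{n\geq 0}\subset \cH_\theta^{(s)}$ be Cauchy for $\|\cdot\|_s$. By construction $(\Lambda^s u_n)_{n\geq 0}$ is Cauchy in $\cH_\theta$, and so it converges to some $v\in\cH_\theta$. Using the continuous embedding $\cH_\theta\hookrightarrow\cA_\theta'$ from Section~\ref{sec:NCtori}, we may view $v$ as an element of $\cA_\theta'$ and set $u:=\Lambda^{-s} v\in\cA_\theta'$, where $\Lambda^{-s}$ is the extension of Proposition~\ref{Adjoints.PsiDOs-extension}. Applying $\Lambda^s$ and using the $1$-parameter group property once more gives $\Lambda^s u=\Lambda^s\Lambda^{-s} v = v\in\cH_\theta$, which shows $u\in \cH_\theta^{(s)}$. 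Finally,
\[
 \|u_n-u\|_s = \|\Lambda^s u_n - \Lambda^s u\|_0 = \|\Lambda^s u_n - v\|_0 \longrightarrow 0,
\]
so $u_n\to u$ in $\cH_\theta^{(s)}$, proving completeness.

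The main point to be careful about is that the definition of $\cH_\theta^{(s)}$ takes place inside $\cA_\theta'$, so we need $\Lambda^{-s}$ to act on $\cA_\theta'$ and to invert $\Lambda^s$ there; this is the only non-routine ingredient and is precisely what the group property $\Lambda^{s}\Lambda^{-s}=\Lambda^0=1$ on $\cA_\theta'$ supplies. Everything else is an immediate transport of Hilbert space structure from $\cH_\theta$ via the isometry $\Lambda^s:\cH_\theta^{(s)}\to\cH_\theta$.
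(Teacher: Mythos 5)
Your proof is correct. The paper itself states the proposition without an explicit proof, but the weighted--$\ell^2$ description in~(\ref{eq:Sobolev.Sobolev-norm-formula}) amounts to the same thing: the composition of your isometry $\Lambda^s:\cH_\theta^{(s)}\to\cH_\theta$ with the orthonormal basis $(U^k)$ is precisely the isometric identification of $\cH_\theta^{(s)}$ with the weighted sequence space $\ell^2(\Z^n,(1+|k|^2)^s)$. Your argument, abstracting the isometry rather than writing out the Fourier series, is cleaner and relies only on two facts the paper supplies: the extension of $\Lambda^{\pm s}$ to $\cA_\theta'$ (Proposition~\ref{Adjoints.PsiDOs-extension}) and the one-parameter group relation $\Lambda^s\Lambda^{-s}=1$, which you correctly observe carries over from $\cA_\theta$ to $\cA_\theta'$ by density and continuity. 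The one place to be scrupulous is the identity $\Lambda^s u=v$ for $u:=\Lambda^{-s}v$: here both $\Lambda^s$ and $\Lambda^{-s}$ are the unique continuous extensions to $\cA_\theta'$, so the group identity indeed makes sense there, and you flag this point explicitly, which is exactly the non-routine ingredient. The completeness argument is standard transport of structure and is sound.
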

 
In terms of the Fourier decomposition $u = \sum u_k U^k$ in $\cA_\theta'$ we have 
 \begin{gather}
\nonumber u\in \cH^{(s)}_{\theta} \Longleftrightarrow \sum_{k\in \Z^n}(1+|k|^2)^{s} |u_k|^2<\infty,\\
\label{eq:Sobolev.Sobolev-norm-formula} \| u\|_s^2= \sum_{k\in \Z^n} (1+|k|^2)^{s}|u_k|^2, \qquad u\in \cH_\theta^{(s)}.
\end{gather}

We have the following version of Sobolev's embedding theorem. 

\begin{proposition}[see \cite{HLP:Part2, XXY:MAMS18}] \label{prop:Sobolev.Sobolev-embedding}
If $t>s$, then the inclusion of $\cH^{(t)}_{\theta}$ into $\cH^{(s)}_{\theta}$ is compact. 
\end{proposition}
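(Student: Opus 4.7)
The plan is to reduce the statement to the classical fact that a diagonal multiplication operator on $\ell^2$ whose multiplier tends to zero at infinity is compact (being a uniform limit of finite-rank truncations). The Fourier series description of $\cH^{(s)}_\theta$ given just before the proposition makes this reduction transparent.

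More precisely, I would first observe that the identity~(\ref{eq:Sobolev.Sobolev-norm-formula}) asserts exactly that, for every $s\in \R$, the linear map
\begin{equation*}
 \Phi_s : \cH^{(s)}_\theta \longrightarrow \ell^2(\Z^n), \qquad u=\sum_{k\in\Z^n} u_k U^k \longmapsto \bigl( (1+|k|^2)^{s/2} u_k \bigr)_{k\in \Z^n},
\end{equation*}
is an isometric isomorphism of Hilbert spaces. A direct computation then shows that, under the identifications $\Phi_t$ and $\Phi_s$, the inclusion $\iota : \cH^{(t)}_\theta \hookrightarrow \cH^{(s)}_\theta$ is intertwined with the diagonal operator $M : \ell^2(\Z^n) \to \ell^2(\Z^n)$ of pointwise multiplication by the sequence $\mu_k := (1+|k|^2)^{(s-t)/2}$, i.e., $\Phi_s \circ \iota = M \circ \Phi_t$.

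Since $t>s$, the weight $\mu_k$ tends to $0$ as $|k|\to \infty$. For each integer $N\geq 1$, I would then let $M_N$ be the multiplication operator on $\ell^2(\Z^n)$ associated with the truncated sequence $\mu_k \mathbf{1}_{|k|\leq N}$; this $M_N$ has finite rank, and
\begin{equation*}
 \|M - M_N\|_{\cL(\ell^2(\Z^n))} = \sup_{|k|>N} \mu_k \xrightarrow[N\to \infty]{} 0.
\end{equation*}
Thus $M$ is the operator-norm limit of finite-rank operators, hence compact. Since $\iota = \Phi_s^{-1} \circ M \circ \Phi_t$ is the composition of a compact operator with two continuous ones, $\iota$ is itself compact.

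I do not anticipate any real obstacle here: the entire argument relies on the Fourier description of the Sobolev norms, which is already available from~(\ref{eq:Sobolev.Sobolev-norm-formula}), together with the fact that Fourier series of elements of $\cA_\theta'$ are well defined and well behaved, as recalled in Section~\ref{sec:NCtori}. The only point requiring some (routine) care is to check that $\Phi_s$ is surjective for every $s\in \R$, i.e., that every weighted $\ell^2$-sequence gives rise to a genuine element of $\cA_\theta'$ lying in $\cH^{(s)}_\theta$; this is immediate from the convergence of the Fourier series in $\cA_\theta'$ together with~(\ref{eq:Sobolev.Sobolev-norm-formula}).
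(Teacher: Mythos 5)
Your proof is correct, and it is the canonical argument: pass to Fourier coefficients, identify the inclusion $\cH^{(t)}_\theta \hookrightarrow \cH^{(s)}_\theta$ with the diagonal operator $(v_k)\mapsto ((1+|k|^2)^{(s-t)/2}v_k)$ on $\ell^2(\Z^n)$, and observe that a diagonal operator whose symbol tends to $0$ at infinity is compact because it is the norm limit of its finite-rank truncations. Note that the paper does not actually provide a proof of this proposition: it is stated with a citation to \cite{HLP:Part2, XXY:MAMS18}, so there is no in-text argument to compare against, but the diagonal-operator reduction is exactly what one finds in those references.

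Two small remarks on rigor, neither of which affects the correctness of the approach. First, in the surjectivity check for $\Phi_s$ you should make sure not to be circular: the cleanest route is to observe that any sequence $(u_k)$ with $\sum (1+|k|^2)^s|u_k|^2<\infty$ is in particular of polynomial growth, hence defines a continuous linear form on $\cA_\theta$ by duality with the rapidly decaying Fourier coefficients of elements of $\cA_\theta$, and then check via~(\ref{eq:Sobolev.Sobolev-norm-formula}) that this element lies in $\cH^{(s)}_\theta$. Second, it is worth making explicit that the intertwining relation $\Phi_s\circ\iota = M\circ\Phi_t$ already shows $\iota$ is bounded (which you implicitly use), and that $M$ maps $\ell^2$ into $\ell^2$ because $\sup_k\mu_k = 1$.
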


As the following result shows the Sobolev spaces provide us with a natural ``topological'' scale of Hilbert spaces interpolating between $\cA_\theta$ and $\cA_\theta'$.

\begin{proposition}[see \cite{HLP:Part2, Po:PJM06, Sp:Padova92}] \label{prop:Sobolev.Hs-inclusion-cAtheta}
The following holds. 
\begin{enumerate}
 \item We have 
           \begin{equation*}
                   \cA_\theta= \bigcap_{s\in\R} \cH^{(s)}_\theta \qquad \text{and} \qquad \bigcup_{s\in\R} \cH^{(s)}_\theta=  \cA_\theta'. 
           \end{equation*}
 
 \item The topology of $\cA_\theta$ is generated by the Sobolev norms $\|\cdot \|_s$, $s\in\R$. 
 
 \item The strong topology of $\cA_\theta'$ coincides with the inductive limit of the $\cH_\theta^{(s)}$-topologies.
\end{enumerate}
\end{proposition}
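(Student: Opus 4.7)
The plan is to reduce everything to the Fourier series characterization. Using the identity $\delta_j(U^k)=k_jU^k$, the Sobolev norm formula~(\ref{eq:Sobolev.Sobolev-norm-formula}) shows that $\delta^\alpha$ sends $\cH_\theta^{(s)}$ continuously into $\cH_\theta^{(s-|\alpha|)}$ with $\|\delta^\alpha u\|_s\leq \|u\|_{s+|\alpha|}$. The other essential ingredient is a Sobolev embedding into $A_\theta$: for $s>n/2$, any $u=\sum u_kU^k$ with $\|u\|_s<\infty$ satisfies
\begin{equation*}
\sum_{k\in\Z^n}|u_k| \leq \Big(\sum_{k\in\Z^n}(1+|k|^2)^{-s}\Big)^{\!1/2}\|u\|_s <\infty,
\end{equation*}
so the Fourier series converges in $A_\theta$ and $\|u\|\leq C_s\|u\|_s$. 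These two inequalities, together with the elementary bound $(1+|k|^2)^N\leq C_N\sum_{|\alpha|\leq N}|k^\alpha|^2$ for integer $N\geq 0$, give the two-sided comparison
\begin{equation*}
\|u\|_s\leq C \max_{|\alpha|\leq \lceil s\rceil}\|\delta^\alpha u\|_0 \qquad \text{and}\qquad \|\delta^\alpha u\|\leq C'\|u\|_{|\alpha|+s_0}, \quad s_0>n/2.
\end{equation*}

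For Part~(1), the inclusion $\cA_\theta\subset\bigcap_s\cH_\theta^{(s)}$ follows immediately from $\|\delta^\alpha u\|_0\leq\|\delta^\alpha u\|$, which gives $\|u\|_N^2\lesssim\sum_{|\alpha|\leq N}\|\delta^\alpha u\|^2<\infty$ for every integer $N$. Conversely, if $u\in\bigcap_s\cH_\theta^{(s)}$, then $(1+|k|^2)^{s/2}|u_k|\in\ell^2(\Z^n)$ for every $s$, hence the Fourier coefficients $u_k$ have rapid decay. This implies that $\sum k^\alpha u_k U^k$ converges absolutely in $A_\theta$ for every multi-order $\alpha$, so $u\in\cA_\theta$. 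For the second equality, the Cauchy--Schwarz argument above, applied to the pairing $\acou{u}{v}=\sum u_k v_{-k}(\text{phase})$, shows $|\acou{u}{v}|\leq\|u\|_s\|v\|_{-s}$, so every element of $\cH_\theta^{(s)}$ defines an element of $\cA_\theta'$ (using that the topology of $\cA_\theta$ dominates $\|\cdot\|_{-s}$, proved in Part~(2)). Conversely, if $u\in\cA_\theta'$, the continuity of $u$ furnishes constants $C$, $N$ with $|u_k|=|\acou{u}{(U^k)^*}|\leq C\max_{|\alpha|\leq N}\|\delta^\alpha(U^k)^*\|\leq C'(1+|k|)^N$, so $u\in\cH_\theta^{(s)}$ for any $s<-N-n/2$.

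Part~(2) is exactly the two-sided comparison of semi-norms displayed above: the Sobolev family $\{\|\cdot\|_s\}_{s\in\R}$ and the defining family $\{\|\delta^\alpha\cdot\|\}_{\alpha\in\N_0^n}$ of~(\ref{eq:NCtori.cAtheta-semi-norms}) are mutually dominated, so they generate the same locally convex topology on $\cA_\theta$.

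For Part~(3), I would use standard duality for countable projective limits. By Part~(2), $\cA_\theta$ is the projective limit of the Hilbert spaces $\cH_\theta^{(s)}$ as $s\to+\infty$, with dense continuous inclusions $\cA_\theta\hookrightarrow \cH_\theta^{(s)}$. Since $\cA_\theta$ is a Fr\'echet--Montel space (as noted in Section~\ref{sec:NCtori}), the strong dual coincides with the inductive limit of the dual Hilbert spaces; identifying $(\cH_\theta^{(s)})'$ with $\cH_\theta^{(-s)}$ via the extension to $\cH_\theta$ of the pairing $\acou{\cdot}{\cdot}$ yields the conclusion. The main obstacle, and the only place where care is required, is verifying that this identification is compatible with the injections $\cH_\theta^{(s)}\hookrightarrow\cA_\theta'$ furnished by Part~(1); this amounts to checking that on the common dense subspace of finite Fourier sums both pairings agree, which is immediate from the formula for $\acou{\cdot}{\cdot}$ and the orthonormality of $(U^k)_{k\in\Z^n}$ in each $\cH_\theta^{(s)}$ (after weighting by $(1+|k|^2)^s$).
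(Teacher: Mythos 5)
The paper does not prove this proposition; it cites \cite{HLP:Part2, Po:PJM06, Sp:Padova92}, so there is no in-text argument to compare against. Your proof is essentially correct and is the expected Fourier-analytic route: Parts~(1) and~(2) follow from the two-sided comparison of the Sobolev norms with the semi-norms~(\ref{eq:NCtori.cAtheta-semi-norms}), together with the convergence of $\sum u_kU^k$ in $A_\theta$ when $\sum|u_k|<\infty$. In Part~(3), the duality step leans implicitly on two facts worth stating: (a) the projective limit is \emph{reduced}, i.e., $\cA_\theta$ is dense in each $\cH_\theta^{(s)}$ (this holds because finite Fourier sums are dense there); and (b) for a reduced countable projective limit of Banach spaces whose limit $X$ is a distinguished Fr\'echet space, the strong dual $X'_\beta$ is topologically the inductive limit of the Banach duals. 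Montel Fr\'echet spaces are reflexive hence distinguished, so your appeal to the Montel property is adequate, though the cleanest route here is to note that the inclusions $\cH_\theta^{(t)}\hookrightarrow\cH_\theta^{(s)}$, $t>s$, are compact by Proposition~\ref{prop:Sobolev.Sobolev-embedding}, so $\cA_\theta$ is a Fr\'echet--Schwartz space and the standard FS/DFS duality applies directly. The compatibility check on finite Fourier sums that you mention at the end is exactly the right verification that the identification $(\cH_\theta^{(s)})'\simeq\cH_\theta^{(-s)}$ via the bilinear pairing $\acou{\cdot}{\cdot}$ matches the embedding $\cH_\theta^{(-s)}\hookrightarrow\cA_\theta'$ used in Part~(1).
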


We have the following Sobolev mapping properties of \psidos\ on noncommutative tori.

\begin{proposition}[see \cite{HLP:Part2}] \label{prop:Sob-Mapping.rho-on-Hs}
Let $\rho(\xi)\in \stS^m(\R^n; \cA_\theta)$, $m\in\R$. 
\begin{enumerate}
 \item $P_\rho$ uniquely extends to continuous linear operator
$P_\rho:\cH^{(s+m)}_{\theta}\rightarrow \cH^{(s)}_{\theta}$ for every $s\in \R$. 

\item If $m\leq 0$, then $P_\rho$ uniquely extends to a bounded operator $P_\rho: \cH_\theta \rightarrow \cH_\theta$. This operator is compact when $m<0$. 
\end{enumerate}
\end{proposition}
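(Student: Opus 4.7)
The plan is to reduce the general Sobolev mapping statement to $L^2$-boundedness of zero-order standard \psidos, and then to obtain compactness for free from the Sobolev embedding in Proposition~\ref{prop:Sobolev.Sobolev-embedding}.

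\textbf{Reduction to order zero.} Fix $s\in \R$. By Example~\ref{eq:PsiDOs.Lambda-to-the-s-PsiDO}, the operator $\Lambda^{t}$ is a classical \psido\ of order $t$, and by the very definition of the Sobolev norm $\|u\|_r=\|\Lambda^{r}u\|_0$ together with the spectral calculus for $\Lambda$, it is a unitary isomorphism $\Lambda^{t}:\cH_\theta^{(r+t)}\to \cH_\theta^{(r)}$ for every $r\in \R$. It therefore suffices to prove that the operator
\[
 Q:=\Lambda^{s}P_\rho \Lambda^{-(s+m)}
\]
extends to a bounded operator on $\cH_\theta$. By the standard-symbol composition formula of Proposition~\ref{prop:Composition.sharp-continuity-standard-symbol}, $Q=P_\sigma$ for some $\sigma\in \stS^{s+m-(s+m)}(\R^n;\cA_\theta)=\stS^{0}(\R^n;\cA_\theta)$. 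Since $\cA_\theta$ is dense in each $\cH_\theta^{(r)}$ by Proposition~\ref{prop:Sobolev.Hs-inclusion-cAtheta}, the extension is automatically unique. So everything is reduced to showing that any $P_\sigma$ with $\sigma\in \stS^{0}(\R^n;\cA_\theta)$ is bounded on $\cH_\theta$.

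\textbf{$L^2$-boundedness of zero-order \psidos.} This is the main obstacle. I would follow H\"ormander's scheme adapted to the NC torus: given $\sigma\in \stS^{0}$ with $M:=\sup_\xi \|\sigma(\xi)\|<\infty$, I would construct, for each $N\geq 0$, symbols $\tau_N\in \stS^{0}$ and $r_N\in \stS^{-N}$ such that
\[
 4M^{2}-\sigma^\star \sharp \sigma \;=\; \tau_N^\star \sharp \tau_N + r_N.
\]
The symbols $\tau_N$ are built by an iterative square-root procedure, peeling off one order at a time using Propositions~\ref{prop:Composition.sharp-continuity-standard-symbol} and~\ref{prop:Adjoints.rhostar-formal-adjoint} together with the fact that square roots of positive invertible elements of $\cA_\theta$ exist by holomorphic functional calculus (Proposition~\ref{prop:NCtori.invertibility-cAtheta}); a Borel lemma for standard symbols then sums the asymptotic series. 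For $u\in \cA_\theta$ one obtains
\[
 4M^{2}\|u\|_0^{2}-\|P_\sigma u\|_0^{2} \;=\; \|P_{\tau_N} u\|_0^{2} + \acoup{P_{r_N}u}{u},
\]
so $\|P_\sigma u\|_0^{2}\leq (4M^{2}+\|P_{r_N}\|)\|u\|_0^{2}$ as soon as $P_{r_N}$ is bounded on $\cH_\theta$. For $N$ large enough the remainder $r_N$ is essentially Schwartz, and boundedness is then elementary: using $P_{r_N}U^{k}=r_N(k)U^{k}$ together with the bound $\|r_N(k)U^{k}\|_0\leq \|r_N(k)\|$ (Proposition~\ref{prop:NCTori.GNS-representation}) and the rapid decay of $\|r_N(k)\|$ in $k$, a Cauchy--Schwarz estimate gives $\|P_{r_N}u\|_0 \leq \bigl(\sum_{k}\|r_N(k)\|^{2}\bigr)^{1/2}\|u\|_0$.

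\textbf{Compactness for $m<0$.} Applying part (1) with $s=-m>0$ provides a continuous extension $P_\rho:\cH_\theta=\cH_\theta^{(0)}\to \cH_\theta^{(-m)}$. Composing with the inclusion $\cH_\theta^{(-m)}\hookrightarrow \cH_\theta$, which is compact by Proposition~\ref{prop:Sobolev.Sobolev-embedding} since $-m>0$, shows that the extension $P_\rho:\cH_\theta\to \cH_\theta$ is compact. The whole weight of the argument sits in the zero-order $L^2$-boundedness step, where the construction of approximate square roots inside $\stS^{0}(\R^n;\cA_\theta)$ and the careful bookkeeping of remainders through the symbolic calculus is the only nontrivial ingredient; the rest is a formal reduction via the conjugation trick and the Sobolev embedding.
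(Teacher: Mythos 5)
The paper itself does not prove this statement; it cites~\cite{HLP:Part2}. So the comparison is with what the reference plausibly does, not with text in this paper. Your argument is sound and complete in outline, but it routes through the heaviest available machinery.

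Your conjugation reduction and the compactness step are both correct and standard: $\Lambda^{\pm t}$ really is a unitary between adjacent Sobolev spaces (by $\|u\|_r=\|\Lambda^r u\|_0$), the symbol of $\Lambda^{s}P_\rho\Lambda^{-(s+m)}$ lands in $\stS^0$ by Proposition~\ref{prop:Composition.sharp-continuity-standard-symbol}, and factoring $P_\rho:\cH_\theta^{(0)}\to\cH_\theta^{(-m)}\hookrightarrow\cH_\theta^{(0)}$ through Proposition~\ref{prop:Sobolev.Sobolev-embedding} gives compactness for $m<0$. The H\"ormander square-root step is also correct, but you should flag a noncommutative subtlety you glossed over: at each stage one must solve a Sylvester equation $\mu\,\tau_0+\tau_0\,\mu=e$ rather than divide by $2\tau_0$; this is solvable (e.g.\ $\mu=\int_0^\infty e^{-t\tau_0}\,e\,e^{-t\tau_0}\,dt$) precisely because $\tau_0$ is bounded below, and one then has to check that the solution and the square root $\tau_0=(4M^2-\sigma^*\sigma)^{1/2}$ have the right symbol estimates. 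You also invoke a Borel lemma for non-parametric standard symbols, which this paper does not state (only the parametric Lemma~\ref{lem:Parameter.Borel}); it is true and presumably in~\cite{HLP:Part1}, but it is one more external input. None of this is wrong, but it makes the proof longer than it needs to be.

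In the noncommutative-torus setting there is a genuinely simpler route that bypasses both the square-root machinery and the conjugation reduction, exploiting the toroidal representation $P_\rho U^k=\rho(k)U^k$ from~\eqref{eq:toroidal.Prhou-equation}. Writing $\rho(k)=\sum_j\rho(k)_jU^j$, the matrix coefficients are $\acoup{P_\rho U^k}{U^l}=\rho(k)_{l-k}\,c(k,l)$ with $|c(k,l)|=1$. Since $\rho\in\stS^m$, the semi-norm bounds $\|\delta^\alpha\rho(\xi)\|\le C_\alpha(1+|\xi|)^m$ give, by the usual integration-by-parts / $\delta^\alpha$ trick, the uniform Fourier decay
\[
 |\rho(k)_j|\le C_N\,(1+|j|)^{-N}(1+|k|)^m,\qquad N\ge 0 .
\]
Thus $|\acoup{P_\rho U^k}{U^l}|\le C_N(1+|l-k|)^{-N}(1+|k|)^m$. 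After multiplying by the Sobolev weights $(1+|l|)^s/(1+|k|)^{s+m}$ and using Peetre's inequality, the kernel is bounded by $C_N(1+|l-k|)^{-N+|s|}$, and for $N>n+|s|$ a Schur test yields boundedness $\cH_\theta^{(s+m)}\to\cH_\theta^{(s)}$ in one stroke, for all $s$ at once. The $m<0$ compactness follows as in your argument. This is a few lines, requires no Borel lemma, no square roots, and no conjugation by $\Lambda^{\pm}$, and is likely closer to what~\cite{HLP:Part2} actually does. Your version, by contrast, has the virtue of being the argument that generalizes verbatim to non-toroidal settings where no nice orthonormal basis is available; but given that the paper already hands you the toroidal formula, you should at least mention that it makes the $L^2$-boundedness elementary.

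One small imprecision: you appeal to Proposition~\ref{prop:Sobolev.Hs-inclusion-cAtheta} for density of $\cA_\theta$ in $\cH_\theta^{(s)}$, but that proposition does not literally state density; it is nonetheless true (already $\cA_\theta^0$ is dense by the Fourier characterization~\eqref{eq:Sobolev.Sobolev-norm-formula}), so the uniqueness of the extension is fine, just worth pinning to the correct fact.
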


\subsection{Toroidal symbols and \psidos}\label{subsec:toroidal}  
In this section, we recall the relationship between standard \psidos\ as defined above and the toroidal \psidos\ considered in~\cite{GJP:MAMS17, LNP:TAMS16, RT:Birkhauser10}. 

Let $\cA_\theta^{\Z^n}$ denote the space of sequences with values in $\cA_\theta$ that are indexed by $\Z^n$. In addition, let $(e_1, \ldots, e_n)$ be the canonical basis of $\R^n$. For $i=1,\ldots, n$, the difference operator $\Delta_i: \cA_\theta^{\Z^n} \rightarrow \cA_\theta^{\Z^n}$ is defined by
\begin{equation*}
 \Delta_i u_k= u_{k+e_i}-u_k, \qquad (u_k)_{k\in \Z^n} \in \cA_\theta^{\Z^n}. 
\end{equation*}
The operators $\Delta_1, \ldots, \Delta_n$ pairwise commute. For $\alpha =(\alpha_1, \ldots, \alpha_n)\in \N_0^n$, we set $\Delta^\alpha =\Delta_1^{\alpha_1} \cdots \Delta_n^{\alpha_n}$. We similarly define backward difference operators $\overline{\Delta}=\overline{\Delta}_1^{\alpha_1} \cdots \overline{\Delta}_n^{\alpha_n}$, where $\overline{\Delta}_i:  \cA_\theta^{\Z^n} \rightarrow \cA_\theta^{\Z^n}$ is defined by
\begin{equation*}
\overline{\Delta}_i u_k= u_{k}-u_{k-e_i}, \qquad (u_k)_{k\in \Z^n} \in \cA_\theta^{\Z^n}. 
\end{equation*}

\begin{definition}[\cite{LNP:TAMS16, RT:Birkhauser10}] $\stS^m(\Z^n; \cA_\theta)$, $m\in \R$, consists of sequences $(\rho_k)_{k\in \Z^n} \subset\cA_\theta$ such that, for all multi-orders $\alpha$ and $\beta$, there is $C_{\alpha\beta}>0$ such that 
\begin{equation*}
 \big\| \delta^\alpha \Delta^\beta \rho_k \| \leq C_{\alpha\beta} (1+|k|)^{m-|\beta|} \qquad \forall k \in \Z^n. 
\end{equation*}
\end{definition}

\begin{definition}
 $\cS(\Z^n; \cA_\theta)$ consists of sequences $(\rho_k)_{k\in \Z^n} \subset \cA_\theta$ such that, for all $N\in \N_0$ and $\alpha\in \N_0^n$, there is $C_{N\alpha}>0$ such that 
 \begin{equation*}
 \|\delta^\alpha \rho_k\|\leq C_{N\alpha}(1+|k|)^{-N}\qquad  \forall k \in \Z^n.  
\end{equation*}
\end{definition}
 
\begin{remark} $\cS(\Z^n; \cA_\theta) = \bigcap_{m\in \R} \stS^m(\Z^n; \cA_\theta)$.
\end{remark}
 
 Following~\cite{LNP:TAMS16} (see also~\cite{GJP:MAMS17}) the \psido\ associated with a  toroidal symbol $(\rho_k)_{k\in \Z^n}\in \stS^m(\Z^n; \cA_\theta)$, $m\in \R$, is the linear operator $P:\cA_\theta \rightarrow \cA_\theta$ given by 
\begin{equation*}
 P u = \sum_{k\in \Z^n} u_k\rho_kU^k, \qquad u=\sum u_kU^k\in \cA_\theta. 
\end{equation*}

Any standard \psido\ is a toroidal \psido. More precisely, we have the following result. 
 
\begin{proposition}[see \cite{CT:Baltimore11, HLP:Part1, RT:Birkhauser10}] \label{prop:toroidal.restriction-of-standard-symbol-is-toroidal-symbol}
 Let $\rho(\xi)\in  \stS^m(\R^n;\cA_\theta)$, $m\in \R$. 
\begin{enumerate}
 \item The restriction of  $\rho(\xi)$ to $\Z^n$ is a toroidal symbol $(\rho(k))_{k\in \Z^n}$ in $\stS^m(\Z^n; \cA_\theta)$. 
 
 \item If $(\rho(k))_{k\in \Z^n}\in \cS(\Z^n; \cA_\theta)$, then $\rho(\xi)\in \cS(\R^n;\cA_\theta)$. 
 \item For all $u=\sum_{k \in \Z^n}u_kU^k$ in $\cA_\theta$, we have 
           \begin{equation} \label{eq:toroidal.Prhou-equation}
                P_{\rho}u = \sum_{k\in \Z^n} u_k \rho(k)U^k. 
           \end{equation}
\end{enumerate}
\end{proposition}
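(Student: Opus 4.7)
Parts~(1)--(3) have different flavors: Part~(1) is a direct passage from differential to difference estimates, Part~(3) reduces to Fourier inversion on the unitaries, and the main obstacle lies in Part~(2), which requires a reverse interpolation argument.

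For Part~(1), the plan is to express the forward differences as iterated integrals of the corresponding derivatives. Indeed, $\Delta_i\rho(k)=\int_0^1 \partial_{\xi_i}\rho(k+te_i)\,dt$, and iterating in the remaining directions gives
\[
\Delta^\beta \rho(k) \;=\; \int_{[0,1]^{|\beta|}} \partial_\xi^\beta \rho\bigl(k+y(t)\bigr)\,dt,\qquad |y(t)|\leq |\beta|.
\]
Combining this identity with the $\stS^m(\R^n;\cA_\theta)$-bound for $\delta^\alpha\partial_\xi^\beta\rho$ and using $1+|k+y|\asymp 1+|k|$ uniformly for $|y|\leq|\beta|$ yields the required estimate $\|\delta^\alpha\Delta^\beta\rho(k)\|\leq C_{\alpha\beta}(1+|k|)^{m-|\beta|}$. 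The $\cS$-statement is then immediate from the characterizations $\cS(\R^n;\cA_\theta)=\bigcap_m\stS^m(\R^n;\cA_\theta)$ and $\cS(\Z^n;\cA_\theta)=\bigcap_m\stS^m(\Z^n;\cA_\theta)$ (see Remark~\ref{rmk:Symbols.symbols-intersection} and the analogous toroidal remark).

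For Part~(2), the plan is a two-step Taylor argument. Given $\xi\in\R^n$, let $k=k(\xi)\in \Z^n$ be a nearest lattice point, so $|\xi-k|\leq \sqrt{n}/2$. Taylor's theorem gives
\[
\rho(\xi)=\sum_{|\alpha|<N}\frac{(\xi-k)^\alpha}{\alpha!}\partial_\xi^\alpha\rho(k)+R_N(\xi),\qquad R_N(\xi)=O\bigl((1+|\xi|)^{m-N}\bigr),
\]
so the remainder is as rapidly decaying as one wishes by taking $N$ large. The main obstacle is that only $\rho(k)$, not $\partial_\xi^\alpha\rho(k)$, is a priori known to be rapidly decaying. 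I would resolve this by a multivariate Vandermonde inversion: fix $M$ and choose an unisolvent finite set of lattice displacements (for instance $J=\{0,1,\ldots,M\}^n\subset\Z^n$). For each $|\beta_0|\leq M$ one then has coefficients $\{c_j^{(\beta_0)}\}_{j\in J}\subset \C$, depending only on $j$, such that $\sum_j c_j^{(\beta_0)}j^\beta/\beta!=\delta_{\beta,\beta_0}$ for every $|\beta|\leq M$. Applying Taylor at the lattice points $k+j$, $j\in J$, and forming the combination $\sum_j c_j^{(\beta_0)}\rho(k+j)$ isolates $\partial_\xi^{\beta_0}\rho(k)$ modulo an error of order $(1+|k|)^{m-M-1}$. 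Since every $\rho(k+j)$ is rapidly decaying in $k$ by hypothesis, letting $M$ be arbitrarily large yields rapid decay of $\partial_\xi^{\beta_0}\rho(k)$ for all $\beta_0$. Re-inserting into the Taylor expansion of $\rho(\xi)$ and letting $N\to\infty$ gives $\rho(\xi)=O((1+|\xi|)^{-N})$ for every $N$. Running the same argument with $\rho$ replaced by $\delta^\gamma\rho$---whose toroidal restriction is rapidly decaying by Part~(1) applied to $\delta^\gamma\rho\in\stS^m(\R^n;\cA_\theta)$---controls every $\delta^\gamma\partial_\xi^\beta\rho(\xi)$, yielding $\rho\in \cS(\R^n;\cA_\theta)$.

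For Part~(3), the plan is to first check the formula on the unitaries and then extend by continuity. Using $\alpha_{-s}(U^k)=e^{-is\cdot k}U^k$, the oscillating integral defining $P_\rho U^k$ factors as
\[
P_\rho U^k=\Big(\iint e^{is\cdot(\xi-k)}\rho(\xi)\,ds\,\dbar\xi\Big) U^k = \rho(k)U^k,
\]
where the evaluation of the oscillating integral, via the regularization of Proposition~\ref{prop:Amplitudes.extension-J0}, is the Fourier-inversion identity $\iint e^{is\cdot\eta}\sigma(\eta)\,ds\,\dbar\eta=\sigma(0)$ applied to $\sigma(\eta)=\rho(\eta+k)$. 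By linearity the formula then holds on any finite sum $u=\sum_{|k|\leq N}u_kU^k$. The general case follows from three continuity facts: (i) $P_\rho:\cA_\theta\to \cA_\theta$ is continuous; (ii) for $u\in\cA_\theta$ the Fourier coefficients $(u_k)$ are rapidly decaying, and a direct computation with Proposition~\ref{prop:Sobolev.Hs-inclusion-cAtheta} shows that the partial sums of $\sum u_k U^k$ converge to $u$ in the Fr\'echet topology of $\cA_\theta$; and (iii) the toroidal operator $v\mapsto \sum v_k\rho(k)U^k$ is continuous on $\cA_\theta$, which follows from Part~(1) together with the same Sobolev-norm estimates. Passing to the limit in the finite-sum identity concludes the proof.
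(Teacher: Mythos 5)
Your proof is correct and all three parts work. Parts~(1) and~(3) follow the standard route---iterated difference--derivative integrals for~(1), Fourier inversion on the $U^k$ followed by a density argument for~(3)---matching what the cited references do.

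Part~(2) is where you take a genuinely different route. The one the cited references take, and the one the paper's own machinery is built around, runs through the interpolation function $\phi$ of Lemma~\ref{lem:toroidal.phi}: once $(\rho(k))_k\in\cS(\Z^n;\cA_\theta)$, the extension $\tilde\rho(\xi)=\sum_k\phi(\xi-k)\rho(k)$ is automatically in $\cS(\R^n;\cA_\theta)$ because $\phi\in\cS(\R^n)$, so the substantive input is that $\tilde\rho-\rho\in\cS(\R^n;\cA_\theta)$ for \emph{any} $\rho\in\stS^m(\R^n;\cA_\theta)$---this is Proposition~\ref{prop:PsiDOs.normalization-symbol}(2), whose proof rests on a Taylor expansion together with the structural properties of $\phi$ in Lemma~\ref{lem:toroidal.phi}. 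Your Vandermonde inversion on the block $\{0,\ldots,M\}^n$ performs the same bootstrap---passing from rapid decay of $\rho$ on $\Z^n$ to rapid decay of all $\partial_\xi^\alpha\rho$ on $\Z^n$---directly, with explicit Lagrange-type coefficients in place of the auxiliary function $\phi$. Your route is self-contained and bypasses Lemma~\ref{lem:toroidal.phi} entirely; the $\phi$-based route is favored in the paper because the extension map~(\ref{eq:toroidal.trho}) is needed anyway for Propositions~\ref{prop:toroidal.extension-of-toroidal-symbol-is-standard-symbol}--\ref{prop:PsiDOs.normalization-symbol}.

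Two small imprecisions in Part~(2), neither affecting the argument. First, the rapid decay of $(\delta^\gamma\rho(k))_k$ is not a consequence of Part~(1) applied to $\delta^\gamma\rho$: Part~(1) only gives the $\stS^m(\Z^n;\cA_\theta)$ bound. It comes directly from the hypothesis $(\rho(k))_k\in\cS(\Z^n;\cA_\theta)$, whose semi-norms already control all $\delta^\alpha\rho(k)$. Second, to bound $\delta^\gamma\partial_\xi^\beta\rho(\xi)$ for $\beta\neq 0$ one should Taylor-expand $\partial_\xi^\beta\delta^\gamma\rho$ itself around $k(\xi)$ and then invoke the Vandermonde-supplied decay of $\partial_\xi^{\alpha+\beta}\delta^\gamma\rho(k)$; the phrase ``controls every $\delta^\gamma\partial_\xi^\beta\rho(\xi)$'' elides this extra Taylor step, though the mechanism is clearly what you intend.
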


\begin{remark}\label{rmk:PsiDOs.rho(k)-P}
 Using~(\ref{eq:toroidal.Prhou-equation}) we get
\begin{equation}
 \rho(k)=P_{\rho}(U^k)(U^k)^{-1}=P_{\rho}(U^k)(U^k)^{*} \qquad \text{for all $k\in \Z^n$}.
 \label{eq:PsiDOs.rho(k)}
\end{equation}
Thus, the restriction to $\Z^n$ of $\rho(\xi)$ is uniquely determined by $P_\rho$. Combining this with the 2nd part shows that the class of $\rho(\xi)$ modulo $\cS(\R^n;\cA_\theta)$ is uniquely determined by $P_\rho$. 
\end{remark}

Conversely, toroidal symbols can be extended to standard symbols of the same order as follows. 

\begin{lemma}[{\cite[Lemma 4.5.1]{RT:Birkhauser10}}] \label{lem:toroidal.phi} There exists a function $\phi(\xi)\in \cS(\Rn)$ such that
\begin{enumerate}
\item[(i)] $\phi(0)=1$ and $\phi(k)=0$ for all $k\in \Z^n\setminus 0$. 

\item[(ii)] For every multi-order $\alpha$, there is $\phi_\alpha(\xi)\in \cS(\Rn)$ such that $\partial_\xi^\alpha \phi(\xi) =\overline{\Delta}^\alpha \phi_\alpha(\xi)$. 

\item[(iii)] $\int \phi(\xi)=1$. 
\end{enumerate}
\end{lemma}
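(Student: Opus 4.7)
The plan is to construct $\phi$ as the inverse Fourier transform of a carefully chosen bump function $\hat\phi\in C_c^\infty(\R^n)$. With the convention $\hat f(\eta)=\int e^{-i\xi\cdot\eta}f(\xi)d\xi$, condition (iii) translates to $\hat\phi(0)=1$; condition (i), via Poisson summation, translates to the partition-of-unity identity $\sum_{l\in\Z^n}\hat\phi(\eta+2\pi l)=1$ on $\R^n$; and condition (ii), by Fourier transforming the identity to be achieved, amounts to solving
\[
(i\eta)^\alpha\hat\phi(\eta)=\prod_{j=1}^n(1-e^{-i\eta_j})^{\alpha_j}\,\hat{\phi_\alpha}(\eta),
\]
since $\widehat{\partial_\xi^\alpha\phi}(\eta)=(i\eta)^\alpha\hat\phi(\eta)$ and $\widehat{\overline{\Delta}_j f}(\eta)=(1-e^{-i\eta_j})\hat f(\eta)$. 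So we are reduced to building a single $\hat\phi\in C_c^\infty(\R^n)$ realizing the first two conditions and allowing the division in the third.

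For the construction of $\hat\phi$, I would pick any nonnegative $\chi\in C_c^\infty(\R^n)$ with $\chi(0)>0$ and $\supp\chi\subset(-2\pi,2\pi)^n$ strictly (for instance $\chi(\eta)=\prod_j\chi_0(\eta_j)$ with $\chi_0\in C_c^\infty(\R)$ supported in $[-\pi,\pi]$), then periodize it as $\sigma(\eta)=\sum_{l\in\Z^n}\chi(\eta+2\pi l)$. Only finitely many terms are nonzero at each point and $\sigma$ is a smooth, strictly positive, $2\pi\Z^n$-periodic function, so $\hat\phi(\eta):=\chi(\eta)/\sigma(\eta)$ lies in $C_c^\infty(\R^n)$ with the same support as $\chi$. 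By the $\Z^n$-invariance of $\sigma$, the sum $\sum_l\hat\phi(\eta+2\pi l)=\sigma(\eta)/\sigma(\eta)=1$; and since no translate $2\pi l$ with $l\neq 0$ meets $\supp\chi$, we get $\hat\phi(0)=\chi(0)/\chi(0)=1$. Setting $\phi:=\mathcal F^{-1}\hat\phi\in\cS(\R^n)$, Fourier inversion immediately yields $\int\phi=\hat\phi(0)=1$ (condition (iii)), while applying Poisson summation to $\phi$ (or, equivalently, identifying the numbers $\phi(k)=(2\pi)^{-n}\int e^{ik\cdot\eta}\hat\phi(\eta)d\eta$ with the Fourier coefficients of the constant function $\sum_l\hat\phi(\cdot+2\pi l)\equiv 1$) gives $\phi(k)=\delta_{k0}$, i.e.\ condition (i).

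For condition (ii), the multiplier
\[
F_\alpha(\eta)=\prod_{j=1}^n\left(\frac{i\eta_j}{1-e^{-i\eta_j}}\right)^{\alpha_j}
\]
is smooth away from the set $\bigcup_{l\in\Z^n\setminus 0}\{\eta_j=2\pi l_j\}$, with the apparent singularity at the origin being removable. Because $\supp\hat\phi$ is a compact subset of the open box $(-2\pi,2\pi)^n$, it avoids all poles of $F_\alpha$, so $\hat{\phi_\alpha}(\eta):=F_\alpha(\eta)\hat\phi(\eta)$ belongs to $C_c^\infty(\R^n)$ for every multi-order $\alpha$. Defining $\phi_\alpha:=\mathcal F^{-1}\hat{\phi_\alpha}\in\cS(\R^n)$ and comparing Fourier transforms then yields $\partial_\xi^\alpha\phi=\overline{\Delta}^\alpha\phi_\alpha$ by Fourier inversion.

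The main obstacle is the compatibility of (ii) with (i) and (iii): we need a \emph{single} function $\phi$ whose derivatives of \emph{every} order can be divided by the finite-difference symbols $(1-e^{-i\eta_j})^{\alpha_j}$ while preserving the Schwartz class. This forces $\hat\phi$ to vanish at every nonzero point of $2\pi\Z^n$, and the cleanest way to secure that uniformly in $\alpha$ is the compact-support condition $\supp\hat\phi\subset(-2\pi,2\pi)^n$ used above. Once this support constraint is imposed, the tension with $\hat\phi(0)=1$ and the partition-of-unity requirement is resolved by the periodization trick $\hat\phi=\chi/\sigma$, which delivers the function simultaneously meeting all three conditions.
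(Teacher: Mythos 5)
Your strategy is the same one used in the cited reference (Ruzhansky--Turunen, Lemma~4.5.1): build $\phi$ as $\mathcal F^{-1}\hat\phi$ for a compactly supported $\hat\phi$, deduce (iii) from $\hat\phi(0)=1$, (i) from a Poisson-summation/partition-of-unity identity, and (ii) from the divisibility of $(i\eta)^\alpha\hat\phi$ by $\prod_j(1-e^{-i\eta_j})^{\alpha_j}$, which is automatic once $\supp\hat\phi$ avoids $2\pi\Z^n\setminus\{0\}$. The Fourier identities, the handling of the removable singularity of $i\eta_j/(1-e^{-i\eta_j})$ at $\eta_j=0$, and the verification $\hat\phi(0)=\chi(0)/\sigma(0)=1$ are all correct.

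There is, however, a genuine gap in the construction of $\hat\phi$. You assert that the periodization $\sigma(\eta)=\sum_{l\in\Z^n}\chi(\eta+2\pi l)$ is strictly positive, but this does not follow from the conditions $\chi\geq 0$, $\chi(0)>0$, $\supp\chi\subset(-2\pi,2\pi)^n$. Positivity of $\sigma$ is equivalent to the requirement that the open set $\{\chi>0\}$ cover $\R^n$ modulo $2\pi\Z^n$, i.e.\ that $\{\chi>0\}$ contain a full fundamental domain of the lattice. Your parenthetical example violates this decisively: if $\supp\chi_0\subset[-\pi,\pi]$, then $\chi_0(\pi)=\chi_0(-\pi)=0$, so $\sigma_0(\pi)=\sum_m\chi_0(\pi+2\pi m)=\chi_0(\pi)+\chi_0(-\pi)=0$, and $\sigma=\prod_j\sigma_0(\eta_j)$ vanishes on the affine hyperplanes $\{\eta_j\in\pi+2\pi\Z\}$. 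Worse, with $\chi_0>0$ on $(-\pi,\pi)$ one finds $\hat\phi(\eta)=\chi_0(\eta)/\sigma_0(\eta)\to 1$ as $\eta\to\pi^-$ but $\hat\phi(\eta)=0$ for $\eta>\pi$, so $\hat\phi$ is not even continuous, let alone in $C_c^\infty$. The fix is straightforward: choose $\chi\in C_c^\infty(\R^n)$ with $\supp\chi\subset(-2\pi,2\pi)^n$ \emph{and} $\chi>0$ on $[-\pi,\pi]^n$ (e.g.\ a standard bump equal to $1$ on $[-\pi,\pi]^n$ and supported in $(-2\pi,2\pi)^n$). Then every $\eta$ has a $2\pi\Z^n$-translate in $[-\pi,\pi]^n$, so $\sigma>0$ everywhere, and the rest of your argument goes through unchanged.
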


Let  $\phi(\xi)\in \cS(\Rn)$ be a function satisfying the properties (i)--(iii) of Lemma~\ref{lem:toroidal.phi}. Given any toroidal symbol $(\rho_k)_{k\in \Z^n}$ in $\stS^m(\Z^n;\cA_\theta)$, $m\in \R$, define
\begin{equation}
\tilde{\rho}(\xi) = \sum_{k\in \Z^n} \phi(\xi-k) \rho_k, \qquad \xi \in \R^n. 
\label{eq:toroidal.trho} 
\end{equation}
Given any $k\in \Z^n$, we have 
 $ \tilde{\rho}(k)= \sum_{\ell \in \Z^n} \phi(k-\ell) \rho_{\ell}=  \sum_{\ell \in \Z^n} \delta_{\ell,k} \rho_{\ell}=\rho_k$. 
Therefore, this provides us with an extension map from $\stS^m(\Z^n;\cA_\theta)$ to $\stS^m(\R^n;\cA_\theta)$.

\begin{proposition}[see~\cite{HLP:Part1,LNP:TAMS16, RT:Birkhauser10}] \label{prop:toroidal.extension-of-toroidal-symbol-is-standard-symbol}
 Let $(\rho_k)_{k\in \Z^n}\in \stS^m(\Z^n;\cA_\theta)$, $m\in \R$, and denote by $P$ the corresponding toroidal \psido. 
\begin{enumerate}
\item $\tilde{\rho}(\xi)$ is a standard symbol in $\stS^m(\R^n;\cA_\theta)$. 

\item The operators $P$ and $P_\rho$ agree.  
\end{enumerate}
\end{proposition}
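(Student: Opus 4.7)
The plan is to reduce part (1) to estimating $\delta^\alpha\partial_\xi^\beta \tilde\rho(\xi)$ via a discrete ``summation by parts,'' and then to deduce part (2) as an almost immediate consequence of Proposition~\ref{prop:toroidal.restriction-of-standard-symbol-is-toroidal-symbol}.

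For part (1), I would first differentiate term by term:
\begin{equation*}
\delta^\alpha\partial_\xi^\beta \tilde{\rho}(\xi) = \sum_{k\in\Z^n} \partial_\xi^\beta \phi(\xi-k)\, \delta^\alpha \rho_k,
\end{equation*}
noting that Schwartz decay of $\phi$ combined with the polynomial-type growth of $\delta^\alpha\rho_k$ makes the series absolutely convergent in $\cA_\theta$ and justifies termwise differentiation. Using property (ii) of Lemma~\ref{lem:toroidal.phi}, I would rewrite $\partial_\xi^\beta \phi = \overline{\Delta}^\beta \phi_\beta$ with $\phi_\beta\in\cS(\Rn)$, so that each $\xi$-derivative appears as a backward difference in the \emph{lattice variable} $k$. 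The key step is then an Abel-type summation by parts that transfers the difference operator from $\phi_\beta(\xi-k)$ onto $\rho_k$: using the identity
\begin{equation*}
\sum_{k\in\Z^n}\overline{\Delta}_i[\phi_\beta(\xi - \cdot)](k)\,\rho_k = -\sum_{k\in\Z^n}\phi_\beta(\xi-k)\,\Delta_i \rho_k,
\end{equation*}
(checked by a change of summation index $k\mapsto k-e_i$) and iterating, one obtains
\begin{equation*}
\delta^\alpha\partial_\xi^\beta \tilde{\rho}(\xi) = (-1)^{|\beta|}\sum_{k\in\Z^n}\phi_\beta(\xi-k)\,\delta^\alpha\Delta^\beta \rho_k.
\end{equation*}

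At this point the symbol estimate $\|\delta^\alpha\Delta^\beta \rho_k\|\leq C_{\alpha\beta}(1+|k|)^{m-|\beta|}$ combined with the Schwartz bound $|\phi_\beta(\xi-k)|\leq C_N(1+|\xi-k|)^{-N}$ (any $N$) and Peetre's inequality $(1+|k|)^{m-|\beta|}\leq 2^{|m-|\beta||}(1+|\xi|)^{m-|\beta|}(1+|\xi-k|)^{|m-|\beta||}$ yields, after choosing $N$ larger than $n+|m-|\beta||$ so that the resulting lattice sum $\sum_{k}(1+|\xi-k|)^{-N+|m-|\beta||}$ converges uniformly in $\xi$, the desired bound $\|\delta^\alpha\partial_\xi^\beta\tilde{\rho}(\xi)\|\leq C'_{\alpha\beta}(1+|\xi|)^{m-|\beta|}$. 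This proves $\tilde{\rho}\in\stS^m(\Rn;\cA_\theta)$.

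For part (2), the computation given just before the proposition shows that $\tilde{\rho}(k)=\rho_k$ for every $k\in\Z^n$, thanks to property (i) of $\phi$. Since $\tilde{\rho}\in\stS^m(\Rn;\cA_\theta)$ by part (1), we may apply Proposition~\ref{prop:toroidal.restriction-of-standard-symbol-is-toroidal-symbol}(3) to obtain, for every $u=\sum u_k U^k\in\cA_\theta$,
\begin{equation*}
P_{\tilde{\rho}}u = \sum_{k\in\Z^n} u_k\,\tilde{\rho}(k)\,U^k = \sum_{k\in\Z^n} u_k\,\rho_k\,U^k = Pu,
\end{equation*}
which is the desired agreement of the two operators. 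The only substantive work is in part (1); the main (though quite mild) obstacle is to justify the summation by parts rigorously in $\cA_\theta$ and to verify that all manipulations stay within absolutely convergent series, which is automatic from the Schwartz decay of $\phi$ and $\phi_\beta$.
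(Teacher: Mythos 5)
Your proof is essentially the standard argument that underlies the cited sources, and the overall scheme is sound: differentiate the series termwise, use Lemma~\ref{lem:toroidal.phi}(ii) to convert $\xi$-derivatives of $\phi$ into differences, perform an Abel summation by parts to move the difference onto $\rho_k$, and then combine the toroidal symbol estimate with Peetre's inequality and the Schwartz decay of $\phi_\beta$; part (2) is correctly reduced to Proposition~\ref{prop:toroidal.restriction-of-standard-symbol-is-toroidal-symbol}(3).

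There is, however, a bookkeeping slip in the passage from $\eta$-differences to $k$-differences. Lemma~\ref{lem:toroidal.phi}(ii) gives $\partial_\xi^\beta\phi(\xi-k)=(\overline{\Delta}^\beta\phi_\beta)(\xi-k)$, where $\overline{\Delta}$ acts in the variable $\eta=\xi-k$, so that $(\overline{\Delta}_i^\eta\phi_\beta)(\xi-k)=\phi_\beta(\xi-k)-\phi_\beta(\xi-k-e_i)$. This is \emph{not} the backward $k$-difference $\overline{\Delta}_i^k[\phi_\beta(\xi-\cdot)](k)=\phi_\beta(\xi-k)-\phi_\beta(\xi-k+e_i)$ that you used; it is $-\Delta_i^k[\phi_\beta(\xi-\cdot)](k)$, a \emph{forward} $k$-difference with a sign. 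Running the Abel summation with this corrected identification gives
\begin{equation*}
\delta^\alpha\partial_\xi^\beta\tilde{\rho}(\xi)=\sum_{k\in\Z^n}\phi_\beta(\xi-k)\,\delta^\alpha\overline{\Delta}^\beta\rho_k ,
\end{equation*}
i.e.\ a backward difference on $\rho_k$ with no overall sign, rather than your $(-1)^{|\beta|}\sum_k\phi_\beta(\xi-k)\delta^\alpha\Delta^\beta\rho_k$. The conclusion is unaffected, since $\overline{\Delta}^\beta\rho_k=\Delta^\beta\rho_{k-\beta}$ and Peetre's inequality shows the toroidal estimates for $\Delta^\beta$ and $\overline{\Delta}^\beta$ are equivalent up to constants; but the displayed identities as written are not correct, so the bookkeeping is worth fixing.
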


Combining together Proposition~\ref{prop:toroidal.restriction-of-standard-symbol-is-toroidal-symbol} and Proposition~\ref{prop:toroidal.extension-of-toroidal-symbol-is-standard-symbol} we arrive at the following result. 

\begin{proposition}[see \cite{HLP:Part1}]  \label{prop:toroidal.standard-and-toroidal-psidos-agree}
For every $m\in \R$, the classes of toroidal and standard \psidos\ of order $m$ agree. 
\end{proposition}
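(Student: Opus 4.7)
The statement asserts equality of two classes of operators, so I plan to prove it by establishing the two inclusions separately, each of which is an essentially immediate consequence of one of the two preceding propositions.

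For the inclusion ``standard $\subset$ toroidal'', I would start with an arbitrary standard \psido\ $P_\rho$ of order $m$, where $\rho(\xi)\in \stS^m(\R^n;\cA_\theta)$. By Proposition~\ref{prop:toroidal.restriction-of-standard-symbol-is-toroidal-symbol}(1), the restriction $(\rho(k))_{k\in \Z^n}$ is a toroidal symbol in $\stS^m(\Z^n;\cA_\theta)$, and by part~(3) of the same proposition, formula~(\ref{eq:toroidal.Prhou-equation}) tells us that $P_\rho$ acts on any $u=\sum u_k U^k \in \cA_\theta$ by $P_\rho u = \sum u_k \rho(k) U^k$. This is precisely the defining action of the toroidal \psido\ associated with $(\rho(k))_{k\in \Z^n}$. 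Therefore $P_\rho$ is a toroidal \psido\ of order $m$.

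For the reverse inclusion, given an arbitrary toroidal \psido\ $P$ of order $m$ with toroidal symbol $(\rho_k)_{k\in \Z^n}\in \stS^m(\Z^n;\cA_\theta)$, I would invoke Proposition~\ref{prop:toroidal.extension-of-toroidal-symbol-is-standard-symbol}. Using the function $\phi(\xi)$ from Lemma~\ref{lem:toroidal.phi}, the extension $\tilde{\rho}(\xi) = \sum_{k\in \Z^n} \phi(\xi-k)\rho_k$ is a standard symbol in $\stS^m(\R^n;\cA_\theta)$ by part~(1), and the operators $P$ and $P_{\tilde \rho}$ agree by part~(2). Hence $P$ is a standard \psido\ of order~$m$.

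Combining the two inclusions gives the desired equality. There is no genuine obstacle here since both of the preceding propositions carry the entire content of the statement; the ``hardest'' point is simply to observe that the order $m$ is preserved by both the restriction and extension operations, which is already built into the statements of Propositions~\ref{prop:toroidal.restriction-of-standard-symbol-is-toroidal-symbol} and~\ref{prop:toroidal.extension-of-toroidal-symbol-is-standard-symbol}. One might additionally remark, using~(\ref{eq:PsiDOs.rho(k)}) together with Proposition~\ref{prop:toroidal.restriction-of-standard-symbol-is-toroidal-symbol}(2), that the identification between the two classes is natural modulo smoothing operators, but this refinement is not needed for the statement itself.
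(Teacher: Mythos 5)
Your proof is correct and follows exactly the same route as the paper: the paper itself introduces Proposition~\ref{prop:toroidal.standard-and-toroidal-psidos-agree} with the phrase ``Combining together Proposition~\ref{prop:toroidal.restriction-of-standard-symbol-is-toroidal-symbol} and Proposition~\ref{prop:toroidal.extension-of-toroidal-symbol-is-standard-symbol} we arrive at the following result,'' which is precisely the two-inclusion argument you spell out.
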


By construction the extension map $(\rho_k)_{k\in \Z^n} \rightarrow \tilde{\rho}(\xi)$ given by~(\ref{eq:toroidal.trho}) is a right-inverse of the restriction map $\rho(\xi) \rightarrow (\rho(k))_{k\in \Z^n}$. It is also interesting to look at the reverse composition.

\begin{proposition}[see \cite{HLP:Part1}] \label{prop:PsiDOs.normalization-symbol} 
 Let $\rho(\xi)\in \stS^m(\R^n;\cA_\theta)$, $m\in \R$, and denote by $\tilde{\rho}(\xi)$ the extension~(\ref{eq:toroidal.trho}) of $(\rho(k))_{k\in \Z^n}$. 
\begin{enumerate}
 \item $\tilde{\rho}(\xi)$ is a standard symbol in $\stS^m(\R^n;\cA_\theta)$ which agrees with $\rho(\xi)$ on $\Z^n$. 
 
\item $P_{\tilde{\rho}}=P_\rho$ and $\tilde{\rho}(\xi)-\rho(\xi)\in \cS(\R^n)$. 
\end{enumerate}
\end{proposition}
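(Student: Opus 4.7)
My plan is to deduce this proposition as a direct consequence of the two preceding propositions (Proposition~\ref{prop:toroidal.restriction-of-standard-symbol-is-toroidal-symbol} and Proposition~\ref{prop:toroidal.extension-of-toroidal-symbol-is-standard-symbol}) together with a short argument using the uniqueness-on-$\Z^n$ observation of Remark~\ref{rmk:PsiDOs.rho(k)-P}. The result is essentially a bookkeeping consequence of the dictionary between toroidal and standard symbols, so I do not expect any genuine obstacle; the main task is just to assemble the right references in the right order.

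For part (1), I will first apply Proposition~\ref{prop:toroidal.restriction-of-standard-symbol-is-toroidal-symbol}(1) to the standard symbol $\rho(\xi)\in \stS^m(\R^n;\cA_\theta)$, which tells us that the restriction $(\rho(k))_{k\in \Z^n}$ is a toroidal symbol in $\stS^m(\Z^n;\cA_\theta)$. Feeding this toroidal symbol into the extension formula~(\ref{eq:toroidal.trho}) and invoking Proposition~\ref{prop:toroidal.extension-of-toroidal-symbol-is-standard-symbol}(1) then gives that $\tilde\rho(\xi)\in \stS^m(\R^n;\cA_\theta)$. The agreement with $\rho$ on $\Z^n$ is immediate from property~(i) of the interpolating function $\phi$ in Lemma~\ref{lem:toroidal.phi}: for each $k\in \Z^n$, only the $\ell=k$ term of the sum $\tilde\rho(k)=\sum_{\ell\in\Z^n}\phi(k-\ell)\rho(\ell)$ survives, giving $\tilde\rho(k)=\rho(k)$.

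For the first assertion in part (2), I will combine the two parts (3) of Proposition~\ref{prop:toroidal.restriction-of-standard-symbol-is-toroidal-symbol} and (2) of Proposition~\ref{prop:toroidal.extension-of-toroidal-symbol-is-standard-symbol}: the former shows $P_\rho u=\sum u_k\rho(k)U^k$ for every $u=\sum u_kU^k\in \cA_\theta$, while the latter identifies $P_{\tilde\rho}$ with the toroidal \psido\ attached to $(\rho(k))_{k\in\Z^n}$, which acts by the same formula. Hence $P_{\tilde\rho}u=P_\rho u$ on $\cA_\theta$, so $P_{\tilde\rho}=P_\rho$.

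Finally, to show $\tilde\rho-\rho\in \cS(\R^n;\cA_\theta)$, I will set $\sigma(\xi):=\tilde\rho(\xi)-\rho(\xi)$. By part (1) of the present statement together with the hypothesis, $\sigma(\xi)\in \stS^m(\R^n;\cA_\theta)$, and by the $\Z^n$-agreement we have $\sigma(k)=0$ for all $k\in\Z^n$. In particular the toroidal restriction $(\sigma(k))_{k\in\Z^n}$ is the zero sequence, which trivially lies in $\cS(\Z^n;\cA_\theta)$. Proposition~\ref{prop:toroidal.restriction-of-standard-symbol-is-toroidal-symbol}(2) then upgrades this to $\sigma(\xi)\in \cS(\R^n;\cA_\theta)$, concluding the proof. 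The only conceivable subtle point is that part (2) of that earlier proposition requires the symbol to already be standard of some finite order before one can infer Schwartz decay from Schwartz behaviour on the lattice; since we have ensured $\sigma\in\stS^m(\R^n;\cA_\theta)$, this hypothesis is met.
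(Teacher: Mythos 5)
Your proof is correct and is exactly the natural derivation from Propositions~\ref{prop:toroidal.restriction-of-standard-symbol-is-toroidal-symbol} and~\ref{prop:toroidal.extension-of-toroidal-symbol-is-standard-symbol}: composing restriction and extension for part (1), comparing the two toroidal action formulas for $P_{\tilde\rho}=P_\rho$, and applying the lattice-to-Schwartz upgrade to the difference $\sigma=\tilde\rho-\rho$, which vanishes on $\Z^n$. You also correctly flag the one hypothesis to check — that $\sigma$ is already in $\stS^m(\R^n;\cA_\theta)$ before Proposition~\ref{prop:toroidal.restriction-of-standard-symbol-is-toroidal-symbol}(2) applies — so nothing is missing.
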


\subsection{Smoothing operators}
A linear operator $R:\cA_\theta \rightarrow \cA_\theta'$ is called \emph{smoothing} when it extends to a continuous linear operator $R:\cA_\theta'\rightarrow \cA_\theta$.
Note that, as  $\cA_\theta$ is dense in $\cA_\theta'$,  the extension of a smoothing operator to $\cA_\theta'$ is unique. 

\begin{proposition}[\cite{HLP:Part1, HLP:Part2}] \label{prop:PsiDOs.smoothing-operator-characterization}
Let $R:\cA_\theta \rightarrow \cA_\theta$ be a linear map. The following are equivalent. 
\begin{enumerate}
\item[(i)]  $R$ is a smoothing operator. 

\item[(ii)] There is $\rho(\xi)\in \cS(\R^n; \cA_\theta)$ such that $R=P_\rho$, i.e., $P\in \Psi^{-\infty}(\cA_\theta)$. 

\item[(iii)] $R$ extends to a continuous linear map $R:\cH_\theta^{(s)}\rightarrow \cH_\theta^{(t)}$ for all $s,t\in \R$. 
\end{enumerate}
\end{proposition}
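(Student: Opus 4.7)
The plan is to prove (ii) $\Rightarrow$ (iii) $\Rightarrow$ (i) $\Rightarrow$ (ii), with all the substance concentrated in the last implication. The first two are formal. If $\rho(\xi)\in \cS(\R^n;\cA_\theta)$, then by Remark~\ref{rmk:Symbols.symbols-intersection} we have $\rho\in \stS^m(\R^n;\cA_\theta)$ for every $m\in\R$, so Proposition~\ref{prop:Sob-Mapping.rho-on-Hs}(1) yields continuous extensions $P_\rho:\cH_\theta^{(s+m)}\to \cH_\theta^{(s)}$ for all $s,m\in\R$; taking $m=t-s$ gives (iii). The implication (iii) $\Rightarrow$ (i) then follows directly from Proposition~\ref{prop:Sobolev.Hs-inclusion-cAtheta}, which realizes $\cA_\theta'$ as the inductive limit and $\cA_\theta$ as the projective limit of the Hilbert scale $\{\cH_\theta^{(s)}\}_{s\in \R}$.

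The substantive step is (i) $\Rightarrow$ (ii). My strategy is to pass through the toroidal calculus of Section~\ref{subsec:toroidal}. Guided by formula~(\ref{eq:PsiDOs.rho(k)}), I would set $\rho_k := R(U^k)(U^k)^*$ for $k\in \Z^n$ and show that $(\rho_k)_{k\in\Z^n}\in \cS(\Z^n;\cA_\theta)$. This is where the main work lies. The crucial observation is that, by formula~(\ref{eq:Sobolev.Sobolev-norm-formula}), $\|U^k\|_s=(1+|k|^2)^{s/2}$, so $U^k$ is arbitrarily small in negative Sobolev norms. Since $R:\cA_\theta'\to \cA_\theta$ is continuous and the Fr\'echet topology on $\cA_\theta$ is generated by all Sobolev norms (Proposition~\ref{prop:Sobolev.Hs-inclusion-cAtheta}), I conclude that for every multi-order $\beta$ and every $N\geq 0$,
\begin{equation*}
\big\|\delta^\beta R(U^k)\big\| = \op{O}\big(|k|^{-N}\big) \qquad \text{as $|k|\to \infty$}.
\end{equation*}
Combining this with the identities $\delta_j(U^k)=k_j U^k$ and $\delta^\alpha((U^k)^*)=(-k)^\alpha (U^k)^*$ (both immediate from the definition of the action $\alpha_s$) and the Leibniz rule yields $\|\delta^\alpha \rho_k\|=\op{O}(|k|^{-N})$ for all $\alpha$ and $N$, which is exactly the condition $(\rho_k)\in \cS(\Z^n;\cA_\theta)$.

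Once this is in hand, the rest is routine. To extend $(\rho_k)$ to a standard symbol on $\R^n$ I would apply formula~(\ref{eq:toroidal.trho}) with a function $\phi$ as in Lemma~\ref{lem:toroidal.phi}; since the same extension formula works simultaneously for all orders $m$, Proposition~\ref{prop:toroidal.extension-of-toroidal-symbol-is-standard-symbol}(1) gives $\tilde{\rho}(\xi)\in \bigcap_m \stS^m(\R^n;\cA_\theta)=\cS(\R^n;\cA_\theta)$. Finally, to verify $R=P_{\tilde{\rho}}$, I would compute both operators on a monomial $U^k$: the toroidal representation~(\ref{eq:toroidal.Prhou-equation}) together with $(U^k)^*U^k=1$ gives $P_{\tilde{\rho}}(U^k)=\rho_k U^k = R(U^k)$, and the $\cA_\theta$-convergence of Fourier series of elements of $\cA_\theta$ together with the continuity of $R$ on $\cA_\theta$ (inherited from its extension to $\cA_\theta'$) promotes this identity to all of $\cA_\theta$. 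The main obstacle throughout is the rapid-decay claim for $(\rho_k)$; everything else is a recombination of the Sobolev characterization of $\cA_\theta$ and the toroidal/standard equivalence already established.
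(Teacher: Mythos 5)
Your proof is correct and follows essentially the route taken by the cited reference \cite{HLP:Part1} (Proposition~6.30 there), which is also visibly the strategy re-used in this paper's proof of the parametric analogue, Proposition~\ref{prop:PsiDOs-parameter.symbol-of-smoothing-operators}: show $(R[U^k](U^k)^*)_{k\in\Z^n}\in\cS(\Z^n;\cA_\theta)$ via the rapid decay $\|U^k\|_s=(1+|k|^2)^{s/2}$ for $s\ll 0$ and the $\cL(\cH_\theta^{(s)},\cH_\theta^{(t)})$-boundedness of $R$, then extend with~(\ref{eq:toroidal.trho}) and check $R=P_{\tilde\rho}$ on the $U^k$. (The only blemish is the off-by-a-sign $m=t-s$ rather than $m=s-t$ in the (ii)$\Rightarrow$(iii) step, which is immaterial since $\rho\in\stS^m$ for every $m$.)
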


\begin{remark}
If $R$ is in all the spaces $\Psi^q(\cA)$, $q\in \C$, then by Proposition~\ref{prop:Sob-Mapping.rho-on-Hs} it extends to a continuous linear map $R:\cH_\theta^{(s)}\rightarrow \cH_\theta^{(t)}$ for all $s,t\in \R$, and so by using (ii) we see that $R\in \Psi^{-\infty}(\cA_\theta)$. Combining this with Remark~\ref{rmk:Symbols.symbols-intersection} shows that
$\Psi^{-\infty}(\cA_\theta) = \bigcap_{q\in \C} \Psi^q(\cA_\theta)$. 
\end{remark}

\subsection{Ellipticity and parametrices}
In this section, we recall the main facts regarding elliptic operators. 

\begin{definition}
An operator $P\in\Psi^q(\cA_\theta)$, $q\in \C$, is \emph{elliptic} when its principal symbol $\rho_{q}(\xi)$ is invertible for all $\xi \in \R^n\setminus 0$. 
\end{definition} 

\begin{remark}
 The ellipticity condition implies that $\rho_{q}(\xi)^{-1} \in S_{-q}(\R^n;\cA_\theta)$ (see~\cite{Ba:CRAS88, HLP:Part2}). 
\end{remark}

\begin{example}
 Suppose that the principal symbol $\rho_{q}(\xi)$ of $P$ is such that there is $c>0$ such that
 \begin{equation} \label{eq:Elliptic.positivity-criterion}
 \acoup{\rho_q(\xi)\eta}{\eta}\geq c|\xi|^q \|\eta\|_0^2 \qquad \text{for all $\eta \in \cH_\theta$ and $\xi \in \R^n\setminus \{0\}$}.
\end{equation}
 Then $P$ is elliptic (see~\cite{HLP:Part2}). This condition is satisfied in the following examples:
\begin{itemize}
 \item[-] The (flat) Laplacian $\Delta=\delta_1^2+\cdots+\delta_n^2$. 
 
 \item[-] The conformal deformations $k\Delta k$, $k\in \cA_\theta$, $k>0$. These operators were introduced by Connes-Tretkoff~\cite{CT:Baltimore11}. They were considered by various other authors as well. 
 
\item[-] The Laplace-Beltrami operators of~\cite{HP:Laplacian}. 
\end{itemize}
\end{example}

\begin{theorem}[see \cite{Ba:CRAS88, Co:CRAS80, HLP:Part2}] \label{prop:Elliptic.existence-parametrice}
Let $P\in\Psi^q(\cA_\theta)$, $q\in\C$, be elliptic with symbol  $\rho(\xi)\sim\sum_{j\geq 0} \rho_{q-j}(\xi)$. 
\begin{enumerate}
 \item $P$ admits a parametrix $Q\in \Psi^{-q}(\cA_\theta)$, i.e., 
          \begin{equation*}
                PQ=QP=1 \quad \bmod \Psi^{-\infty}(\cA_\theta). 
          \end{equation*}

\item Any parametrix $Q\in \Psi^{-q}(\cA_\theta)$ has symbol $\sigma(\xi)\sim\sum_{j\geq 0} \sigma_{-q-j}(\xi)$, where 
\begin{gather} 
 \sigma_{-q}(\xi)=\rho_q(\xi)^{-1},
 \label{eq:Elliptic.para-homo-principal}\\
\sigma_{-q-j}(\xi)=-\!\!\!\sum_{\substack{k+l+|\alpha|=j \\ l<j}}\!\! \frac{1}{\alpha !}\rho_q(\xi)^{-1}\partial_\xi^\alpha\rho_{q-k}(\xi)\delta^\alpha\sigma_{-q-l}(\xi), \qquad j\geq 1 .
\label{eq:Elliptic.para-homo-asymp} 
\end{gather}
\end{enumerate}
\end{theorem}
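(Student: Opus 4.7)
The plan is to construct a right parametrix symbolically via the recursion~(\ref{eq:Elliptic.para-homo-asymp}), assemble the homogeneous terms into a classical symbol via a Borel lemma, verify right-invertibility modulo smoothing via the composition formula in Proposition~\ref{prop:Composition.composition-PsiDOs}, and finally bootstrap to a two-sided parametrix by constructing an analogous left parametrix.

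First, set $\sigma_{-q}(\xi):=\rho_q(\xi)^{-1}$, which lies in $S_{-q}(\R^n;\cA_\theta)$ by the remark following the definition of ellipticity. Recursively define $\sigma_{-q-j}(\xi)$ for $j\geq 1$ by~(\ref{eq:Elliptic.para-homo-asymp}); an induction on $j$ shows each term on the right-hand side has homogeneity degree $-q-(k+l+|\alpha|)=-q-j$, so $\sigma_{-q-j}\in S_{-q-j}(\R^n;\cA_\theta)$. By a Borel lemma for classical $\cA_\theta$-valued symbols (a standard consequence of the Borel lemma for standard symbols, applied to the cut-offs $(1-\chi(\xi))\sigma_{-q-j}(\xi)$), there is $\sigma(\xi)\in S^{-q}(\R^n;\cA_\theta)$ with $\sigma(\xi)\sim\sum_{j\geq 0}\sigma_{-q-j}(\xi)$; set $Q:=P_\sigma\in\Psi^{-q}(\cA_\theta)$. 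By Proposition~\ref{prop:Composition.composition-PsiDOs}, $PQ=P_{\rho\sharp\sigma}$ and the $j$-th homogeneous component of $\rho\sharp\sigma$ is given by~(\ref{eq:Composition-Sym.sharp-homo-asymptotics}). For $j=0$ it is $\rho_q\sigma_{-q}=1$. For $j\geq 1$, isolate the summand with $k=|\alpha|=0$, $l=j$ (equal to $\rho_q(\xi)\sigma_{-q-j}(\xi)$); the remaining terms have $l<j$ and, by the very definition~(\ref{eq:Elliptic.para-homo-asymp}), sum to $-\rho_q(\xi)\sigma_{-q-j}(\xi)$. Thus $(\rho\sharp\sigma)_{-j}=0$ for all $j\geq 1$, so $\rho\sharp\sigma-1\in\bigcap_N\stS^{-N}(\R^n;\cA_\theta)=\cS(\R^n;\cA_\theta)$, and Proposition~\ref{prop:PsiDOs.smoothing-operator-characterization} gives $PQ-I\in\Psi^{-\infty}(\cA_\theta)$. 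Repeating the construction with the roles of the two symbols in $\sharp$ swapped yields a left parametrix $Q'\in\Psi^{-q}(\cA_\theta)$ with $Q'P\equiv I\bmod\Psi^{-\infty}(\cA_\theta)$, and the identity
\begin{equation*}
  Q' \equiv Q'(PQ) = (Q'P)Q \equiv Q \bmod \Psi^{-\infty}(\cA_\theta)
\end{equation*}
(using that $\Psi^{-\infty}$ is a two-sided ideal inside the algebra of operators $\cA_\theta\to\cA_\theta$) shows $Q\equiv Q'$, whence $QP\equiv I\bmod\Psi^{-\infty}(\cA_\theta)$ as well.

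For part (2), any parametrix $Q\in\Psi^{-q}(\cA_\theta)$ with symbol $\sigma(\xi)\sim\sum\sigma_{-q-j}(\xi)$ satisfies $PQ\equiv I\bmod\Psi^{-\infty}$, hence $\rho\sharp\sigma-1\in\cS(\R^n;\cA_\theta)$ by Proposition~\ref{prop:PsiDOs.smoothing-operator-characterization}. By uniqueness of the homogeneous components in a classical asymptotic expansion, one must have $(\rho\sharp\sigma)_0=1$ and $(\rho\sharp\sigma)_{-j}=0$ for all $j\geq 1$. The first identity combined with~(\ref{eq:Composition-Sym.sharp-homo-asymptotics}) yields~(\ref{eq:Elliptic.para-homo-principal}); solving the second for the term $\rho_q(\xi)\sigma_{-q-j}(\xi)$ (the only one with $l=j$) and left-multiplying by $\rho_q(\xi)^{-1}$ gives~(\ref{eq:Elliptic.para-homo-asymp}). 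The main difficulty is really only one of recursive bookkeeping: all the heavy analytic inputs (continuity of $\sharp$, the composition formula for classical symbols, Borel's lemma for $\cA_\theta$-valued symbols, and the characterization of smoothing operators) are already available from the prior sections, so the proof is essentially formal once these tools are invoked.
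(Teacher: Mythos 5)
Your proof is correct. The paper recalls this theorem without proof (citing \cite{Ba:CRAS88, Co:CRAS80, HLP:Part2}), but your argument --- recursive construction of the homogeneous symbols, Borel's lemma to assemble a classical symbol, the composition formula to obtain $PQ\equiv 1 \bmod \Psi^{-\infty}$, a parallel left parametrix, and the identity $Q' \equiv Q'(PQ) = (Q'P)Q \equiv Q$ to identify the two --- is exactly the method the authors deploy for the parametric analogue in Theorem~\ref{thm:Resolvent.parametrix-with-parameter}, so the approach is essentially the same.
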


An important application of Theorem~\ref{prop:Elliptic.existence-parametrice} is the following version of the elliptic regularity theorem. 

\begin{proposition}[\cite{HLP:Part2}] \label{prop:Elliptic.regularity}
Let $P\in\Psi^q(\cA_\theta)$, $q\in\C$, be elliptic, and set $m=\Re q$. 
\begin{enumerate}
\item For any $s\in \R$ and $u\in{\cA_\theta}'$, we have 
\begin{equation*}
Pu\in\cH^{(s)}_{\theta}\Longleftrightarrow u\in\cH^{(s+m)}_{\theta} .
\end{equation*}

\item The operator $P$ is hypoelliptic, i.e.,  for any $u\in \cA_\theta'$, we have
\begin{equation*}
 Pu \in \cA_\theta \Longleftrightarrow u \in \cA_\theta. 
\end{equation*}

\item If $m>0$, then the operator $P-\lambda$ is hypoelliptic in the above sense for every $\lambda \in \C$.  
\end{enumerate}
\end{proposition}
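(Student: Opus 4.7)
My plan for Part (1) is to use the parametrix from Theorem~\ref{prop:Elliptic.existence-parametrice}. The forward direction $u\in\cH^{(s+m)}_\theta\Rightarrow Pu\in\cH^{(s)}_\theta$ is immediate from Proposition~\ref{prop:Sob-Mapping.rho-on-Hs}, since $P\in\Psi^q(\cA_\theta)$ and $\Re q=m$. For the converse, I would take a parametrix $Q\in\Psi^{-q}(\cA_\theta)$, so that $QP=1+R$ with $R\in\Psi^{-\infty}(\cA_\theta)$. Assuming $Pu\in\cH^{(s)}_\theta$, applying $Q$ (which by Proposition~\ref{prop:Sob-Mapping.rho-on-Hs} maps $\cH^{(s)}_\theta$ continuously into $\cH^{(s+m)}_\theta$) yields $QPu\in\cH^{(s+m)}_\theta$. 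Moreover, by Proposition~\ref{prop:PsiDOs.smoothing-operator-characterization}, the smoothing remainder $R$ extends to a continuous map $\cA_\theta'\to\cA_\theta$, so $Ru\in\cA_\theta\subset\cH^{(s+m)}_\theta$. Therefore $u=QPu-Ru\in\cH^{(s+m)}_\theta$.

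Part (2) is then a direct consequence of Part (1) together with the characterization $\cA_\theta=\bigcap_{s\in\R}\cH^{(s)}_\theta$ from Proposition~\ref{prop:Sobolev.Hs-inclusion-cAtheta}. Indeed, if $Pu\in\cA_\theta$, then $Pu\in\cH^{(s)}_\theta$ for every $s\in\R$, and Part (1) upgrades this to $u\in\cH^{(s+m)}_\theta$ for every $s$. Since $s+m$ ranges over all of $\R$ as $s$ does, this forces $u\in\cA_\theta$. The reverse implication is immediate, since $P$ maps $\cA_\theta$ into itself by construction.

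For Part (3) the key observation is that, when $m>0$, Part (1) provides a \emph{strict} gain of regularity, which permits a bootstrapping argument. Suppose $u\in\cA_\theta'$ satisfies $(P-\lambda)u\in\cA_\theta$. By Proposition~\ref{prop:Sobolev.Hs-inclusion-cAtheta} we may pick $t_0\in\R$ with $u\in\cH^{(t_0)}_\theta$. Writing $Pu=(P-\lambda)u+\lambda u$, both summands lie in $\cH^{(t_0)}_\theta$, so $Pu\in\cH^{(t_0)}_\theta$ and Part (1) promotes $u$ to $\cH^{(t_0+m)}_\theta$. Iterating yields $u\in\cH^{(t_0+km)}_\theta$ for every $k\geq 0$, and since $m>0$ we conclude $u\in\bigcap_{t\in\R}\cH^{(t)}_\theta=\cA_\theta$. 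The main subtlety to navigate is that a direct application of the parametrix construction to $P-\lambda$ is not available, because $\lambda$ is a zeroth-order (rather than an order-$q$) perturbation of $P$ and so $P-\lambda$ need not fit cleanly into $\Psi^{q}(\cA_\theta)$ as an elliptic symbol with principal symbol $\rho_q(\xi)-\lambda$; the iterative use of Part (1), treating $\lambda u$ as a source term of the same regularity as $u$, is precisely what makes the positivity hypothesis $m>0$ essential and circumvents this issue.
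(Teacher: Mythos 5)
Your argument is correct, and it is the standard elliptic-regularity proof: parametrix for parts (1) and (2), and bootstrap for part (3). Since the paper cites the proof from \cite{HLP:Part2} rather than giving it here, I cannot compare line-by-line, but the strategy you take is precisely what one expects: for (1), write $u = QPu - Ru$ with $Q\in\Psi^{-q}(\cA_\theta)$ a parametrix and $R\in\Psi^{-\infty}(\cA_\theta)$, use the Sobolev mapping property of $Q$ and the smoothing property of $R$; (2) then follows by intersecting over $s$; (3) follows by iterating (1) against the decomposition $Pu = (P-\lambda)u + \lambda u$.

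Your concluding remark about part (3) is on the right track, though the parenthetical is slightly off. When $m>0$ the natural candidate for the principal symbol of $P-\lambda$ would be $\rho_q(\xi)$ alone, not $\rho_q(\xi)-\lambda$, since the constant $-\lambda$ has order $0<m$. The genuine obstruction to invoking part (2) directly is that $0$ need not equal $q-j$ for any integer $j\geq 0$ when $q$ is complex or non-integer, so $\rho(\xi)-\lambda$ has no classical asymptotic expansion in $S^q(\R^n;\cA_\theta)$, and hence $P-\lambda$ need not lie in any $\Psi^{q'}(\cA_\theta)$. Your bootstrap, which only uses part (1) with the zeroth-order term $\lambda u$ absorbed as a source of the same Sobolev regularity as $u$, circumvents this cleanly and is exactly why the hypothesis $m>0$ is needed to gain regularity at each step.

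One small point worth making explicit: in part (1), the identity $QPu = u + Ru$ must be seen to hold for $u\in\cA_\theta'$ and not merely $u\in\cA_\theta$. This follows because both sides are the restrictions of continuous operators on $\cA_\theta'$ (via Proposition~\ref{Adjoints.PsiDOs-extension} and Proposition~\ref{prop:PsiDOs.smoothing-operator-characterization}) that agree on the dense subspace $\cA_\theta$. Likewise, the extension of $Q$ to $\cA_\theta'$ restricted to $\cH^{(s)}_\theta$ agrees with its bounded extension $\cH^{(s)}_\theta\to\cH^{(s+m)}_\theta$ by density and uniqueness of continuous extensions. These are routine but are precisely the kind of compatibility checks that justify passing freely between the $\cA_\theta'$- and Sobolev-extensions of \psidos.
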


\begin{corollary}[\cite{HLP:Part2}] \label{cor:Elliptic.P-lambda-hypoell}
 Let $P\in\Psi^q(\cA_\theta)$, $q\in \C$, be elliptic. Then 
   \begin{equation*}
 \ker P :=\left\{u\in \cA_\theta'; \ Pu=0\right\}\subset \cA_\theta. 
\end{equation*}
If $\Re q>0$, then, for every $\lambda \in \C$, we also have 
\begin{equation*}
 \ker (P-\lambda) :=\left\{u\in \cA_\theta'; \ (P-\lambda )u=0\right\}\subset \cA_\theta. 
\end{equation*}
 \end{corollary}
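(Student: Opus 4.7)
The plan is to deduce this corollary as a direct consequence of the hypoellipticity statements in Proposition~\ref{prop:Elliptic.regularity}. Both parts reduce to the observation that the zero element belongs to $\cA_\theta$, so the kernel of an elliptic (or elliptic-shifted) operator is automatically smooth.

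For the first assertion, I would start with an arbitrary $u \in \cA_\theta'$ satisfying $Pu=0$. Since $0\in \cA_\theta$, we have $Pu\in \cA_\theta$. By the hypoellipticity of $P$ (part (2) of Proposition~\ref{prop:Elliptic.regularity}), this forces $u \in \cA_\theta$. Hence $\ker P \subset \cA_\theta$, as claimed. Note that no assumption on $\Re q$ is needed here because the hypoellipticity of $P$ itself does not require it.

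For the second assertion, assume $\Re q >0$ and fix $\lambda \in \C$. Given $u\in \cA_\theta'$ with $(P-\lambda)u=0$, the same reasoning applies: $(P-\lambda)u = 0\in \cA_\theta$, so by part (3) of Proposition~\ref{prop:Elliptic.regularity} (which is precisely the hypoellipticity of $P-\lambda$ under the assumption $\Re q >0$) we conclude $u\in \cA_\theta$. Thus $\ker(P-\lambda) \subset \cA_\theta$.

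There is no real obstacle here; the corollary is essentially a reformulation of Proposition~\ref{prop:Elliptic.regularity}(2)--(3) applied to the inhomogeneous equation with right-hand side $0$. The only thing worth emphasising in the write-up is the distinction between the two parts: the unconditional statement for $\ker P$ relies only on the ellipticity of $P$, whereas to absorb the spectral parameter $\lambda$ into the principal-symbol analysis one needs $\Re q>0$ so that $\lambda$ is of lower order than the principal part of $P$, which is exactly the hypothesis under which part~(3) of Proposition~\ref{prop:Elliptic.regularity} is proved.
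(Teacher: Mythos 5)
Your proof is correct and is exactly the intended argument: the corollary is stated in the paper as an immediate consequence of Proposition~\ref{prop:Elliptic.regularity}, and applying parts (2) and (3) of that proposition with right-hand side $0\in\cA_\theta$ is precisely how it follows. Your remark about why $\Re q>0$ is needed for the second part also matches the hypothesis built into part~(3) of Proposition~\ref{prop:Elliptic.regularity}.
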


\subsection{Spectral theory of elliptic \psidos} \label{subsec:spectra-of-psidos}
Let $P\in \Psi^q(\cA_\theta)$ be elliptic with $m:=\Re q>0$. We shall regard $P$ as an unbounded operator of $\cH_\theta$ with domain $\cH^{(m)}_\theta$. We also let $P^*\in \Psi^{\bar{q}}(\cA_\theta)$ be its formal adjoint. Recall that $P^*\in \Psi^{\widebar{q}}(\cA_\theta)$ by Proposition~\ref{prop:Adjoints.rhostar-formal-adjoint}. 

\begin{proposition}[see \cite{Ba:CRAS88, Co:CRAS80, HLP:Part2}]\label{prop:Spectrum.closedness}
The following holds.
\begin{enumerate}
 \item The operator $P$ with domain $\cH^{(m)}_\theta$ is closed, Fredholm and has closed range.
 
 \item The adjoint of $P$ is the formal adjoint $P^*$ with domain $\cH^{(m)}_\theta$. 
 
 \item   If $P$ is formally selfadjoint (resp., $P$ commutes with its formal adjoint), then $P$ is selfdjoint (resp., normal).  
\end{enumerate}
 \end{proposition}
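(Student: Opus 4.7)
The plan is to build everything on two inputs: the existence of a parametrix for $P$ given by Theorem~\ref{prop:Elliptic.existence-parametrice}, and the elliptic regularity theorem (Proposition~\ref{prop:Elliptic.regularity}). Both $P$ and its formal adjoint $P^*\in\Psi^{\bar q}(\cA_\theta)$ are elliptic of the same real order $m=\Re q$ (since the principal symbol of $P^*$ is $\rho_q(\xi)^*$, which is invertible), so these tools apply to both.

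For part~(1), closedness of $P$ is immediate from the continuity $P:\cH_\theta^{(m)}\rightarrow\cH_\theta$ provided by Proposition~\ref{prop:Sob-Mapping.rho-on-Hs}. For the Fredholm property I will take a parametrix $Q\in\Psi^{-q}(\cA_\theta)$ as in Theorem~\ref{prop:Elliptic.existence-parametrice}, so that $PQ=I+R_1$ and $QP=I+R_2$ with $R_j\in\Psi^{-\infty}(\cA_\theta)$. By Proposition~\ref{prop:PsiDOs.smoothing-operator-characterization} each $R_j$ factors through $\cH_\theta^{(1)}$; combined with the compactness of $\cH_\theta^{(1)}\hookrightarrow\cH_\theta$ given by Proposition~\ref{prop:Sobolev.Sobolev-embedding}, each $R_j$ is compact on $\cH_\theta$. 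Since $Q$ itself extends to a bounded map $\cH_\theta\rightarrow\cH_\theta^{(m)}$, it is a two-sided Fredholm inverse of $P$ modulo compacts, and standard Fredholm theory then yields the Fredholm property, and in particular closed range.

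Part~(2) is the crux. Writing $P^\star$ for the Hilbert-space adjoint of $P$ (with domain $\dom(P^\star)\subset\cH_\theta$), the easy inclusion $\cH_\theta^{(m)}\subset\dom(P^\star)$ with $P^\star=P^*$ on $\cH_\theta^{(m)}$ follows by extending the formal adjoint identity $\acoup{Pv}{u}=\acoup{v}{P^*u}$, valid on $\cA_\theta$, to $v,u\in\cH_\theta^{(m)}$ by density and continuity. The hard direction $\dom(P^\star)\subset\cH_\theta^{(m)}$ is where I expect the main obstacle. The strategy: given $u\in\dom(P^\star)$ with $w:=P^\star u\in\cH_\theta$, view $u\in\cH_\theta\subset\cA_\theta'$ and consider the distribution $P^*u\in\cA_\theta'$ furnished by Proposition~\ref{Adjoints.PsiDOs-extension}; using~(\ref{eq:NCtori.distrb-innerproduct-eq}) to translate between the inner product and the distributional pairing, the defining relation $\acoup{Pv}{u}=\acoup{v}{w}$ for $v\in\cA_\theta$ should force $P^*u=w$ in $\cA_\theta'$. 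The careful bookkeeping between inner product, duality pairing and involution is the delicate point. Once this equality is in hand, $P^*u=w\in\cH_\theta$ together with ellipticity of $P^*$ and elliptic regularity force $u\in\cH_\theta^{(m)}$, and the identification $P^\star u=P^*u$ from the easy direction then extends to all of $\dom(P^\star)$.

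Part~(3) is then essentially formal. If $P=P^*$ then part~(2) immediately gives $P^\star=P$, so $P$ is selfadjoint. For normality under $PP^*=P^*P$, applying elliptic regularity twice shows $\dom(PP^\star)=\dom(P^\star P)=\cH_\theta^{(2m)}$ (in each case the extra condition on $u$ reduces, via ellipticity of $P$ or $P^*$, to $u\in\cH_\theta^{(2m)}$), and on this common domain the two compositions coincide with the \psido\ products $PP^*$ and $P^*P$, which agree by hypothesis.
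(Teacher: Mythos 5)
The paper states this proposition with a citation to \cite{Ba:CRAS88, Co:CRAS80, HLP:Part2} and gives no proof of its own, so there is no in-paper argument to compare against. Your overall plan --- parametrix plus compactness of smoothing operators for the Fredholm property, the distributional reinterpretation of the adjoint relation combined with elliptic regularity of $P^*$ for part~(2), and iterated elliptic regularity to identify the domains in part~(3) --- is the standard route for elliptic operators and does go through.

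One genuine imprecision to fix: closedness of $P$ is \emph{not} ``immediate from the continuity $P:\cH_\theta^{(m)}\rightarrow\cH_\theta$''. If $u_n\in\cH_\theta^{(m)}$ with $u_n\to u$ and $Pu_n\to v$ in $\cH_\theta$, boundedness of $P$ on $\cH_\theta^{(m)}$ gives you nothing, because you do not yet know $u\in\cH_\theta^{(m)}$. The argument that actually works is the one you already deploy in part~(2): pass to $\cA_\theta'$ via the extension of Proposition~\ref{Adjoints.PsiDOs-extension}, conclude $Pu=v$ in $\cA_\theta'$ by continuity of $P$ on $\cA_\theta'$ and uniqueness of limits, and then invoke the elliptic regularity theorem (Proposition~\ref{prop:Elliptic.regularity}) to get $u\in\cH_\theta^{(m)}$. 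So closedness already requires the same distributional and elliptic-regularity machinery, not merely Sobolev boundedness; it is worth stating it that way so part~(1) does not look more elementary than it is.

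The ``careful bookkeeping'' you flag in part~(2) does work out. Using~(\ref{eq:NCtori.distrb-innerproduct-eq}) and the characterization of the extension of $P^*$ to $\cA_\theta'$ by $\acou{P^*u}{v}=\acou{u}{(P(v^*))^*}$, one gets for $u\in\cH_\theta$ and $v\in\cA_\theta$ that $\acou{P^*u}{v}=\acoup{u}{P(v^*)}$; the adjoint hypothesis $\acoup{P\phi}{u}=\acoup{\phi}{w}$ for $\phi\in\cA_\theta$ then yields $\acoup{u}{P(v^*)}=\acoup{w}{v^*}=\acou{w}{v}$, so indeed $P^*u=w$ in $\cA_\theta'$, and elliptic regularity of $P^*$ gives $u\in\cH_\theta^{(m)}$. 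The normality argument also checks out: elliptic regularity of $P$ and of $P^*$ identifies both $\dom(PP^\star)$ and $\dom(P^\star P)$ with $\cH_\theta^{(2m)}$, where the Hilbert-space compositions reduce to the \psido\ compositions $PP^*$ and $P^*P$, which agree by hypothesis.
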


\begin{definition}
 The \emph{resolvent set} of $P$ consists of all $\lambda\in \C$ such that $P-\lambda:\cH_\theta^{(m)}\rightarrow \cH_\theta$ is a bijection with bounded inverse. The \emph{spectrum} of $P$, denoted by $\Sp(P)$, is the complement of its resolvent set. 
\end{definition}

\begin{remark}\label{rmk:PsiDOs.resolvent-boundedness}
If $\lambda \in \C \setminus \Sp(P)$, then $(P-\lambda)^{-1}\in \cL(\cH_\theta, \cH^{(m)}_\theta)$. 
\end{remark}

\begin{remark}\label{rmk:PsiDOs.spectrum-adjoint} 
 It can be shown that $\Sp(P^*)=\{\overline{\lambda}; \ \lambda \in \Sp(P)\}$ (see~\cite[Corollary~12.8]{HLP:Part2}). 
\end{remark}

\begin{proposition}[see \cite{HLP:Part2}] \label{prop:Spectral.spectrum-P}
 There are only two possibilities for the spectrum of $P$. Either $\Sp(P)$ is all $\C$, or this is an unbounded discrete set consisting of isolated eigenvalues with finite multiplicity. 
 \end{proposition}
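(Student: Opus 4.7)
The plan is to reduce the spectral analysis of $P$ to Riesz--Schauder theory for a compact operator via the resolvent. Assuming $\Sp(P)\neq \C$, I would pick some $\lambda_0\in \C\setminus \Sp(P)$ and set $R_0:=(P-\lambda_0)^{-1}$. By Remark~\ref{rmk:PsiDOs.resolvent-boundedness}, $R_0$ lies in $\cL(\cH_\theta,\cH_\theta^{(m)})$ with $m=\Re q>0$, and since the inclusion $\cH_\theta^{(m)}\hookrightarrow \cH_\theta$ is compact by Proposition~\ref{prop:Sobolev.Sobolev-embedding}, the operator $R_0:\cH_\theta\to\cH_\theta$ is compact.

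The key step is the factorization
\begin{equation*}
 P-\lambda=(P-\lambda_0)\bigl[\op{id}_{\cH_\theta}-(\lambda-\lambda_0)R_0\bigr],
\end{equation*}
viewed as an equation between operators $\cH_\theta^{(m)}\to\cH_\theta$. Since $P-\lambda_0$ is an isomorphism between these two Hilbert spaces, this identity shows that $P-\lambda$ is a bijection with bounded inverse if and only if $\op{id}-(\lambda-\lambda_0)R_0$ is invertible on $\cH_\theta$; for $\lambda\neq\lambda_0$, this is in turn equivalent to $(\lambda-\lambda_0)^{-1}\notin \Sp(R_0)$. Hence the M\"obius map $\lambda\mapsto (\lambda-\lambda_0)^{-1}$ identifies $\Sp(P)\setminus\{\lambda_0\}$ bijectively with $\Sp(R_0)\setminus\{0\}$. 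By the classical Riesz--Schauder theorem, $\Sp(R_0)\setminus\{0\}$ consists of at most countably many isolated eigenvalues of finite multiplicity whose only possible accumulation point is $0$. Transporting this back through the M\"obius map, $\Sp(P)$ is a discrete subset of $\C$ whose only possible accumulation point is $\infty$. Each $\lambda\in\Sp(P)$ is an eigenvalue because the same factorization identifies $\ker(P-\lambda)\subset\cH_\theta^{(m)}$ with the $(\lambda-\lambda_0)^{-1}$-eigenspace of $R_0$ on $\cH_\theta$, which gives the finite multiplicity; Corollary~\ref{cor:Elliptic.P-lambda-hypoell} then places the eigenvectors in $\cA_\theta$.

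The main obstacle I anticipate is showing that $\Sp(P)$ is genuinely \emph{unbounded} rather than merely discrete, since Riesz--Schauder alone leaves open the possibility that $R_0$ has only finitely many non-zero eigenvalues, or even none at all if $R_0$ were quasinilpotent. Ruling this out needs additional input: the cleanest route is to apply the analytic Fredholm theorem to the entire holomorphic family $\op{id}-(\lambda-\lambda_0)R_0$ of index-zero Fredholm operators on $\cH_\theta$, and to combine the infinite-dimensionality of $\cH_\theta$ with the compactness of $R_0$ to force $0$ to be a genuine accumulation point of $\Sp(R_0)$, which corresponds to an accumulation of $\Sp(P)$ at $\infty$. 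Once this is settled, the dichotomy asserted in the proposition follows directly from the construction above.
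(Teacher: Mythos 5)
The strategy you outline — pass to the compact resolvent $R_0 = (P-\lambda_0)^{-1}$ using the Sobolev embedding, transport the Riesz--Schauder picture for $\Sp(R_0)\setminus\{0\}$ back to $\Sp(P)$ through the M\"obius map $\lambda \mapsto (\lambda-\lambda_0)^{-1}$, and invoke hypoellipticity (Corollary~\ref{cor:Elliptic.P-lambda-hypoell}) to place eigenvectors in $\cA_\theta$ — is the standard argument and correctly yields discreteness, the isolated-eigenvalue structure, and finite multiplicity. The paper itself defers the proof to~\cite{HLP:Part2}, so there is no in-text argument to compare against; on its own terms, this part of your proof is fine.

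Your proposed fix for the unboundedness step, however, does not work. You claim that "combining the infinite-dimensionality of $\cH_\theta$ with the compactness of $R_0$" forces $0$ to be an accumulation point of $\Sp(R_0)$. This is false: a compact operator on an infinite-dimensional Hilbert space can be quasinilpotent, i.e., have spectrum exactly $\{0\}$ with no nonzero eigenvalues whatsoever — the Volterra operator $Vf(x)=\int_0^x f$ on $L^2[0,1]$ is the classical example. Indeed $V^{-1}$ is a closed, densely defined operator with compact resolvent and \emph{empty} spectrum, so "elliptic of positive order with compact resolvent" is not, by itself, enough to force unbounded spectrum; what saves the day in the manifold/NCT setting is the absence of boundary (closedness of $\T^n$ and its noncommutative analogue), and any argument must somehow invoke this. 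The analytic Fredholm theorem likewise only gives discreteness of the exceptional set for $1-(\lambda-\lambda_0)R_0$; it says nothing about that set being nonempty, let alone infinite. So the unboundedness claim remains a genuine gap in your write-up that cannot be closed by generic Banach-space compactness arguments: one has to use that $R_0$ is a \psido\ of negative order on a noncommutative torus (for example via Schatten/trace-class properties of \psidos\ and a trace argument, or via the symbolic structure of the resolvent), which is presumably what~\cite{HLP:Part2} does.
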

 
 In the special case of normal operators we even obtain the following result. 
 
 \begin{proposition}[see \cite{HLP:Part2}] \label{prop:Spectral.ortho-splitting}
Suppose that $P$ is normal and $\Sp (P)\neq \C$ (e.g., $P$ is selfadjoint). 
 \begin{enumerate}
 \item We have an orthogonal decomposition, 
 \begin{equation} \label{eq:Spectral.ortho-splitting}
 \cH_\theta = \bigoplus_{\lambda \in \Sp(P)} \ker (P-\lambda). 
\end{equation}

\item There is an orthonormal basis $(e_\ell)_{\ell \geq 0}$ of $\cH_\theta$ such that $e_\ell\in\cA_\theta $ and 
$P e_\ell =\lambda_\ell e_\ell$ with  $|\lambda_\ell|\rightarrow \infty$ as $\ell \rightarrow \infty$.
\end{enumerate}
\end{proposition}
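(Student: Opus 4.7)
The plan is to reduce the statement to the spectral theorem for compact normal operators by passing to the resolvent. Since $\Sp(P)\neq\C$, we can pick $\lambda_0\in\C\setminus\Sp(P)$ and set $R:=(P-\lambda_0)^{-1}$. By Remark~\ref{rmk:PsiDOs.resolvent-boundedness}, $R$ is continuous from $\cH_\theta$ to $\cH_\theta^{(m)}$. As $m>0$, composing with the compact inclusion $\cH_\theta^{(m)}\hookrightarrow \cH_\theta$ given by Proposition~\ref{prop:Sobolev.Sobolev-embedding} shows that $R\in\cL(\cH_\theta)$ is compact.

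Next I would check that $R$ is normal on $\cH_\theta$. By Proposition~\ref{prop:Spectrum.closedness}, the Hilbert-space adjoint of $P-\lambda_0$ (with domain $\cH_\theta^{(m)}$) is $P^*-\overline{\lambda_0}$, which is invertible by Remark~\ref{rmk:PsiDOs.spectrum-adjoint}. Standard facts about adjoints of invertible closed operators then give $R^*=(P^*-\overline{\lambda_0})^{-1}$. The normality $PP^*=P^*P$ rearranges into the commutation $(P-\lambda_0)(P^*-\overline{\lambda_0})=(P^*-\overline{\lambda_0})(P-\lambda_0)$, and inverting yields $RR^*=R^*R$.

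The spectral theorem for compact normal operators now supplies an orthogonal decomposition $\cH_\theta=\bigoplus_{\mu\in\Sp(R)}\ker(R-\mu)$ together with an orthonormal basis $(e_\ell)_{\ell\geq 0}$ of eigenvectors $Re_\ell=\mu_\ell e_\ell$, with nonzero eigenvalues accumulating only at $0$. Since $R$ is injective (being the inverse of $P-\lambda_0$), $0$ is not an eigenvalue, so each $\mu_\ell\neq 0$. Applying $P-\lambda_0$ to $Re_\ell=\mu_\ell e_\ell$ gives $e_\ell\in\cH_\theta^{(m)}$ and $Pe_\ell=(\lambda_0+\mu_\ell^{-1})e_\ell$; conversely, any $Pe=\lambda e$ with $e\neq 0$ yields $Re=(\lambda-\lambda_0)^{-1}e$. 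This establishes a bijection $\mu\leftrightarrow\lambda_0+\mu^{-1}$ between $\Sp(R)\setminus\{0\}$ and $\Sp(P)$, under which the eigenspace decompositions correspond and the accumulation $\mu_\ell\to 0$ becomes $|\lambda_\ell|\to\infty$ with $\lambda_\ell:=\lambda_0+\mu_\ell^{-1}$. Finally, each $e_\ell$ lies in $\ker(P-\lambda_\ell)\subset\cA_\theta$ by Corollary~\ref{cor:Elliptic.P-lambda-hypoell} (available since $\Re q=m>0$), and the decomposition of $\cH_\theta$ transports from $R$ to $P$. The only mildly delicate point is the identification $R^*=(P^*-\overline{\lambda_0})^{-1}$ and the transfer of normality from $P$ to $R$; both are standard consequences of the general theory of unbounded normal operators, so the argument is essentially a bookkeeping exercise once the compactness of $R$ is observed.
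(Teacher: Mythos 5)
Your proof is correct. The paper cites this proposition from \cite{HLP:Part2} without reproducing a proof, but the reduction you carry out — passing to the resolvent $R=(P-\lambda_0)^{-1}$ for a single $\lambda_0\notin\Sp(P)$, observing compactness of $R$ from Proposition~\ref{prop:Sobolev.Sobolev-embedding} and Remark~\ref{rmk:PsiDOs.resolvent-boundedness}, checking normality of $R$ via $(T^{-1})^*=(T^*)^{-1}$ and $T^*T=TT^*$ for the closed normal operator $T=P-\lambda_0$, applying the spectral theorem for compact normal operators, noting injectivity of $R$ to exclude $0$ as an eigenvalue, transporting the decomposition and basis through the correspondence $\mu\mapsto\lambda_0+\mu^{-1}$, and finally invoking Corollary~\ref{cor:Elliptic.P-lambda-hypoell} for $e_\ell\in\cA_\theta$ — is the standard and essentially the only reasonable route. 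The domain bookkeeping you flag (that $P-\lambda_0$ is normal whenever $P$ is, that $\mathrm{dom}(P)=\mathrm{dom}(P^*)$, and that inverting reverses the composition) is indeed standard unbounded-operator theory, and the bijection between $\Sp(R)\setminus\{0\}$ and $\Sp(P)$ with matching eigenspaces is handled correctly in both directions.
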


\subsection{Schatten-class properties of \psidos} \label{subsec:Schatten}
We denote by $\cK$ the closed two-sided ideal of compact operators on $\cH_\theta$. For $p\geq 1$ we let $\cL^p$ and $\cL^{(p,\infty)}$ be the corresponding Schatten and weak Schatten classes equipped with their usual norms (see, e.g., 
\cite{GK:AMS69, Si:AMS05}). In particular,  we have 
\begin{gather*}
 \cL^{p}=\big\{T\in \cK; \ \Trace |T|^p<\infty\big\}, \qquad p\geq 1\\
 \cL^{(p,\infty)} = \big\{T\in \cK;\ \sum_{k<N}\mu_k(T)= \op{O}\big(N^{1-\frac1{p}}\big)\big\},\qquad p>1,\\
 \cL^{(1,\infty)} = \big\{T\in \cK;\ \sum_{k<N}\mu_k(T)= \op{O}(\log N)\big\}.
 \end{gather*}
Here $\mu_k(T)$ is the $(k+1)$-th eigenvalue of $|T|=\sqrt{T^*T}$ counted with multiplicity. Moreover,  $\cL^p$ and $\cL^{(p,\infty)}$ are Banach ideals with respect to their natural norms, 
\begin{gather*}
 \|T\|_{\cL^p}=\big( \Tra |T|^p\big)^{\frac1{p}}, \qquad T\in \cL^p, \ p\geq 1,\\
   \|T\|_{\cL^{(1,\infty)}}= \sup_{N\geq 1}\biggl\{N^{-1+\frac1{p}} \sum_{k<N} \mu_k(T)\biggr\}, \qquad T\in \cL^{(p,\infty)},\  p>1,\\ 
 \|T\|_{\cL^{(1,\infty)}}= \sup_{N\geq 2}\biggr\{\frac{1}{\log N} \sum_{k<N} \mu_k(T)\biggl\}, \qquad T\in \cL^{(1,\infty)}. 
\end{gather*}


\begin{proposition}[see~\cite{HLP:Part2}] 
 Let $\rho(\xi)\in \stS^m(\R^n;\cA_\theta)$, $m<0$. 
\begin{enumerate}
 \item If $m\in[-n,0)$ and $p=n|m|^{-1}$, then $P_\rho\in \cL^{(p,\infty)}$, and hence $P_\rho\in \cL^{q}$ for all $q>p$. 
 \item If $m<-n$, then $P_\rho\in \cL^1$. 
\end{enumerate}
\end{proposition}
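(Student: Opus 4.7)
The plan is to reduce the statement to a spectral analysis of the single operator $\Lambda^m=(1+\Delta)^{m/2}$. First, I would factor
\[
P_\rho = \bigl(P_\rho \Lambda^{-m}\bigr)\,\Lambda^m.
\]
Since $\brak{\xi}^{-m}$ is the symbol of $\Lambda^{-m}$ and lies in $\stS^{-m}(\R^n;\cA_\theta)$ (\emph{cf}.~Example~\ref{ex:Symbols.example-symbol}), Proposition~\ref{prop:Composition.sharp-continuity-standard-symbol} identifies $P_\rho \Lambda^{-m}$ as a pseudodifferential operator associated with a symbol in $\stS^{0}(\R^n;\cA_\theta)$. Then Proposition~\ref{prop:Sob-Mapping.rho-on-Hs} provides a continuous extension of $P_\rho \Lambda^{-m}$ to a bounded operator on $\cH_\theta$. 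Because $\cL^{(p,\infty)}$ and $\cL^p$ are two-sided Banach ideals in $\cL(\cH_\theta)$, it will suffice to prove the announced Schatten-class memberships for the operator $\Lambda^m$ alone.

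Next I would compute the singular values of $\Lambda^m$ explicitly. As recalled in Example~\ref{eq:PsiDOs.Lambda-to-the-s-PsiDO}, the operator $\Delta$ on $\cH_\theta$ is diagonalized by the orthonormal basis $(U^k)_{k\in\Z^n}$ with eigenvalues $|k|^2$, so that $\Lambda^m U^k = (1+|k|^2)^{m/2}U^k$. A standard lattice-point count gives
\[
 \#\bigl\{k\in\Z^n: (1+|k|^2)^{m/2}\geq t\bigr\} = \op{O}\bigl(t^{n/m}\bigr) \qquad \text{as } t\to 0^+.
\]
Listing the singular values $\mu_N(\Lambda^m)$ in decreasing order, this rearrangement yields $\mu_N(\Lambda^m)=\op{O}(N^{m/n})$ as $N\to\infty$.

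In the regime $m\in[-n,0)$, putting $p=n|m|^{-1}\geq 1$, the bound becomes $\mu_N(\Lambda^m)=\op{O}(N^{-1/p})$, which summed gives $\sum_{k<N}\mu_k(\Lambda^m)=\op{O}(N^{1-1/p})$ if $p>1$ and $\op{O}(\log N)$ if $p=1$; in either case $\Lambda^m\in\cL^{(p,\infty)}$. In the regime $m<-n$, $\mu_N(\Lambda^m)=\op{O}(N^{m/n})$ with $m/n<-1$, which is summable, so $\Lambda^m\in\cL^1$. Combining with the factorization and the ideal property yields the two stated conclusions for $P_\rho$; the inclusion $\cL^{(p,\infty)}\subset \cL^q$ for $q>p$ is a standard fact about weak Schatten ideals.

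The main technical content lies in the factorization argument and invoking the symbolic calculus to guarantee that $P_\rho\Lambda^{-m}$ is a genuinely bounded operator on $\cH_\theta$; once that is in hand, the Weyl-type eigenvalue asymptotics for $\Lambda^m$ reduce to their commutative analogues on $\T^n$ thanks to isospectrality, and the Schatten conditions then follow mechanically from the definitions of $\cL^{(p,\infty)}$ and $\cL^1$ given in Section~\ref{subsec:Schatten}.
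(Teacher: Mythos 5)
Your proof is correct and takes the standard and natural approach: factor $P_\rho = (P_\rho\Lambda^{-m})\Lambda^m$, use the symbolic calculus plus the Sobolev boundedness result to see that $P_\rho\Lambda^{-m}$ extends to a bounded operator on $\cH_\theta$, compute the singular values of $\Lambda^m$ explicitly from the eigenvalues $(1+|k|^2)^{m/2}$ on the orthonormal basis $(U^k)_{k\in\Z^n}$ via a lattice-point count, and finish with the two-sided ideal property of $\cL^{(p,\infty)}$ and $\cL^1$. This is essentially the argument given in the cited reference [HLP:Part2], and all the supporting facts you invoke (Example~\ref{eq:PsiDOs.Lambda-to-the-s-PsiDO}, Propositions~\ref{prop:Composition.sharp-continuity-standard-symbol} and~\ref{prop:Sob-Mapping.rho-on-Hs}) are available at this point in the paper.
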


%

 \section{Parametric Symbols} \label{sec:Parametric-symbols}
In this section, we introduce our classes of symbols with parameter. These classes are similar to the classes introduced in~\cite{Po:PhD, Po:CRAS01}. 

Throughout the rest of this paper, we denote by $\C^*$ the complex plane with the origin deleted, i.e., $\C^*=\C\setminus \{0\}$. 

We refer to~\cite[Appendix~C]{HLP:Part1}, and the references therein, for background on differentiable maps with values in a given locally convex space $\sE$. In particular, if $U\subset \R^d$ is open, then a map $f:U\rightarrow \sE$ is $C^N$, $N\geq 0$, when all the partial derivatives of order~$\leq N$ exist and are continuous on $U$. The map is $C^\infty$ or smooth when it is $C^N$ for all $N\geq 0$. Moreover, the space $C^\infty(U;\sE)$ of smooth maps from $U$ to $\sE$ is a locally convex space with respect to the topology generated by the semi-norms, 
\begin{equation*}
 u \longrightarrow \max_{|\alpha| \leq N} \sup_{x\in K} p\big[ \partial_x^\alpha u(x)\big],
\end{equation*}
where $N$ ranges over non-negative integers, $K$ ranges over compact subsets of $U$ and $p$ ranges over continuous semi-norms on $\sE$. 

\subsection{Pseudo-cones and $\Hol^d(\Lambda)$-families} 
In what follows, by a cone $\Theta \subset \C^*$ we shall always mean a cone with vertex at the origin, so that $ \lambda \in \Theta \Rightarrow t\lambda \in \Theta$  for all $t>0$. 

 \begin{definition}
A  connected set $\Lambda\subset \C$ is called a \emph{pseudo-cone} when there is a cone $\Theta \subset \C^*$ and a disk $D$ about the origin such that 
$\Lambda\setminus D=\Theta\setminus D$. The cone $\Theta$ is called the \emph{conical par}t of $\Lambda$.  
 \end{definition}

\begin{remark}
 In the above definition the conical part $\Theta$ is unique. 
\end{remark}

\begin{remark}
If $\Lambda$ is a pseudo-cone, then its interior $\op{Int}(\Lambda)$ and its closure $\overline{\Lambda}$ are pseudo-cones as well. 
\end{remark}

\begin{example}
 Any angular sector $\Theta=\{ \phi <\arg \lambda <\phi'\}$ is a cone, and hence is a pseudo-cone. Chopping off a disk from $\Theta$ or glueing to it a disk or an annulus  provides us with pseudo-cones with conical component $\Theta$. 
 \end{example}
 
\begin{figure}[h]
\begin{minipage}{0.25\linewidth}
\centering{\def\svgwidth{\columnwidth}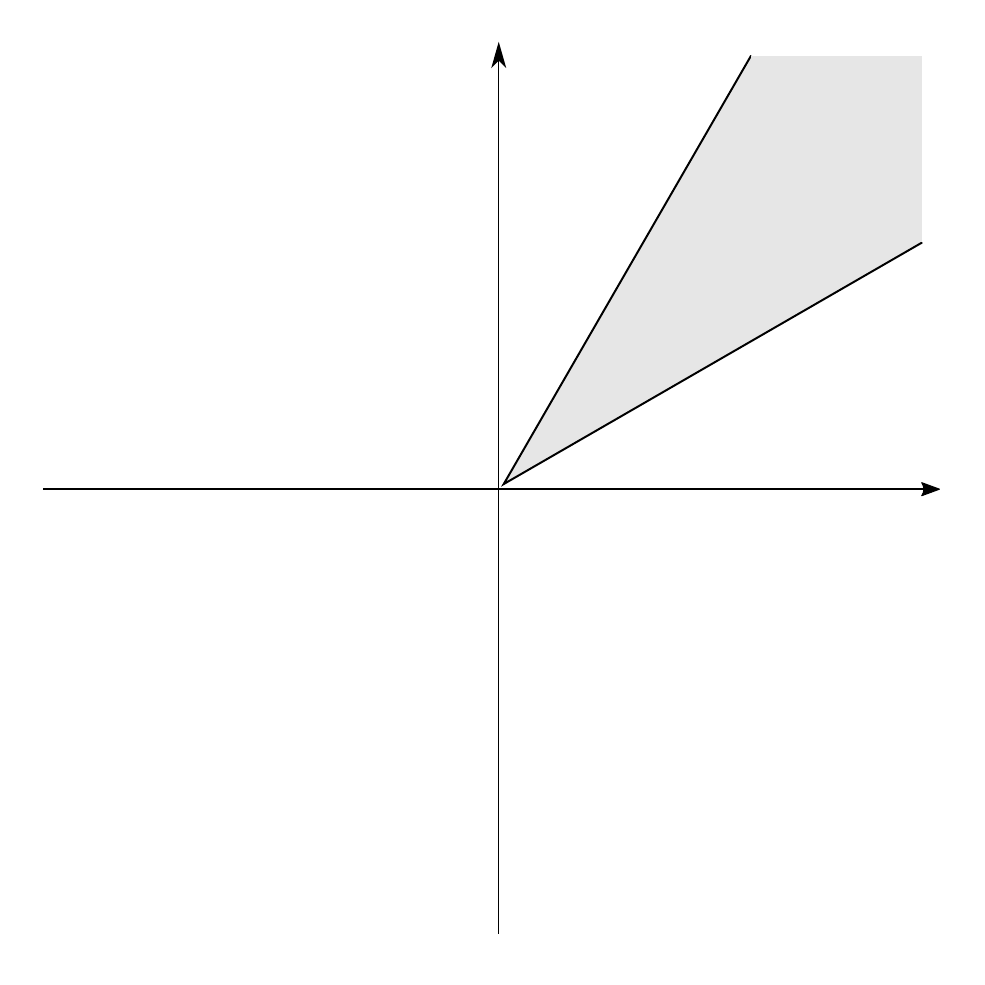}
\end{minipage}
\begin{minipage}{0.25\linewidth}
\centering{\def\svgwidth{\columnwidth}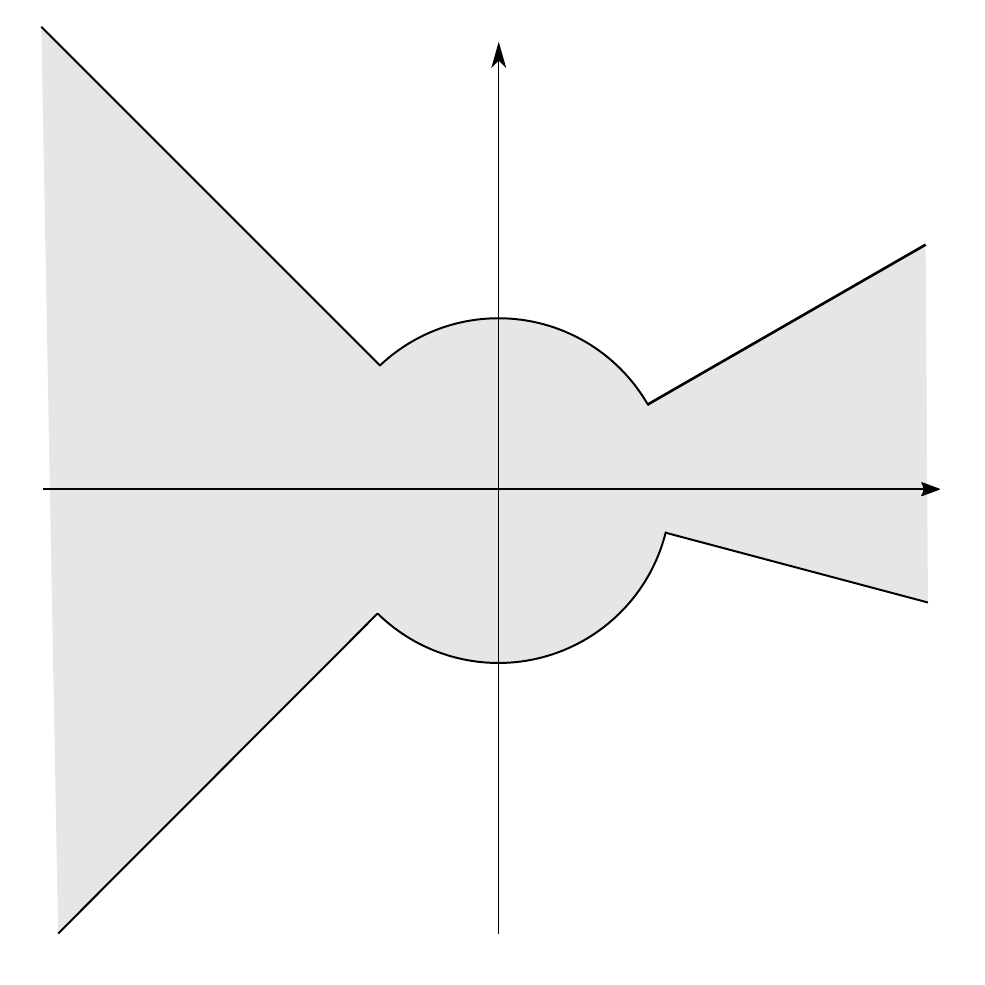}
\end{minipage}
\begin{minipage}{0.25\linewidth}
\centering{\def\svgwidth{\columnwidth}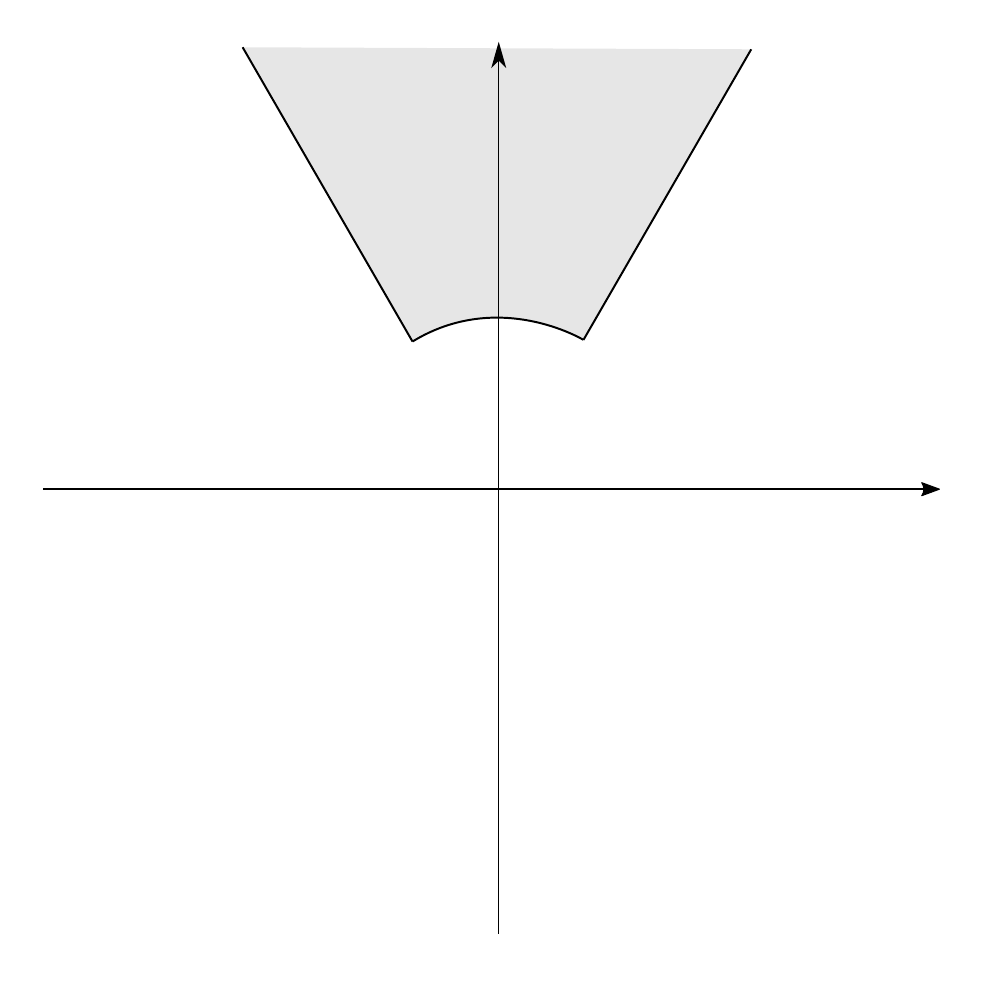}
\end{minipage}
\begin{minipage}{0.25\linewidth}
\centering{\def\svgwidth{\columnwidth}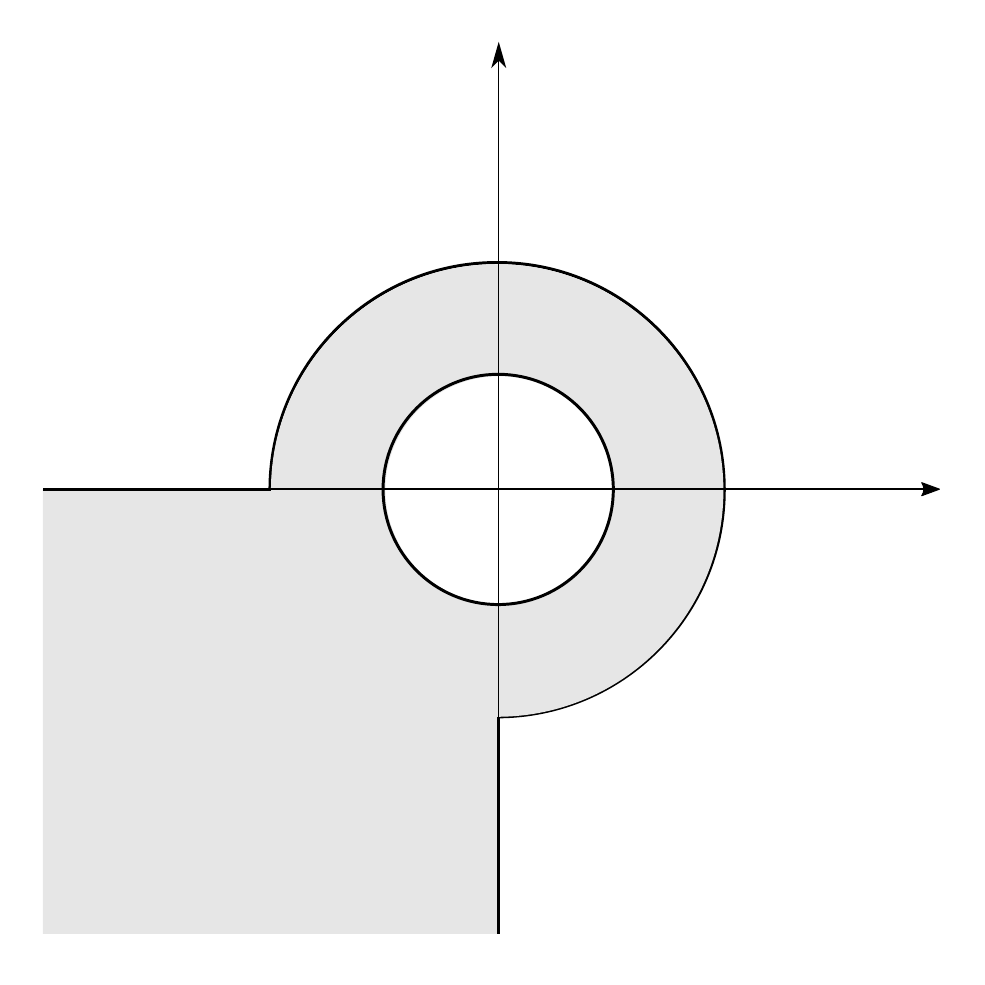}
\end{minipage}
\begin{minipage}{0.25\linewidth}
\centering{\def\svgwidth{\columnwidth}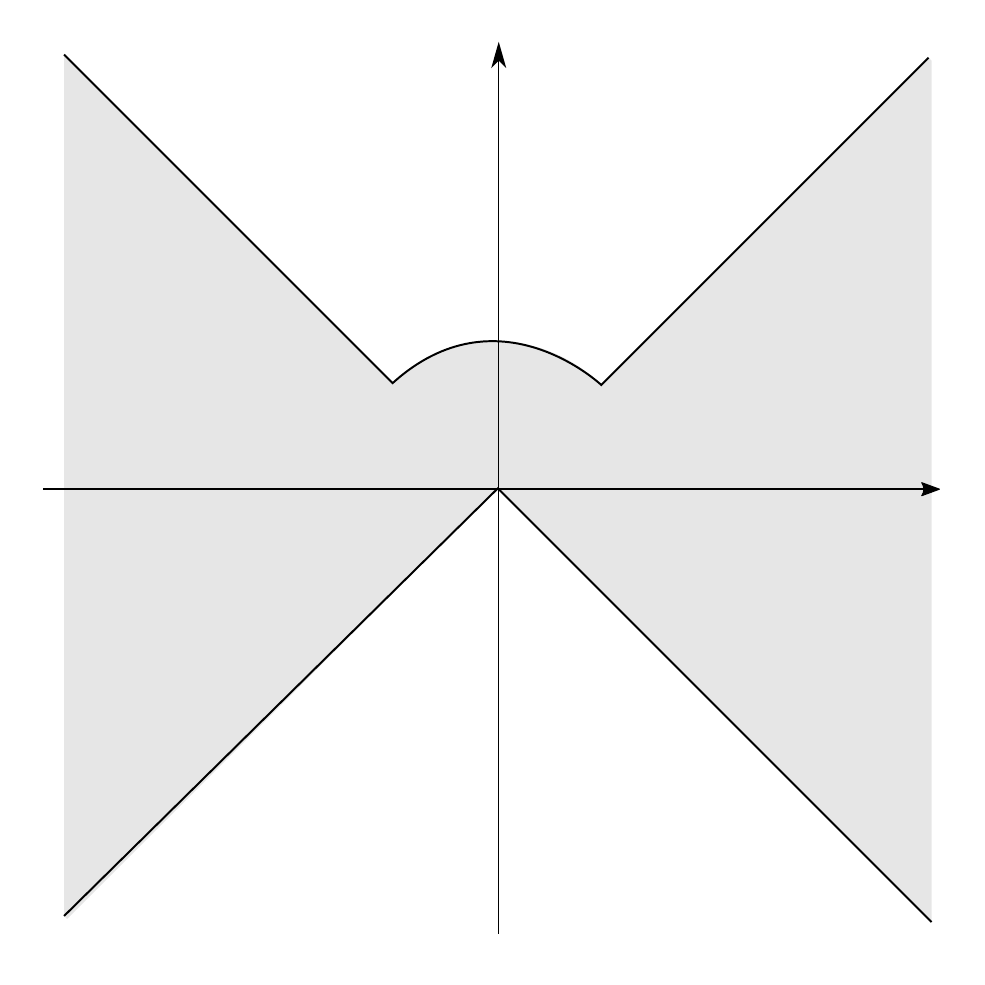}
\end{minipage}
\begin{minipage}{0.25\linewidth}
\centering{\def\svgwidth{\columnwidth}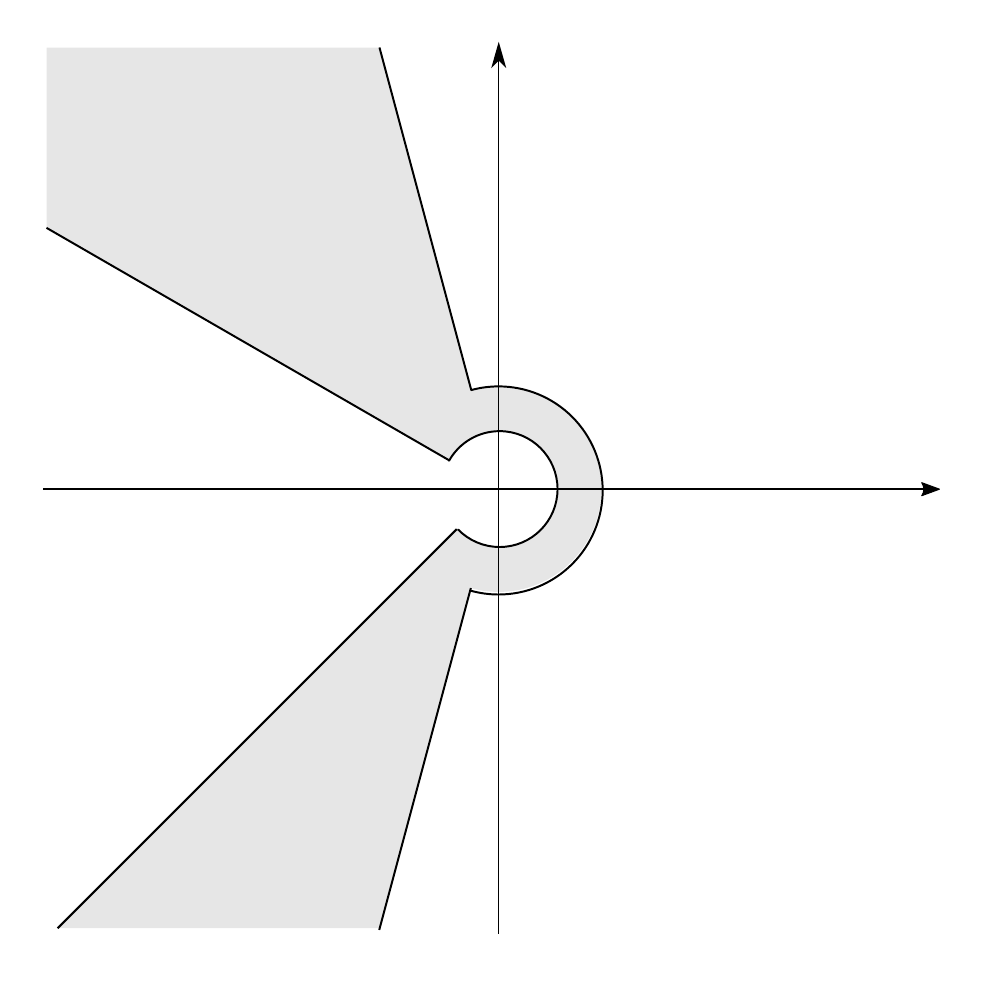}
\end{minipage}
\caption{Examples of Pseudo-Cones}
\end{figure}

\begin{definition}
 Given pseudo-cones  $\Lambda$ and $\Lambda'$ we shall write $\Lambda'\subsubset\Lambda$ to mean that $\overline{\Lambda'}\subset \op{Int}(\Lambda)$.
\end{definition}

\begin{remark}
If $\Theta$ and $\Theta'$ are the respective conical parts of $\Lambda$ and $\Lambda'$, then the relation $\Lambda'\subsubset\Lambda$ implies that 
$\Theta'\cap\bS^1$ is a relatively compact subset of $\Theta \cap \bS^{1}$ (where $\bS^{1}\subset \C$ is the unit circle). 
\end{remark}

 \begin{remark}
 If $\Lambda \subset \C^*$, then the relation $\Lambda'\subsubset\Lambda$ implies there is an open disk $D$ about the origin such that $\Lambda'\subset \C\setminus D$. 
 Thus, there is $r>0$ such that $|\lambda|\geq r$ for all $\lambda \in \Lambda'$.  
\end{remark}

\begin{remark} \label{rmk:Parameter.pseudo-cone-exhaustion}
Any open pseudo-cone $\Lambda$ admits a pseudo-cone exhaustion $\Lambda = \bigcup_{j\geq 0} \Lambda_j$, where the $\Lambda_j$ are closed pseudo-cones such that  $\Lambda_j \subsubset \Lambda_{j+1}$. 
\end{remark}
 
Throughout this section we let $\Lambda$ be an \emph{open} pseudo-cone. 

\begin{definition}
 $\Hol^d(\Lambda)$, $d\in \R$, consists of holomorphic functions $f:\Lambda \rightarrow \C$ such that, for every pseudo-cone $\Lambda'\subsubset \Lambda$, there is $C_{\Lambda'}>0$ such that 
\begin{equation*}
 |f(\lambda)| \leq C_{\Lambda'} (1+|\lambda|)^d \qquad \forall \lambda \in \Lambda'. 
\end{equation*}
\end{definition}

We equip the vector space $\Hol^d(\Lambda)$, $d\in \R$, with the locally convex topology generated by the semi-norms, 
\begin{equation*}
 f \longrightarrow \sup_{\lambda \in \Lambda'} (1+|\lambda|)^{-d} |f(\lambda)|, 
\end{equation*}
where $\Lambda'$ ranges over pseudo-cones $\Lambda'\subsubset \Lambda$. This turns $\Hol^d(\Lambda)$ into a Fr\'echet space. The space $\Hol^{-\infty}(\Lambda)$ is also a Fr\'echet space with respect to the topology generated by these semi-norms, where $\Lambda'$ ranges over all pseudo-cones $\subsubset \Lambda$ and $d$ ranges over $\R$. 

In what follows, given an open $\Omega \subset \C$ and a locally convex space $\sE$, we shall say that a map $f:\Omega\rightarrow \sE$ is \emph{holomorphic} at a given point $\lambda_0\in \Omega$ when 
\begin{equation*}
 \partial_{\lambda}f(\lambda_0):= \lim_{\lambda \rightarrow \lambda_0} \frac{f(\lambda)-f(\lambda_0)}{\lambda -\lambda_0} \ \text{exists in} \ \sE. 
\end{equation*}
We say that $f$ is \emph{holomorphic} on $\Omega$ when it is holomorphic at every point of $\Omega$.

\begin{remark}
 Let $f:\Omega \rightarrow \sE$ be a holomorphic map. Then, for all $\lambda\in \Omega$, we have the Cauchy formula, 
\begin{equation}
 f(\lambda)= \frac{1}{2i\pi} \int_{\Gamma}f(\zeta)(\zeta-\lambda)^{-1}d\zeta,
 \label{eq:symbols.Cauchy-Formula}
\end{equation}
where $\Gamma$ is any direct-oriented Jordan curve contained in $\Omega$  whose interior contains $\lambda$. Indeed, for all $\varphi \in \cE'$, the composition $\varphi\circ f$ is a holomoprhic function on $\Omega$, and so we have 
\begin{equation*}
 \varphi\big[f(\lambda)\big] = \frac{1}{2i\pi} \int_{\Gamma}(\varphi\circ f)(\zeta)(\zeta-\lambda)^{-1}d\zeta = \frac{1}{2i\pi} \varphi\biggl(   \int_{\Gamma}f(\zeta)(\zeta-\lambda)^{-1}d\zeta\biggr). 
\end{equation*}
As $\sE'$ separates the points of $\sE$ this gives~(\ref{eq:symbols.Cauchy-Formula}).  If we denote by $D$ the interior of $\Gamma$, then the integrand in the right-hand side of~(\ref{eq:symbols.Cauchy-Formula}) lies in $C^\infty(D\times \Gamma;\sE)$, and so the integral actually converges in $C^\infty(D; \sE)$ (see, e.g.,~\cite[Appendix C]{HLP:Part1}). It then follows that $f$ is a smooth map from $\Omega$ to $\sE$. Moreover, by differentiating under the integral sign (see, e.g., \cite[Appendix C]{HLP:Part1}) we see that $f(\lambda)$ satisfies the Cauchy-Riemann equation $\partial_{\overline{\lambda}}f=0$. Conversely, any differentiable map satisfying this equation is a holomorphic map. 
\end{remark}

\begin{definition}\label{def:symbols.Hold-sE}
Suppose that $\sE$ is a locally convex space. Then $\Hol^d(\Lambda;\sE)$, $d\in\R$, consists of holomorphic maps $x: \Lambda \rightarrow \sE$ such that, for every continuous semi-norm $p$ on $\sE$ and every pseudo-cone $\Lambda'\subsubset\Lambda$, there is $C_{p\Lambda'}>0$ such that
\begin{equation} \label{eq:Parameter.vector-with-parameter-estimate}
p\left[x(\lambda)\right]\leq C_{p\Lambda'}(1+|\lambda|)^d \qquad \forall\lambda\in\Lambda' .
\end{equation}
\end{definition}

\begin{remark}
 $\Hol^d(\Lambda; \sE)$, $d\in \R$, is a locally convex space with respect to the topology generated by the semi-norms, 
 \begin{equation} \label{eq:Parameter.vector-with-parameter-semi-norm}
x(\lambda) \longrightarrow \sup_{\lambda\in\Lambda'}(1+|\lambda|)^{-d} p\left[x(\lambda)\right], 
\end{equation}
where $p$ ranges over continuous semi-norms on $\sE$ and $\Lambda'$ ranges over pseudo-cones $\Lambda'\subsubset \Lambda$. In addition, if $\sE$ is a Fr\'echet space, then $\Hol^d(\Lambda; \sE)$ is a Fr\'echet space as well. 
\end{remark}

\begin{proposition} \label{prop:Parameter.linear-map}
Suppose that $\sE$ and $\sF$ are locally convex spaces, and let $d\in \R$. Any continuous linear map $T:\sE \rightarrow \sF$ gives rise to a continuous linear map $T:\Hol^d(\Lambda; \sE) \rightarrow \Hol^d(\Lambda; \sF)$.
 \end{proposition}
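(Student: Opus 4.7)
The plan is to verify the three conditions that define membership in $\Hol^d(\Lambda;\sF)$ for the composition $T\circ x$, given $x\in \Hol^d(\Lambda;\sE)$, and then read continuity off the semi-norm estimates. The key observation is that $T$ being continuous and linear lets it commute with the limits that define holomorphy and that any continuous semi-norm on $\sF$ pulls back to a continuous semi-norm on $\sE$.

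First I would check that $T\circ x:\Lambda \to \sF$ is holomorphic. For $\lambda_0\in \Lambda$, since $x$ is holomorphic at $\lambda_0$ the difference quotient $(\lambda-\lambda_0)^{-1}[x(\lambda)-x(\lambda_0)]$ converges to $\partial_\lambda x(\lambda_0)$ in $\sE$ as $\lambda\to\lambda_0$. Because $T$ is continuous and linear,
\[
    \frac{T[x(\lambda)]-T[x(\lambda_0)]}{\lambda-\lambda_0}
    = T\!\left(\frac{x(\lambda)-x(\lambda_0)}{\lambda-\lambda_0}\right)
    \xrightarrow[\lambda\to\lambda_0]{} T\!\left(\partial_\lambda x(\lambda_0)\right) \ \text{in}\ \sF.
\]
Hence $T\circ x$ is holomorphic on $\Lambda$ with derivative $T\circ \partial_\lambda x$.

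Next I would establish the growth estimate~(\ref{eq:Parameter.vector-with-parameter-estimate}) for $T\circ x$. Let $q$ be a continuous semi-norm on $\sF$ and $\Lambda'\subsubset \Lambda$. The continuity of $T$ means that $p:=q\circ T$ is a continuous semi-norm on $\sE$. Applying the estimate for $x$ with this semi-norm yields $C_{p\Lambda'}>0$ such that
\[
    q\bigl[T(x(\lambda))\bigr] = p\bigl[x(\lambda)\bigr]
    \leq C_{p\Lambda'}(1+|\lambda|)^d \qquad \forall\lambda\in \Lambda'.
\]
This shows $T\circ x\in \Hol^d(\Lambda;\sF)$.

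Finally, linearity of the induced map $T:\Hol^d(\Lambda;\sE)\to\Hol^d(\Lambda;\sF)$ is immediate from the linearity of $T$, so only continuity remains. The displayed inequality above says precisely that the semi-norm of $T\circ x$ of the form~(\ref{eq:Parameter.vector-with-parameter-semi-norm}) attached to $(q,\Lambda')$ is bounded by the semi-norm of $x$ of that same form attached to $(q\circ T,\Lambda')$; since each generating semi-norm on the target is dominated by a generating semi-norm on the source, continuity follows. There is no genuine obstacle here: the entire argument is a bookkeeping exercise that follows from the defining property of continuous linear maps between locally convex spaces and the fact that holomorphy in $\sE$ is defined via $\sE$-valued limits that $T$ preserves.
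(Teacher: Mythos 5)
Your argument is correct and follows the same route as the paper: both observe that for a continuous semi-norm $q$ on $\sF$, the pullback $q\circ T$ is a continuous semi-norm on $\sE$, which gives both the growth estimate and the continuity of the induced map on the $\Hol^d$ spaces. The only difference is that you spell out the holomorphy of $T\circ x$ via the difference quotient, whereas the paper simply asserts it as an immediate consequence of continuity and linearity of $T$.
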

\begin{proof}
Let $x(\lambda)\in\Hol^d(\Lambda;\sE)$, $d\in \R$. As $T$ is a continuous linear map, the composition $T[x(\lambda)]$ is a holomorphic map from $\Lambda$ to $\sF$.
Let $q$ be a continuous semi-norm on $\sF$. Then $q\circ T$ is a continuous semi-norm on $\sE$, and so, for every pseudo-cone $\Lambda'\subsubset \Lambda$, we have 
\begin{equation*}
 \sup_{\lambda \in \Lambda'} (1+|\lambda|)^{-d}q\left(T\big[x(\lambda)\big]\right)= \sup_{\lambda \in \Lambda'} (1+|\lambda|)^{-d}(q\circ T)\big[x(\lambda)\big]<\infty. 
\end{equation*}
This shows that $T[x(\lambda)]$ is contained in $ \Hol^d(\Lambda; \sF)$ and depends continuously on $x(\lambda)$. The proof is complete. 
\end{proof}

For bilinear bilinear maps we have the following statement. 

\begin{proposition} \label{prop:Parameter.bilinear-map}
 Suppose that $\sE_i$, $i=1,2$, and $\sF$ are locally convex spaces, and let $d_1, d_2\in \R$. Any (jointly) continuous bilinear map $\Phi:\sE_1\times \sE_2 \rightarrow \sF$ gives rise to  a continuous bilinear map $ \Phi: \Hol^{d_1}(\Lambda;\sE_1)\times \Hol^{d_2}(\Lambda;\sE_2) \rightarrow \Hol^{d_1+d_2}(\Lambda; \sF)$. 
\end{proposition}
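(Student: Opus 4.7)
The plan is to verify three things in order: holomorphy of the composition $\lambda \mapsto \Phi(x_1(\lambda), x_2(\lambda))$, the $\op{O}((1+|\lambda|)^{d_1+d_2})$-growth of its semi-norms, and the continuity of the induced bilinear map on the product Fr\'echet-type spaces. The key input is the standard characterization of joint continuity for bilinear maps between locally convex spaces: for every continuous semi-norm $q$ on $\sF$ there exist continuous semi-norms $p_i$ on $\sE_i$ ($i=1,2$) such that
\begin{equation*}
 q\bigl[\Phi(e_1,e_2)\bigr]\leq p_1(e_1)\, p_2(e_2) \qquad \forall (e_1,e_2)\in \sE_1\times \sE_2.
\end{equation*}

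\emph{Holomorphy.} Fix $x_i(\lambda)\in \Hol^{d_i}(\Lambda;\sE_i)$, $i=1,2$. At any $\lambda_0\in\Lambda$, I would write the difference quotient as
\begin{equation*}
 \frac{\Phi(x_1(\lambda),x_2(\lambda))-\Phi(x_1(\lambda_0),x_2(\lambda_0))}{\lambda-\lambda_0} = \Phi\!\left(\frac{x_1(\lambda)-x_1(\lambda_0)}{\lambda-\lambda_0},x_2(\lambda)\right) + \Phi\!\left(x_1(\lambda_0),\frac{x_2(\lambda)-x_2(\lambda_0)}{\lambda-\lambda_0}\right).
\end{equation*}
Since $\partial_\lambda x_i(\lambda_0)$ exists in $\sE_i$ and $x_2(\lambda)\to x_2(\lambda_0)$ in $\sE_2$ (holomorphic maps into locally convex spaces are in particular continuous), joint continuity of $\Phi$ lets us pass to the limit and conclude that the composition is holomorphic with $\lambda$-derivative $\Phi(\partial_\lambda x_1,x_2)+\Phi(x_1,\partial_\lambda x_2)$.

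\emph{Growth estimate and continuity.} Given a continuous semi-norm $q$ on $\sF$, choose $p_1,p_2$ as in the joint-continuity characterization. For any pseudo-cone $\Lambda'\subsubset\Lambda$, the definition of $\Hol^{d_i}(\Lambda;\sE_i)$ yields constants $C_i=C_i(p_i,\Lambda')$ such that $p_i[x_i(\lambda)]\leq C_i(1+|\lambda|)^{d_i}$ for $\lambda\in\Lambda'$. Therefore
\begin{equation*}
 (1+|\lambda|)^{-(d_1+d_2)}\, q\bigl[\Phi(x_1(\lambda),x_2(\lambda))\bigr] \leq (1+|\lambda|)^{-(d_1+d_2)} p_1[x_1(\lambda)]\, p_2[x_2(\lambda)] \leq C_1 C_2,
\end{equation*}
uniformly for $\lambda\in\Lambda'$. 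This simultaneously places $\Phi(x_1,x_2)$ in $\Hol^{d_1+d_2}(\Lambda;\sF)$ and controls its defining semi-norm~(\ref{eq:Parameter.vector-with-parameter-semi-norm}) by the product of the analogous semi-norms of $x_1$ and $x_2$, which is exactly the quantitative form of joint continuity for the induced bilinear map.

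The only subtle point is the holomorphy step, where one needs to know that a holomorphic map into a locally convex space is automatically continuous (so that $x_2(\lambda)\to x_2(\lambda_0)$ in $\sE_2$), and that joint continuity of $\Phi$ then genuinely allows the limit to be taken inside both arguments simultaneously; separate continuity would not suffice. Everything else is a direct application of the product semi-norm estimate, so the proof is short once the right characterization of joint continuity is invoked.
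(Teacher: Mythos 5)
Your proof is correct and follows essentially the same route as the paper's: the growth estimate and the continuity of the induced bilinear map both rest on the identical product-semi-norm characterization of joint continuity. The only difference is that the paper asserts the holomorphy of $\lambda\mapsto\Phi[x_1(\lambda),x_2(\lambda)]$ as an immediate consequence of bilinearity and joint continuity, whereas you spell it out with the Leibniz-type difference-quotient decomposition — a harmless (and arguably clarifying) expansion of the same argument.
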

\begin{proof}
Let $x_j(\lambda)\in\Hol^{d_j}(\Lambda;\sE_j)$, $j=1,2$. As $x_1(\lambda)$ (resp., $x_2(\lambda)$) is a holomorphic family with values in $\sE_1$ (resp., $\sE_2$) and the map $\Phi:\sE_1\times \sE_2\rightarrow \sF$ is continuous and bilinear, we see that $\Phi[x_1(\lambda),x_2(\lambda)]$, $\lambda \in \Lambda$, is a holomorphic family in $\sF$. Moreover, the continuity of $\Phi$ means that, given any continuous semi-norm $q$ on $\sF$, there are a continuous semi-norm $p_1$ on $\sE_1$ and 
a continuous semi-norm $p_2$ on $\sE_2$ such that 
\begin{equation*}
q \big( \Phi[x_1,x_2] \big) \leq p_1(x_1) p_2(x_2)  \qquad \forall x_i \in \sE_i.
\end{equation*}
Thus, for every pseudo-cone $\Lambda'\subsubset \Lambda$, we have 
\begin{align*}
 \sup_{\lambda \in \Lambda'} (1+|\lambda|)^{-(d_1+d_2)} q \big( \Phi[x_1(\lambda),x_2(\lambda) ] \big)  & \leq 
   \sup_{\lambda \in \Lambda'} (1+|\lambda|)^{-d_1} p_1[x_1(\lambda)]  \cdot  \sup_{\lambda \in \Lambda'} (1+|\lambda|)^{-d_2} p_2[x_2(\lambda)]\\
    &<\infty.  
\end{align*}
This shows that  $\Phi[x_1(\lambda),x_2(\lambda) ]$ is an $\Hol^{d_1+d_2}(\Lambda)$-family and depends continuously on $x_1(\lambda)$ and $x_2(\lambda)$. The proof is complete. 
\end{proof}

In addition, it is also convenient to introduce the following class of maps. 

\begin{definition}
 Suppose that $\Omega$ is an open set of $\R^N\times \C$ for some $N\geq 1$ and $\sE$ is a locally convex space. Then $C^{\infty,\omega}(\Omega; \sE)$ consists of maps $\Omega\ni(\eta,\lambda) \rightarrow f(\eta;\lambda)\in \sE$ that are $C^\infty$ with respect to $\eta$ and holomorphic with respect to $\lambda$.
\end{definition}

\begin{remark}\label{rmk:Symbols.identification-Cinftyw}
 By using the Cauchy formula~(\ref{eq:symbols.Cauchy-Formula}) we can identify  $C^{\infty,\omega}(\Omega; \sE)$ with the (closed) subspace of $C^\infty(\Omega;\sE)$ of solutions of the Cauchy-Riemann equation $\partial_{\widebar{\lambda}} f=0$. In particular, when $\Omega=V\times \Lambda$, where $V\subset \R^N$ is an open set, then under the natural identification $C^\infty(V\times \Lambda; \sE)\simeq C^\infty(\Lambda; C^\infty(V;\sE))$ we get a one-to-one correspondence, 
\begin{equation*}
 C^{\infty,\omega}(\Omega; \sE)\simeq \Hol\big(\Lambda; C^\infty(V;\sE)\big). 
\end{equation*}
\end{remark}

\subsection{Standard parametric symbols} 
\begin{definition}
 $\stS^{m,d}(\R^n\times \Lambda; \cA_\theta)$, $m,d\in \R$, consists of maps $\rho(\xi;\lambda)\in C^{\infty,\omega} (\R^n\times \Lambda;\cA_\theta)$ such that, for all pseudo-cones $\Lambda'\subsubset \Lambda$ and multi-orders $\alpha$ and $\beta$, there is $C_{\Lambda'\alpha\beta}>0$ such that
\begin{equation} \label{eq:Parameter.standard-symbol-with-parameter-estimate}
 \big\|\delta^\alpha \partial_\xi^\beta \rho(\xi;\lambda)\big\| \leq  C_{\Lambda'\alpha\beta}(1+|\lambda|)^d(1+|\xi|)^{m-|\beta|} \qquad \forall (\xi, \lambda)\in \R^n\times \Lambda'. 
\end{equation}
\end{definition}

\begin{example}
 Any symbol $\rho(\xi)\in \stS^m(\R^n; \cA_\theta)$ can be regarded as an element of $\stS^{m,0}(\R^n\times \Lambda;\cA_\theta)$ that does not depend on $\lambda$. 
\end{example}

\begin{remark}
 $\stS^{m,d}(\Rn\times\Lambda;\cA_\theta)$ is a locally convex space with respect to the locally convex topology generated  by the semi-norms,
\begin{equation} \label{eq:Parameter.standard-symbol-with-parameter-semi-norm}
\rho(\xi;\lambda)\longrightarrow\max_{|\alpha|+|\beta|\leq N}\sup_{(\xi,\lambda)\in\Rn\times\Lambda'}(1+|\lambda|)^{-d}(1+|\xi|)^{|\beta|-m}\|\delta^\alpha\partial_\xi^\beta\rho(\xi;\lambda)\| , 
\end{equation}
where $N$ ranges over non-negative integers and $\Lambda'$ ranges over pseudo-cones such that $\Lambda'\subsubset\Lambda$. 
\end{remark}

\begin{remark} \label{rmk:Parameter.standard-symbols-with-parameter-identification}
Let $\rho(\xi;\lambda)\in \stS^{m,d}(\R^n\times \Lambda;\cA_\theta)$. For every $\lambda\in \Lambda$ we get a symbol $\rho(\cdot;\lambda)\in \stS^m(\R^n;\cA_\theta)$, and so we get a family in $\stS^m(\R^n;\cA_\theta)$ parametrized by $\Lambda$. This is actually an $\Hol^d(\Lambda)$-family. This provides us with a  one-to-one correspondence,  
\begin{equation}
 \stS^{m,d}(\R^n\times \Lambda; \cA_\theta)\simeq \Hol^d(\Lambda;\stS^m(\R^n;\cA_\theta)).
 \label{eq:Parameter.parametric-symbols-identification} 
\end{equation}
Under this correspondence the semi-norms~(\ref{eq:Parameter.standard-symbol-with-parameter-semi-norm}) generate the topology of $\Hol^d(\Lambda;\stS^m(\R^n;\cA_\theta))$, and so we actually have a topological vector space isomorphism.  
\end{remark}

\begin{definition}
 $\stS^{-\infty,d}(\R^n\times \Lambda; \cA_\theta)$, $d\in \R$, consists of maps $\rho(\xi;\lambda)\in C^{\infty,\omega}(\R^n\times \Lambda;\cA_\theta)$ such that, given any $N\geq 0$, for all pseudo-cones $\Lambda'\subsubset \Lambda$ and multi-orders $\alpha$ and $\beta$, there is $C_{\Lambda'N\alpha\beta}>0$ such that
\begin{equation*}
 \big\|\delta^\alpha \partial_\xi^\beta \rho(\xi;\lambda)\big\| \leq  C_{\Lambda'N\alpha\beta}(1+|\lambda|)^d(1+|\xi|)^{-N} \qquad \forall (\xi, \lambda)\in \R^n\times \Lambda'. 
\end{equation*}
\end{definition}

\begin{remark}
We have $\stS^{-\infty,d}(\R^n\times \Lambda; \cA_\theta)=\bigcap_{m\in \R} \stS^{m,d}(\R^n\times \Lambda; \cA_\theta)$. 
\end{remark}

\begin{remark}
$\stS^{-\infty,d}(\R^n\times \Lambda; \cA_\theta)$ is a locally convex space with respect to the topology generated by the semi-norms~(\ref{eq:Parameter.standard-symbol-with-parameter-semi-norm}), where we let $m$ ranges over all real numbers.
\end{remark}

\begin{remark}
 The one-to-one correspondance~(\ref{eq:Parameter.parametric-symbols-identification}) induces a topological vector space isomorphism,  
\begin{equation}
\stS^{-\infty,d}(\R^n\times \Lambda; \cA_\theta)\simeq \Hol^d(\Lambda;\cS(\R^n;\cA_\theta)).
\label{eq:Parameter.smoothing-parametric-symbols-identification}
\end{equation}
\end{remark}

Combining the topological vector space identifications~(\ref{eq:Parameter.parametric-symbols-identification}) and~(\ref{eq:Parameter.smoothing-parametric-symbols-identification}) with Proposition~\ref{prop:Parameter.linear-map} yields the following result. 

\begin{proposition}\label{prop:Parameter.linear-map-symbols}
 Suppose that $\sE$ is a locally convex space, and let $d\in \R$. 
\begin{enumerate}
 \item Given $m\in \R$, any continuous linear map $T:\stS^{m}(\R^n; \cA_\theta) \rightarrow \sE$ gives rise to a continuous linear map 
 $T:\stS^{m,d}(\R^n\times \Lambda; \cA_\theta) \rightarrow \Hol^d(\Lambda;\sE)$. 
 
 \item Any continuous linear map $T:\cS(\R^n; \cA_\theta) \rightarrow \sE$ gives rise to a continuous linear map 
 $T:\stS^{-\infty,d}(\R^n\times \Lambda; \cA_\theta) \rightarrow \Hol^d(\Lambda;\sE)$.
\end{enumerate}
\end{proposition}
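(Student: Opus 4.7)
The plan is to deduce both statements directly from the topological vector space isomorphisms
\[
 \stS^{m,d}(\R^n\times \Lambda; \cA_\theta)\simeq \Hol^d\bigl(\Lambda;\stS^m(\R^n;\cA_\theta)\bigr), \qquad
 \stS^{-\infty,d}(\R^n\times \Lambda; \cA_\theta)\simeq \Hol^d\bigl(\Lambda;\cS(\R^n;\cA_\theta)\bigr)
\]
of~(\ref{eq:Parameter.parametric-symbols-identification}) and~(\ref{eq:Parameter.smoothing-parametric-symbols-identification}), together with the functoriality statement in Proposition~\ref{prop:Parameter.linear-map}. Essentially, once a parametric symbol has been recognized as an $\Hol^d(\Lambda)$-family of non-parametric symbols, its image under $T$ becomes an $\Hol^d(\Lambda)$-family in $\sE$, and the continuity of $T$ guarantees the required estimates uniformly over pseudo-cones $\Lambda'\subsubset\Lambda$.

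More concretely, to prove (i), I would take a continuous linear map $T:\stS^m(\R^n;\cA_\theta)\to\sE$ and apply Proposition~\ref{prop:Parameter.linear-map} with $\sF=\sE$ and with the first factor $\sE$ there replaced by $\stS^m(\R^n;\cA_\theta)$. This yields a continuous linear map
\[
 T:\Hol^d\bigl(\Lambda;\stS^m(\R^n;\cA_\theta)\bigr)\longrightarrow \Hol^d(\Lambda;\sE).
\]
Pre-composing with the isomorphism~(\ref{eq:Parameter.parametric-symbols-identification}) then gives the desired continuous linear map
\[
 T:\stS^{m,d}(\R^n\times\Lambda;\cA_\theta)\longrightarrow \Hol^d(\Lambda;\sE).
\]
At the level of families, $T$ acts as $\rho(\xi;\lambda)\mapsto T[\rho(\cdot;\lambda)]$. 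Statement (ii) is obtained by exactly the same argument with $\stS^m(\R^n;\cA_\theta)$ replaced by $\cS(\R^n;\cA_\theta)$ and~(\ref{eq:Parameter.parametric-symbols-identification}) replaced by~(\ref{eq:Parameter.smoothing-parametric-symbols-identification}).

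There is essentially no obstacle here: the content has been absorbed into Remark~\ref{rmk:Parameter.standard-symbols-with-parameter-identification} (which promotes the set-theoretic identification~(\ref{eq:Parameter.parametric-symbols-identification}) to a topological isomorphism by matching the defining semi-norms) and Proposition~\ref{prop:Parameter.linear-map}. The only thing worth checking explicitly is that, under the identification, the map $\rho(\xi;\lambda)\mapsto T[\rho(\cdot;\lambda)]$ corresponds to post-composition by $T$ in $\Hol^d(\Lambda;\bullet)$, which is immediate from the definitions. Thus the whole argument is a one-line composition of two already-established functorialities.
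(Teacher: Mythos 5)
Your proof is correct and follows exactly the paper's own route: the paper also obtains this proposition by combining the topological vector space identifications~(\ref{eq:Parameter.parametric-symbols-identification}) and~(\ref{eq:Parameter.smoothing-parametric-symbols-identification}) with Proposition~\ref{prop:Parameter.linear-map}. Your write-up merely spells out that one-line composition in slightly more detail.
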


Likewise, by combining the identification~(\ref{eq:Parameter.parametric-symbols-identification}) with Proposition~\ref{prop:Parameter.bilinear-map} we arrive at the following statement. 

\begin{proposition}\label{prop:Parameter.bilinear-map-symbols}
 Given $m_1,m_2, m_3\in \R$ and $d_1,d_2\in \R$, any (jointly) continuous bilinear map $\Phi: \stS^{m_1}(\R^n; \cA_\theta)\times  \stS^{m_2}(\R^n; \cA_\theta) \rightarrow  \stS^{m_3}(\R^n; \cA_\theta)$ gives rise to a continuous bilinear map 
 $\Phi: \stS^{m_1,d_1}(\R^n\times \Lambda; \cA_\theta)\times  \stS^{m_2, d_2}(\R^n\times \Lambda; \cA_\theta) \rightarrow  \stS^{m_3,d_1+d_2}(\R^n\times \Lambda; \cA_\theta)$.
\end{proposition}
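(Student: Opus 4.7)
The plan is to reduce the claim to Proposition~\ref{prop:Parameter.bilinear-map} (the bilinear version of the $\Hol^d$-lifting result) by passing through the topological vector space isomorphism~(\ref{eq:Parameter.parametric-symbols-identification}).

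First, I would invoke Remark~\ref{rmk:Parameter.standard-symbols-with-parameter-identification} to obtain, for each $i \in \{1,2\}$ and also for the target index~$3$, a topological vector space isomorphism
$\stS^{m_i,d_i}(\R^n\times\Lambda;\cA_\theta)\simeq \Hol^{d_i}(\Lambda;\stS^{m_i}(\R^n;\cA_\theta))$, where a parametric symbol $\rho(\xi;\lambda)$ corresponds to the family $\lambda \mapsto \rho(\cdot;\lambda)$ with values in the Fr\'echet space $\stS^{m_i}(\R^n;\cA_\theta)$. The analogous identification holds for $m_3$ with exponent $d_1+d_2$.

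Next, I would apply Proposition~\ref{prop:Parameter.bilinear-map} to the jointly continuous bilinear map $\Phi$ with $\sE_1=\stS^{m_1}(\R^n;\cA_\theta)$, $\sE_2=\stS^{m_2}(\R^n;\cA_\theta)$, and $\sF=\stS^{m_3}(\R^n;\cA_\theta)$. This produces a continuous bilinear map
\begin{equation*}
\widetilde{\Phi}: \Hol^{d_1}\bigl(\Lambda;\stS^{m_1}(\R^n;\cA_\theta)\bigr)\times \Hol^{d_2}\bigl(\Lambda;\stS^{m_2}(\R^n;\cA_\theta)\bigr) \longrightarrow \Hol^{d_1+d_2}\bigl(\Lambda;\stS^{m_3}(\R^n;\cA_\theta)\bigr),
\end{equation*}
given pointwise by $\widetilde{\Phi}(x_1,x_2)(\lambda)=\Phi[x_1(\lambda),x_2(\lambda)]$. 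Translating through~(\ref{eq:Parameter.parametric-symbols-identification}) in both the source and the target, I get the natural pointwise extension of $\Phi$ to parametric symbols, namely $(\rho_1,\rho_2)\mapsto \bigl((\xi,\lambda)\mapsto \Phi[\rho_1(\cdot;\lambda),\rho_2(\cdot;\lambda)](\xi)\bigr)$, and continuity on the parametric-symbol spaces follows from the continuity of $\widetilde{\Phi}$ together with the bicontinuity of~(\ref{eq:Parameter.parametric-symbols-identification}).

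I do not anticipate any genuine obstacle: the argument is essentially a mechanical composition of the bilinear $\Hol^d$-lifting of Proposition~\ref{prop:Parameter.bilinear-map} with the identification~(\ref{eq:Parameter.parametric-symbols-identification}), in perfect parallel with the proof of Proposition~\ref{prop:Parameter.linear-map-symbols} for the linear case. The only point requiring a brief check is that the resulting map indeed lands in $C^{\infty,\omega}(\R^n\times\Lambda;\cA_\theta)$, i.e., that joint smoothness in $\xi$ and holomorphy in $\lambda$ are preserved; this is automatic since via~(\ref{eq:Parameter.parametric-symbols-identification}) elements of $\Hol^{d_1+d_2}(\Lambda;\stS^{m_3}(\R^n;\cA_\theta))$ tautologically correspond to smooth-in-$\xi$, holomorphic-in-$\lambda$ maps satisfying the required estimates.
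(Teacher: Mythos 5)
Your argument is precisely the one the paper uses: the text immediately preceding the proposition says it follows "by combining the identification~(\ref{eq:Parameter.parametric-symbols-identification}) with Proposition~\ref{prop:Parameter.bilinear-map}," which is exactly your reduction. Your added remark about membership in $C^{\infty,\omega}(\R^n\times\Lambda;\cA_\theta)$ is a harmless extra check that is indeed built into the identification.
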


\subsection{Asymptotic expansions of parametric symbols}
In what follows we let $(m_j)_{j \geq 0}$ be a (strictly) decreasing sequence of real numbers converging to $-\infty$. 

\begin{definition} \label{def:Parameter.Standard-asymptotic}
Given $\rho(\xi;\lambda)\in C^{\infty,\omega}(\Rn\times\Lambda;\cA_\theta)$ and $\rho_{j}(\xi;\lambda)\in\stS^{m_j,d}(\Rn\times\Lambda;\cA_\theta)$, $j\geq 0$, we shall write $\rho(\xi;\lambda)\sim\sum_{j\geq 0}\rho_{j}(\xi;\lambda)$ when
\begin{equation} \label{eq:Parameter.Standard-asymptotic}
\rho(\xi;\lambda)-\sum_{j<N}\rho_{j}(\xi;\lambda)\in\stS^{m_N,d}(\Rn\times\Lambda;\cA_\theta) \qquad \text{for all $N\geq 0$} .
\end{equation}
\end{definition}

\begin{remark}
 The condition~(\ref{eq:Parameter.Standard-asymptotic}) for $N=0$ means that $\rho(\xi;\lambda)\in \stS^{m_0,d}(\Rn\times\Lambda;\cA_\theta)$. 
\end{remark}

For our purpose, it will be convenient to have a \emph{qualitative} version of the conditions~(\ref{eq:Parameter.Standard-asymptotic}).  

\begin{lemma} \label{lem:Parameter.standard-symbol-asymptotics-qualitative-equivalence}
 Given $\rho(\xi;\lambda)\in C^{\infty,\omega}(\Rn\times\Lambda;\cA_\theta)$ and $\rho_{j}(\xi;\lambda)\in\stS^{m_j,d}(\Rn\times\Lambda;\cA_\theta)$, $j\geq 0$, the following are equivalent:
\begin{enumerate}
 \item[(i)] $\rho(\xi;\lambda)\sim\sum_{j\geq 0}\rho_{j}(\xi;\lambda)$ in the sense of~(\ref{eq:Parameter.Standard-asymptotic}). 
 
 \item[(ii)] For all $N\geq 0$, given any pseudo-cone $\Lambda'\subsubset \Lambda$ and multi-orders $\alpha$ and $\beta$, as soon as $J$ is large enough there is a constant $C>0$ such that, for all $(\xi,\lambda)\in \R^n\times \Lambda'$, we have 
\begin{equation} \label{eq:Parameter.Standard-asymptotic-qualitative}
 \big\|\delta^\alpha \partial_\xi^\beta \big(\rho-\sum_{j<J}\rho_{j}\big)(\xi;\lambda)\big\| \leq C(1+|\lambda|)^d(1+|\xi|)^{-N}. 
\end{equation}
\end{enumerate}
\end{lemma}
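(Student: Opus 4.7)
The plan is to prove the two implications separately, with the pseudo-cone $\Lambda'\subsubset\Lambda$ fixed throughout so that the $(1+|\lambda|)^d$ factor passes through uniformly and the parameter $\lambda$ plays no essential role.

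The direction (i)~$\Rightarrow$~(ii) is immediate from unpacking Definition~\ref{def:Parameter.Standard-asymptotic}. Given $N\geq 0$, $\Lambda'$, and multi-orders $\alpha,\beta$, I would pick any $J$ with $m_J\leq -N$ (possible since $m_j\to -\infty$). By~(i) the remainder $\rho-\sum_{j<J}\rho_j$ lies in $\stS^{m_J,d}(\R^n\times\Lambda;\cA_\theta)$, and the defining estimate~(\ref{eq:Parameter.standard-symbol-with-parameter-estimate}) gives
\begin{equation*}
\Big\|\delta^\alpha\partial_\xi^\beta\Big(\rho-\sum_{j<J}\rho_j\Big)(\xi;\lambda)\Big\|\leq C(1+|\lambda|)^d(1+|\xi|)^{m_J-|\beta|}\leq C(1+|\lambda|)^d(1+|\xi|)^{-N},
\end{equation*}
since $m_J-|\beta|\leq m_J\leq -N$. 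Any larger truncation index also works, so~(ii) holds.

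For the main direction (ii)~$\Rightarrow$~(i), I fix $N\geq 0$, a pseudo-cone $\Lambda'\subsubset\Lambda$, and multi-orders $\alpha,\beta$; the task is to bound $\delta^\alpha\partial_\xi^\beta(\rho-\sum_{j<N}\rho_j)$ by $C(1+|\lambda|)^d(1+|\xi|)^{m_N-|\beta|}$ on $\R^n\times\Lambda'$. The key idea is to trade differentiation for a deeper truncation: choose a non-negative integer $M$ with $M\geq|\beta|-m_N$ and apply~(ii) with this $M$ in place of $N$. This yields an index $J\geq N$, depending on $(N,\alpha,\beta,\Lambda')$, such that
\begin{equation*}
\Big\|\delta^\alpha\partial_\xi^\beta\Big(\rho-\sum_{j<J}\rho_j\Big)(\xi;\lambda)\Big\|\leq C(1+|\lambda|)^d(1+|\xi|)^{-M}\leq C(1+|\lambda|)^d(1+|\xi|)^{m_N-|\beta|}.
\end{equation*}
Telescoping,
\begin{equation*}
\rho-\sum_{j<N}\rho_j=\Big(\rho-\sum_{j<J}\rho_j\Big)+\sum_{j=N}^{J-1}\rho_j,
\end{equation*}
and for each $j$ with $N\leq j<J$ the symbol $\rho_j$ lies in $\stS^{m_j,d}$ with $m_j\leq m_N$ (the sequence is decreasing), so that
\begin{equation*}
\|\delta^\alpha\partial_\xi^\beta\rho_j(\xi;\lambda)\|\leq C_j(1+|\lambda|)^d(1+|\xi|)^{m_j-|\beta|}\leq C_j(1+|\lambda|)^d(1+|\xi|)^{m_N-|\beta|}.
\end{equation*}
Summing the finitely many contributions yields the desired estimate, which is precisely the statement that $\rho-\sum_{j<N}\rho_j\in\stS^{m_N,d}(\R^n\times\Lambda;\cA_\theta)$.

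I do not anticipate a genuine obstacle here: the argument is essentially bookkeeping and exactly parallels the classical equivalence for asymptotic expansions of ordinary symbols on $\R^n$. The only subtle point worth underlining is that the truncation index $J$ is allowed to depend on $(N,\alpha,\beta,\Lambda')$, which is harmless since~(i) consists of a separate family of estimates for each pair $(\alpha,\beta)$. The holomorphy in $\lambda$ and the growth factor $(1+|\lambda|)^d$ enter only as spectators preserved at every step, so no refinement of the argument in the $\lambda$-variable is needed.
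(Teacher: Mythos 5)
Your proof is correct and follows essentially the same route as the paper: the easy direction (i)~$\Rightarrow$~(ii) is read off directly from Definition~\ref{def:Parameter.Standard-asymptotic}, and the converse is obtained by applying~(ii) with a depth $M$ large enough to absorb the loss $|\beta|$ in the $\xi$-exponent, then telescoping $\rho-\sum_{j<N}\rho_j=(\rho-\sum_{j<J}\rho_j)+\sum_{N\le j<J}\rho_j$ and using that each tail term $\rho_j$ with $j\ge N$ already lies in $\stS^{m_j,d}\subset\stS^{m_N,d}$. The only difference is cosmetic: the paper treats $\alpha=\beta=0$ first and then says ``likewise,'' whereas you spell out the choice $M\ge|\beta|-m_N$ explicitly for the general case, which is exactly what the paper's ``likewise'' abbreviates.
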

\begin{proof}
 It is immediate that (i) implies (ii), so we only have to prove the converse. Suppose that (ii) holds. Let $N\geq 0$. Then (ii) implies that, given any pseudo-cone 
 $\Lambda'\subsubset \Lambda$, if $J$ is large enough, then there is $C>0$ such that 
\begin{equation} \label{eq:Parameter-Standard-asymptotic-qualitative-to-usual-definition-proof}
 \big\|\rho(\xi;\lambda)-\sum_{j<J}\rho_{j}(\xi;\lambda)\big\| \leq C(1+|\lambda|)^{d}(1+|\xi|)^{m_N}\qquad \forall (\xi,\lambda)\in \R^n\times \Lambda'.
\end{equation}
If we take $J$ large enough so as to have  $J\geq N$ as well, then we have 
\begin{equation*}
\rho(\xi;\lambda)-\sum_{j<N}\rho_{j}(\xi;\lambda)= \rho(\xi;\lambda)-\sum_{j<J}\rho_{j}(\xi;\lambda)+ \sum_{N\leq j<J} \rho_{j}(\xi;\lambda).  
\end{equation*}
If $j \geq N$, then $\rho_{j}(\xi;\lambda)$ satisfies the estimate~(\ref{eq:Parameter-Standard-asymptotic-qualitative-to-usual-definition-proof}),  since it is contained in $\stS^{m_j, d}(\R^n\times \Lambda; \cA_\theta) \subset \stS^{m_N, d}(\R^n\times \Lambda; \cA_\theta)$. Therefore, the difference $\rho(\xi;\lambda)-\sum_{j<N}\rho_{j}(\xi;\lambda)$ satisfies the estimate~(\ref{eq:Parameter-Standard-asymptotic-qualitative-to-usual-definition-proof}).

Likewise, given any multi-orders $\alpha$ and $\beta$ there is $C_{N\Lambda'\alpha\beta}>0$ such that 
\begin{equation*}
 \big\|\delta^\alpha \partial_\xi^\beta\big(\rho-\sum_{j<N}\rho_{j}\big)(\xi;\lambda)\big\| \leq C_{N\Lambda'\alpha\beta}(1+|\lambda|)^{d}(1+|\xi|)^{m_N-|\beta|} \qquad \forall (\xi,\lambda)\in \R^n\times \Lambda'. 
\end{equation*}
This shows that $\rho(\xi;\lambda)-\sum_{j<N}\rho_{j}(\xi;\lambda)\in \stS^{m_N,d}(\R^n\times \Lambda; \cA_\theta)$ for all $N\geq 0$. That is, we have $\rho(\xi;\lambda)\sim\sum_{j\geq 0}\rho_{j}(\xi;\lambda)$ in the sense of~(\ref{eq:Parameter.Standard-asymptotic}). The proof is complete.   
\end{proof}

\subsection{Homogeneous parametric symbols}
Throughout the rest of this paper we let $w$ be some fixed (strictly) positive real number. We also denote by $\Theta$ the conical component of $\Lambda$. Note this is an open cone contained in $\C^*$. In addition, given any $c\geq 0$, we  set
\begin{equation} \label{eq:Parameter.parameter-set-for-homogeneous-symbols}
\Omega_c(\Theta) = \big\{ (\xi,\lambda)\in(\Rn\setminus 0)\times\C; \ \text{$\lambda\in\Theta$ or $|\lambda|<c|\xi|^w$} \big\} .
\end{equation}
In particular, $\Omega_0(\Theta) = (\Rn\setminus 0)\times\Theta$.

\begin{definition} \label{def:Parameter.homogeneous-symbol}
$S_m^d(\Omega_c(\Theta);\cA_\theta)$, $m,d\in\R$, consists of maps $\rho(\xi;\lambda)\in C^{\infty,\omega}(\Omega_c(\Theta);\cA_\theta)$ that satisfy the following two properties:
\begin{enumerate}
\item[(i)] $\rho(t\xi;t^w\lambda) = t^m\rho(\xi;\lambda)$ for all $(\xi,\lambda)\in\Omega_c(\Theta)$ and $t>0$.

\item[(ii)] Given any cone $\Theta'$ such that $\overline{\Theta'}\setminus \{0\} \subset \Theta$, for all compacts $K\subset \R^n\setminus 0$ and multi-orders $\alpha$, $\beta$, there is $C_{\Theta'K\alpha\beta}>0$ such that 
\begin{equation} \label{eq:Parameter.homogeneous-symbol-with-parameter-estimate}
 \norm{\delta^\alpha \partial_\xi^\beta \rho(\xi;\lambda)} \leq C_{\Theta'K\alpha\beta} \big(1+|\lambda|\big)^d \qquad \forall (\xi,\lambda)\in K\times \Theta'. 
\end{equation}
\end{enumerate}
\end{definition}

\begin{example} \label{ex:Parameter.homogeous-symbol-can-be-seen-as-homogeneous-symbol-with-parameter}
 Any homogeneous symbol $\rho(\xi)\in S_m(\R^n; \cA_\theta)$ can be regarded as an element of $S^{0}_m(\Omega_c(\Theta);\cA_\theta)$ for every $c>0$. 
\end{example}

\begin{remark} \label{rem:Parameter.homogeneous-symbol-partial-derivative}
If $\rho(\xi;\lambda)\in S_m^d(\Omega_c(\Theta);\cA_\theta)$, then $\delta^\alpha\partial_\xi^\beta\rho(\xi;\lambda)\in S_{m-|\beta|}^d(\Omega_c(\Theta);\cA_\theta)$ for all $\alpha,\beta\in\N_0^n$.
\end{remark}

\begin{remark} \label{rmk:Parameter.homogeneous-symbol-product}
 If $\rho_1(\xi;\lambda)\in S_{m_1}^{d_1} (\Omega_c(\Theta);\cA_\theta)$ and $\rho_2(\xi;\lambda)\in S_{m_2}^{d_2} (\Omega_c(\Theta);\cA_\theta)$, then 
 $\rho_1(\xi;\lambda)\rho_2(\xi;\lambda)$ is contained in $S_{m_1+m_2}^{d_1+d_2} (\Omega_c(\Theta);\cA_\theta)$. 
\end{remark}

The next lemma shows how to cut off homogeneous parametric symbols in order to get standard parametric symbols. 

\begin{lemma} \label{lem:Parameter.homogeneous-symbol-estimate}
Suppose that $\rho(\xi;\lambda)\in S_m^d(\Omega_c(\Theta);\cA_\theta)$, $m,d\in\R$. Let $\chi(\xi)\in C_c^\infty(\Rn)$ be such that $\chi(\xi) = 1$ for 
$|\xi|\leq (c^{-1}R)^{\frac{1}{w}}$, where $R>0$ is such that $\Lambda \subset \Theta \cup D(0,R)$. 
\begin{enumerate}
 \item[(i)] If $d\geq 0$, then $(1-\chi(\xi))\rho(\xi;\lambda)\in \stS^{m,d}(\Rn\times\Lambda;\cA_\theta)$.
 
 \item[(ii)] If $d<0$, then  
 \begin{equation}
 \label{eq:Parameter.homogeneous-symbol-times-cutoff-is-standard-symbol-negative-d-case}
 (1-\chi(\xi))\rho(\xi;\lambda)\in  \bigcap_{d\leq d'\leq 0} \stS^{m+w|d'|,d'}(\Rn\times\Lambda;\cA_\theta). 
 \end{equation}
\end{enumerate}
\end{lemma}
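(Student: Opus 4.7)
The first step is to verify that $(1-\chi(\xi))\rho(\xi;\lambda)$ is well defined and jointly smooth in $\xi$ and holomorphic in $\lambda$ on $\R^n\times\Lambda$: on $\supp(1-\chi)=\{|\xi|\geq(c^{-1}R)^{1/w}\}$ one has $c|\xi|^w\geq R$, so any $\lambda\in\Lambda\subset\Theta\cup D(0,R)$ satisfies either $\lambda\in\Theta$ or $|\lambda|<R\leq c|\xi|^w$, which places $(\xi,\lambda)$ inside $\Omega_c(\Theta)$. Since $1-\chi$ vanishes near $\xi=0$, the product extends smoothly by zero across the origin.

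Fix a pseudo-cone $\Lambda'\subsubset\Lambda$ with conical part $\Theta'$. Then $\overline{\Theta'}\setminus 0\subset\Theta$ and the non-conical piece $\Lambda'\setminus\Theta'$ is contained in some disk $D(0,R_1)$. Differentiating by Leibniz's rule gives
\begin{equation*}
\delta^\alpha\partial_\xi^\beta\bigl[(1-\chi)\rho\bigr]=(1-\chi(\xi))\delta^\alpha\partial_\xi^\beta\rho(\xi;\lambda)+\sum_{0\neq\gamma\leq\beta}\binom{\beta}{\gamma}\bigl(-\partial_\xi^\gamma\chi(\xi)\bigr)\delta^\alpha\partial_\xi^{\beta-\gamma}\rho(\xi;\lambda).
\end{equation*}
The error terms ($\gamma\neq 0$) have $\xi$-support in a fixed compact annulus $K\subset\R^n\setminus 0$. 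To bound them on $K\times\Lambda'$ I split $\lambda$ into the conical piece $\Theta'$ (on which Definition~\ref{def:Parameter.homogeneous-symbol}(ii) yields $\|\delta^\alpha\partial_\xi^{\beta-\gamma}\rho(\xi;\lambda)\|\leq C(1+|\lambda|)^d$) and the bounded piece $\Lambda'\cap\{|\lambda|\leq R_1\}$ (on which the compact set $K\times\overline{\Lambda'\cap\{|\lambda|\leq R_1\}}\subset\Omega_c(\Theta)$ makes $\delta^\alpha\partial_\xi^{\beta-\gamma}\rho$ bounded by continuity); on $K$ the factor $(1+|\xi|)^{m-|\beta|}$ is bounded away from $0$ and $\infty$, so these contributions are absorbed into the target estimates.

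For the main term $(1-\chi(\xi))\delta^\alpha\partial_\xi^\beta\rho(\xi;\lambda)$, first enlarge the auxiliary annulus $K$ above (if necessary) so that the remaining part of $\supp(1-\chi)$ lies in $\{|\xi|\geq 1\}$. By Remark~\ref{rem:Parameter.homogeneous-symbol-partial-derivative}, $\delta^\alpha\partial_\xi^\beta\rho\in S_{m-|\beta|}^d(\Omega_c(\Theta);\cA_\theta)$, and homogeneity gives
\begin{equation*}
\delta^\alpha\partial_\xi^\beta\rho(\xi;\lambda)=|\xi|^{m-|\beta|}\,(\delta^\alpha\partial_\xi^\beta\rho)(\hat\xi;\mu),\qquad\hat\xi=\xi/|\xi|,\ \mu=\lambda/|\xi|^w.
\end{equation*}
When $\lambda\in\Theta'$, then $\mu\in\Theta'$ too and Definition~\ref{def:Parameter.homogeneous-symbol}(ii) on $\bS^{n-1}\times\Theta'$ yields $\|(\delta^\alpha\partial_\xi^\beta\rho)(\hat\xi;\mu)\|\leq C(1+|\mu|)^d$. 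When $\lambda\in\Lambda'\setminus\Theta'$ the modulus $|\lambda|\leq R_1$ is bounded, so $(\hat\xi,\mu)$ ranges over a fixed compact subset of $\Omega_c(\Theta)$ and the same bound follows from continuity.

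It remains to convert the pointwise estimate $C|\xi|^{m-|\beta|}(1+|\mu|)^d$ into the required parametric symbol bound. For~(i), $d\geq 0$, on $|\xi|\geq 1$ we have $1+|\mu|\leq 1+|\lambda|$ since $|\xi|^w\geq 1$, so $(1+|\mu|)^d\leq(1+|\lambda|)^d$, giving the $\stS^{m,d}(\R^n\times\Lambda;\cA_\theta)$ estimate. For~(ii), $d<0$ and $d'\in[d,0]$, rewrite
\begin{equation*}
|\xi|^{m-|\beta|}(1+|\mu|)^d=|\xi|^{m-|\beta|+w|d'|}\left(\frac{|\xi|^w}{|\xi|^w+|\lambda|}\right)^{|d|-|d'|}(|\xi|^w+|\lambda|)^{-|d'|}.
\end{equation*}
The middle factor is $\leq 1$ since $|d|\geq|d'|$, while $|\xi|^w+|\lambda|\geq 1+|\lambda|$ on $|\xi|\geq 1$ yields $(|\xi|^w+|\lambda|)^{-|d'|}\leq(1+|\lambda|)^{d'}$, producing the $\stS^{m+w|d'|,d'}$ estimate~(\ref{eq:Parameter.homogeneous-symbol-times-cutoff-is-standard-symbol-negative-d-case}). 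The main bookkeeping obstacle is the non-conical part of $\Lambda'$: there the homogeneity-based scaling does not automatically land $(\hat\xi,\mu)$ in $\bS^{n-1}\times\Theta'$, and one must fall back on continuity of $\rho$ on a suitable compact subset of $\Omega_c(\Theta)$.
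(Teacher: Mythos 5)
Your proof is correct and follows essentially the same approach as the paper's. Both arguments rest on the identical mechanism: the support condition $|\xi|\geq (c^{-1}R)^{1/w}$ guarantees $\supp(1-\chi)\times\Lambda\subset\Omega_c(\Theta)$, homogeneity scales $(\xi,\lambda)$ to $(\hat\xi,\mu)$ with $\hat\xi\in\bS^{n-1}$ and $\mu=\lambda/|\xi|^w$, the parameter is then split into a conical piece (where Definition~4.12(ii) applies) and a bounded piece (where $(\hat\xi,\mu)$ lands in a fixed compact subset of $\Omega_c(\Theta)$ because $|\mu|<cR'/R<c$), and finally elementary power inequalities convert $(1+|\mu|)^d$ to the desired $(1+|\lambda|)^{d'}$ weight at the cost of $|\xi|^{w|d'|}$. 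The only cosmetic differences are in bookkeeping: the paper proves a single auxiliary claim giving $\|\delta^\alpha\partial_\xi^\beta\rho\|\le C(1+|\lambda|)^d|\xi|^{m+wd_-\,-|\beta|}$ uniformly on $U\times\Lambda'$ and then applies Leibniz, whereas you apply Leibniz first and handle the compactly supported commutator terms separately; and the paper writes the conversion once using $d_-=\max(0,-d)$ and then obtains the intersection over $d'\in[d,0]$ by the inclusion $S^d_q\subset S^{d'}_q$, whereas you carry $d'$ explicitly through the algebra. Neither of these alters the substance of the argument.
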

\begin{proof}
As the relation $\Lambda'\subsubset \Lambda$ implies that $\Lambda'\subsubset \Theta \cup D(0,R)$, the restriction to $\R^n\times \Lambda$ of any 
element in $\stS^{m,d}(\R^n\times (\Theta \cup D(0,R)); \cA_\theta)$ is contained in $\stS^{m,d}(\R^n\times \Lambda; \cA_\theta)$. Therefore, without any loss of generality we may assume that $\Lambda = \Theta \cup D(0,R)$. 
 
Set $B=\{\xi\in \R^n; \ |\xi| <(c^{-1}R)^{\frac1{w}}\}$ and $U=\R^n\setminus B=\{\xi\in \R^n;\ c|\xi|^w\geq R\}$. As $\Lambda = \Theta \cup D(0,R)$, we have $U\times \Lambda = (U\times \Theta)\cup (U\times D(0,R))$. It is immediate that $U\times \Theta\subset \Omega_c(\Theta)$. Moreover, if $(\xi,\lambda)\in U\times D(0,R)$, then
 $c|\xi|^w\geq R>|\lambda|$, and hence $(\xi,\lambda)\in \Omega_c(\Theta)$. Therefore, we see that $U\times \Lambda\subset \Omega_c(\Theta)$, and hence we have
\begin{equation*}
 \R^n\times \Lambda = \big( U\times \Lambda\big) \cup \big( B\times \Lambda\big) \subset \Omega_c(\Theta) \cup \big( B\times \Lambda\big). 
\end{equation*}
As $\rho(\xi; \lambda) \in C^{\infty,\omega}(\Omega_c(\Theta); \cA_\theta)$ and $\chi(\xi)=1$ on $B$, if we extend $(1-\chi(\xi))\rho(\xi;\lambda)$ to be zero on $B\times \Lambda$, then we get a $C^{\infty,\omega}$-map on $\Omega_c(\Theta) \cup ( B\times \Lambda)$. By restriction this gives a map in $C^{\infty,\omega}(\R^n\times \Lambda;\cA_\theta)$. It then remains to show that $(1-\chi(\xi))\rho(\xi;\lambda)$ satisfies the estimates~(\ref{eq:Parameter.standard-symbol-with-parameter-estimate}) on $\R^n\times \Lambda$. 

In what follows we set  $d_{-}=\max(0,-d)$. 

\begin{claim*}
Given any pseudo-cone $\Lambda'\subsubset \Lambda$ and multi-orders $\alpha, \beta \in \N_0^n$, there is $C_{\Lambda'\alpha\beta}>0$ such that
\begin{equation} \label{eq:Parameter.homogeneous-symbol-estimate-after-separate-xi}
\|\delta^\alpha\partial_\xi^\beta\rho(\xi;\lambda)\| \leq C_{\Lambda'\alpha\beta}(1+|\lambda|)^d|\xi|^{m+wd_{-}-|\beta|} \qquad \forall (\xi,\lambda)\in U\times\Lambda'. 
\end{equation}
\end{claim*}
\begin{proof}[Proof of the Claim]
Without any loss of generality we may assume that $\Lambda'=\Theta'\cup D(0,R')$, where $R'<R$ and $\Theta'\subset \C^*$ is a cone such that $\overline{\Theta'}\setminus \{0\}\subset \Theta$. Moreover, thanks to the homogeneity of $\delta^\alpha \partial_\xi^\beta \rho(\xi;\lambda)$, for all $(\xi,\lambda)\in \Omega_c(\Theta)$, we have 
\begin{equation} \label{eq:Parameter.homogeneous-symbol-estimate-inducing-to-compact-set}
\delta^\alpha \partial_\xi^\beta \rho(\xi;\lambda)  = \delta^\alpha \partial_\xi^\beta \rho\left[ |\xi|\big(|\xi|^{-1}\xi\big);  |\xi|^{w}\big(|\xi|^{-w}\lambda\big)\right] 
 = |\xi|^{m-|\beta|} \delta^\alpha \partial_\xi^\beta \rho\big(|\xi|^{-1}\xi; |\xi|^{-w}\lambda\big). 
\end{equation}

Set $D'=D(0,R')$, so that $\Lambda' = \Theta'\cup D'$. As $\bS^{n-1}$ is a compact subset of $\R^n\setminus 0$ and $\overline{\Theta'}\setminus \{0\}\subset \Theta$, we know by~(\ref{eq:Parameter.homogeneous-symbol-with-parameter-estimate}) there is $C_1>0$ such that
\begin{equation*}
 \big\| \delta^\alpha \partial_\xi^\beta \rho(\eta; \mu)\big\| \leq C_1 (1+|\mu|)^d \qquad \forall (\eta,\mu)\in \bS^{n-1}\times \Theta'. 
\end{equation*}
If $(\xi,\lambda)\in U\times \Theta'$, then $(|\xi|^{-1}\xi, |\xi|^{-w}\lambda)\in \bS^{n-1}\times \Theta'$. Therefore, by using~(\ref{eq:Parameter.homogeneous-symbol-estimate-inducing-to-compact-set}) we see that, for all 
$(\xi,\lambda)\in U\times \Theta'$,  we have 
\begin{equation} \label{eq:Parameter.homogeneous-symbol-estimate-on-cone}
  \big\| \delta^\alpha \partial_\xi^\beta \rho(\xi;\lambda)\big\| =  |\xi|^{m-|\beta|} \big\|\delta^\alpha \partial_\xi^\beta \rho\big(|\xi|^{-1}\xi; |\xi|^{-w}\lambda\big)\big\| 
   \leq C_1 |\xi|^{m-|\beta|}\big( 1+ |\xi|^{-w}|\lambda|\big)^d. 
\end{equation}

If $(\xi,\lambda) \in U\times D'$, then $c|\xi|^w\geq R= R(R')^{-1}R'\geq R(R')^{-1}|\lambda|$, and so $|\xi|^{-w}|\lambda|\leq cR'R^{-1}$. Thus, if we set $K=\bS^{n-1}\times 
\overline{D}(0,cR'R^{-1})$, then $K$ is compact and $(|\xi|^{-1}\xi, |\xi|^{-w}\lambda)\in K$ for all $(\xi,\lambda)\in U\times D'$. Note also that
\begin{align*}
 \Omega_c(\Theta) \cap \big(\bS^{n-1}\times \C\big) & = \left\{ (\xi,\lambda)\in \bS^{n-1}\times \C; \ \text{$\lambda\in\Theta$ or $|\lambda|<c|\xi|^w$}\right\} \\
 & = \left( \bS^{n-1}\times \Theta\right) \cup  \left( \bS^{n-1}\times D(0,c)\right) . 
\end{align*}
Therefore, we see that $K$ is a compact subset of $\Omega_c(\Theta)$. As $\rho(\xi,\lambda)\in C^{\infty,\omega}(\Omega_c(\Theta); \cA_\theta)$ we deduce there is $C_2>0$ such that, for all $(\eta,\mu)\in K$, we have
\begin{equation} \label{eq:Parameter.homogeneous-symbol-estimate-on-sphere-times-disk}
 \big\| \delta^\alpha \partial_\xi^\beta \rho(\eta; \mu)\big\| \leq C_2 \leq C_2'(1+|\mu|)^{d},
\end{equation}
where we have set $C_2'=C_2\sup\{(1+|\mu|)^{-d}; \mu\in D(0,cR'R^{-1})\}$. As mentioned above, if $(\xi,\lambda)$ is in $U\times D'$, then
$(|\xi|^{-1}\xi, |\xi|^{-w}\lambda)\in K$. Therefore, by using~(\ref{eq:Parameter.homogeneous-symbol-estimate-on-sphere-times-disk}) we deduce that, for all $(\xi,\lambda)\in U\times D'$, we have 
\begin{equation} \label{eq:Parameter.homogeneous-symbol-estimate-on-disk}
  \big\| \delta^\alpha \partial_\xi^\beta \rho(\xi;\lambda)\big\| = |\xi|^{m-|\beta|} \big\|\delta^\alpha \partial_\xi^\beta \rho\big(|\xi|^{-1}\xi; |\xi|^{-w}\lambda\big)\big\| 
   \leq C_2' |\xi|^{m-|\beta|}\big( 1+ |\xi|^{-w}|\lambda|\big)^d. 
\end{equation}

Note that if $\xi\in U$, then $c|\xi|^w \geq R$, and so $|\xi|^{-w} \leq c R^{-1}$ and $1\geq c^{-1} R|\xi|^{-w}$. Thus, 
\begin{gather*}
 1+|\xi|^{-w} |\lambda| \leq 1+ cR^{-1} |\lambda| \leq c_1(1+|\lambda|),\\
 1+|\xi|^{-w} |\lambda| \geq  c^{-1}R|\xi|^{-w} + |\xi|^{-w} |\lambda|  \geq c_1^{-1}|\xi|^{-w}(1+|\lambda|), 
\end{gather*}
where we have set $c_1=\max(1, cR^{-1})$. It then follows that
\begin{equation*}
 \big( 1+ |\xi|^{-w}|\lambda|\big)^d\leq c_1^{|d|}(1+|\lambda|)^d |\xi|^{wd_{-}} \qquad \forall (\xi,\lambda)\in U\times \Lambda'. 
\end{equation*}
Combining this with~(\ref{eq:Parameter.homogeneous-symbol-estimate-on-cone}) and~(\ref{eq:Parameter.homogeneous-symbol-estimate-on-disk}) proves the claim. 
\end{proof}

Given $\alpha,\beta\in\N_0^n$, let $\Lambda'$ be a pseudo-cone such that  $\Lambda'\subsubset \Lambda$. By Leibniz's rule we have
\begin{equation} \label{eq:Parameter.cutoff-function-times-homogeneous-symbol-Leibniz-rule}
\delta^\alpha\partial_\xi^\beta[(1-\chi(\xi))\rho(\xi;\lambda)]  = \sum \binom \beta {\beta'} \partial_\xi^{\beta'}(1-\chi(\xi))\delta^\alpha\partial_\xi^{\beta''}\rho(\xi;\lambda) ,
\end{equation}
where the sum ranges over all multi-orders $\beta'$ and $\beta''$ such that $\beta'+\beta'' = \beta$. Note that each $\partial_\xi^{\beta'}(1-\chi(\xi))$ is uniformly bounded on $\R^n$ and vanishes on $B\times \Lambda'$. Combining this with~(\ref{eq:Parameter.homogeneous-symbol-estimate-after-separate-xi}) allows us to show there is $C_{\Lambda'\alpha\beta}>0$ such that
\begin{equation} \label{eq:Parameter.cutoff-function-times-homogeneous-symbol-estimate-before-separate-xi}
\Big\| \delta^\alpha\partial_\xi^\beta[(1-\chi(\xi))\rho(\xi;\lambda)] \Big\| \leq  C_{\Lambda'\alpha\beta}(1+|\lambda|)^d(1+|\xi|)^{m+wd_{-}-|\beta|}
\quad \forall (\xi,\lambda)\in \R^n\times\Lambda'.
\end{equation}
This shows that $(1-\chi(\xi))\rho(\xi;\lambda)\in \stS^{m+wd_{-},d}(\R^n\times \Lambda; \cA_\theta)$. In particular,  $(1-\chi(\xi))\rho(\xi;\lambda)$ is contained in  $\stS^{m,d}(\R^n\times \Lambda; \cA_\theta)$ when $d\geq 0$. 

Suppose that $d<0$. In this case $d_{-}=-d=|d|$, and so  $(1-\chi(\xi))\rho(\xi;\lambda)$ is contained in $\stS^{m+w|d|,d}(\R^n\times \Lambda; \cA_\theta)$. Note that if $d'\in [d,0]$, then $\rho(\xi;\lambda)$ is contained in $S_q^{d'}(\Omega_c(\Theta);\cA_\theta)$, since $S_q^{d'}(\Omega_c(\Theta);\cA_\theta)$ contains $S_q^{d}(\Omega_c(\Theta);\cA_\theta)$. Therefore, the above arguments also show that  $(1-\chi(\xi))\rho(\xi;\lambda)$  is in $\stS^{m+w|d'|,d'}(\Rn\times\Lambda;\cA_\theta)$ for any $d'\in [d,0]$. This gives~(\ref{eq:Parameter.homogeneous-symbol-times-cutoff-is-standard-symbol-negative-d-case}). The proof is complete.
 \end{proof}

\subsection{Classical parametric symbols} 
In what follows, given any $r>0$, we denote by $B(r)$ the ball of radius $r$ about the origin in $\R^n$. We observe that if $c>0$ and $R>0$ is such that $\Lambda \subset \Theta \cup D(0,R)$, then 
\begin{equation*}
 \Omega_c(\Theta) \supset \big( \R^n \setminus B(r) \big) \times \Lambda \qquad \text{for all}\  r> (c^{-1}R)^{\frac1{w}}. 
\end{equation*}
Indeed, $(\R^n\setminus 0)\times \Theta \supset ( \R^n \setminus B(r))\times \Theta$. Moreover, if $|\lambda|<R$ and $|\xi|\geq r>(c^{-1}R)^{\frac1{w}}$, 
then $|\lambda|<R<c|\xi|^w$. Thus,
\begin{equation*}
  \Omega_c(\Theta) \supset \big[ \big( \R^n \setminus B(r) \big) \times \Theta\big] \bigcup \big[ \big( \R^n \setminus B(r) \big) \times D(0,R)\big] \supset \big( \R^n \setminus B(r) \big) \times \Lambda. 
\end{equation*}

\begin{definition} \label{def:Parameter.classical-symbol}
$S^{m,d}(\Rn\times\Lambda;\cA_\theta)$, $m, d\in\R$, consists of maps $\rho(\xi;\lambda)\in C^{\infty,d}(\Rn\times\Lambda;\cA_\theta)$ for which there are $c>0$ and 
$\rho_{m-j}(\xi;\lambda)\in S_{m-j}^d(\Omega_c(\Theta);\cA_\theta)$, $j\geq 0$, such that 
\begin{equation*}
\rho(\xi;\lambda)\sim\sum_{j\geq 0}\rho_{m-j}(\xi;\lambda), 
\end{equation*}
in the sense that, for all $N\geq 0$ and multi-orders $\alpha$, $\beta$, given any pseudo-cone $\Lambda'\subsubset \Lambda$ and $r>0$ such that 
$ \Omega_c(\Theta) \supset \big( \R^n \setminus B(r) \big) \times \Lambda'$, as soon as $J$ is large enough there is $C_{\Lambda'NJr\alpha \beta}>0$ such that, for all $(\xi,\lambda)\in (\R^n\setminus B(r)) \times\Lambda'$, we have 
\begin{equation} \label{eq:Parameter.classical-symbol-asymptotics}
 \big\| \delta^\alpha \partial_\xi^\beta \big(\rho-\sum_{j<J}\rho_{m-j}\big)(\xi;\lambda)\big\| \leq C_{\Lambda'NJr\alpha \beta}|\xi|^{-N}(1+|\lambda|)^{d} . 
\end{equation}
\end{definition}

\begin{example} \label{ex:Parameter.classical-symbol-can-be-seen-as-classical-symbol-with-parameter}
 Let $\rho(\xi)\in S^m(\R^n;\cA_\theta)$, $\rho(\xi)\sim \sum \rho_{m-j}(\xi)$. As mentioned in Example~\ref{ex:Parameter.homogeous-symbol-can-be-seen-as-homogeneous-symbol-with-parameter} each homogeneous symbol $\rho_{m-j}(\xi)$ can be regarded as an element of $S^{0}_{m-j}(\Omega_c(\Theta);\cA_\theta)$ for any $c>0$. Note also that having an asymptotic expansion $\rho(\xi)\sim \sum \rho_{m-j}(\xi)$ in the sense of~(\ref{eq:Symbols.classical-estimates}) implies that we also have an asymptotic expansion in the sense of~(\ref{eq:Parameter.classical-symbol-asymptotics}) with $d=0$. It then follows that $\rho(\xi)\in S^{m,0}(\R^n\times \Lambda;\cA_\theta)$. 
\end{example}


\begin{remark} \label{rem:Parameter.classical-symbol-partial-derivative}
Let $\rho(\xi;\lambda)\in S^{m,d}(\Rn\times\Lambda;\cA_\theta)$. Thus, $\rho(\xi;\lambda)\sim\sum_{j\geq 0}\rho_{m-j}(\xi;\lambda)$,  where $\rho_{m-j}(\xi;\lambda)$ is in $S_{m-j}^{d}(\Omega_c(\Theta);\cA_\theta)$. Given any multi-orders $\alpha$, $\beta$,  we also have $\delta^\alpha\partial_\xi^\beta\rho(\xi;\lambda)\sim\sum_{j\geq 0}\delta^\alpha\partial_\xi^\beta\rho_{m-j}(\xi;\lambda)$ in the sense of~(\ref{eq:Parameter.classical-symbol-asymptotics}), and hence $\delta^\alpha\partial_\xi^\beta\rho(\xi;\lambda)\in S^{m-|\beta|,d}(\Rn\times\Lambda;\cA_\theta)$.
\end{remark}

\begin{remark} \label{rmk:Parameter.classical-symbol-asymptotics-equivalent-conditions}
Suppose we are given $\rho_{m-j}(\xi;\lambda)\in S_{m-j}^{d}(\Omega_c(\Theta);\cA_\theta)$, $j\geq 0$. Let $\chi(\xi)\in C^\infty_c(\R^n)$ be as in Lemma~\ref{lem:Parameter.homogeneous-symbol-estimate}. Recall that by Lemma~\ref{lem:Parameter.homogeneous-symbol-estimate} $(1-\chi(\xi))\rho_{m-j}(\xi;\lambda)\in \stS^{m-j+wd_{-},d}(\Omega_c(\Theta);\cA_\theta)$, where $d_{-}=\max(0,-d)$.  
Then, the following are equivalent: 
\begin{enumerate}
 \item[(i)] $\rho(\xi;\lambda)\sim \sum_{j\geq 0} \rho_{m-j}(\xi;\lambda)$ in the sense of~(\ref{eq:Parameter.classical-symbol-asymptotics}). 
 
 \item[(ii)] $\rho(\xi;\lambda)\sim \sum_{j\geq 0} (1-\chi(\xi))\rho_{m-j}(\xi;\lambda)$ in the sense of~(\ref{eq:Parameter.Standard-asymptotic}) or~(\ref{eq:Parameter.Standard-asymptotic-qualitative}). 
\end{enumerate}
Note that here~(\ref{eq:Parameter.Standard-asymptotic}) means that,  for all $N\geq 0$, as soon as $J\geq N+wd_{-}$, we have 
\begin{equation} \label{eq:Parameter.classical-symbol-asymptotics-qualitative}
\rho(\xi;\lambda) - \sum_{j<J}  (1-\chi(\xi))\rho_{m-j}(\xi;\lambda) \in \stS^{m-N,d}(\R^n\times \Lambda; \cA_\theta). 
\end{equation}
This provides us with a quantitative version of the estimates~(\ref{eq:Parameter.classical-symbol-asymptotics}). 
 \end{remark}
 
Combining the above remark with Lemma~\ref{lem:Parameter.homogeneous-symbol-estimate} we also get the following result. 
\begin{proposition}\label{symbols:inclusion-classical-standard}
 Let $m,d\in \R$. 
\begin{enumerate}
 \item If $d\geq 0$, then $S^{m,d}(\Rn\times\Lambda;\cA_\theta)\subset\stS^{m,d}(\Rn\times\Lambda;\cA_\theta)$. 
 
 \item If $d<0$, then  
        \begin{equation*}
              S^{m,d}(\Rn\times\Lambda;\cA_\theta)\subset  \bigcap_{d\leq d'\leq 0} \stS^{m+w|d'|,d'}(\Rn\times\Lambda;\cA_\theta). 
         \end{equation*}
\end{enumerate}
 \end{proposition}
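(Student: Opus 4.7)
The proof is essentially a direct consequence of Lemma~\ref{lem:Parameter.homogeneous-symbol-estimate} combined with the equivalent qualitative characterization of the classical asymptotic expansion recorded in Remark~\ref{rmk:Parameter.classical-symbol-asymptotics-equivalent-conditions}. My plan is as follows.

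Let $\rho(\xi;\lambda)\in S^{m,d}(\R^n\times\Lambda;\cA_\theta)$ with asymptotic expansion $\rho\sim\sum_{j\geq 0}\rho_{m-j}$, where $\rho_{m-j}(\xi;\lambda)\in S_{m-j}^d(\Omega_c(\Theta);\cA_\theta)$ for some $c>0$. Fix a cutoff $\chi(\xi)\in C_c^\infty(\R^n)$ as in Lemma~\ref{lem:Parameter.homogeneous-symbol-estimate}. Set $d_-=\max(0,-d)$. By Remark~\ref{rmk:Parameter.classical-symbol-asymptotics-equivalent-conditions}, we know that for every $N\geq 0$ there exists $J\geq N+wd_-$ such that the remainder
\begin{equation*}
 R_J(\xi;\lambda):=\rho(\xi;\lambda)-\sum_{j<J}(1-\chi(\xi))\rho_{m-j}(\xi;\lambda)
\end{equation*}
is contained in $\stS^{m-N,d}(\R^n\times\Lambda;\cA_\theta)$. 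The key elementary observation I will use throughout is that, since $1+|\lambda|\geq 1$, the inequality $d\leq d'$ implies $(1+|\lambda|)^d\leq(1+|\lambda|)^{d'}$, and hence $\stS^{a,d}(\R^n\times\Lambda;\cA_\theta)\subset \stS^{a,d'}(\R^n\times\Lambda;\cA_\theta)$.

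In the case $d\geq 0$ we have $d_-=0$, so I may take $N=0$ and $J=0$; the identity $\rho=R_0$ together with $R_0\in \stS^{m,d}(\R^n\times\Lambda;\cA_\theta)$ immediately gives $\rho\in\stS^{m,d}(\R^n\times\Lambda;\cA_\theta)$. (Equivalently: Lemma~\ref{lem:Parameter.homogeneous-symbol-estimate}(i) gives each $(1-\chi)\rho_{m-j}\in\stS^{m-j,d}\subset\stS^{m,d}$, and the remainder lies in $\stS^{m-N,d}\subset\stS^{m,d}$ for $N\geq 0$.)

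In the case $d<0$, fix $d'\in[d,0]$. By part (ii) of Lemma~\ref{lem:Parameter.homogeneous-symbol-estimate} applied with this $d'$, we have $(1-\chi(\xi))\rho_{m-j}(\xi;\lambda)\in\stS^{m-j+w|d'|,d'}(\R^n\times\Lambda;\cA_\theta)\subset \stS^{m+w|d'|,d'}(\R^n\times\Lambda;\cA_\theta)$ for every $j\geq 0$. Choosing $N=0$ and $J\geq w|d|=wd_-$, the remainder satisfies $R_J\in\stS^{m,d}(\R^n\times\Lambda;\cA_\theta)\subset\stS^{m,d'}(\R^n\times\Lambda;\cA_\theta)\subset \stS^{m+w|d'|,d'}(\R^n\times\Lambda;\cA_\theta)$, where the two inclusions use $d\leq d'$ and $w|d'|\geq 0$ respectively. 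Writing $\rho$ as the sum of the $J$ cut-off homogeneous pieces plus $R_J$ then shows $\rho\in\stS^{m+w|d'|,d'}(\R^n\times\Lambda;\cA_\theta)$, and taking the intersection over $d'\in[d,0]$ gives the claim.

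There is no genuine obstacle here: the work has already been done in Lemma~\ref{lem:Parameter.homogeneous-symbol-estimate} and in the reformulation of the asymptotic expansion. The only point requiring some care is bookkeeping the monotonicity $\stS^{a,d}\subset\stS^{a,d'}$ for $d\leq d'$ together with the loss of $w|d'|$ in the symbolic order produced by Lemma~\ref{lem:Parameter.homogeneous-symbol-estimate}; everything else is a routine sum-plus-remainder argument.
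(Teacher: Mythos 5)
Your proof is correct and follows exactly the route the paper intends: the paper states this proposition with the one-line justification "combining the above remark with Lemma~\ref{lem:Parameter.homogeneous-symbol-estimate}," and your write-up is the natural expansion of that sentence. The only cosmetic remark is that Remark~\ref{rmk:Parameter.classical-symbol-asymptotics-equivalent-conditions} actually guarantees $R_J\in\stS^{m-N,d}$ for \emph{every} $J\geq N+wd_-$, not merely for some $J$, but since you only need one such $J$ this does not affect the argument.
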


\subsection{Borel lemma for parametric symbols} 
In the following, given any $m\in \R$, we denote by $\stS^m(\R^n)$ the space of scalar standard symbols on $\R^n$. It consists of functions $\sigma(\xi)\in C^\infty(\R^n)$ such that, for every multi-order $\alpha$, there is $C_\alpha>0$ such that
\begin{equation*}
 |\partial_\xi^\alpha \sigma(\xi)| \leq C_\alpha (1+|\xi|)^{m-|\alpha|} \qquad \text{for all $\xi \in \R^n$}. 
\end{equation*}
We equip $\stS^m(\R^n)$ with the locally convex topology generated by the semi-norms, 
\begin{equation*}
 \sigma(\xi) \longrightarrow \sup_{|\alpha|\leq N} \sup_{\xi \in \R^n} (1+|\xi|)^{-m+|\alpha|} |\partial_\xi^\alpha \sigma(\xi)|, \qquad N\geq 0. 
\end{equation*}

\begin{lemma}[{\cite[Prop.~18.1.2]{Ho:Springer85}}] \label{lem:Parameter.standard-symbol-density-lemma}
Let $\chi\in \cS(\R^n)$ be such that $\chi(0)=1$. Then the family $(\chi(\epsilon \xi))_{\epsilon >0}$ converges to $1$ in $\stS^m(\R^n)$ as $\epsilon \rightarrow 0^+$ for every $m>0$. 
\end{lemma}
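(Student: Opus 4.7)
The plan is to verify directly that for every multi-order $\alpha$ the quantity
\begin{equation*}
q_\alpha(\epsilon) := \sup_{\xi \in \R^n} (1+|\xi|)^{-m+|\alpha|} \big|\partial_\xi^\alpha\big(\chi(\epsilon\xi) - 1\big)\big|
\end{equation*}
tends to $0$ as $\epsilon \to 0^+$. Since the semi-norms generating the topology of $\stS^m(\R^n)$ are finite maxima of such $q_\alpha$, this will yield the claim. The argument separates into the case $|\alpha| = 0$ and the case $|\alpha| \geq 1$.

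For $|\alpha| = 0$, I would use the standard trick of splitting $\R^n$ into $\{|\xi| \leq R\}$ and $\{|\xi| > R\}$. On the second region the factor $(1+|\xi|)^{-m}$ is at most $(1+R)^{-m}$, which is small for large $R$ since $m > 0$, while $|\chi(\epsilon\xi) - 1|$ is uniformly bounded by $\|\chi\|_\infty + 1$. On the compact region $\{|\xi| \leq R\}$, continuity of $\chi$ together with $\chi(0) = 1$ makes $\sup_{|\xi|\leq R}|\chi(\epsilon\xi)-1|$ arbitrarily small once $\epsilon$ is small. Picking $R$ first, then $\epsilon$, gives $q_0(\epsilon) \to 0$.

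For $|\alpha| \geq 1$, the dilation yields $\partial_\xi^\alpha\chi(\epsilon\xi) = \epsilon^{|\alpha|}(\partial^\alpha \chi)(\epsilon\xi)$. After the change of variable $\eta = \epsilon\xi$ (valid for $\epsilon \leq 1$) and the algebraic identity $1+|\xi| = (\epsilon+|\eta|)/\epsilon$, the quantity to estimate becomes
\begin{equation*}
q_\alpha(\epsilon) = \sup_{\eta \in \R^n} \epsilon^m (\epsilon + |\eta|)^{-m+|\alpha|} |(\partial^\alpha \chi)(\eta)|.
\end{equation*}
If $|\alpha| \geq m$, then $(\epsilon+|\eta|)^{-m+|\alpha|} \leq (1+|\eta|)^{|\alpha|-m}$ for $\epsilon \leq 1$, and the Schwartz decay of $\partial^\alpha \chi$ makes the remaining supremum finite, leaving $q_\alpha(\epsilon) = \op{O}(\epsilon^m)$. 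If $|\alpha| < m$, using $\epsilon + |\eta| \geq \epsilon$ with the now-negative exponent gives $(\epsilon+|\eta|)^{-m+|\alpha|} \leq \epsilon^{-(m-|\alpha|)}$, and the total bound becomes $\epsilon^{|\alpha|} \|\partial^\alpha \chi\|_\infty = \op{O}(\epsilon^{|\alpha|})$. In either case $q_\alpha(\epsilon) \to 0$ as $\epsilon \to 0^+$, since $m > 0$ and $|\alpha| \geq 1$.

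The only delicate step is the correct ordering of quantifiers ($R$ before $\epsilon$) in the $|\alpha| = 0$ case, where the positivity hypothesis $m > 0$ is crucially used; the rest is routine manipulation of Schwartz estimates, and I expect no serious obstacle.
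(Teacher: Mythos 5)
Your proof is correct and complete. Note that the paper does not prove this lemma at all; it is imported as a citation to H\"ormander's Proposition~18.1.2, which is the classical statement that multiplication by cut-offs $\chi(\epsilon\xi)$ gives a dense embedding of symbol classes into any strictly larger one. What you have done is reconstruct a direct, self-contained elementary proof. This is a genuine contribution relative to the paper, and the structure is the standard one: for $\alpha = 0$ the positivity of $m$ is needed and you split the region, fixing $R$ before $\epsilon$; for $|\alpha|\ge 1$ each derivative brings down a factor of $\epsilon$, and the rescaling $\eta = \epsilon\xi$ with the identity $(1+|\xi|)^{-m+|\alpha|} = \epsilon^{m-|\alpha|}(\epsilon+|\eta|)^{-m+|\alpha|}$ reduces the estimate to a Schwartz bound on $\partial^\alpha\chi$. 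Your split into the two sub-cases $|\alpha|\ge m$ (use the Schwartz decay) and $|\alpha|<m$ (bound the weight by $\epsilon^{|\alpha|-m}$) is exactly the right way to handle the sign of $-m+|\alpha|$, and both give a positive power of $\epsilon$. The only thing I would spell out, if this were to replace the citation, is that after the substitution $\eta = \epsilon\xi$ the supremum over $\xi\in\R^n$ is the same as the supremum over $\eta\in\R^n$ because the dilation is a bijection of $\R^n$ for $\epsilon>0$; you use this implicitly and it is fine, but a reader may stumble without it.
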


We have the following density result in $\stS^{m,d}(\R^n\times \Lambda; \cA_\theta)$. 

\begin{lemma} \label{lem:Parameter.standard-symbol-with-parameter-density-lemma}
Let $\chi(\xi)\in \cS(\R^n)$ be such that $\chi(0)=1$. Given any $\rho(\xi;\lambda)\in \stS^{m,d}(\R^n\times \Lambda; \cA_\theta)$, $m,d\in \R$, as $\epsilon \rightarrow 0^+$, the family $\chi(\epsilon \xi)\rho(\xi;\lambda)$ converges  to $\rho(\xi;\lambda)$ in $ \stS^{m',d}(\R^n\times\Lambda; \cA_\theta)$ for all $m'>m$. 
\end{lemma}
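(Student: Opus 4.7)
The plan is to reduce the statement to the (purely scalar, non-parametric) density result of Lemma~\ref{lem:Parameter.standard-symbol-density-lemma} by exploiting the topological vector space identification $\stS^{m',d}(\R^n\times\Lambda;\cA_\theta)\simeq \Hol^d(\Lambda;\stS^{m'}(\R^n;\cA_\theta))$ from Remark~\ref{rmk:Parameter.standard-symbols-with-parameter-identification}. Under this identification, convergence in $\stS^{m',d}(\R^n\times\Lambda;\cA_\theta)$ amounts to showing that, for every semi-norm $p_N^{(m')}$ of $\stS^{m'}(\R^n;\cA_\theta)$ and every pseudo-cone $\Lambda'\subsubset\Lambda$,
\begin{equation*}
\sup_{\lambda\in\Lambda'}(1+|\lambda|)^{-d}\, p_N^{(m')}\big[\big(\chi(\epsilon\,\cdot)-1\big)\rho(\cdot\,;\lambda)\big] \longrightarrow 0 \quad \text{as } \epsilon\to 0^+.
\end{equation*}

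The key ingredient I would then need is the fact that pointwise multiplication defines a continuous bilinear map $\stS^{m''}(\R^n;\C)\times\stS^m(\R^n;\cA_\theta)\to\stS^{m+m''}(\R^n;\cA_\theta)$ for every $m,m''\in\R$. This is a routine Leibniz-rule computation: expanding $\delta^\alpha\partial_\xi^\beta(\sigma\rho)=\sum_{\beta'+\beta''=\beta}\binom{\beta}{\beta'}\partial_\xi^{\beta'}\sigma\cdot\delta^\alpha\partial_\xi^{\beta''}\rho$ and estimating term by term yields, for every $N\geq 0$, indices $N_1,N_2$ and a constant $C>0$ such that $p_N^{(m+m'')}(\sigma\rho)\leq C\,p_{N_1}^{(m'')}(\sigma)\,p_{N_2}^{(m)}(\rho)$. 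Applying this with $m'':=m'-m>0$ and combining with the uniform bound $M:=\sup_{\lambda\in\Lambda'}(1+|\lambda|)^{-d}p_{N_2}^{(m)}[\rho(\cdot\,;\lambda)]<\infty$ afforded by the hypothesis $\rho\in\Hol^d(\Lambda;\stS^m(\R^n;\cA_\theta))$ would give
\begin{equation*}
\sup_{\lambda\in\Lambda'}(1+|\lambda|)^{-d}p_N^{(m')}\big[(\chi(\epsilon\,\cdot)-1)\rho(\cdot\,;\lambda)\big]\leq CM\cdot p_{N_1}^{(m'-m)}\big(\chi(\epsilon\,\cdot)-1\big).
\end{equation*}
Since $m'-m>0$, the scalar Lemma~\ref{lem:Parameter.standard-symbol-density-lemma} applied to $\chi$ forces the right-hand side to $0$ as $\epsilon\to 0^+$, which would close the argument.

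There is no genuine obstacle here; the only point requiring a small verification is the continuity of the bilinear multiplication, and even this can alternatively be obtained as an immediate corollary of the scalar-valued variant of Proposition~\ref{prop:Parameter.bilinear-map-symbols} once one regards $\chi(\epsilon\xi)$ as a $\lambda$-independent element of $\stS^{m'-m,0}(\R^n\times\Lambda;\C)$. In effect, the proof illustrates the general slogan of the parametric theory: the $\Hol^d$-formalism, via Remark~\ref{rmk:Parameter.standard-symbols-with-parameter-identification}, is designed precisely so that bilinear and continuity statements for the usual (non-parametric) symbol spaces promote automatically to uniform statements over the admissible subsets $\Lambda'\subsubset\Lambda$.
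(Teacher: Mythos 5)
Your proposal is correct and follows essentially the same route as the paper: both reduce the claim to the scalar density result of Lemma~\ref{lem:Parameter.standard-symbol-density-lemma} by first observing that multiplication gives a continuous bilinear map $\stS^{m'-m}(\R^n)\times \stS^{m,d}(\R^n\times\Lambda;\cA_\theta)\to \stS^{m',d}(\R^n\times\Lambda;\cA_\theta)$, and the ``alternative'' you mention at the end (regarding $\chi(\epsilon\xi)$ as a $\lambda$-independent symbol of bidegree $(m'-m,0)$ and invoking Proposition~\ref{prop:Parameter.bilinear-map-symbols}) is precisely the paper's argument.
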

\begin{proof}
 Let $m'>m$. The product of $\cA_\theta$ gives rise to a continuous bilinear maps from $\stS^{m_1}(\R^n;\cA_\theta)\times \stS^{m_2}(\R^n;\cA_\theta)$ to $\stS^{m_1+m_2}(\R^n;\cA_\theta)$ for all $m_1, m_2\in \R$. Proposition~\ref{prop:Parameter.bilinear-map-symbols} then ensures us that we get a continuous bilinear map, 
\begin{equation} \label{eq:Parameter.standard-symbol-with-parameter-product}
\stS^{m_1,d_1}(\R^n\times \Lambda;\cA_\theta)\times \stS^{m_2,d_2}(\R^n \times \Lambda;\cA_\theta) \longrightarrow \stS^{m_1+m_2,d_1+d_2}(\R^n\times \Lambda;\cA_\theta).
\end{equation}
In particular, by using the continuity of the inclusion of $\stS^{m'-m}(\R^n)$ into $\stS^{m'-m,0}(\R^n \times \Lambda;\cA_\theta)$ we obtain a continuous bilinear map, 
\begin{equation*}
 \stS^{m'-m}(\R^n)\times \stS^{m,d}(\R^n \times \Lambda;\cA_\theta) \longrightarrow \stS^{m',d}(\R^n\times \Lambda;\cA_\theta). 
\end{equation*}
Combining Lemma~\ref{lem:Parameter.standard-symbol-density-lemma} with the continuity of the above bilinear map~(\ref{eq:Parameter.standard-symbol-with-parameter-product}) then gives the result. The proof is complete. 
\end{proof}

The following is the version of Borel's lemma for standard parametric symbols.  

\begin{lemma}\label{lem:Parameter.Borel}
For $j=0,1,\ldots$, let $\rho_{j}(\xi;\lambda)\in\stS^{m_j,d}(\Rn\times\Lambda;\cA_\theta)$.  Then there exists $\rho(\xi;\lambda)$ in $\stS^{m_0,d}(\R^n\times \Lambda;\cA_\theta)$ such that  $\rho(\xi;\lambda)\sim\sum_{j\geq 0}\rho_{j}(\xi;\lambda)$.
\end{lemma}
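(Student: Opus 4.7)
The approach is the standard Borel construction via a sequence of cutoff parameters $\epsilon_j \downarrow 0$, adapted to the $\Hol^d$-parametric setting by means of a diagonal argument over a pseudo-cone exhaustion. Fix once and for all a function $\chi \in \cS(\R^n)$ with $\chi(0)=1$, and let $(\Lambda_k)_{k \geq 1}$ be an exhaustion of $\Lambda$ by closed pseudo-cones with $\Lambda_k \subsubset \Lambda_{k+1}$, as provided by Remark~\ref{rmk:Parameter.pseudo-cone-exhaustion}. For each $j \geq 1$, since $m_{j-1} > m_j$, Lemma~\ref{lem:Parameter.standard-symbol-with-parameter-density-lemma} applied to $\rho_j$ yields $(1-\chi(\epsilon\xi))\rho_j(\xi;\lambda) \to 0$ in $\stS^{m_{j-1},d}(\R^n \times \Lambda; \cA_\theta)$ as $\epsilon \to 0^+$. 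I therefore choose $\epsilon_j>0$ so small that
\[
p_j^{(m_{j-1},d,\Lambda_j)}\bigl((1-\chi(\epsilon_j\xi))\rho_j(\xi;\lambda)\bigr) \leq 2^{-j},
\]
where $p_N^{(m,d,\Lambda')}$ denotes the semi-norm in~(\ref{eq:Parameter.standard-symbol-with-parameter-semi-norm}); also set $\epsilon_0=1$, and put $\rho_j'(\xi;\lambda) := (1-\chi(\epsilon_j\xi))\rho_j(\xi;\lambda)$.

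The candidate is $\rho(\xi;\lambda) := \sum_{j \geq 0} \rho_j'(\xi;\lambda)$. I will show that for every $N\geq 0$ the tail $\sum_{j\geq N}\rho_j'$ converges in the Fr\'echet space $\stS^{m_N,d}(\R^n\times\Lambda;\cA_\theta)$. Fix any semi-norm $p_k^{(m_N,d,\Lambda_K)}$. For $j \geq \max(N+1,k,K)$, the three monotonicities $k \leq j$, $m_{j-1} \leq m_N$, and $\Lambda_K \subset \Lambda_j$ together give
\[
p_k^{(m_N,d,\Lambda_K)}(\rho_j') \leq p_j^{(m_{j-1},d,\Lambda_j)}(\rho_j') \leq 2^{-j},
\]
so the tail is absolutely summable in every semi-norm. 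Via the identification~(\ref{eq:Parameter.parametric-symbols-identification}), the limit is an $\Hol^d(\Lambda)$-family with values in $\stS^{m_N}(\R^n;\cA_\theta)$, hence lies in $\stS^{m_N,d}(\R^n\times\Lambda;\cA_\theta)$. Applied with $N=0$, this shows $\rho(\xi;\lambda) \in \stS^{m_0,d}(\R^n\times\Lambda;\cA_\theta)$.

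To verify the asymptotic expansion in the sense of~(\ref{eq:Parameter.Standard-asymptotic}), I write, for each $N\geq 0$,
\[
\rho(\xi;\lambda) - \sum_{j<N}\rho_j(\xi;\lambda) \;=\; \sum_{j\geq N}\rho_j'(\xi;\lambda) \;-\; \sum_{j<N}\chi(\epsilon_j\xi)\rho_j(\xi;\lambda).
\]
The first sum lies in $\stS^{m_N,d}(\R^n\times\Lambda;\cA_\theta)$ by the previous paragraph. For the finite second sum, each $\chi(\epsilon_j\xi)$ belongs to $\cS(\R^n)$, and the bilinear map from Proposition~\ref{prop:Parameter.bilinear-map-symbols} applied in combination with Remark~\ref{rmk:Symbols.symbols-intersection} shows that $\chi(\epsilon_j\xi)\rho_j(\xi;\lambda) \in \stS^{-\infty,d}(\R^n\times\Lambda;\cA_\theta) \subset \stS^{m_N,d}(\R^n\times\Lambda;\cA_\theta)$. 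This establishes $\rho(\xi;\lambda) - \sum_{j<N}\rho_j(\xi;\lambda) \in \stS^{m_N,d}(\R^n\times\Lambda;\cA_\theta)$ for all $N$, which is precisely the desired asymptotic expansion.

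The main delicate point is the diagonal choice of the $\epsilon_j$. The topology of $\stS^{m,d}(\R^n\times\Lambda;\cA_\theta)$ is generated by a doubly-indexed family of semi-norms $p_N^{(m,d,\Lambda')}$ (indexed by $N\in\N_0$ and $\Lambda' \subsubset \Lambda$), so no single $\epsilon_j$ can make $(1-\chi(\epsilon_j\xi))\rho_j$ uniformly small; fixing a countable cofinal subfamily $(p_j^{(\cdot,\cdot,\Lambda_j)})_j$ via the exhaustion and calibrating the $j$-th cutoff against this $j$-th semi-norm is what allows the tail of $\sum \rho_j'$ to be summable against any prescribed semi-norm by eventually comparing with the diagonal one.
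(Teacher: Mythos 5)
Your proof is correct and follows essentially the same approach as the paper's: a Borel-type construction using the density Lemma~\ref{lem:Parameter.standard-symbol-with-parameter-density-lemma} to calibrate cutoff parameters $\epsilon_j$ against a countable cofinal family of semi-norms indexed by the pseudo-cone exhaustion of Remark~\ref{rmk:Parameter.pseudo-cone-exhaustion}, followed by tail summability and the observation that $\chi(\epsilon_j\xi)\rho_j \in \stS^{-\infty,d}$. The only cosmetic difference is that you track the semi-norm index pair $(k,K)$ separately while the paper collapses them to a single index $\ell$, but the monotonicity comparison and the termwise $2^{-j}$ bound are identical.
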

\begin{proof}
Let $\Lambda = \bigcup_{j\geq 0} \Lambda_j$ be a pseudo-cone exhaustion of $\Lambda$ with $\Lambda_{j} \subsubset \Lambda_{j+1}$ (\emph{cf}.\ Remark~\ref{rmk:Parameter.pseudo-cone-exhaustion}). Given any $m\in \R$ the topology of the $\stS^{m,d}(\R^n\times \Lambda; \cA_\theta)$ is generated by the semi-norms, 
\begin{equation*}
 p_j^{(m)}(\rho) = \max_{|\alpha|+|\beta|\leq j} \sup_{(\xi,\lambda)\in \R^n\times \Lambda_j} (1+|\lambda|)^{-d} (1+|\xi|)^{|\beta|-m} \big\| \delta^\alpha \partial_\xi^\beta \rho(\xi;\lambda)\big\|, \quad j\geq 0. 
\end{equation*}
Note that $p_{j}^{(m)}\leq p_{j+1}^{(m)}$. In addition, we let $\chi(\xi)\in \cS(\R^n)$ be such that $\chi(0)=1$. 
For $\epsilon>0$ we set $\chi_\epsilon(\xi)=\chi(\epsilon \xi)$, $\xi\in \R^n$. 

We know by Lemma~\ref{lem:Parameter.standard-symbol-with-parameter-density-lemma} that, for $j=1,2,\ldots $, the family $\chi_\epsilon(\xi) \rho_j(\xi;\lambda)$ converges to $\rho_j(\xi;\lambda)$ in $\stS^{m_{j-1},d}(\R^n\times \Lambda; \cA_\theta)$ as $\epsilon \rightarrow 0^+$ (since $m_{j-1}>m_j$). Thus, we can find $\epsilon_j>0$ such that $p_j^{(m_{j-1})}\big[ (1-\chi_{\epsilon_j})\rho_j\big] \leq 2^{-j}$. We also set $\epsilon_0=1$. Given any $N\geq 0$, for any $\ell \geq 0$, we have 
\begin{equation*}
 \sum_{j\geq \ell+1} p_\ell^{(m_N)}\big[ (1-\chi_{\epsilon_j})\rho_j\big] \leq \sum_{j\geq \ell+1} p_j^{(m_{j-1})}\big[ (1-\chi_{\epsilon_j})\rho_j\big] \leq \sum_{j\geq \ell+1} 2^{-j}<\infty. 
\end{equation*}
This implies that the series $\sum_{j\geq N}(1-\chi_{\epsilon_j}(\xi))\rho_j(\xi;\lambda)$ converges normally with respect to each semi-norm $p_\ell^{(m_N)}$, $\ell\geq N$. As $\stS^{m_{N},d}(\R^n\times \Lambda; \cA_\theta)$ is a Fr\'echet space whose topology is generated by these semi-norms, we deduce that the series $\sum_{j\geq N}(1-\chi_{\epsilon_j}(\xi))\rho_j(\xi;\lambda)$ converges  in  $\stS^{m_{N},d}(\R^n\times \Lambda; \cA_\theta)$ for every $N\geq 0$. Set 
\begin{equation*}
 \rho(\xi;\lambda) = \sum_{j\geq 0}\big(1-\chi_{\epsilon_j}(\xi)\big)\rho_j(\xi;\lambda). 
\end{equation*}
Then $\rho(\xi;\lambda)\in \stS^{m_{0},d}(\R^n\times \Lambda; \cA_\theta)$. Moreover, for all $N\geq 1$, we have
\begin{equation*}
 \rho(\xi;\lambda) -  \sum_{j<N}\rho_j(\xi;\lambda)=  \sum_{j<N}\chi_{\epsilon_j}(\xi)\rho_j(\xi;\lambda) +  \sum_{j\geq N}\big(1-\chi_{\epsilon_j}(\xi)\big)\rho_j(\xi;\lambda). 
\end{equation*}
Here the series $\sum_{j\geq N}(1-\chi_{\epsilon_j}(\xi))\rho_j(\xi;\lambda)$ converges  in  $\stS^{m_{N},d}(\R^n\times \Lambda; \cA_\theta)$  and each map $\chi_{\epsilon_j}(\xi)\rho_j(\xi;\lambda)$ is contained in $\stS^{-\infty,d}(\R^n\times \Lambda; \cA_\theta) \subset \stS^{m_{N},d}(\R^n\times \Lambda; \cA_\theta)$. Therefore, we see that the remainder term $ \rho(\xi;\lambda) -  \sum_{j<N}\rho_j(\xi;\lambda)$ is in $\stS^{m_{N},d}(\R^n\times \Lambda; \cA_\theta)$ for all $N\geq 0$. That is, 
$ \rho(\xi;\lambda) \sim \sum_{j\geq 0}\rho_j(\xi;\lambda)$. The proof is complete. 
\end{proof}

We are now in a position to get a version of Borel's lemma for classical parametric symbols.  

\begin{proposition} \label{prop:Parameter.Borel-for-classical-symbol}
 Given $m\in \R$ and $c>0$, let $\rho_{m-j}(\xi;\lambda)\in S_{m-j}^{d}(\Omega_c(\Theta);\cA_\theta)$, $j\geq 0$. Then there is $\rho(\xi;\lambda)\in S^{m,d}(\Rn\times\Lambda;\cA_\theta)$ such that 
 $\rho(\xi;\lambda)\sim\sum_{j\geq 0}\rho_{m-j}(\xi;\lambda)$. 
\end{proposition}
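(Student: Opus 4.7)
The plan is to reduce this to the version of Borel's lemma for standard parametric symbols (Lemma~\ref{lem:Parameter.Borel}) by cutting off each homogeneous symbol near the origin in $\xi$, so that it becomes a legitimate standard parametric symbol over all of $\R^n\times\Lambda$.

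First, I would pick $R>0$ such that $\Lambda\subset\Theta\cup D(0,R)$, and then choose a cutoff $\chi(\xi)\in C^\infty_c(\R^n)$ with $\chi(\xi)=1$ for $|\xi|\leq (c^{-1}R)^{1/w}$, exactly as in Lemma~\ref{lem:Parameter.homogeneous-symbol-estimate}. That lemma ensures that each product $(1-\chi(\xi))\rho_{m-j}(\xi;\lambda)$ lies in $\stS^{m_j',d}(\R^n\times\Lambda;\cA_\theta)$, where $m_j'=m-j$ if $d\geq 0$, and $m_j'=m-j+w|d|$ if $d<0$. In either case $m_j'\to-\infty$ as $j\to\infty$ (and the sequence is strictly decreasing from some index on).

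Next, I would apply Lemma~\ref{lem:Parameter.Borel} (Borel's lemma for standard parametric symbols) to the sequence $\bigl((1-\chi(\xi))\rho_{m-j}(\xi;\lambda)\bigr)_{j\geq 0}$, which yields a symbol $\rho(\xi;\lambda)\in\stS^{m_0',d}(\R^n\times\Lambda;\cA_\theta)$ such that
\begin{equation*}
 \rho(\xi;\lambda)\sim \sum_{j\geq 0}(1-\chi(\xi))\rho_{m-j}(\xi;\lambda)
\end{equation*}
in the sense of Definition~\ref{def:Parameter.Standard-asymptotic}, i.e., for every $N\geq 0$, as soon as $J$ is large enough the remainder $\rho(\xi;\lambda)-\sum_{j<J}(1-\chi(\xi))\rho_{m-j}(\xi;\lambda)$ lies in $\stS^{m-N,d}(\R^n\times\Lambda;\cA_\theta)$. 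The map $\rho(\xi;\lambda)$ is holomorphic in $\lambda$ and smooth in $\xi$ by construction.

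Finally, I would invoke Remark~\ref{rmk:Parameter.classical-symbol-asymptotics-equivalent-conditions}, which states precisely that the standard-type expansion in~$(1-\chi)\rho_{m-j}$ is equivalent to the classical-type expansion~(\ref{eq:Parameter.classical-symbol-asymptotics}) in the uncut homogeneous pieces $\rho_{m-j}(\xi;\lambda)$ themselves. This gives $\rho(\xi;\lambda)\sim\sum_{j\geq 0}\rho_{m-j}(\xi;\lambda)$ in the classical sense, and hence $\rho(\xi;\lambda)\in S^{m,d}(\R^n\times\Lambda;\cA_\theta)$. The main thing to check is really that all the ingredients line up: that the cutoff in Lemma~\ref{lem:Parameter.homogeneous-symbol-estimate} produces a \emph{decreasing} sequence of orders tending to $-\infty$ (immediate), that holomorphy in $\lambda$ is preserved by the construction of Lemma~\ref{lem:Parameter.Borel} (built into the ambient space $\stS^{m_0',d}$), and that the equivalence in Remark~\ref{rmk:Parameter.classical-symbol-asymptotics-equivalent-conditions} applies uniformly on each $\Lambda'\subsubset\Lambda$---none of these steps presents a real obstacle, so the argument is essentially a direct assembly of the two preceding lemmas.
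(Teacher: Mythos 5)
Your proof is correct and follows essentially the same route as the paper's: cut off each homogeneous symbol with a fixed $\chi\in C^\infty_c(\R^n)$ as in Lemma~\ref{lem:Parameter.homogeneous-symbol-estimate}, invoke Lemma~\ref{lem:Parameter.Borel} to sum the resulting standard parametric symbols, and then appeal to Remark~\ref{rmk:Parameter.classical-symbol-asymptotics-equivalent-conditions} to convert the standard-type asymptotic expansion back into the classical one. One very minor remark: the sequence of orders $m_j'=m-j+wd_{-}$ is strictly decreasing from the start (each step drops by $1$), not merely ``from some index on,'' so Lemma~\ref{lem:Parameter.Borel} applies immediately.
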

\begin{proof}
 Let $\chi(\xi)\in C^\infty_c(\R^n)$ be as in Lemma~\ref{lem:Parameter.homogeneous-symbol-estimate}. We know by Lemma~\ref{lem:Parameter.homogeneous-symbol-estimate} that $(1-\chi(\xi))\rho_{m-j}(\xi;\lambda)$ lies in  $\stS^{m+wd_{-}-j,d}(\Omega_c(\Theta);\cA_\theta)$. Therefore, by Lemma~\ref{lem:Parameter.Borel} there is $\rho(\xi;\lambda)$ in $\stS^{m+wd_{-},d}(\R^n\times\Lambda; \cA_\theta)$ such that 
 $\rho(\xi;\lambda) \sim \sum_{j\geq 0}(1-\chi(\xi))\rho_{m-j}(\xi;\lambda)$ in the sense of~(\ref{eq:Parameter.Standard-asymptotic}). By Remark~\ref{rmk:Parameter.classical-symbol-asymptotics-equivalent-conditions} this implies that
  $\rho(\xi;\lambda) \sim \sum_{j\geq 0}\rho_{m-j}(\xi;\lambda)$ in the sense of~(\ref{eq:Parameter.classical-symbol-asymptotics}). In particular, we see that $\rho(\xi;\lambda)\in S^{m,d}(\Rn\times\Lambda;\cA_\theta)$. The proof is complete.
\end{proof}

\section{Parametric Pseudodifferential Operators} \label{sec:parametric-PsiDOs} 
In this section, we introduce our classes of \psidos\ with parameter and derive some of their properties. 

\subsection{Classes of \psidos\ with parameter} 
Let $\rho(\xi;\lambda)\in \stS^{m,d}(\R^n\times \Lambda; \cA_\theta)$, $m,d\in \R$. Given any $\lambda \in \Lambda$, we get a symbol in $\stS^m(\R^n;\cA_\theta)$. We denote by $P_\rho(\lambda)$ the \psido\ associated with this symbol. That is, $P_\rho(\lambda)$ is the continuous operator on $\cA_\theta$ defined by 
\begin{equation*}
P_\rho(\lambda) u = \iint e^{is\cdot\xi}\rho(\xi;\lambda)\alpha_{-s}(u)ds\dbar\xi , \qquad u\in\cA_\theta,
\end{equation*}
where the above integral is meant as an oscillating integral (see Section~\ref{sec:PsiDOs}). 

In what follows we equip $\cL(\cA_\theta)$ with its strong topology (i.e., the topology of uniform convergence on bounded subsets of $\cA_\theta$). We similarly equip  $\cL(\cA_\theta')$ with its strong topology.  By~\cite[Proposition~5.4]{HLP:Part1} and~\cite[Proposition~8.6]{HLP:Part2} we have continuous linear maps,
\begin{equation*}
 \stS^m(\R^n;\cA_\theta) \ni \rho(\xi)\rightarrow P_\rho \in \cL(\cA_\theta), \qquad \stS^m(\R^n;\cA_\theta) \ni \rho(\xi)\rightarrow P_\rho \in \cL(\cA_\theta'). 
\end{equation*}
Combining this Proposition~\ref{prop:Parameter.linear-map-symbols} gives the following result. 

\begin{proposition} \label{prop:PsiDOs-parameter.PsiDO-gives-rise-to-family-of-continuous-operators-on-cAtheta}
 For any $\rho(\xi;\lambda)\in \stS^{m,d}(\R^n\times \Lambda; \cA_\theta)$, $m,d\in \R$, the family $P_\rho(\lambda)$ is contained in $\Hol^d(\Lambda; \cL(\cA_\theta))$ and uniquely extends to a family $P_\rho(\lambda)\in \Hol^d(\Lambda; \cL(\cA_\theta'))$. 
\end{proposition}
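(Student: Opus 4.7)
The plan is to obtain this as a direct application of Proposition~\ref{prop:Parameter.linear-map-symbols}(1) to the quantization map $T:\rho\mapsto P_\rho$. Under the topological vector space identification
\[
\stS^{m,d}(\R^n\times\Lambda;\cA_\theta) \simeq \Hol^d\big(\Lambda;\stS^m(\R^n;\cA_\theta)\big)
\]
from~(\ref{eq:Parameter.parametric-symbols-identification}), a parametric symbol $\rho(\xi;\lambda)$ corresponds to the $\Hol^d(\Lambda)$-family of standard symbols $\lambda \mapsto \rho(\cdot;\lambda)$, and pushing this family forward by $T$ via Proposition~\ref{prop:Parameter.linear-map} gives the family $\lambda\mapsto P_{\rho(\cdot;\lambda)} = P_\rho(\lambda)$. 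Thus to conclude in each case it suffices to check that $T$ is continuous linear into the appropriate target.

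For the first assertion, recall that the quantization map is continuous linear from $\stS^m(\R^n;\cA_\theta)$ into $\cL(\cA_\theta)$ (stated immediately before the proposition, citing~\cite{HLP:Part1}). Applying Proposition~\ref{prop:Parameter.linear-map-symbols}(1) with $\sE=\cL(\cA_\theta)$ yields a continuous linear map
\[
\stS^{m,d}(\R^n\times\Lambda;\cA_\theta) \longrightarrow \Hol^d(\Lambda;\cL(\cA_\theta)),
\]
whose value at $\rho(\xi;\lambda)$ is the family $P_\rho(\lambda)$. This shows $P_\rho(\lambda)\in\Hol^d(\Lambda;\cL(\cA_\theta))$.

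For the second assertion, the same argument goes through verbatim with $\sE=\cL(\cA_\theta')$, using the fact (also recalled just before the proposition, from~\cite{HLP:Part2}, \emph{cf}.\ Proposition~\ref{Adjoints.PsiDOs-extension}) that $\rho\mapsto P_\rho$ is continuous linear from $\stS^m(\R^n;\cA_\theta)$ into $\cL(\cA_\theta')$. Proposition~\ref{prop:Parameter.linear-map-symbols}(1) then produces a holomorphic family in $\Hol^d(\Lambda;\cL(\cA_\theta'))$ whose value at each $\lambda\in\Lambda$ is the (unique) extension of $P_\rho(\lambda)\in\cL(\cA_\theta)$ to a continuous operator on $\cA_\theta'$. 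Uniqueness of such an extension at each fixed $\lambda$ is automatic from the density of $\cA_\theta$ in $\cA_\theta'$: two continuous operators on $\cA_\theta'$ that coincide on the dense subspace $\cA_\theta$ must agree.

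No genuine obstacle arises here; the statement is essentially a bookkeeping consequence of the abstract framework already established. The only point to verify is that the map induced by Proposition~\ref{prop:Parameter.linear-map-symbols}(1) really coincides with the pointwise-in-$\lambda$ quantization of $\rho(\xi;\lambda)$, and this is built into the way that proposition is constructed out of the identification~(\ref{eq:Parameter.parametric-symbols-identification}) and Proposition~\ref{prop:Parameter.linear-map}.
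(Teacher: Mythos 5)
Your proof is correct and is essentially the paper's own argument, which is given implicitly in the two sentences preceding the proposition: the paper cites continuity of $\rho\mapsto P_\rho$ from $\stS^m(\R^n;\cA_\theta)$ into $\cL(\cA_\theta)$ and into $\cL(\cA_\theta')$, then invokes Proposition~\ref{prop:Parameter.linear-map-symbols}. You have merely spelled out the bookkeeping and added the (correct, if routine) observation on uniqueness via density of $\cA_\theta$ in $\cA_\theta'$.
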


\begin{definition}
$\Psi^{m,d}(\cA_\theta;\Lambda)$, $m,d\in \R$, consists of families of operators $P_{\rho}(\lambda):\cA_\theta \rightarrow \cA_\theta$ with $\rho(\xi;\lambda)$ in $S^{m,d}(\Rn\times\Lambda;\cA_\theta)$.
\end{definition}

\begin{example} \label{ex:PsiDOs-parameter.usual-PsiDO-can-be-seen-as-PsiDO-with-parameter}
 It follows from Example~\ref{ex:Parameter.classical-symbol-can-be-seen-as-classical-symbol-with-parameter} that any operator $P\in \Psi^m(\cA_\theta)$ can be regarded as an element of $\Psi^{m,0}(\cA_\theta; \Lambda)$. Combining this with the obvious inclusion $\Psi^{m,0}(\cA_\theta; \Lambda)\subset \Psi^{m,1}(\cA_\theta; \Lambda)$ allows us to regard $P-\lambda$ as an element of  $\Psi^{m,1}(\cA_\theta; \Lambda)$. 
\end{example}

We also define \psidos\ with parameter of order~$-\infty$ as follows. 
\begin{definition}
$\Psi^{-\infty,d}(\cA_\theta;\Lambda)$, $d\in \R$,  consists of families of operators $P_{\rho}(\lambda):\cA_\theta \rightarrow \cA_\theta$ with $\rho(\xi;\lambda)$ in $\stS^{-\infty,d}(\Rn\times\Lambda;\cA_\theta)$.
\end{definition}

\begin{remark} \label{rmk:PsiDOs-parameter.order-minus-infty-to-intersection-inclusion}
We have $\Psi^{-\infty,d}(\cA_\theta;\Lambda)\subset \bigcap_{m\in \R}\Psi^{m,d}(\cA_\theta;\Lambda)$. This inclusion is actually an equality (see Corollary~\ref{cor:PsiDOs-parameter.intersection-of-psidos-of-all-orders}). 
\end{remark}

\subsection{Composition of \psidos\ with parameter} 
Let $\rho_1(\xi;\lambda) \in \stS^{m_1,d_1}(\R^n\times \Lambda; \cA_\theta)$ and $\rho_2(\xi;\lambda)\in \stS^{m_2,d_2}(\R^n\times \Lambda; \cA_\theta)$. For each $\lambda \in \Lambda$, we denote by $\rho_1\sharp \rho_2(\xi;\lambda)$ the symbol given by~(\ref{eq:Composition.symbol-sharp}). By Proposition~\ref{prop:Composition.sharp-continuity-standard-symbol} the composition $P_{\rho_1}(\lambda)P_{\rho_2}(\lambda)$ is the \psido\ with symbol $\rho_1\sharp \rho_2(\xi;\lambda)$. 

\begin{proposition} \label{prop:PsiDOs-parameter.standard-symbol-sharp-product}
 Let $\rho_1(\xi;\lambda)\in \stS^{m_1,d_1}(\R^n\times \Lambda; \cA_\theta)$, $m_1,d_1\in \R$, and $\rho_2(\xi;\lambda)\in \stS^{m_2,d_2}(\R^n\times \Lambda; \cA_\theta)$, $m_2,d_2\in \R$. Then 
  $\rho_1\sharp \rho_2(\xi;\lambda)\in \stS^{m_1+m_2,d_1+d_2}(\R^n\times \Lambda; \cA_\theta)$, and in the sense of~(\ref{eq:Parameter.Standard-asymptotic}) we have
  \begin{equation*}
     \rho_1\sharp \rho_2(\xi;\lambda) \sim   \sum  \frac{1}{\alpha!}\partial_\xi^\alpha \rho_1(\xi;\lambda)\delta^\alpha \rho_2(\xi;\lambda).   
  \end{equation*}
 \end{proposition}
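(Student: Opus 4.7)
The plan is to reduce the proposition to the non-parametric case (Proposition~\ref{prop:Composition.sharp-continuity-standard-symbol}) via the topological vector space isomorphism $\stS^{m,d}(\R^n\times\Lambda;\cA_\theta)\simeq \Hol^d(\Lambda;\stS^m(\R^n;\cA_\theta))$ from Remark~\ref{rmk:Parameter.standard-symbols-with-parameter-identification} together with the bilinear transfer principle of Proposition~\ref{prop:Parameter.bilinear-map-symbols}. Under this identification, $(\rho_1\sharp\rho_2)(\xi;\lambda)$ is nothing but the pointwise non-parametric sharp product of $\rho_1(\cdot;\lambda)$ and $\rho_2(\cdot;\lambda)$, so everything is dictated by how $\sharp$ behaves on the Fr\'echet spaces $\stS^m(\R^n;\cA_\theta)$.

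The first step is to extract from Proposition~\ref{prop:Composition.sharp-continuity-standard-symbol}(i) (and its proof in \cite{HLP:Part2}) the stronger fact that the bilinear map
\[
\sharp:\stS^{m_1}(\R^n;\cA_\theta)\times\stS^{m_2}(\R^n;\cA_\theta)\longrightarrow\stS^{m_1+m_2}(\R^n;\cA_\theta)
\]
is \emph{jointly} continuous. This is implicit in the label of the cited proposition and amounts to tracking that the semi-norm estimates produced by the oscillatory integral formula~(\ref{eq:Composition.symbol-sharp}) depend bilinearly and continuously on the symbol semi-norms of $\rho_1$ and $\rho_2$. Granted this, Proposition~\ref{prop:Parameter.bilinear-map-symbols} immediately produces a jointly continuous bilinear extension
\[
\sharp:\stS^{m_1,d_1}(\R^n\times\Lambda;\cA_\theta)\times\stS^{m_2,d_2}(\R^n\times\Lambda;\cA_\theta)\longrightarrow\stS^{m_1+m_2,d_1+d_2}(\R^n\times\Lambda;\cA_\theta),
\]
which by construction is the pointwise sharp product. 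This establishes the first assertion.

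For the asymptotic expansion, introduce the Taylor remainder
\[
R_N(\rho_1,\rho_2)(\xi;\lambda):=(\rho_1\sharp\rho_2)(\xi;\lambda)-\sum_{|\alpha|<N}\frac{1}{\alpha!}\partial_\xi^\alpha\rho_1(\xi;\lambda)\,\delta^\alpha\rho_2(\xi;\lambda).
\]
The pointwise product $(\rho_1,\rho_2)\mapsto\partial_\xi^\alpha\rho_1\cdot\delta^\alpha\rho_2$ is jointly continuous from $\stS^{m_1}(\R^n;\cA_\theta)\times\stS^{m_2}(\R^n;\cA_\theta)$ into $\stS^{m_1+m_2-|\alpha|}(\R^n;\cA_\theta)$ by the Leibniz rule together with the continuity of $\partial_\xi^\alpha$ and $\delta^\alpha$ on standard symbols. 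Moreover, the usual proof of Proposition~\ref{prop:Composition.sharp-continuity-standard-symbol}(i) writes each $R_N$ as an explicit oscillatory integral that is jointly continuous bilinear from $\stS^{m_1}(\R^n;\cA_\theta)\times\stS^{m_2}(\R^n;\cA_\theta)$ into $\stS^{m_1+m_2-N}(\R^n;\cA_\theta)$. Proposition~\ref{prop:Parameter.bilinear-map-symbols} then promotes $R_N$ to a continuous bilinear map with target $\stS^{m_1+m_2-N,d_1+d_2}(\R^n\times\Lambda;\cA_\theta)$. As $N$ is arbitrary and $m_1+m_2-N\to-\infty$, grouping the terms by $j=|\alpha|$ gives exactly the asymptotic expansion in the sense of Definition~\ref{def:Parameter.Standard-asymptotic}.

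The main obstacle is the joint continuity of $\sharp$ and of its Taylor remainders $R_N$ as bilinear maps on the non-parametric symbol classes: this is standard but not explicitly stated in Proposition~\ref{prop:Composition.sharp-continuity-standard-symbol}, so it must be extracted from the non-parametric proof in \cite{HLP:Part2} by checking bilinearity and continuity in the relevant semi-norms. Once this is granted, the rest of the argument is purely abstract, riding on the $\Hol^d$-transfer machinery developed in Section~\ref{sec:Parametric-symbols}, with no parameter-dependent analysis required.
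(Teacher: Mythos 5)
Your proof follows essentially the same route as the paper's. The paper's proof defines for each $N$ the Taylor remainder bilinear map $\sharp_N(\rho_1,\rho_2)=\rho_1\sharp\rho_2-\sum_{|\alpha|<N}\frac{1}{\alpha!}\partial_\xi^\alpha\rho_1\,\delta^\alpha\rho_2$, cites its joint continuity from $\stS^{m_1}\times\stS^{m_2}$ into $\stS^{m_1+m_2-N}$ (\cite[Proposition~7.10]{HLP:Part2}), and then applies Proposition~\ref{prop:Parameter.bilinear-map-symbols} to lift it to the parametric classes --- exactly your $R_N$ argument. The one cosmetic difference is that you first treat the joint continuity of $\sharp$ itself separately before moving to the remainders; this is subsumed by the $N=0$ case and can be folded in, but it does no harm. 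Your flagged ``main obstacle'' --- joint continuity of the non-parametric remainder maps --- is precisely what \cite[Proposition~7.10]{HLP:Part2} supplies, so the argument is sound as stated.
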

\begin{proof}
 For $N\geq 0$, let $\sharp_N:\stS^{m_1}(\R^n; \cA_\theta) \times \stS^{m_2}(\R^n; \cA_\theta)\rightarrow \stS^{m_1+m_2-N}(\R^n; \cA_\theta)$ be the bilinear map defined by
\begin{equation*}
 \rho_1\sharp_N \rho_2(\xi) = \rho_1\sharp \rho_2(\xi) -  \sum_{|\alpha|<N}  \frac{1}{\alpha!}\partial_\xi^\alpha \rho_1(\xi)\delta^\alpha \rho_2(\xi), \qquad \rho_j(\xi)\in \stS^{m_j}(\R^n;\cA_\theta). 
\end{equation*}
By~\cite[Proposition~7.10]{HLP:Part2} this is a continuous bilinear map. Therefore, by using Proposition~\ref{prop:Parameter.bilinear-map-symbols} we see that if $\rho_1(\xi;\lambda)\in \stS^{m_1,d_1}(\R^n\times \Lambda; \cA_\theta)$ and $\rho_2(\xi;\lambda)\in \stS^{m_2,d_2}(\R^n\times \Lambda; \cA_\theta)$, then 
$\rho_1\sharp_N \rho_2(\xi;\lambda)$ is  in $\stS^{m_1+m_2-N,d_1+d_2}(\R^n\times \Lambda; \cA_\theta)$. Thus, 
\begin{equation*}
  \rho_1\sharp \rho_2(\xi;\lambda) -  \sum_{|\alpha|<N}  \frac{1}{\alpha!}\partial_\xi^\alpha \rho_1(\xi;\lambda)\delta^\alpha \rho_2(\xi;\lambda) \in 
  \stS^{m_1+m_2-N,d_1+d_2}(\R^n\times \Lambda; \cA_\theta) \qquad \forall N\geq 0.  
\end{equation*}
This means that  $\rho_1\sharp \rho_2(\xi;\lambda) \sim   \sum  \frac{1}{\alpha!}\partial_\xi^\alpha \rho_1(\xi;\lambda)\delta^\alpha \rho_2(\xi;\lambda)$ in the sense of~(\ref{eq:Parameter.Standard-asymptotic}).  In particular, we see that $\rho_1\sharp \rho_2(\xi;\lambda)\in \stS^{m_1+m_2,d_1+d_2}(\R^n\times \Lambda; \cA_\theta)$. The proof is complete. 
\end{proof}

In order to deal with the composition of \psidos\ associated with classical parametric symbols we need the following two lemmas. 

\begin{lemma} \label{lem:Parameter.classical-homogeneouspart}
Suppose that $\rho(\xi;\lambda) \sim \sum_{\ell\geq 0}\rho^{(\ell)}(\xi;\lambda)$ in the sense of~(\ref{eq:Parameter.Standard-asymptotic}), where $\rho^{(\ell)}(\xi;\lambda)$ is in 
$S^{m-\ell,d}(\R^n\times \Lambda; \cA_\theta)$ and $\rho^{(\ell)}(\xi;\lambda)\sim \sum_{k\geq 0} \rho^{(\ell)}_{m-\ell-k}(\xi;\lambda)$ with $ \rho^{(\ell)}_{m-\ell-k}(\xi;\lambda)\in S^{d}_{m-\ell-k}(\Omega_c(\Theta); \cA_\theta)$.  Then $\rho(\xi;\lambda)\in S^{m,d}(\R^n\times \Lambda; \cA_\theta)$, and we have 
\begin{equation}
 \rho(\xi;\lambda) \sim \sum_{j\geq 0}  \rho_{m-j}(\xi;\lambda), \quad \text{where}\  \rho_{m-j}(\xi;\lambda):= \sum_{0\leq \ell \leq j}  \rho_{m-j}^{(\ell)}(\xi;\lambda)\in 
 S_{m-j}^d(\Omega_c(\Theta);\Lambda). 
 \label{eq:Parameter.homogeneous-components-expansion}
\end{equation}
\end{lemma}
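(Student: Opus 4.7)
The plan is to verify the qualitative characterization of classical parametric symbols given in Remark~\ref{rmk:Parameter.classical-symbol-asymptotics-equivalent-conditions}. Fix a cut-off $\chi(\xi)\in C^\infty_c(\R^n)$ as in Lemma~\ref{lem:Parameter.homogeneous-symbol-estimate} and set $d_-=\max(0,-d)$. The main claim will be that, for every $N\geq 0$ and every integer $J\geq N+wd_-$, one has
\begin{equation*}
\rho(\xi;\lambda)-\sum_{j<J}(1-\chi(\xi))\rho_{m-j}(\xi;\lambda)\in\stS^{m-N,d}(\R^n\times\Lambda;\cA_\theta).
\end{equation*}
This is exactly condition~(\ref{eq:Parameter.classical-symbol-asymptotics-qualitative}), so by Remark~\ref{rmk:Parameter.classical-symbol-asymptotics-equivalent-conditions} it gives $\rho(\xi;\lambda)\sim\sum_{j\geq 0}\rho_{m-j}(\xi;\lambda)$ in the classical sense of~(\ref{eq:Parameter.classical-symbol-asymptotics}), and hence $\rho\in S^{m,d}(\R^n\times\Lambda;\cA_\theta)$. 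Moreover the representation $\rho_{m-j}=\sum_{0\leq\ell\leq j}\rho^{(\ell)}_{m-j}$ already exhibits $\rho_{m-j}$ as a finite sum of elements of $S_{m-j}^d(\Omega_c(\Theta);\cA_\theta)$, so it lies in that space automatically.

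The main step is a double truncation with two nested asymptotic expansions. Fix $N\geq 0$ and an integer $J\geq N+wd_-$. The outer hypothesis provides
\begin{equation*}
\rho=\sum_{\ell<J}\rho^{(\ell)}+R_J,\qquad R_J\in\stS^{m-J+wd_-,d}\subset\stS^{m-N,d},
\end{equation*}
where Proposition~\ref{symbols:inclusion-classical-standard} is used to account for the $wd_-$ loss between the classical order of each $\rho^{(\ell)}\in S^{m-\ell,d}$ and its standard-symbol order in the case $d<0$. For each $\ell<J$, applying~(\ref{eq:Parameter.classical-symbol-asymptotics-qualitative}) to $\rho^{(\ell)}\in S^{m-\ell,d}$ with truncation parameter $J$ and residual index $N'=J-wd_-\geq 0$ yields
\begin{equation*}
\rho^{(\ell)}=\sum_{k<J}(1-\chi)\rho^{(\ell)}_{m-\ell-k}+R^{(\ell)}_J,\qquad R^{(\ell)}_J\in\stS^{m-\ell-J+wd_-,d}\subset\stS^{m-N,d},
\end{equation*}
the last inclusion holding because $\ell+J-wd_-\geq J-wd_-\geq N$.

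Substituting and regrouping the double sum by the degree index $j=\ell+k$, observe that for each $j<J$ every pair $(\ell,k)$ with $\ell+k=j$ and $\ell,k\geq 0$ automatically satisfies $\ell,k\leq j<J$ and is thus included in the double sum; the inner contribution is therefore exactly $(1-\chi)\rho_{m-j}$ by the definition of $\rho_{m-j}$. The remaining pairs sit in the range $J\leq j\leq 2J-2$. This gives
\begin{equation*}
\rho-\sum_{j<J}(1-\chi)\rho_{m-j}=\sum_{J\leq j\leq 2J-2}\sum_{\substack{\ell+k=j\\ 0\leq\ell,k<J}}(1-\chi(\xi))\rho^{(\ell)}_{m-j}(\xi;\lambda)+\sum_{\ell<J}R^{(\ell)}_J+R_J.
\end{equation*}
The two sums of remainders already lie in $\stS^{m-N,d}$. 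For each off-diagonal term Lemma~\ref{lem:Parameter.homogeneous-symbol-estimate} gives $(1-\chi)\rho^{(\ell)}_{m-j}\in\stS^{m-j+wd_-,d}\subset\stS^{m-N,d}$ since $j\geq J\geq N+wd_-$. This establishes the displayed inclusion and completes the proof.

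The main obstacle is really combinatorial bookkeeping of the two nested expansions: the inner and outer truncation orders must be chosen so that the doubly truncated sum reorganises exactly into the diagonal sum $\sum_{j<J}(1-\chi)\rho_{m-j}$ plus controllable off-diagonal extras, while simultaneously each type of error -- the outer remainder $R_J$, the finitely many inner remainders $R^{(\ell)}_J$, and the off-diagonal extras -- is absorbed into the single space $\stS^{m-N,d}$. The uniform slack $J\geq N+wd_-$ is precisely what is needed to compensate the $wd_-$ loss of regularity incurred (when $d<0$) upon replacing classical parametric symbols and their homogeneous components by their cut-offs, \emph{cf}.\ Lemma~\ref{lem:Parameter.homogeneous-symbol-estimate} and Proposition~\ref{symbols:inclusion-classical-standard}.
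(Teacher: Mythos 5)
Your proposal is correct and takes essentially the same approach as the paper: both verify the qualitative criterion of Remark~\ref{rmk:Parameter.classical-symbol-asymptotics-equivalent-conditions} by truncating the outer expansion, truncating each inner expansion, and regrouping by the total degree $j=\ell+k$. The only difference is a bookkeeping detail: you truncate every inner expansion at the fixed index $J$ and then separately absorb the resulting off-diagonal terms with $J\leq j\leq 2J-2$ via Lemma~\ref{lem:Parameter.homogeneous-symbol-estimate}, whereas the paper truncates the $\ell$-th inner expansion at $J-\ell$ so that the regrouped double sum matches $\sum_{j<J}(1-\chi)\rho_{m-j}$ exactly with no extra terms.
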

\begin{proof}
 Let $N\geq 0$ and $J\geq N+wd_{-}$. By assumption, 
\begin{equation*}
 \rho(\xi;\lambda) - \sum_{\ell< J}\rho^{(\ell)}(\xi;\lambda)\in \stS^{m-N,d}(\R^n\times \Lambda; \cA_\theta). 
\end{equation*}
 Let $\chi(\xi)\in C^\infty_c(\R^n)$ be as in Lemma~\ref{lem:Parameter.homogeneous-symbol-estimate}. In view of~(\ref{eq:Parameter.classical-symbol-asymptotics-qualitative}) for $\ell< J$ we have 
 \begin{align*}
\rho^{(\ell)} (\xi;\lambda) &=  \sum_{k<J-\ell}\big(1-\chi(\xi)\big) \rho^{(\ell)}_{m-\ell-k} (\xi;\lambda) \quad \bmod \stS^{(m-\ell)-(N-\ell),d}(\R^n\times \Lambda; \cA_\theta) \\
 & =  \sum_{\ell\leq j<J} \big(1-\chi(\xi)\big)\rho^{(\ell)}_{m-j} (\xi;\lambda) \quad \bmod \stS^{m-N,d}(\R^n\times \Lambda; \cA_\theta). 
\end{align*}
Thus, 
 \begin{align*}
 \rho(\xi;\lambda)   & = \sum_{\ell< J} \sum_{\ell\leq j<J} \big(1-\chi(\xi)\big) \rho^{(\ell)}_{m-j} (\xi;\lambda) \quad \bmod \stS^{m-N,d}(\R^n\times \Lambda; \cA_\theta)\\
 & = \sum_{j< J}\big(1-\chi(\xi)\big)\rho_{m-j} (\xi;\lambda) \quad \bmod \stS^{m-N,d}(\R^n\times \Lambda; \cA_\theta), 
\end{align*}
 where we have set $\rho_{m-j}(\xi;\lambda)= \sum_{\ell=0}^j  \rho_{m-j}^{(\ell)}(\xi;\lambda)$. Here $\rho_{m-j}(\xi;\lambda)\in 
 S_{m-j}^d(\Omega_c(\Theta);\Lambda)$, and so by using Remark~\ref{rmk:Parameter.classical-symbol-asymptotics-equivalent-conditions} we see that  $\rho(\xi;\lambda) \sim \sum_{j\geq 0}  \rho_{m-j}(\xi;\lambda)$ in the sense of~(\ref{eq:Parameter.classical-symbol-asymptotics}). In particular, this shows that $\rho(\xi;\lambda)\in S^{m,d}(\R^n\times \Lambda; \cA_\theta)$. The result is thus proved. 
\end{proof}

\begin{lemma} \label{lem:Parameter.classical-symbol-product}
Let $\rho_1(\xi;\lambda)\in S^{m_1,d_1}(\Rn\times\Lambda;\cA_\theta)$ and $\rho_2(\xi;\lambda)\in S^{m_2,d_2}(\Rn\times\Lambda;\cA_\theta)$, $m_i,d_i\in \R$,  be such that 
$\rho_1(\xi;\lambda)\sim\sum_{j\geq 0}\rho_{1,m_1-j}(\xi;\lambda)$ and $\rho_2(\xi;\lambda)\sim\sum_{j\geq 0}\rho_{2,m_2-j}(\xi;\lambda)$, with
$\rho_{i,m_i-j}(\xi;\lambda)$ in $S_{m_i-j}^{d_i}(\Omega_c(\Theta);\cA_\theta)$, $i=1,2$.  Then $\rho_1(\xi;\lambda)\rho_2(\xi;\lambda)\in S^{m_1+m_2,d_1+d_2}(\Rn\times\Lambda;\cA_\theta)$, and we have 
\begin{gather}
\rho_1(\xi;\lambda)\rho_2(\xi;\lambda)\sim\sum_{j\geq 0}(\rho_1\rho_2)_{m_1+m_2-j}(\xi;\lambda), \quad \text{where}\\ 
 (\rho_1\rho_2)_{m_1+m_2-j}(\xi;\lambda) := \sum_{k+l=j}\rho_{1,m_1-k}(\xi;\lambda)\rho_{2,m_2-l}(\xi;\lambda)\in S^{d_1+d_2}_{m_1+m_2-j}(\Omega_c(\Theta);\cA_\theta).  
 \label{eq:PsiDOs-parameter.pointwise-product-classical2}
\end{gather}
\end{lemma}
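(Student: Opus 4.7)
The plan is to verify the two requirements of Definition~\ref{def:Parameter.classical-symbol} for the pointwise product $\rho_1\rho_2$. The candidate homogeneous components $(\rho_1\rho_2)_{m_1+m_2-j}$ prescribed in~(\ref{eq:PsiDOs-parameter.pointwise-product-classical2}) clearly lie in $S^{d_1+d_2}_{m_1+m_2-j}(\Omega_c(\Theta);\cA_\theta)$: each summand $\rho_{1,m_1-k}\rho_{2,m_2-l}$ with $k+l=j$ does so by the product rule for homogeneous parametric symbols (Remark~\ref{rmk:Parameter.homogeneous-symbol-product}), and a finite sum stays in the same class.

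The heart of the argument will be to establish the asymptotic expansion. I plan to rephrase it via the standard-symbol criterion in Remark~\ref{rmk:Parameter.classical-symbol-asymptotics-equivalent-conditions}: with $\chi$ a cutoff as in Lemma~\ref{lem:Parameter.homogeneous-symbol-estimate}, it is enough to show that for every $N\geq 0$, whenever $J$ is large enough,
\begin{equation*}
 \rho_1\rho_2 - \sum_{j<J}(1-\chi)(\rho_1\rho_2)_{m_1+m_2-j} \in \stS^{m_1+m_2-N, d_1+d_2}(\R^n\times\Lambda;\cA_\theta).
\end{equation*}
To produce this, I will truncate each $\rho_i$ via the quantitative form of its own asymptotic expansion, writing $\rho_i = \sum_{k<J_i}(1-\chi)\rho_{i,m_i-k}+r_i$ with $r_i\in\stS^{m_i-N_i,d_i}(\R^n\times\Lambda;\cA_\theta)$, and then expand $\rho_1\rho_2$. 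The resulting terms split naturally into three groups: the diagonal block indexed by $k+l<J$, the off-diagonal block indexed by $k+l\geq J$, and three mixed contributions involving at least one remainder $r_i$. The diagonal block reassembles $\sum_{j<J}(1-\chi)^2(\rho_1\rho_2)_{m_1+m_2-j}$, and the discrepancy $(1-\chi)^2-(1-\chi)=-\chi(1-\chi)$ is compactly supported in an annulus bounded away from $0$ and $\infty$, so the associated correction is Schwartz in $\xi$ with $(1+|\lambda|)^{d_1+d_2}$ growth, hence lies in $\stS^{-\infty,d_1+d_2}(\R^n\times\Lambda;\cA_\theta)$.

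What remains is to place the off-diagonal block and the mixed contributions into $\stS^{m_1+m_2-N,d_1+d_2}$. For this I will invoke Proposition~\ref{prop:Parameter.bilinear-map-symbols} applied to pointwise multiplication, which is continuous as a bilinear map between standard parametric symbol classes (via the corresponding continuity on $\stS^m(\R^n;\cA_\theta)$ proved by the usual Leibniz estimate), together with Lemma~\ref{lem:Parameter.homogeneous-symbol-estimate}: the latter places $(1-\chi)\rho_{i,m_i-k}$ in $\stS^{m_i-k+w(d_i)_-,d_i}$ with $(d_i)_-=\max(0,-d_i)$. Tracking orders, every off-diagonal or mixed term is seen to have $\xi$-order at most $m_1+m_2-\min(J,N_1,N_2)+w\bigl((d_1)_-+(d_2)_-\bigr)$ and $\lambda$-order at most $d_1+d_2$, so choosing $J$, $N_1$, $N_2$ large enough relative to $N$ closes the argument.

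The main obstacle I foresee is the bookkeeping forced on us when $d_1$ or $d_2$ is negative: the order losses $w(d_i)_-$ coming from Lemma~\ref{lem:Parameter.homogeneous-symbol-estimate} must be absorbed simultaneously in all truncation indices. This is not a genuine obstruction, only a matter of care, since the classical asymptotic expansions of $\rho_1$ and $\rho_2$ may be truncated to arbitrarily high order, and the threshold for $J$, $N_1$, $N_2$ can be fixed in advance as an explicit affine function of $N$ and $w\bigl((d_1)_-+(d_2)_-\bigr)$.
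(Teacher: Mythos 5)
Your argument is correct and follows the same overall route as the paper: verify that $(\rho_1\rho_2)_{m_1+m_2-j}\in S^{d_1+d_2}_{m_1+m_2-j}(\Omega_c(\Theta);\cA_\theta)$ via Remark~\ref{rmk:Parameter.homogeneous-symbol-product}, pass to the cut-off form of the asymptotic expansions via Remark~\ref{rmk:Parameter.classical-symbol-asymptotics-equivalent-conditions}, and control the tails with Lemma~\ref{lem:Parameter.homogeneous-symbol-estimate} and the continuity of the pointwise product on standard parametric symbol classes (Proposition~\ref{prop:Parameter.bilinear-map-symbols}). The one genuine difference is how you handle the mismatch between $(1-\chi)^2$, which naturally arises when multiplying the two cut-off expansions, and the $(1-\chi)$ required by Remark~\ref{rmk:Parameter.classical-symbol-asymptotics-equivalent-conditions}: you subtract off the compactly supported correction $\chi(1-\chi)(\rho_1\rho_2)_{m_1+m_2-j}$ and estimate it by hand (correctly noting it is $\xi$-compactly supported inside $\{|\xi|\geq(c^{-1}R)^{1/w}\}$, where the homogeneous symbols are defined for all $\lambda\in\Lambda$, hence lies in $\stS^{-\infty,d_1+d_2}$). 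The paper instead sets $\widetilde\chi:=1-(1-\chi)^2$, observes this is again a cutoff of the type admitted by Lemma~\ref{lem:Parameter.homogeneous-symbol-estimate}, and reads off the expansion directly with $1-\widetilde\chi=(1-\chi)^2$, avoiding any separate estimate on the correction. Both are fine; the paper's substitution is slicker, while yours keeps the bookkeeping more explicit, which may make the required thresholds for $J$, $N_1$, $N_2$ in terms of $N$ and $w((d_1)_-+(d_2)_-)$ more transparent.
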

\begin{proof}
For $i=1,2$ let $\rho^{(i)}(\xi;\lambda)\in \stS^{m_i,d_i}(\R^n\times \Lambda; \cA_\theta)$ be such that $\rho^{(i)}(\xi;\lambda) \sim \sum_{j\geq 0} \rho^{(i)}_{m_i-j}(\xi;\lambda)$ with 
$\rho^{(i)}_{m_i-j}(\xi;\lambda)\in \stS^{m_i-j,d_i}(\R^n\times \Lambda; \cA_\theta)$. Then~(\ref{eq:Parameter.standard-symbol-with-parameter-product}) implies that, for all $N\geq 0$ and $l<N$, we have 
\begin{gather*}
 \rho^{(1)}(\xi;\lambda)\rho^{(2)}(\xi;\lambda)= \sum_{l<N} \rho^{(1)}(\xi;\lambda)\rho^{(2)}_{m_2-l}(\xi;\lambda) \quad \bmod  \stS^{m_1+m_2-N,d_1+d_2}(\R^n\times \Lambda; \cA_\theta),\\
 \rho^{(1)}(\xi;\lambda)\rho^{(2)}_{m_2-l}(\xi;\lambda) = \sum_{k<N-l} \rho^{(1)}_{m_1-k}(\xi;\lambda)\rho^{(2)}_{m_2-l}(\xi;\lambda) \quad \bmod  \stS^{m_1+m_2-N,d_1+d_2}(\R^n\times \Lambda; \cA_\theta).
\end{gather*}
Thus, 
\begin{equation*}
  \rho^{(1)}(\xi;\lambda)\rho^{(2)}(\xi;\lambda)=\sum_{k+l<N} \rho^{(1)}_{m_1-k}(\xi;\lambda)\rho^{(2)}_{m_2-l}(\xi;\lambda) \quad \bmod  \stS^{m_1+m_2-N,d_1+d_2}(\R^n\times \Lambda; \cA_\theta).
\end{equation*}
This shows that, in the sense of~(\ref{eq:Parameter.Standard-asymptotic}), we have 
\begin{equation} \label{eq:Parameter.Standard-asymptotic-product}
 \rho^{(1)}(\xi;\lambda)\rho^{(2)}(\xi;\lambda) \sim \sum_{j\geq 0} \sum_{k+l=j}\rho^{(1)}_{m_1-k}(\xi;\lambda)\rho^{(2)}_{m_2-l}(\xi;\lambda).  
\end{equation}

Bearing this in mind, let $\chi(\xi)\in C_c^\infty(\Rn)$ be such that 
$\chi(\xi) = 1$ for $|\xi|\leq (c^{-1}R)^{\frac{1}{w}}$, where $R>0$ is such that $\Lambda \subset \Theta \cup D(0,R)$. Set $\widetilde{\chi}(\xi) = 1-(1-\chi(\xi))^2$. Then $\widetilde{\chi}(\xi)\in  C_c^\infty(\Rn)$  and 
$\widetilde{\chi}(\xi) = 1$ for $|\xi|\leq (c^{-1}R)^{\frac{1}{w}}$, i.e., $\widetilde{\chi}(\xi)$ is as in Lemma~\ref{lem:Parameter.homogeneous-symbol-estimate}. We know by Remark~\ref{rmk:Parameter.classical-symbol-asymptotics-equivalent-conditions} that $\rho_1(\xi;\lambda)\sim\sum_{j\geq 0}(1-\chi(\xi))\rho_{1,m_1-j}(\xi;\lambda)$ and $\rho_2(\xi;\lambda)\sim\sum_{j\geq 0}(1-\chi(\xi))\rho_{2,m_2-j}(\xi;\lambda)$ in the sense of~(\ref{eq:Parameter.Standard-asymptotic}). 
Therefore, in view of~(\ref{eq:Parameter.Standard-asymptotic-product}), in the sense of~(\ref{eq:Parameter.Standard-asymptotic}) once again, we have
\begin{align*}
 \rho_1(\xi;\lambda)\rho_2(\xi;\lambda) &\sim \sum_{j\geq 0} \sum_{k+l=j} (1-\chi(\xi))^2  \rho_{1,m_1-k}(\xi;\lambda)\rho_{2,m_2-l}(\xi;\lambda)\\  
 & \sim \sum_{j\geq 0}\big(1-\widetilde{\chi}(\xi)\big)  (\rho_1\rho_2)_{m_1+m_2-j}(\xi;\lambda). 
\end{align*}
where $ (\rho_1\rho_2)_{m+m_2-j}(\xi;\lambda)$ is given by~(\ref{eq:PsiDOs-parameter.pointwise-product-classical2}). As $ (\rho_1\rho_2)_{m+m_2-j}(\xi;\lambda)\in S^{d_1+d_2}_{m_1+m_2-j}(\Omega_c(\Theta);\cA_\theta)$ and $\widetilde{\chi}(\xi)$ is as in Lemma~\ref{lem:Parameter.homogeneous-symbol-estimate}, it follows from Remark~\ref{rmk:Parameter.classical-symbol-asymptotics-equivalent-conditions} that in the sense of~(\ref{eq:Parameter.classical-symbol-asymptotics}) we have
$\rho_1(\xi;\lambda)\rho_2(\xi;\lambda)\sim\sum_{j\geq 0}(\rho_1\rho_2)_{m_1+m_2-j}(\xi;\lambda)$. This immediately implies that 
$\rho_1(\xi;\lambda)\rho_2(\xi;\lambda)$ is in  $S^{m_1+m_2,d_1+d_2}(\Rn\times\Lambda;\cA_\theta)$. The proof is complete. 
\end{proof}

We are now in a position to prove the following result. 

\begin{proposition} \label{prop:Parameter.composition-PsiDOs}
Let $P_1(\lambda)\in\Psi^{m_1,d_1}(\cA_\theta;\Lambda)$ have symbol $\rho_1(\xi;\lambda)\sim\sum \rho_{1,m_1-j}(\xi;\lambda)$, and 
let $P_2(\lambda)\in\Psi^{m_2,d_2}(\cA_\theta;\Lambda)$ have symbol $\rho_2(\xi;\lambda)\sim\sum \rho_{2,m_2-j}(\xi;\lambda)$. 
\begin{enumerate}
 \item $\rho_1\sharp \rho_2(\xi;\lambda)\in S^{m_1+m_2,d_1+d_2}(\R^n\times \Lambda; \cA_\theta)$ with 
 $\rho_1\sharp\rho_2(\xi;\lambda)\sim\sum (\rho_1\sharp\rho_2)_{m_1+m_2-j}(\xi;\lambda)$, where
\begin{equation} \label{eq:PsiDOs-parameter.composition-symbol-homogeneous-parts}
(\rho_1\sharp\rho_2)_{m_1+m_2-j}(\xi;\lambda) = \sum_{k+l+|\alpha|=j}\frac{1}{\alpha!}\partial_\xi^\alpha\rho_{1,m_1-k}(\xi;\lambda)\delta^\alpha\rho_{2,m_2-l}(\xi;\lambda),\qquad j\geq 0 .
\end{equation}

\item The composition $P_1(\lambda)P_2(\lambda)=P_{\rho_1\sharp \rho_2}(\lambda)$ is in $\Psi^{m_1+m_2,d_1+d_2}(\cA_\theta;\Lambda)$. 
\end{enumerate}
\end{proposition}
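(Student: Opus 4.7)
My plan is to deduce this proposition from the three previous results of the section: Proposition~\ref{prop:PsiDOs-parameter.standard-symbol-sharp-product}, which gives the asymptotic expansion of $\rho_1\sharp\rho_2$ at the level of standard parametric symbols, together with Lemma~\ref{lem:Parameter.classical-symbol-product} (pointwise product of classical parametric symbols) and Lemma~\ref{lem:Parameter.classical-homogeneouspart} (reassembly of homogeneous components from a standard asymptotic expansion by order $S^{m-\ell,d}$ pieces). Part~(2) will be essentially free: for each $\lambda\in\Lambda$ the equality $P_{\rho_1}(\lambda)P_{\rho_2}(\lambda)=P_{\rho_1\sharp\rho_2}(\lambda)$ is Proposition~\ref{prop:Composition.sharp-continuity-standard-symbol}(2) applied pointwise, so the only thing to establish is the classical parametric nature of the composition symbol claimed in~(1).

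First, by Remark~\ref{rem:Parameter.classical-symbol-partial-derivative} the symbols $\partial_\xi^\alpha\rho_1(\xi;\lambda)$ and $\delta^\alpha\rho_2(\xi;\lambda)$ belong to $S^{m_1-|\alpha|,d_1}$ and $S^{m_2,d_2}$ respectively, with homogeneous components $\partial_\xi^\alpha\rho_{1,m_1-k}(\xi;\lambda)\in S^{d_1}_{m_1-k-|\alpha|}(\Omega_c(\Theta);\cA_\theta)$ and $\delta^\alpha\rho_{2,m_2-l}(\xi;\lambda)\in S^{d_2}_{m_2-l}(\Omega_c(\Theta);\cA_\theta)$ (see Remark~\ref{rem:Parameter.homogeneous-symbol-partial-derivative}). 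Apply Lemma~\ref{lem:Parameter.classical-symbol-product} to each pointwise product: it yields that
\[
\rho^{(\ell)}(\xi;\lambda):=\sum_{|\alpha|=\ell}\frac{1}{\alpha!}\partial_\xi^\alpha\rho_1(\xi;\lambda)\,\delta^\alpha\rho_2(\xi;\lambda)
\]
belongs to $S^{m_1+m_2-\ell,\,d_1+d_2}(\R^n\times\Lambda;\cA_\theta)$, with an asymptotic expansion into homogeneous components obtained by collecting $\partial_\xi^\alpha\rho_{1,m_1-k}\,\delta^\alpha\rho_{2,m_2-l}$ (each of degree $m_1+m_2-\ell-(k+l)$).

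Next, Proposition~\ref{prop:PsiDOs-parameter.standard-symbol-sharp-product} gives
\[
\rho_1\sharp\rho_2(\xi;\lambda)\sim\sum_{\ell\geq 0}\rho^{(\ell)}(\xi;\lambda)
\]
in the standard sense~(\ref{eq:Parameter.Standard-asymptotic}), with $\rho_1\sharp\rho_2\in\stS^{m_1+m_2,d_1+d_2}(\R^n\times\Lambda;\cA_\theta)$. All hypotheses of Lemma~\ref{lem:Parameter.classical-homogeneouspart} (with $m=m_1+m_2$ and $d=d_1+d_2$) are then met: we have a standard asymptotic expansion whose $\ell$-th term is itself a classical parametric symbol of order $m_1+m_2-\ell$. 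The lemma therefore guarantees that $\rho_1\sharp\rho_2\in S^{m_1+m_2,d_1+d_2}(\R^n\times\Lambda;\cA_\theta)$ and produces its homogeneous components $(\rho_1\sharp\rho_2)_{m_1+m_2-j}$ by formula~(\ref{eq:Parameter.homogeneous-components-expansion}). A direct bookkeeping identifies the latter with the triple sum over $k+l+|\alpha|=j$ in~(\ref{eq:PsiDOs-parameter.composition-symbol-homogeneous-parts}) — indeed, the $\ell$-th piece $\rho^{(\ell)}$ contributes exactly the terms with $|\alpha|=\ell$ and $k+l=j-\ell$.

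The only mild subtlety — and what I view as the main point requiring care — is verifying the combinatorial identification in the last step, namely that the homogeneous components assembled by Lemma~\ref{lem:Parameter.classical-homogeneouspart} from the pieces $\rho^{(\ell)}$ coincide with the expression~(\ref{eq:PsiDOs-parameter.composition-symbol-homogeneous-parts}). This is purely algebraic: it amounts to the unique decomposition of a multi-index sum by total order. Once this is noted, part~(2) follows at once: the operator equality $P_{\rho_1}(\lambda)P_{\rho_2}(\lambda)=P_{\rho_1\sharp\rho_2}(\lambda)$ holds pointwise in $\lambda$ by Proposition~\ref{prop:Composition.sharp-continuity-standard-symbol}(2), and since $\rho_1\sharp\rho_2\in S^{m_1+m_2,d_1+d_2}(\R^n\times\Lambda;\cA_\theta)$, the composed family lies in $\Psi^{m_1+m_2,d_1+d_2}(\cA_\theta;\Lambda)$ by definition.
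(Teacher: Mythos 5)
Your proof is correct and follows essentially the same route as the paper's: both reduce part (1) to the combination of Proposition~\ref{prop:PsiDOs-parameter.standard-symbol-sharp-product}, Lemma~\ref{lem:Parameter.classical-symbol-product}, Remark~\ref{rem:Parameter.classical-symbol-partial-derivative}, and Lemma~\ref{lem:Parameter.classical-homogeneouspart}, and both dispose of part (2) via the pointwise operator equality from Section~\ref{sec:PsiDOs}. The only difference is cosmetic — you spell out the intermediate grouping $\rho^{(\ell)}=\sum_{|\alpha|=\ell}\frac{1}{\alpha!}\partial_\xi^\alpha\rho_1\,\delta^\alpha\rho_2$ more explicitly before invoking Lemma~\ref{lem:Parameter.classical-homogeneouspart}, which makes the combinatorial bookkeeping behind~(\ref{eq:PsiDOs-parameter.composition-symbol-homogeneous-parts}) slightly more transparent than in the paper's terser statement.
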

\begin{proof}
As the 2nd part is an immediate consequence of the first part and Proposition~\ref{prop:Composition.composition-PsiDOs}, we only have to prove the first part. Given any $\alpha \in \N_0^n$, it follows from Remark~\ref{rem:Parameter.classical-symbol-partial-derivative} and Lemma~\ref{lem:Parameter.classical-symbol-product} that, in the sense of~(\ref{eq:Parameter.classical-symbol-asymptotics}), we have
\begin{equation*}
\partial_\xi^\alpha\rho_{1}(\xi;\lambda)\delta^\alpha\rho_{2}(\xi;\lambda)\sim 
\sum \partial_\xi^\alpha\rho_{1,m_1-k}(\xi;\lambda)\delta^\alpha\rho_{2,m_2-l}(\xi;\lambda), 
\end{equation*}
 where $\partial_\xi^\alpha\rho_{1,m_1-k}(\xi;\lambda)\delta^\alpha\rho_{2,m_2-l}(\xi;\lambda)\in S^{d_1+d_2}_{m_1+m_2-|\alpha|-k-l}(\Omega_c(\Theta);\cA_\theta)$ for some $c>0$ independent of $\alpha$, $k$ and $l$. Combining this with Lemma~\ref{lem:Parameter.classical-homogeneouspart} and Proposition~\ref{prop:PsiDOs-parameter.standard-symbol-sharp-product} then shows that, in the sense of~(\ref{eq:Parameter.classical-symbol-asymptotics}), we have
\begin{equation*}
 \rho_1\sharp\rho_2(\xi;\lambda)\sim\sum (\rho_1\sharp\rho_2)_{m_1+m_2-j}(\xi;\lambda), 
\end{equation*}
where $ (\rho_1\sharp\rho_2)_{m_1+m_2-j}(\xi;\lambda)\in S^{d_1+d_2}_{m_1+m_2-j}(\Omega_c(\Theta);\cA_\theta)$ is given by~(\ref{eq:PsiDOs-parameter.composition-symbol-homogeneous-parts}). In particular, we see that $\rho_1\sharp \rho_2(\xi;\lambda)$ is contained in $S^{m_1+m_2,d_1+d_2}(\R^n\times \Lambda; \cA_\theta)$. The proof is complete. 
\end{proof}

\subsection{Sobolev space mapping properties} 
Given any $m,s\in \R$, by~\cite[Proposition~10.4]{HLP:Part2} we have a continuous linear map $\stS^m(\R^n;\cA_\theta) \ni \rho(\xi) \longrightarrow P_\rho \in \cL(\cH_\theta^{(s+m)},\cH_\theta^{(s)})$. By combining this with Proposition~\ref{prop:Parameter.linear-map-symbols} and the version of Sobolev's embedding theorem provided by Proposition~\ref{prop:Sobolev.Sobolev-embedding} we obtain the following result. 

\begin{proposition} \label{prop:PsiDOs-parameter.Sobolev-mapping-properties}
 Let $\rho(\xi;\lambda)\in \stS^{m,d}(\R^n\times \Lambda; \cA_\theta)$, $m,d\in \R$. 
 \begin{enumerate}
 \item The family $P_\rho(\lambda)$ is contained  in $\Hol^d(\Lambda; \cL(\cH_\theta^{(s+m)}, \cH_\theta^{(s)}))$ for every $s\in \R$.
 
 \item If $m\leq 0$, then $P_\rho(\lambda)\in \Hol^d(\Lambda; \cL(\cH_\theta))$. 
\end{enumerate}
\end{proposition}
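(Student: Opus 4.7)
The proof is essentially a soft, functorial argument built on top of two ingredients that are already available: the continuity of the symbol-to-operator assignment on non-parametric Sobolev spaces (cited from \cite{HLP:Part2}) and the transfer mechanism in Proposition~\ref{prop:Parameter.linear-map-symbols} that promotes a continuous linear map out of $\stS^m(\R^n;\cA_\theta)$ to a continuous linear map on parametric symbols landing in $\Hol^d$.

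For part (1), fix $s\in\R$. The cited fact is that the assignment $\rho(\xi)\mapsto P_\rho$ is a continuous linear map from $\stS^m(\R^n;\cA_\theta)$ into $\cL(\cH_\theta^{(s+m)},\cH_\theta^{(s)})$, which is endowed with its strong operator topology. Under the topological identification $\stS^{m,d}(\R^n\times\Lambda;\cA_\theta)\simeq\Hol^d(\Lambda;\stS^m(\R^n;\cA_\theta))$ of Remark~\ref{rmk:Parameter.standard-symbols-with-parameter-identification}, the parametric symbol $\rho(\xi;\lambda)$ is an $\Hol^d(\Lambda)$-family in $\stS^m(\R^n;\cA_\theta)$. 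Applying Proposition~\ref{prop:Parameter.linear-map-symbols}(1) to the continuous linear map $\rho\mapsto P_\rho$ therefore immediately gives that $\lambda\mapsto P_\rho(\lambda)$ is an $\Hol^d(\Lambda)$-family in $\cL(\cH_\theta^{(s+m)},\cH_\theta^{(s)})$, which is part (1).

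For part (2), the plan is to deduce it from part (1) together with a Sobolev embedding. Specialize part (1) to $s=0$ so that $P_\rho(\lambda)\in\Hol^d(\Lambda;\cL(\cH_\theta^{(m)},\cH_\theta))$, where $\cH_\theta^{(0)}=\cH_\theta$. When $m\leq 0$, formula~(\ref{eq:Sobolev.Sobolev-norm-formula}) gives $\|u\|_m\leq\|u\|_0$ for every $u\in\cH_\theta$, so the inclusion $\iota:\cH_\theta\hookrightarrow\cH_\theta^{(m)}$ is continuous. Post-composition with $\iota$ defines a continuous linear map $\cL(\cH_\theta^{(m)},\cH_\theta)\to\cL(\cH_\theta,\cH_\theta)$ (in the strong topologies). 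Applying Proposition~\ref{prop:Parameter.linear-map} to this continuous linear map transports the $\Hol^d$-family property, yielding $P_\rho(\lambda)\circ\iota=P_\rho(\lambda)\in\Hol^d(\Lambda;\cL(\cH_\theta))$.

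There is no genuine obstacle here; the entire argument is a formal application of the transfer principle, once one has the non-parametric mapping property. The only mild point to be careful about is confirming that the strong (operator) topology on $\cL(\cH_\theta^{(s+m)},\cH_\theta^{(s)})$ is the one with respect to which $\rho\mapsto P_\rho$ is continuous and to which the $\Hol^d$-machinery of Definition~\ref{def:symbols.Hold-sE} applies; this is built into the setup used in~\cite{HLP:Part2} and is compatible with the framework of Proposition~\ref{prop:Parameter.linear-map-symbols}, so no additional verification is needed.
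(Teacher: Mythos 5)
Your proof is correct and follows essentially the same route as the paper: for part (1) the paper also combines the continuity of $\rho\mapsto P_\rho$ from $\stS^m(\R^n;\cA_\theta)$ into $\cL(\cH_\theta^{(s+m)},\cH_\theta^{(s)})$ (citing \cite[Prop.~10.4]{HLP:Part2}) with the transfer principle of Proposition~\ref{prop:Parameter.linear-map-symbols}, and for part (2) it invokes the Sobolev embedding $\cH_\theta\hookrightarrow\cH_\theta^{(m)}$ exactly as you do. Only small cosmetic points: the map you describe is pre-composition (not post-composition) with $\iota$, and the relevant topology on $\cL(\cH_\theta^{(s+m)},\cH_\theta^{(s)})$ is the operator-norm topology (these spaces are Hilbert, so ``uniform convergence on bounded sets'' coincides with the norm topology), but neither affects the validity of the argument.
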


 In the case of classical \psidos\ with parameter we get the following result.  

\begin{proposition} \label{prop:PsiDOs-parameter.classical-PsiDO-Sobolev-mapping-properties}
 Let $P(\lambda)\in \Psi^{m,d}(\cA_\theta; \Lambda)$, $m,d\in \R$. 
\begin{enumerate}
 \item If $d\geq 0$, then $P(\lambda) \in \Hol^d(\Lambda; \cL(\cH_\theta^{(s+m)}, \cH_\theta^{(s)}))$ for every $s\in \R$. 
 
 \item If $d<0$, then, for every $s\in \R$, we have 
          \begin{equation*}
             P(\lambda) \in \bigcap_{d\leq d'\leq 0}\Hol^{d'}\big(\Lambda; \cL\big(\cH_\theta^{(s+m+w|d'|)}, \cH_\theta^{(s)}\big)\big).  
          \end{equation*}
          
 \item If $m\leq 0$, then $P(\lambda)\in\Hol^{\bar{d}}(\Lambda; \cL(\cH_\theta))$ with $\bar{d}:=\max(d,mw^{-1})$. 
 \end{enumerate}
\end{proposition}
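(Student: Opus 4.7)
The plan is to derive all three parts directly from the inclusion result Proposition~\ref{symbols:inclusion-classical-standard}, which embeds classical parametric symbols into the standard ones, combined with the mapping property for standard parametric symbols (Proposition~\ref{prop:PsiDOs-parameter.Sobolev-mapping-properties}) and the continuous inclusions $\cH_\theta^{(s)}\hookrightarrow \cH_\theta^{(t)}$ for $t\leq s$. No genuinely new analysis is needed; the content is entirely bookkeeping with the parameters $d$ and $d'$.

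For part~(1), assume $d\geq 0$ and let $\rho(\xi;\lambda)\in S^{m,d}(\R^n\times\Lambda;\cA_\theta)$ be the symbol of $P(\lambda)$. By the first item of Proposition~\ref{symbols:inclusion-classical-standard} we have $\rho(\xi;\lambda)\in \stS^{m,d}(\R^n\times\Lambda;\cA_\theta)$, and then Proposition~\ref{prop:PsiDOs-parameter.Sobolev-mapping-properties}(i) immediately gives $P(\lambda)\in \Hol^d(\Lambda;\cL(\cH_\theta^{(s+m)},\cH_\theta^{(s)}))$ for every $s\in\R$. For part~(2), assume $d<0$ and fix $d'\in[d,0]$. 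By the second item of Proposition~\ref{symbols:inclusion-classical-standard} we have $\rho(\xi;\lambda)\in \stS^{m+w|d'|,d'}(\R^n\times\Lambda;\cA_\theta)$, and a further application of Proposition~\ref{prop:PsiDOs-parameter.Sobolev-mapping-properties}(i) yields $P(\lambda)\in \Hol^{d'}(\Lambda;\cL(\cH_\theta^{(s+m+w|d'|)},\cH_\theta^{(s)}))$. Intersecting over $d'\in[d,0]$ gives the claim.

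For part~(3), set $s=0$ and split into cases according to the sign of $d$. If $d\geq 0$, then $\bar d=d$ since $mw^{-1}\leq 0\leq d$, and part~(1) with $s=0$ gives $P(\lambda)\in \Hol^d(\Lambda;\cL(\cH_\theta^{(m)},\cH_\theta))$; because $m\leq 0$, the continuous embedding $\cH_\theta\hookrightarrow \cH_\theta^{(m)}$ lets us compose and conclude $P(\lambda)\in \Hol^d(\Lambda;\cL(\cH_\theta))$. If $d<0$, then two subcases arise. When $mw^{-1}\leq d$, we have $m+w|d|\leq 0$, so choosing $d'=d$ in part~(2) with $s=0$ gives $P(\lambda)\in \Hol^d(\Lambda;\cL(\cH_\theta^{(m+w|d|)},\cH_\theta))$, and the embedding $\cH_\theta\hookrightarrow \cH_\theta^{(m+w|d|)}$ (valid since $m+w|d|\leq 0$) yields $P(\lambda)\in \Hol^d(\Lambda;\cL(\cH_\theta))$ with $\bar d=d$. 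When $d<mw^{-1}\leq 0$, the choice $d'=mw^{-1}\in[d,0]$ satisfies $m+w|d'|=0$, and part~(2) with $s=0$ gives directly $P(\lambda)\in \Hol^{mw^{-1}}(\Lambda;\cL(\cH_\theta))$, which matches $\bar d=mw^{-1}$.

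There is no real obstacle here beyond correctly juggling the parameter $d'$ in the intersection appearing in part~(2) to hit either $d'=d$ or $d'=mw^{-1}$, whichever achieves both the best decay exponent and the neutralization of the order $m+w|d'|\leq 0$ needed to absorb the Sobolev loss into the trivial embedding of $\cH_\theta$. Once one observes that these two choices cover the two possible values of $\max(d,mw^{-1})$, the proof of part~(3) is automatic.
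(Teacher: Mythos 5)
Your proof is correct and follows essentially the same route as the paper's: parts (1) and (2) reduce to Proposition~\ref{symbols:inclusion-classical-standard} plus the Sobolev mapping property for standard parametric symbols, and part (3) is obtained by optimizing the choice of $d'$. The only cosmetic difference is that for part (3) the paper invokes the second item of Proposition~\ref{prop:PsiDOs-parameter.Sobolev-mapping-properties} (which already gives $\cL(\cH_\theta)$-boundedness when $m\leq 0$) directly, whereas you re-derive the same step by taking $s=0$ in the first item and composing with the trivial embedding $\cH_\theta\hookrightarrow\cH_\theta^{(m+w|d'|)}$; the two are equivalent.
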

\begin{proof}
The first two parts follow by combining the first part of Proposition~\ref{prop:PsiDOs-parameter.Sobolev-mapping-properties} with Proposition~\ref{symbols:inclusion-classical-standard}.  It remains to prove the last part. 

Suppose that $m\leq 0$. If $d\geq 0$, then by using the last two parts of Proposition~\ref{prop:PsiDOs-parameter.Sobolev-mapping-properties} and Proposition~\ref{symbols:inclusion-classical-standard} we see that $P(\lambda)$ is in 
 $\Hol^{d}(\Lambda; \cL(\cH_\theta))$ and is even in $\Hol^d(\Lambda; \cK)$ when $m<0$. 
 
 Assume now that $d<0$. In the same way as above, given any $d'\in[d,0]$,  we see that $P(\lambda)$ is in $\Hol^{d'}(\Lambda; \cL(\cH_\theta))$ when $m+w|d'|\leq 0$ and is in $\Hol^{d'}(\Lambda; \cK)$ when $m+w|d'|<0$. In particular, we see that $P(\lambda)$ is in $\Hol^{\bar{d}}(\Lambda; \cL(\cH_\theta))$ with $\bar{d}:=\max(d,mw^{-1})$. The proof is complete. 
 \end{proof}

\subsection{Schatten class properties} 
We refer to \S\S\ref{subsec:Schatten} for the definitions of the Schatten classes $\cL^{p}$ and weak Schatten classes $\cL^{(p,\infty)}$ on $\cH_\theta$ with $p\geq 1$. Recall they are Banach ideals of $\cL(\cH_\theta)$.

\begin{lemma}[{\cite[Propositions~13.8 \& 13.13, Corollary~13.11]{HLP:Part2}}] \label{lem:PsiDos-parameter.standard-PsiDO-with-no-parameter-Schatten} The following holds.
\begin{enumerate}
\item If $-n\leq m<0$ and we set $p=n|m|^{-1}$, then $\rho \rightarrow P_\rho$ induces continuous linear maps from $\stS^{m}(\R^n;\cA_\theta)$ to 
$\cL^{(p,\infty)}$ and $\cL^{q}$, $q>p$. 

\item If $m<-n$, then $\rho \rightarrow P_\rho$ induces a continuous linear map from $\stS^{m}(\R^n;\cA_\theta)$ to $\cL^{1}$. 
\end{enumerate}
\end{lemma}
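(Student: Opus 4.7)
The plan is to reduce both statements to spectral estimates for the operator $\Lambda^m$ via a factorization argument. Write $P_\rho = (P_\rho \Lambda^{-m}) \cdot \Lambda^m$. Since $\Lambda^{-m} \in \Psi^{-m}(\cA_\theta)$ with symbol $\brak{\xi}^{-m} \in \stS^{-m}(\R^n;\cA_\theta)$ (Example~\ref{eq:PsiDOs.Lambda-to-the-s-PsiDO}), Proposition~\ref{prop:Composition.sharp-continuity-standard-symbol} gives $\rho \sharp \brak{\xi}^{-m} \in \stS^{0}(\R^n;\cA_\theta)$, and moreover the sharp product is continuous, so $\rho \mapsto P_\rho \Lambda^{-m}$ is continuous from $\stS^{m}(\R^n;\cA_\theta)$ into the \psidos\ of order~$0$. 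By Proposition~\ref{prop:Sob-Mapping.rho-on-Hs} these extend to bounded operators on $\cH_\theta$ with operator norm controlled by a Fr\'echet semi-norm $p_N^{(m)}(\rho)$ of~(\ref{eq:Symbols.standard-symbol-semi-norm-definition}).

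The second ingredient is the spectral analysis of $\Lambda^m$. The flat Laplacian $\Delta=\delta_1^2+\cdots+\delta_n^2$ on $\cA_\theta$ is isospectral to the flat Laplacian on $\T^n$, so $\Lambda^m = (1+\Delta)^{m/2}$ is a selfadjoint operator on $\cH_\theta$ with an orthonormal eigenbasis $(U^k)_{k \in \Z^n}$ and eigenvalues $(1+|k|^2)^{m/2}$. Standard lattice-point counting shows that the singular values $\mu_j(\Lambda^m)$, rearranged in decreasing order, satisfy $\mu_j(\Lambda^m) = \op{O}(j^{m/n})$. Hence, when $-n \leq m <0$, setting $p=n|m|^{-1}$, we get $\Lambda^m \in \cL^{(p,\infty)}$ and $\Lambda^m \in \cL^q$ for every $q>p$; when $m<-n$ the series $\sum_j \mu_j(\Lambda^m)$ converges and $\Lambda^m \in \cL^1$.

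The two ingredients combine via the ideal property of the Schatten and weak Schatten classes: since $\cL^{(p,\infty)}$ and $\cL^q$ are Banach ideals of $\cL(\cH_\theta)$, we obtain
\begin{equation*}
\| P_\rho \|_{\cL^{(p,\infty)}} \leq \| P_\rho \Lambda^{-m}\|_{\cL(\cH_\theta)} \cdot \|\Lambda^m\|_{\cL^{(p,\infty)}} \leq C\, p_N^{(m)}(\rho),
\end{equation*}
for suitable $N$ and $C$, and likewise for the $\cL^q$-norm when $q>p$ and for the $\cL^1$-norm when $m<-n$. This simultaneously gives membership in the appropriate Schatten ideal and continuity of the map $\rho \mapsto P_\rho$.

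The main technical point, rather than an obstacle, is the joint continuity of the factorization: one needs that the map $\rho \mapsto \rho \sharp \brak{\xi}^{-m}$ is continuous from $\stS^{m}(\R^n;\cA_\theta)$ to $\stS^{0}(\R^n;\cA_\theta)$, together with the continuity of the quantization $\stS^0(\R^n;\cA_\theta) \ni \sigma \mapsto P_\sigma \in \cL(\cH_\theta)$. Both are recorded in the references above, so the proof is essentially a bookkeeping of these continuity estimates combined with the elementary eigenvalue asymptotics of $\Lambda^m$.
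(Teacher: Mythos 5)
The paper does not actually prove this lemma --- it is quoted verbatim from the companion paper \cite{HLP:Part2} (Propositions 13.8, 13.13, Corollary 13.11) --- so there is no in-text argument to compare against, but your proof is correct and very likely reproduces the argument given there. The factorization $P_\rho = (P_\rho \Lambda^{-m})\,\Lambda^m$, the explicit eigenvalues $(1+|k|^2)^{m/2}$ of $\Lambda^m$ on the eigenbasis $(U^k)_{k\in\Z^n}$ together with the lattice-point asymptotics $\mu_j(\Lambda^m)=\op{O}(j^{m/n})$, and the Banach-ideal property of $\cL^{(p,\infty)}$, $\cL^q$, and $\cL^1$ are exactly the right ingredients; they yield simultaneously the Schatten-class membership and the semi-norm bound $\|P_\rho\|_{\cL^{(p,\infty)}}\leq C\,p_N^{(m)}(\rho)$ giving continuity. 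One small simplification is available: you do not need the $\sharp$-product at all. By definition of the Sobolev norms, $\Lambda^{-m}\colon\cH_\theta\to\cH_\theta^{(m)}$ is an isometry, so $\|P_\rho\Lambda^{-m}\|_{\cL(\cH_\theta)}=\|P_\rho\|_{\cL(\cH_\theta^{(m)},\cH_\theta)}$, and the continuity of $\stS^m(\R^n;\cA_\theta)\ni\rho\mapsto P_\rho\in\cL(\cH_\theta^{(m)},\cH_\theta)$ is precisely the fact invoked (from \cite[Prop.~10.4]{HLP:Part2}) just before Proposition~\ref{prop:PsiDOs-parameter.Sobolev-mapping-properties}. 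That sidesteps having to assert joint continuity of $\sharp$, which Proposition~\ref{prop:Composition.sharp-continuity-standard-symbol} as stated does not actually record.
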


Combining this with Proposition~\ref{prop:Parameter.linear-map-symbols} gives the following result. 

\begin{proposition}\label{prop:PsiDOs-parameter.Schatten-standard} 
 Let $\rho(\xi;\lambda)\in \stS^{m,d}(\R^n\times \Lambda; \cA_\theta)$ with $d\in \R$ and $m<0$.  Set  $p=n|m|^{-1}$. 
 \begin{enumerate}
 \item If $-n\leq m<0$, then $P_\rho(\lambda)$ is contained in $\Hol^d(\Lambda; \cL^{(p,\infty)})$ and $\Hol^d(\Lambda; \cL^{q})$ for all $q>p$. 
 
 \item If $m<-n$, then $P_\rho(\lambda) \in \Hol^d(\Lambda; \cL^{1})$. 
\end{enumerate}
\end{proposition}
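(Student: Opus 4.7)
The plan is a short combination of two prior results already in the excerpt, so I would keep the proof essentially a one-line assembly and organize it into the two cases.

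First I would recall that by Lemma~\ref{lem:PsiDos-parameter.standard-PsiDO-with-no-parameter-Schatten} the quantization map $\rho \mapsto P_\rho$ is continuous as a linear map from $\stS^m(\R^n;\cA_\theta)$ into each of the Banach ideals $\cL^{(p,\infty)}$ and $\cL^q$ (for $q>p$) when $-n\leq m<0$, and into $\cL^1$ when $m<-n$. Since each of these Schatten ideals is a Banach space, it fits the framework of Proposition~\ref{prop:Parameter.linear-map-symbols}(1).

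Next I would invoke Proposition~\ref{prop:Parameter.linear-map-symbols}(1) with $\sE$ taken successively to be $\cL^{(p,\infty)}$, $\cL^q$ (for any $q>p$), or $\cL^1$. This proposition upgrades any continuous linear map $T:\stS^m(\R^n;\cA_\theta)\rightarrow \sE$ to a continuous linear map $T:\stS^{m,d}(\R^n\times\Lambda;\cA_\theta)\rightarrow \Hol^d(\Lambda;\sE)$. Applied to the quantization map, this immediately yields: for $\rho(\xi;\lambda)\in\stS^{m,d}(\R^n\times\Lambda;\cA_\theta)$ with $-n\leq m<0$, the family $P_\rho(\lambda)$ is an $\Hol^d(\Lambda)$-family in $\cL^{(p,\infty)}$ and in each $\cL^q$, $q>p$; and for $m<-n$, the family $P_\rho(\lambda)$ is an $\Hol^d(\Lambda)$-family in $\cL^1$.

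There is essentially no obstacle here. The only thing worth noting explicitly is that we must use the identification~(\ref{eq:Parameter.parametric-symbols-identification}) of $\stS^{m,d}(\R^n\times\Lambda;\cA_\theta)$ with $\Hol^d(\Lambda;\stS^m(\R^n;\cA_\theta))$, which is implicit in the statement of Proposition~\ref{prop:Parameter.linear-map-symbols}, in order to get that $\lambda\mapsto P_{\rho(\cdot;\lambda)}$ is holomorphic in the Banach-space sense (holomorphy is preserved by composition with any continuous linear map, by the Cauchy-formula argument already used in Section~\ref{sec:Parametric-symbols}). Given this identification, the continuity estimates of Lemma~\ref{lem:PsiDos-parameter.standard-PsiDO-with-no-parameter-Schatten} transfer verbatim to the bound~(\ref{eq:Parameter.vector-with-parameter-estimate}) in each relevant Schatten norm, finishing the proof.
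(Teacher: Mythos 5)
Your proposal is correct and is exactly the paper's argument: the paper states Proposition~\ref{prop:PsiDOs-parameter.Schatten-standard} immediately after Lemma~\ref{lem:PsiDos-parameter.standard-PsiDO-with-no-parameter-Schatten} with the one-line remark ``Combining this with Proposition~\ref{prop:Parameter.linear-map-symbols} gives the following result,'' which is precisely the two-step assembly you describe. Your added remark about the identification~(\ref{eq:Parameter.parametric-symbols-identification}) correctly identifies the mechanism that makes Proposition~\ref{prop:Parameter.linear-map-symbols} applicable.
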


Let us now specialize the above result to classical \psidos\ with parameter. 

\begin{proposition} \label{prop:PsiDOs-parameter.Schatten-classical}
 Let $P(\lambda)\in \Psi^{m,d}(\cA_\theta; \Lambda)$ with $d\in \R$ and $-n\leq m<0$. Set $p=n|m|^{-1}$. In addition, for $q\geq p$ set 
 $d(q)=(m+nq^{-1})w^{-1}$.  
 \begin{enumerate}
\item If $d\geq 0$, then $P(\lambda)\in \Hol^{d}(\Lambda; \cL^{(p,\infty)})$.

\item If $mw^{-1}<d<0$, then
      \begin{equation}
 P(\lambda)\in \bigcap_{p\leq q \leq \bar{p}}  \Hol^{d(q)}\big(\Lambda; \cL^{(q,\infty)}\big), \qquad \text{where}\ \bar{p}:=n(|m|+wd)^{-1}.
 \label{eq:PsiDOs-parameter.classical1}
        \end{equation}
\item If $d\leq  mw^{-1}$, then 
      \begin{equation}
 P(\lambda)\in \bigcap_{q\geq p}  \Hol^{d(q)}\big(\Lambda; \cL^{(q,\infty)}\big). 
 \label{eq:PsiDOs-parameter.classical2}
        \end{equation}
\end{enumerate}
\end{proposition}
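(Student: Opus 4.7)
The plan is to reduce the three statements to the standard parametric case of Proposition~\ref{prop:PsiDOs-parameter.Schatten-standard} via the inclusions furnished by Proposition~\ref{symbols:inclusion-classical-standard}. The whole point is the trade-off: when $d<0$, a classical parametric symbol can be treated either as a standard one of its nominal order~$m$ with growth exponent~$d$, or as one of higher order $m+w|d'|$ with the improved growth exponent $d'\in[d,0]$. We pick $d'$ so that $m+w|d'|$ lands on the threshold $-nq^{-1}$ that makes Proposition~\ref{prop:PsiDOs-parameter.Schatten-standard} produce exactly $\cL^{(q,\infty)}$.

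First, Case~(i): if $d\geq 0$, then by Proposition~\ref{symbols:inclusion-classical-standard} the symbol $\rho(\xi;\lambda)$ of $P(\lambda)$ lies in $\stS^{m,d}(\R^n\times\Lambda;\cA_\theta)$. Since $-n\leq m<0$, the first part of Proposition~\ref{prop:PsiDOs-parameter.Schatten-standard} (applied with exponent $p=n|m|^{-1}$) gives $P(\lambda)\in\Hol^d(\Lambda;\cL^{(p,\infty)})$. Note that $d(p)=(m+n|m|^{-1}n^{-1}\cdot n)w^{-1}=0$ would be the endpoint formula, but here we prefer to keep the growth~$d\geq 0$ coming from the hypothesis.

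Second, for Cases~(ii) and~(iii), fix any $q$ in the relevant range and set
\begin{equation*}
d'=d(q)=(m+nq^{-1})w^{-1}.
\end{equation*}
A direct computation gives $m+w|d'|=m-wd'=-nq^{-1}$. Now I verify that $d'\in[d,0]$. We have $d'\leq 0$ iff $m+nq^{-1}\leq 0$ iff $q\geq n|m|^{-1}=p$, which holds by assumption on $q$. For the lower bound: in Case~(ii), $d'\geq d$ iff $nq^{-1}\geq wd-m=|m|+wd$ iff $q\leq n(|m|+wd)^{-1}=\bar{p}$; in Case~(iii), since $d(q)\geq mw^{-1}\geq d$ for every $q\geq p$, the inequality $d'\geq d$ is automatic. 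In both cases Proposition~\ref{symbols:inclusion-classical-standard} then yields that the symbol of $P(\lambda)$ lies in $\stS^{-nq^{-1},\,d(q)}(\R^n\times\Lambda;\cA_\theta)$.

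Finally, since $q\geq p\geq 1$ (the latter because $|m|\leq n$), we have $-n\leq -nq^{-1}<0$, so the first part of Proposition~\ref{prop:PsiDOs-parameter.Schatten-standard} applies with order $-nq^{-1}$ and growth $d(q)$, yielding
\begin{equation*}
P(\lambda)\in\Hol^{d(q)}\bigl(\Lambda;\cL^{(q,\infty)}\bigr)
\end{equation*}
for each admissible $q$. Taking the intersection over $q\in[p,\bar p]$ (Case~(ii)) or $q\geq p$ (Case~(iii)) gives~\eqref{eq:PsiDOs-parameter.classical1} and~\eqref{eq:PsiDOs-parameter.classical2} respectively. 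The main obstacle is purely book-keeping, namely ensuring that the index $d'$ always stays inside $[d,0]$ so that Proposition~\ref{symbols:inclusion-classical-standard} can be invoked; the inequalities $d>mw^{-1}$ (resp.~$d\leq mw^{-1}$) are exactly what guarantee this in the two cases, and they explain the appearance of the threshold $\bar p$ in Case~(ii) as opposed to the unbounded range in Case~(iii).
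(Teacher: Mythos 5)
Your proof is correct and follows essentially the same route as the paper: reduce to the standard parametric case via Proposition~\ref{symbols:inclusion-classical-standard}, choosing the index $d'=d(q)$ so that the shifted order $m+w|d'|$ equals $-nq^{-1}$, and then invoke the first part of Proposition~\ref{prop:PsiDOs-parameter.Schatten-standard}. The only cosmetic difference is that you compute the resulting order $-nq^{-1}$ directly and check $-n\leq -nq^{-1}<0$ using $q\geq p\geq 1$, whereas the paper writes the order as $m-wd(q)$ and verifies that the associated Schatten exponent $p(q)=n(wd(q)-m)^{-1}$ equals $q$; these are the same calculation.
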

\begin{proof}
 Let $\rho(\xi;\lambda)\in S^{m,d}(\R^n\times \Lambda; \cA_\theta)$ be such that $P(\lambda)=P_\rho(\lambda)$. Suppose that $d\geq 0$. 
 Proposition~\ref{symbols:inclusion-classical-standard} ensures us that $\rho(\xi;\lambda)\in \stS^{m,d}(\R^n\times \Lambda; \cA_\theta)$. As $-n\leq m<0$, it then follows from the 1st part of Proposition~\ref{prop:PsiDOs-parameter.Schatten-standard}  that $P(\lambda)\in \Hol^{d}(\Lambda; \cL^{(p,\infty)})$. 
 
Suppose that $d<0$, and let $q\geq p$. As $d(q)=(m+nq^{-1})w^{-1}$ and $p=-nm^{-1}$ we have
\begin{equation} \label{eq:PsiDOs-parameter.d(p)-upper-and-lower-bounds}
 mw^{-1} <d(q) \leq \big(m+np^{-1}\big)w^{-1}=0. 
\end{equation}
Thus, if $d\leq mw^{-1}$, then $d(q)\in [d,0]$ for all $q\geq p$. If $d>mw^{-1}$, then $d\leq d(q)$ if and only if $q\leq \bar{p}$, with $\bar{p}:=n(|m|+wd)^{-1}$. 

From now on we assume that either $d\leq mw^{-1}$ and $q\geq p$, or $d>mw^{-1}$ and $p\leq q \leq \bar{p}$. This ensures that $d(q)\in [d,0]$, and so by Proposition~\ref{symbols:inclusion-classical-standard} this implies that $\rho(\xi;\lambda)$ is contained in $\stS^{m-wd(q),d(q)}(\R^n\times \Lambda;\cA_\theta)$. 
Note also that as $m\geq -n$, we have $m-wd(q)\geq m\geq -n$. 
Moreover, as by~(\ref{eq:PsiDOs-parameter.d(p)-upper-and-lower-bounds}) we have $d(q)>mw^{-1}$, we also have $m-wd(q)<0$. 
It then follows from the 1st part of Proposition~\ref{prop:PsiDOs-parameter.Schatten-standard}  that $P(\lambda)\in \Hol^{d(q)}(\Lambda; \cL^{(p(q),\infty)})$, where $p(q):=n(wd(q)-m)^{-1}$. As $d(q)=(m+nq^{-1})w^{-1}$ it is immediate to check that $p(q)=q$. Therefore, we see that $P(\lambda)$ is contained in $\Hol^{d(q)}(\Lambda; \cL^{(q,\infty)})$. 
This proves~(\ref{eq:PsiDOs-parameter.classical1}) and~(\ref{eq:PsiDOs-parameter.classical2}). The proof is complete. 
\end{proof}

For the trace-class we have the following result. 
\begin{proposition} \label{prop:PsiDOs-parameter.trace-class}
Let $P(\lambda)\in \Psi^{m,d}(\cA_\theta; \Lambda)$ with $d\in \R$ and $m<-n$. 
\begin{enumerate}
 \item If $d>(m+n)w^{-1}$, then $P(\lambda)\in \Hol^{d}(\Lambda; \cL^{1})$ and, for any pseudo-cone $\Lambda' \subsubset \Lambda$, there is $C_{\Lambda'}>0$ such that 
          \begin{equation}
             \Big| \Tra\big[P(\lambda)\big] \Big|\leq  C_{\Lambda'} (1+|\lambda|)^{d} \qquad \forall \lambda \in \Lambda'.  
             \label{eq:Resolvent.trace-estimate-P1} 
         \end{equation}

  \item If $d\leq (m+n)w^{-1}<d'\leq 0$, then $P(\lambda)\in \Hol^{d'}(\Lambda; \cL^{1})$  and, for any pseudo-cone $\Lambda' \subsubset \Lambda$, there is $C_{\Lambda'd'}>0$ such that 
          \begin{equation}
             \Big| \Tra\big[P(\lambda)\big] \Big|\leq  C_{\Lambda' d'} (1+|\lambda|)^{d'} \qquad \forall \lambda \in \Lambda'.  
             \label{eq:Resolvent.trace-estimate-P2}          
             \end{equation}
\end{enumerate}
 \end{proposition}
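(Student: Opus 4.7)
The plan is to deduce trace-class membership by combining the embedding of classical parametric symbols into standard parametric symbols (Proposition~\ref{symbols:inclusion-classical-standard}) with the $\cL^1$-criterion for standard parametric symbols provided by Proposition~\ref{prop:PsiDOs-parameter.Schatten-standard}(2). Write $P(\lambda) = P_\rho(\lambda)$ with $\rho(\xi;\lambda) \in S^{m,d}(\R^n\times \Lambda; \cA_\theta)$. The guiding observation is that for $d'\leq 0$ the standard-symbol order $m + w|d'| = m - wd'$ falls strictly below $-n$ exactly when $d' > (m+n)w^{-1}$; so the threshold appearing in the statement is precisely the threshold at which the basic trace-class criterion kicks in. Note also that since $m<-n$ we have $(m+n)w^{-1}<0$.

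For part (1), if $d\geq 0$ then Proposition~\ref{symbols:inclusion-classical-standard}(1) gives $\rho \in \stS^{m,d}(\R^n\times \Lambda; \cA_\theta)$, and since $m<-n$, Proposition~\ref{prop:PsiDOs-parameter.Schatten-standard}(2) yields $P(\lambda) \in \Hol^d(\Lambda; \cL^1)$. Otherwise $(m+n)w^{-1} < d < 0$, and applying Proposition~\ref{symbols:inclusion-classical-standard}(2) with $d' = d\in[d,0]$ gives $\rho \in \stS^{m-wd,\,d}(\R^n\times \Lambda; \cA_\theta)$; the hypothesis $d > (m+n)w^{-1}$ rewrites as $m - wd < -n$, so Proposition~\ref{prop:PsiDOs-parameter.Schatten-standard}(2) again delivers $P(\lambda) \in \Hol^d(\Lambda; \cL^1)$. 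For part (2), the assumption $d \leq (m+n)w^{-1}$ together with $(m+n)w^{-1}<0$ forces $d<0$; by hypothesis $d' \in [d,0]$, so Proposition~\ref{symbols:inclusion-classical-standard}(2) gives $\rho \in \stS^{m - wd',\, d'}(\R^n\times \Lambda; \cA_\theta)$, while $d' > (m+n)w^{-1}$ once more amounts to $m - wd' < -n$, whence Proposition~\ref{prop:PsiDOs-parameter.Schatten-standard}(2) produces $P(\lambda) \in \Hol^{d'}(\Lambda; \cL^1)$.

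The trace estimates~(\ref{eq:Resolvent.trace-estimate-P1}) and~(\ref{eq:Resolvent.trace-estimate-P2}) then follow by combining the elementary inequality $|\Tra[T]| \leq \|T\|_{\cL^1}$ with the defining bounds of the $\Hol^d(\Lambda; \cL^1)$- and $\Hol^{d'}(\Lambda; \cL^1)$-topologies on any sub-pseudo-cone $\Lambda' \subsubset \Lambda$. There is no substantive obstacle: the argument is a short bookkeeping combination of the symbol-class inclusion with the standard-symbol trace-class criterion, and the role of the threshold $(m+n)w^{-1}$ is simply to ensure that the order shift $m \mapsto m - wd'$ produced by Proposition~\ref{symbols:inclusion-classical-standard}(2) lands strictly below $-n$.
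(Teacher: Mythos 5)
Your proposal is correct and follows essentially the same route as the paper's proof: both invoke Proposition~\ref{symbols:inclusion-classical-standard} to place $\rho(\xi;\lambda)$ in a standard parametric symbol class of order $m-wd'<-n$ and then apply the trace-class criterion of Proposition~\ref{prop:PsiDOs-parameter.Schatten-standard}(2), finishing with $|\Tra(T)|\le\|T\|_{\cL^1}$. The only cosmetic difference is that the paper phrases the case split for part (1) via $d_-=\max(-d,0)$ rather than explicitly separating $d\ge 0$ from $d<0$, which is equivalent bookkeeping.
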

 \begin{proof}
Let $\rho(\xi;\lambda)\in S^{m,d}(\R^n\times \Lambda; \cA_\theta)$ be such that $P(\lambda)=P_\rho(\lambda)$. We know by Proposition~\ref{symbols:inclusion-classical-standard} that $\rho(\xi;\lambda)\in \stS^{m+wd_-,d}(\R^n\times \Lambda; \cA_\theta)$, where $d_{-}=\max(-d,0)$. Suppose that $d>(m+n)w^{-1}$. If $d\geq 0$, then $m+wd_{-}=m<-n$, and if $d< 0$, then we also have $m+wd_{-}=m-wd<-n$. It then follows from the 2nd part of Proposition~\ref{prop:PsiDOs-parameter.Schatten-standard}  that $P(\lambda)\in \Hol^{d}(\Lambda; \cL^{1})$. This means that, for every pseudo-cone $\Lambda'\subsubset \Lambda$, there is $C_{\Lambda'}>0$ such that 
         \begin{equation*}
            \norm{P(\lambda)}_{\cL^1} \leq C_{\Lambda'} (1+|\lambda|)^{d} \qquad \forall \lambda \in \Lambda'. 
         \end{equation*}
Here $\|T\|_{\cL^1}=\Tra(|T|)$, $T\in \cL^1$. Combining this with the inequality $| \Tra(T) |\leq   \norm{T}_{\cL^1}$ gives~(\ref{eq:Resolvent.trace-estimate-P1}).
 
Suppose now that $d\leq (m+n)w^{-1}$, and let $d'\in ((m+n)w^{-1},0]$. This implies that $d<d'\leq 0$, and so by using Proposition~\ref{symbols:inclusion-classical-standard} we see that 
$\rho(\xi;\lambda)\in \stS^{m-wd',d'}(\R^n\times \Lambda; \cA_\theta)$. As $d'> (m+n)w^{-1}$, we have $m-wd'<m-w(m+n)w^{-1}=-n$. Therefore, by using the 2nd part of Proposition~\ref{prop:PsiDOs-parameter.Schatten-standard} once again we see that $P(\lambda)\in \Hol^{d'}(\Lambda; \cL^{1})$. In the same way as above this gives~(\ref{eq:Resolvent.trace-estimate-P2}). The proof is complete. 
\end{proof}

\subsection{Toroidal \psidos\ with parameter} 
We know by Proposition~\ref{prop:toroidal.standard-and-toroidal-psidos-agree} that the classes of standard  and toroidal \psidos\ on $\cA_\theta$ agree. We shall now explain how to obtain analogous results for \psidos\ with parameter. We shall keep on using the notation of \S\S\ref{subsec:toroidal}. 

As mentioned in~\cite{HLP:Part1} each toroidal symbol space $\stS^m(\R^n;\cA_\theta)$, $m\in \R$, is a Fr\'echet space with respect to the 
 locally convex topology generated by the semi-norms, 
\begin{equation*}
 (\rho_k)_{k\in \Z^n} \longrightarrow \sup_{|\alpha|+|\beta|\leq N} \sup_{k\in \Z^n} (1+|k|)^{-m+|\beta|}\big\| \delta^\alpha \Delta^\beta \rho_k  \big\|, \qquad N\geq 0.  
\end{equation*}
Likewise,  $\cS(\Z^n; \cA_\theta)$ is a Fr\'echet space with respect to the topology generated by the semi-norms, 
\begin{equation*}
  (\rho_k)_{k\in \Z^n} \longrightarrow \sup_{k\in \Z^n} (1+|k|)^{-N}\big\| \delta^\alpha \rho_k\big\|, \qquad N\geq 0, \quad \alpha \in \N_0^n.
\end{equation*}

\begin{definition}
 $\stS^{m,d}(\Z^n\times \Lambda; \cA_\theta)$, $m,d\in \R$, consists of sequences $(\rho_k(\lambda))_{k\in \Z^n}\subset \Hol(\Lambda)$ such that, for all pseudo-cones $\Lambda' \subsubset \Lambda$ and multi-orders $\alpha$ and $\beta$, there is $C_{\Lambda'\alpha \beta}>0$ such that
\begin{equation*}
 \big\| \delta^\alpha \Delta^\beta \rho_k(\lambda)\big\| \leq C_{\Lambda'\alpha\beta} (1+|\lambda|)^d (1+|k|)^{m-|\beta|} \qquad \forall (k,\lambda)\in \Z^n\times \Lambda'.  
\end{equation*}
\end{definition}

\begin{remark} \label{rmk:PsiDOs-parameter.toroidal-symbols-with-parameter-identification}
In the same way as in Remark~\ref{rmk:Parameter.standard-symbols-with-parameter-identification} the space $\stS^{m,d}(\Z^n\times \Lambda; \cA_\theta)$ is naturally identified with $\Hol^d(\Lambda; \stS^{m}(\Z^n; \cA_\theta))$. 
\end{remark}

\begin{definition}
 $\stS^{-\infty,d}(\Z^n\times \Lambda; \cA_\theta)$, $d\in \R$, consists of sequences $(\rho_k(\lambda))_{k\in \Z^n}\subset \Hol(\Lambda)$ such that, given any $N\geq 0$, for all pseudo-cones $\Lambda' \subsubset \Lambda$ and multi-orders $\alpha$, $\beta$, there is $C_{\Lambda'N\alpha \beta}>0$ such that
\begin{equation*}
 \big\| \delta^\alpha \Delta^\beta \rho_k(\lambda)\big\| \leq C_{\Lambda'N\alpha\beta} (1+|\lambda|)^d (1+|k|)^{-N} \qquad \forall (k,\lambda)\in \Z^n\times \Lambda'. 
\end{equation*}
\end{definition}

\begin{remark}
$\stS^{-\infty,d}(\Z^n\times \Lambda; \cA_\theta) = \bigcap_{m\in \R} \stS^{m,d}(\Z^n\times \Lambda; \cA_\theta)$. 
 \end{remark}

\begin{remark}
In the same way as in Remark~\ref{rmk:Parameter.standard-symbols-with-parameter-identification} and Remark~\ref{rmk:PsiDOs-parameter.toroidal-symbols-with-parameter-identification} the space $\stS^{-\infty,d}(\Z^n\times \Lambda; \cA_\theta)$ is naturally identified with $\Hol^d(\Lambda; \cS(\Z^n; \cA_\theta))$. 
\end{remark}

In what follows, given $m\in \R\cup\{-\infty\}$, by a \emph{toroidal \psido\ with parameter} of order $m$ we shall mean a family of operators $P(\lambda):\cA_\theta \rightarrow \cA_\theta$ parametrized by $\lambda \in \Lambda$ for which there is a toroidal symbol $(\rho_k(\lambda))_{k\in \Z^n}\in \stS^{m,d}(\Z^n\times \Lambda; \cA_\theta)$,  such that
\begin{equation*}
 P(\lambda)u= \sum_{k\in \Z^n} u_k \rho_k(\lambda) U^k \qquad \text{for all}\ u=\sum_{k\in \Z^n}u_k U^k\in \cA_\theta. 
\end{equation*}

We will need the following versions of Proposition~\ref{prop:toroidal.restriction-of-standard-symbol-is-toroidal-symbol} and Proposition~\ref{prop:toroidal.extension-of-toroidal-symbol-is-standard-symbol}. 

\begin{lemma} \label{lem:PsiDOs-parameter-restriction-extension-continuity}
 The following linear maps are continuous: 
 \begin{enumerate}
 \item[(i)] The restriction maps $\stS^m(\R^n;\cA_\theta) \ni \rho(\xi) \rightarrow (\rho(k))_{k\in \Z^n} \in \stS^m(\Z^n;\cA_\theta)$, $m\in \R$. 
 
 \item[(ii)] The extension maps $\stS^m(\Z^n;\cA_\theta) \ni (\rho_k)_{k\in \Z^n}\rightarrow \tilde{\rho}(\xi) \in \stS^m(\R^n;\cA_\theta)$, $m\in \R$, and 
 $\cS(\Z^n;\cA_\theta) \ni (\rho_k)_{k\in \Z^n}\rightarrow \tilde{\rho}(\xi) \in \cS(\R^n;\cA_\theta)$ given by~(\ref{eq:toroidal.trho}). 
\end{enumerate}
\end{lemma}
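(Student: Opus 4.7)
For part (i), the idea is to bound discrete differences by integrals of the corresponding partial derivatives. Iterating the one-dimensional identity $\Delta_i f(k) = \int_0^1 \partial_{\xi_i} f(k + t e_i)\,dt$ yields
\begin{equation*}
\Delta^\beta \rho(k) = \int_{[0,1]^{|\beta|}} \partial_\xi^\beta \rho\bigl(k + y(t)\bigr)\,dt,
\end{equation*}
where $y(t)$ ranges in the cube $[0,|\beta|]^n$. Applying $\delta^\alpha$ under the integral and invoking the defining symbol estimates of $\stS^m(\R^n;\cA_\theta)$ gives
\begin{equation*}
\|\delta^\alpha \Delta^\beta \rho(k)\| \leq \sup_{y\in [0,|\beta|]^n} \|\delta^\alpha \partial_\xi^\beta \rho(k+y)\| \leq C_{\alpha\beta}\, p^{(m)}_{|\alpha|+|\beta|}(\rho)\,(1+|k|)^{m-|\beta|},
\end{equation*}
where the last step uses that $(1+|k|+|\beta|)^{m-|\beta|}$ is comparable to $(1+|k|)^{m-|\beta|}$ up to a constant depending only on $\beta$. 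Taking the supremum in $k$ shows that each defining semi-norm of $(\rho(k))_{k\in \Z^n}$ in $\stS^m(\Z^n;\cA_\theta)$ is dominated by a fixed semi-norm of $\rho$ in $\stS^m(\R^n;\cA_\theta)$, which is the required continuity.

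For part (ii), the key step is the transfer identity
\begin{equation*}
\partial_\xi^\beta \tilde{\rho}(\xi) = \sum_{k\in \Z^n} \phi_\beta(\xi - k)\,\overline{\Delta}^\beta \rho_k.
\end{equation*}
Its proof combines property~(ii) of $\phi$ in Lemma~\ref{lem:toroidal.phi}, namely $\partial_\xi^\beta \phi = \overline{\Delta}^\beta \phi_\beta$ with $\phi_\beta \in \cS(\R^n)$, with a discrete Abel summation: for $|\beta|=1$, a single relabelling $k \mapsto k + e_i$ transfers the backward-difference operator from the test function onto the sequence without sign changes, and the general case follows by iteration in each coordinate. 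Once the identity is in hand, applying $\delta^\alpha$ termwise and the toroidal-symbol estimate on $\overline{\Delta}^\beta \rho_k$, together with the elementary Peetre inequalities $(1+|k|) \leq (1+|\xi|)(1+|\xi-k|)$ and $(1+|\xi|) \leq (1+|k|)(1+|\xi-k|)$, allow one to factor out $(1+|\xi|)^{m-|\beta|}$ and reduce to bounding $\sum_{k\in \Z^n} |\phi_\beta(\xi-k)|(1+|\xi-k|)^{|m-|\beta||}$. This sum is uniformly bounded in $\xi$ since $\phi_\beta$ is Schwartz and the points $\xi-k$ form a lattice translate. This yields continuity from $\stS^m(\Z^n;\cA_\theta)$ into $\stS^m(\R^n;\cA_\theta)$.

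The Schwartz statement then follows automatically: by Remark~\ref{rmk:Symbols.symbols-intersection} and its toroidal analogue, both $\cS(\Z^n;\cA_\theta)=\bigcap_m \stS^m(\Z^n;\cA_\theta)$ and $\cS(\R^n;\cA_\theta)=\bigcap_m \stS^m(\R^n;\cA_\theta)$ carry the projective-limit Fr\'echet topology, so continuity on each $\stS^m$ descends to continuity on the intersection. The only step requiring real care is the Abel-summation derivation of the transfer identity --- it is conceptually a one-dimensional discrete integration-by-parts iterated in each coordinate, and the use of \emph{backward} differences on both sides ensures that no sign corrections arise in the bookkeeping.
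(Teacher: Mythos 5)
Your proof is correct, but it takes a genuinely different route from the paper's. For part (i), the paper simply observes that $\stS^m(\R^n;\cA_\theta)$ and $\stS^m(\Z^n;\cA_\theta)$ are Fr\'echet spaces and that the restriction map has closed graph (since convergence in either symbol topology implies pointwise convergence), and then invokes the closed graph theorem; for part (ii), it delegates to cited remarks in \cite{HLP:Part1}. Your argument instead proves both parts by explicit quantitative estimates: for (i), the mean-value representation $\Delta^\beta\rho(k)=\int_{[0,1]^{|\beta|}}\partial_\xi^\beta\rho(k+y(t))\,dt$ reduces the discrete-difference semi-norm to a supremum of the continuous-derivative semi-norm over a bounded translate of $k$; for (ii), the Abel-summation transfer identity $\partial_\xi^\beta\tilde\rho(\xi)=\sum_k\phi_\beta(\xi-k)\overline\Delta^\beta\rho_k$ combined with Peetre's inequality and the Schwartz decay of $\phi_\beta$ give the requisite bound. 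Your approach is more constructive --- it yields explicit semi-norm dominations and exposes what actually makes the two symbol classes comparable --- while the paper's is shorter and soft. Two small points you should fill in if you expand this into a full proof: the toroidal semi-norms are phrased in terms of \emph{forward} differences $\Delta^\beta$, so you need the one-line observation that $\overline\Delta^\beta\rho_k=\Delta^\beta\rho_{k-\beta}$ (and $(1+|k-\beta|)^{m-|\beta|}\lesssim(1+|k|)^{m-|\beta|}$) to convert your transfer identity into a bound by the given semi-norms; and the termwise differentiation defining the transfer identity requires a brief locally-uniform-convergence argument, which is standard given that $\phi$ and the $\phi_\beta$ are Schwartz. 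Neither is a gap in the mathematics, only in the write-up.
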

\begin{proof}
 Recall that $\stS^m(\R^n;\cA_\theta)$ and  $\stS^m(\Z^n;\cA_\theta)$ are Fr\'echet spaces. It is straightforward to check that, for every $m\in \R$, the graph of the restriction map is a closed subspace of $\stS^m(\R^n;\cA_\theta)\times \stS^m(\Z^n;\cA_\theta)$. It then follows from the closed graph theorem we get a continuous linear map from $\stS^m(\R^n;\cA_\theta)$ and  $\stS^m(\Z^n;\cA_\theta)$. This proves (i). For a proof of~(ii) see~\cite[Remark~6.23]{HLP:Part1} and~\cite[Remark~6.26]{HLP:Part1}.  The proof is complete. 
\end{proof}

Combining the first part of Lemma~\ref{lem:PsiDOs-parameter-restriction-extension-continuity} with Proposition~\ref{prop:Parameter.linear-map-symbols} gives the following result.  

\begin{proposition} \label{prop:PsiDOs-parameter.restriction-of-standard-symbol-is-toroidal-symbol}
Let $\rho(\xi;\lambda) \in \stS^{m,d}(\R^n\times \Lambda; \cA_\theta)$, $m,d\in \R$. Then the restriction $(\rho(k;\lambda))_{k \in \Z^n}$ is contained in $\stS^{m,d}(\Z^n\times \Lambda; \cA_\theta)$.
\end{proposition}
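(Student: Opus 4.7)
The plan is to assemble the statement from three ingredients already established in the paper, plus the identification in Remark~\ref{rmk:PsiDOs-parameter.toroidal-symbols-with-parameter-identification}. No direct symbol-wise estimate needs to be redone.

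First, by Lemma~\ref{lem:PsiDOs-parameter-restriction-extension-continuity}(i) the restriction map
\[
 T:\stS^m(\R^n;\cA_\theta)\longrightarrow \stS^m(\Z^n;\cA_\theta),\qquad \rho(\xi)\longmapsto (\rho(k))_{k\in\Z^n},
\]
is continuous. Apply the first part of Proposition~\ref{prop:Parameter.linear-map-symbols} with $\sE=\stS^m(\Z^n;\cA_\theta)$ to promote $T$ to a continuous linear map
\[
 T:\stS^{m,d}(\R^n\times\Lambda;\cA_\theta)\longrightarrow \Hol^d\bigl(\Lambda;\stS^m(\Z^n;\cA_\theta)\bigr).
\]
The underlying identification used by Proposition~\ref{prop:Parameter.linear-map-symbols} is exactly the one from Remark~\ref{rmk:Parameter.standard-symbols-with-parameter-identification}, so a symbol $\rho(\xi;\lambda)\in \stS^{m,d}(\R^n\times\Lambda;\cA_\theta)$ is sent to the family $\lambda\mapsto (\rho(k;\lambda))_{k\in\Z^n}$, which is pointwise in $\Lambda$ exactly the restriction of $\rho(\,\cdot\,;\lambda)$ to $\Z^n$.

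Next, by Remark~\ref{rmk:PsiDOs-parameter.toroidal-symbols-with-parameter-identification} there is a canonical identification
\[
 \Hol^d\bigl(\Lambda;\stS^m(\Z^n;\cA_\theta)\bigr)\simeq \stS^{m,d}(\Z^n\times\Lambda;\cA_\theta),
\]
again carrying the $\Hol^d(\Lambda)$-family $\lambda\mapsto (\rho(k;\lambda))_{k\in\Z^n}$ to the toroidal parametric symbol $(\rho(k;\lambda))_{k\in\Z^n}$. Composing with $T$ yields the asserted membership $(\rho(k;\lambda))_{k\in\Z^n}\in \stS^{m,d}(\Z^n\times\Lambda;\cA_\theta)$, and moreover shows that restriction is a continuous linear map between the two parametric symbol spaces.

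There is essentially no obstacle here: the content is purely the compatibility between the $\Hol^d(\Lambda;\sE)$ formalism and the restriction map from continuous to discrete symbols. The only small point to check is that $(\rho(k;\lambda))_{k\in\Z^n}$ is a holomorphic sequence in $\lambda$, which is automatic from the fact that $\rho(\xi;\lambda)\in C^{\infty,\omega}(\R^n\times\Lambda;\cA_\theta)$ and hence each $\rho(k;\cdot)$ is holomorphic on $\Lambda$ with values in $\cA_\theta$; this is implicitly handled by passing through the space $\Hol^d(\Lambda;\stS^m(\Z^n;\cA_\theta))$.
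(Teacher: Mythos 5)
Your proof is correct and follows exactly the paper's route: the paper derives this proposition by combining Lemma~\ref{lem:PsiDOs-parameter-restriction-extension-continuity}(i) with Proposition~\ref{prop:Parameter.linear-map-symbols}, which is precisely your argument, just stated more tersely. You merely spell out the intermediate identification of Remark~\ref{rmk:PsiDOs-parameter.toroidal-symbols-with-parameter-identification} and the holomorphy in $\lambda$, both of which the paper leaves implicit.
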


\begin{remark}
 It is immediate from the definitions of $\stS^{-\infty,d}(\R^n\times \Lambda; \cA_\theta)$ and $\stS^{-\infty,d}(\Z^n\times \Lambda; \cA_\theta)$ that, if 
 $\rho(\xi;\lambda)\in \stS^{-\infty,d}(\R^n\times \Lambda; \cA_\theta)$, then $(\rho(k;\lambda))_{k \in \Z^n}\in \stS^{-\infty,d}(\Z^n\times \Lambda; \cA_\theta)$. 
\end{remark}

Let $(\rho_k(\lambda))_{k\in \Z^n}\in \stS^{m,d}(\Z^n\times \Lambda; \cA_\theta)$. Given any $\lambda \in \Lambda$ we get a toroidal symbol 
$(\rho_k(\lambda))_{k\in \Z^n}$ in $\stS^{m}(\Z^n; \cA_\theta)$. We denote by $\tilde{\rho}(\xi;\lambda)\in \stS^m(\R^n; \cA_\theta)$ its extension. By using the 
2nd part of Lemma~\ref{lem:PsiDOs-parameter-restriction-extension-continuity} and Proposition~\ref{prop:Parameter.linear-map} we get the following result. 

\begin{proposition} \label{prop:PsiDOs-parameter.extension-of-toroidal-symbol-is-standard-symbol}
Let $(\rho_k(\lambda))_{k\in \Z^n}\in \stS^{m,d}(\Z^n\times \Lambda; \cA_\theta)$, $m\in \R\cup \{-\infty\}$, $d\in \R$. Then $\tilde{\rho}(\xi;\lambda)$ is contained in $\stS^{m,d}(\R^n\times \Lambda; \cA_\theta)$. 
\end{proposition}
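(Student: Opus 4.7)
The proof essentially reduces to transporting the continuous linear extension map of Lemma~\ref{lem:PsiDOs-parameter-restriction-extension-continuity}(ii) through the functor $\Hol^d(\Lambda;\cdot)$ via Proposition~\ref{prop:Parameter.linear-map}. My plan is to proceed as follows.

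First I would invoke the topological vector space identifications established earlier in the section. By Remark~\ref{rmk:PsiDOs-parameter.toroidal-symbols-with-parameter-identification}, the hypothesis means precisely that the family $\lambda \mapsto (\rho_k(\lambda))_{k\in\Z^n}$ belongs to $\Hol^d(\Lambda; \stS^m(\Z^n;\cA_\theta))$ when $m\in\R$, and by the analogous identification $\stS^{-\infty,d}(\Z^n\times\Lambda;\cA_\theta)\simeq\Hol^d(\Lambda;\cS(\Z^n;\cA_\theta))$ when $m=-\infty$. Similarly, in view of~(\ref{eq:Parameter.parametric-symbols-identification}) and~(\ref{eq:Parameter.smoothing-parametric-symbols-identification}), what I need to show is that $\lambda\mapsto\tilde\rho(\cdot;\lambda)$ lies in $\Hol^d(\Lambda;\stS^m(\R^n;\cA_\theta))$ (respectively $\Hol^d(\Lambda;\cS(\R^n;\cA_\theta))$).

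Next, I would apply Lemma~\ref{lem:PsiDOs-parameter-restriction-extension-continuity}(ii): the fiberwise extension map $E:(\rho_k)_{k\in\Z^n}\mapsto\tilde\rho(\xi)$ is a continuous linear map from $\stS^m(\Z^n;\cA_\theta)$ to $\stS^m(\R^n;\cA_\theta)$ for every $m\in\R$, and it restricts to a continuous linear map from $\cS(\Z^n;\cA_\theta)$ to $\cS(\R^n;\cA_\theta)$. Proposition~\ref{prop:Parameter.linear-map} then lifts $E$ to a continuous linear map
\begin{equation*}
E:\Hol^d\bigl(\Lambda;\stS^m(\Z^n;\cA_\theta)\bigr)\longrightarrow \Hol^d\bigl(\Lambda;\stS^m(\R^n;\cA_\theta)\bigr),
\end{equation*}
and likewise with $\cS$ in place of $\stS^m$. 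Applying this to the family $(\rho_k(\lambda))_{k\in\Z^n}$ produces the family $\tilde\rho(\cdot;\lambda)$ in the target space, which is the desired conclusion after undoing the identifications.

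The only point worth double-checking is that the resulting $\tilde\rho(\xi;\lambda)$ genuinely lies in $C^{\infty,\omega}(\R^n\times\Lambda;\cA_\theta)$, i.e.\ that holomorphy in $\lambda$ is preserved and the joint smoothness in $\xi$ and holomorphy in $\lambda$ are encoded by belonging to $\Hol(\Lambda;\stS^m(\R^n;\cA_\theta))$. This however is exactly the content of Remark~\ref{rmk:Symbols.identification-Cinftyw}, and the image of a holomorphic family under a continuous linear map is holomorphic by composition with the continuous linear operator (one can differentiate the difference quotient under $E$). I don't anticipate any genuine obstacle: once the two identifications and Proposition~\ref{prop:Parameter.linear-map} are combined with Lemma~\ref{lem:PsiDOs-parameter-restriction-extension-continuity}(ii), the statement is immediate.
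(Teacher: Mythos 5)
Your proposal is correct and follows exactly the paper's own argument: the paper proves this by combining the continuity of the extension map (Lemma~\ref{lem:PsiDOs-parameter-restriction-extension-continuity}(ii)) with the lifting of continuous linear maps to $\Hol^d(\Lambda;\cdot)$-families (Proposition~\ref{prop:Parameter.linear-map}), together with the identifications of parametric symbol spaces with $\Hol^d$-spaces. You have merely spelled out the identifications a bit more explicitly than the paper does.
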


We are now in a position to prove the following result. 

\begin{proposition}\label{prop:Parametric-PsiDOs.equality-toroidal}
Given any $m\in \R\cup\{-\infty\}$, the classes of toroidal and standard \psidos\ with parameter of order $m$ agree. 
\end{proposition}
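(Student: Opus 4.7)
The plan is to deduce this from the pointwise (non-parametric) equivalence of Proposition~\ref{prop:toroidal.standard-and-toroidal-psidos-agree}, by invoking the parametric versions of the restriction and extension maps, namely Proposition~\ref{prop:PsiDOs-parameter.restriction-of-standard-symbol-is-toroidal-symbol} and Proposition~\ref{prop:PsiDOs-parameter.extension-of-toroidal-symbol-is-standard-symbol}. The crucial observation is that in both the standard and toroidal parametric settings the family of operators $P(\lambda)$ is determined by its values at each $\lambda\in\Lambda$, so the only thing to check is that the symbols one obtains through restriction or extension remain in the correct parametric class; this is precisely the content of the two propositions just proved, and the remaining identifications take place at fixed $\lambda$ via the already-known non-parametric theory.

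For the forward inclusion, I would start with a standard parametric \psido\ $P(\lambda)=P_\rho(\lambda)$ with symbol $\rho(\xi;\lambda)\in\stS^{m,d}(\R^n\times\Lambda;\cA_\theta)$ (or $\stS^{-\infty,d}$ in the smoothing case). By Proposition~\ref{prop:PsiDOs-parameter.restriction-of-standard-symbol-is-toroidal-symbol} (together with the remark following it for the $m=-\infty$ case) the restriction $(\rho(k;\lambda))_{k\in\Z^n}$ lies in $\stS^{m,d}(\Z^n\times\Lambda;\cA_\theta)$, so it qualifies as a toroidal symbol with parameter. Evaluating the formula~(\ref{eq:toroidal.Prhou-equation}) from Proposition~\ref{prop:toroidal.restriction-of-standard-symbol-is-toroidal-symbol} at each $\lambda\in\Lambda$ then gives, for every $u=\sum u_k U^k\in\cA_\theta$,
\begin{equation*}
P_\rho(\lambda)u=\sum_{k\in\Z^n}u_k\rho(k;\lambda)U^k,
\end{equation*}
which identifies $P(\lambda)$ with the toroidal parametric \psido\ attached to $(\rho(k;\lambda))_{k\in\Z^n}$.

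For the converse inclusion, I would start with a toroidal parametric \psido\ $P(\lambda)$ defined by a toroidal symbol $(\rho_k(\lambda))_{k\in\Z^n}\in\stS^{m,d}(\Z^n\times\Lambda;\cA_\theta)$ and form the extension $\tilde\rho(\xi;\lambda)$ via~(\ref{eq:toroidal.trho}) applied at each $\lambda$. By Proposition~\ref{prop:PsiDOs-parameter.extension-of-toroidal-symbol-is-standard-symbol} this extension is a standard parametric symbol in $\stS^{m,d}(\R^n\times\Lambda;\cA_\theta)$ (and in $\stS^{-\infty,d}$ when $m=-\infty$). Applying the non-parametric Proposition~\ref{prop:toroidal.extension-of-toroidal-symbol-is-standard-symbol} at each $\lambda\in\Lambda$ shows that $P_{\tilde\rho}(\lambda)$ coincides with the toroidal \psido\ associated with $(\rho_k(\lambda))_{k\in\Z^n}$, i.e.\ with $P(\lambda)$. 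Thus $P(\lambda)=P_{\tilde\rho}(\lambda)$ is a standard parametric \psido, which completes the argument.

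There is no real obstacle here: all genuine work has been done in Propositions~\ref{prop:PsiDOs-parameter.restriction-of-standard-symbol-is-toroidal-symbol} and~\ref{prop:PsiDOs-parameter.extension-of-toroidal-symbol-is-standard-symbol}, which themselves reduce to Lemma~\ref{lem:PsiDOs-parameter-restriction-extension-continuity} via Propositions~\ref{prop:Parameter.linear-map} and~\ref{prop:Parameter.linear-map-symbols}. The only mildly delicate point is uniformity: one must ensure that the identification of operators holds \emph{as $\Hol^d(\Lambda)$-families}, but since the identification holds at every $\lambda$ and both sides are built from symbols in $\stS^{m,d}$, the uniformity is automatic from the symbol-level statements already established.
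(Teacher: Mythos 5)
Your proof is correct and follows essentially the same route as the paper's: both directions reduce to the non-parametric identification of operators at each fixed $\lambda$ (Propositions~\ref{prop:toroidal.restriction-of-standard-symbol-is-toroidal-symbol} and~\ref{prop:toroidal.extension-of-toroidal-symbol-is-standard-symbol}), combined with the parametric membership statements for the restriction and extension maps (Propositions~\ref{prop:PsiDOs-parameter.restriction-of-standard-symbol-is-toroidal-symbol} and~\ref{prop:PsiDOs-parameter.extension-of-toroidal-symbol-is-standard-symbol}).
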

\begin{proof}
 Let $\rho(\xi;\lambda)\in \stS^{m,d}(\R^n\times \Lambda; \cA_\theta)$. Given any $\lambda \in \Lambda$, it follows from Proposition~\ref{prop:toroidal.restriction-of-standard-symbol-is-toroidal-symbol} that $P_\rho(\lambda)$ is the toroidal \psido\ associated with the restriction $(\rho(k;\lambda))_{k\in \Z^n}$. As $(\rho(k;\lambda))_{k\in \Z^n}\in \stS^m(\Z^n\times \Lambda; \cA_\theta)$ by Proposition~\ref{prop:PsiDOs-parameter.restriction-of-standard-symbol-is-toroidal-symbol} we see that $P_\rho(\lambda)$ is a toroidal \psido\ with parameter of order~$m$. 
 
 Conversely, let $(\rho_k(\lambda))_{k\in \Z^n}\in  \stS^m(\Z^n\times \Lambda; \cA_\theta)$ and denote by $P(\lambda)$ the corresponding toroidal \psido\ with parameter. It follows from Proposition~\ref{prop:toroidal.extension-of-toroidal-symbol-is-standard-symbol} that $P(\lambda)=P_{\tilde{\rho}}(\lambda)$. As we know from Proposition~\ref{prop:PsiDOs-parameter.extension-of-toroidal-symbol-is-standard-symbol} that the extension $\tilde{\rho}(\xi;\lambda)$ is contained in 
 $ \stS^m(\R^n\times \Lambda; \cA_\theta)$, it then follows that $P(\lambda)$ is a standard \psido\ with parameter of order~$m$. This completes the proof. 
\end{proof}

Given any $\rho(\xi;\lambda)\in \stS^{m,d}(\R^n\times \Lambda; \cA_\theta)$ and  $\lambda \in \Lambda$, we have a symbol 
$\rho(\xi;\lambda)\in \stS^{m}(\R^n; \cA_\theta)$. 
We then denote by $\tilde{\rho}(\xi;\lambda)$ the extension~(\ref{eq:toroidal.trho}) of the toroidal symbol $(\rho(k;\lambda))_{k\in \Z^n}$.  

\begin{lemma} \label{lem:PsiDOs-parameter.usual-standard-symbol-restriction-extension-composition-is-continuous}
 The maps $\rho(\xi)\rightarrow \tilde{\rho}(\xi)$ and $\rho(\xi) \rightarrow  \tilde{\rho}(\xi)-\rho(\xi)$ are continuous linear maps from $\stS^m(\R^n; \cA_\theta)$ to $\stS^m(\R^n; \cA_\theta)$ and $\cS(\R^n; \cA_\theta)$, respectively. 
\end{lemma}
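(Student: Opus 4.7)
The plan is to handle the two assertions separately. First, I would observe that the map $\rho(\xi) \mapsto \tilde{\rho}(\xi)$ factors as the composition of the restriction map $\stS^m(\R^n;\cA_\theta) \ni \rho(\xi) \mapsto (\rho(k))_{k\in \Z^n} \in \stS^m(\Z^n;\cA_\theta)$ with the extension map $\stS^m(\Z^n;\cA_\theta) \ni (\rho_k)_{k\in \Z^n} \mapsto \tilde{\rho}(\xi) \in \stS^m(\R^n;\cA_\theta)$ given by~(\ref{eq:toroidal.trho}). Both maps are continuous by Lemma~\ref{lem:PsiDOs-parameter-restriction-extension-continuity}, so their composition is a continuous linear map from $\stS^m(\R^n;\cA_\theta)$ to itself. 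This settles the first claim.

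For the second claim, Proposition~\ref{prop:PsiDOs.normalization-symbol} already tells us that $\tilde{\rho}(\xi) - \rho(\xi) \in \cS(\R^n;\cA_\theta)$ for every $\rho(\xi) \in \stS^m(\R^n;\cA_\theta)$, so the linear map $T:\rho \mapsto \tilde{\rho} - \rho$ is at least well-defined from $\stS^m(\R^n;\cA_\theta)$ to $\cS(\R^n;\cA_\theta)$. Rather than trying to estimate the Schwartz seminorms of $T\rho$ directly in terms of standard-symbol seminorms of $\rho$, which would require a delicate quantitative analysis of the finite-difference structure of $\phi$, my plan is to invoke the closed graph theorem. Both source and target are Fr\'echet spaces (the target because $\cS(\R^n;\cA_\theta)$ is the countable intersection $\bigcap_{m\in\R}\stS^m(\R^n;\cA_\theta)$), so it suffices to check that the graph of $T$ is closed.

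To close the graph, I would take a sequence $\rho_j \to \rho$ in $\stS^m(\R^n;\cA_\theta)$ such that $T\rho_j \to \sigma$ in $\cS(\R^n;\cA_\theta)$, and verify that $\sigma = T\rho$. The continuous inclusion $\cS(\R^n;\cA_\theta) \hookrightarrow \stS^m(\R^n;\cA_\theta)$ (\emph{cf.}\ Remark~\ref{rmk:Symbols.symbols-intersection}) promotes the second convergence to $\stS^m(\R^n;\cA_\theta)$, while the first part of the lemma gives $\tilde{\rho}_j \to \tilde{\rho}$ in $\stS^m(\R^n;\cA_\theta)$. Combining these yields $T\rho_j = \tilde{\rho}_j - \rho_j \to \tilde{\rho} - \rho = T\rho$ in $\stS^m(\R^n;\cA_\theta)$, and uniqueness of limits forces $\sigma = T\rho$, closing the graph.

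The only real obstacle is the second assertion; the first is pure bookkeeping. The conceptual point is that, once the \emph{pointwise} inclusion $\tilde{\rho} - \rho \in \cS(\R^n;\cA_\theta)$ is known from Proposition~\ref{prop:PsiDOs.normalization-symbol}, the Fr\'echet structure together with the continuity statement already established in the first part of the lemma automatically upgrades this to continuity of the map $T$, sparing us any explicit seminorm estimate involving $\phi$.
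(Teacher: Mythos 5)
Your proof is correct and follows essentially the same route as the paper's: factor $\rho\mapsto\tilde\rho$ through the restriction and extension maps to get continuity into $\stS^m(\R^n;\cA_\theta)$, then upgrade to continuity into $\cS(\R^n;\cA_\theta)$ via the closed graph theorem, using the continuous inclusion $\cS(\R^n;\cA_\theta)\hookrightarrow\stS^m(\R^n;\cA_\theta)$. The only cosmetic difference is that you unpack the closed-graph condition by a direct sequential argument, whereas the paper observes more abstractly that the already-closed graph in $\stS^m\times\stS^m$ remains closed in $\stS^m\times\cS$; the two formulations are equivalent.
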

\begin{proof}
 It follows from Lemma~\ref{lem:PsiDOs-parameter-restriction-extension-continuity} that $\rho(\xi)\rightarrow \tilde{\rho}(\xi)$ is a continuous linear map from $\stS^m(\R^n; \cA_\theta)$ to itself, since this is the composition of the restriction and extension maps. This also implies that the linear map $\rho(\xi) \rightarrow  \tilde{\rho}(\xi)-\rho(\xi)$ is continuous from $\stS^m(\R^n; \cA_\theta)$ to itself, and so its graph is a closed subspace of $\stS^m(\R^n; \cA_\theta)\times \stS^m(\R^n; \cA_\theta)$. As this graph is contained in 
 $\stS^m(\R^n; \cA_\theta)\times \cS(\R^n; \cA_\theta)$ and the inclusion of $\cS(\R^n; \cA_\theta)$ into  $\stS^m(\R^n; \cA_\theta)$ is continuous, this graph is a closed subspace of $\stS^m(\R^n; \cA_\theta)\times \cS(\R^n; \cA_\theta)$ as well. The closed graph theorem then ensures us that we have a continuous linear map 
 from $\stS^m(\R^n; \cA_\theta)$ to $\cS(\R^n; \cA_\theta)$. The proof is complete. 
\end{proof}

This leads us to the following parameter version of Proposition~\ref{prop:toroidal.restriction-of-standard-symbol-is-toroidal-symbol} and Proposition~\ref{prop:PsiDOs.normalization-symbol}. 
\begin{proposition} \label{prop:PsiDOs-parameter.extension-minus-given-symbol-is-smooth}
Let $\rho(\xi;\lambda)\in \stS^{m,d}(\R^n\times \Lambda; \cA_\theta)$, $m, d\in \R$.  
\begin{enumerate}
 \item[(i)] $\tilde{\rho}(\xi;\lambda)\in \stS^{m,d}(\R^n\times \Lambda; \cA_\theta)$ and $\tilde{\rho}(\xi;\lambda)-\rho(\xi;\lambda)\in \stS^{-\infty,d}(\R^n\times \Lambda; \cA_\theta)$. 
 
 \item[(ii)] Suppose that $(\rho(k;\lambda))_{k\in \Z^n}\in \stS^{-\infty,d}(\Z^n\times \Lambda; \cA_\theta)$. Then $\rho(\xi;\lambda)\in \stS^{-\infty,d}(\R^n\times \Lambda; \cA_\theta)$. 
\end{enumerate}
\end{proposition}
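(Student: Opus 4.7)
The plan is to deduce both parts from the non-parametric statement in Lemma~\ref{lem:PsiDOs-parameter.usual-standard-symbol-restriction-extension-composition-is-continuous} via the functorial mechanism provided by Proposition~\ref{prop:Parameter.linear-map-symbols}. Under the topological identifications
\[
 \stS^{m,d}(\R^n\times \Lambda;\cA_\theta) \simeq \Hol^d\bigl(\Lambda;\stS^m(\R^n;\cA_\theta)\bigr) \et
 \stS^{-\infty,d}(\R^n\times \Lambda;\cA_\theta) \simeq \Hol^d\bigl(\Lambda;\cS(\R^n;\cA_\theta)\bigr)
\]
(Remark~\ref{rmk:Parameter.standard-symbols-with-parameter-identification}), any continuous linear map at the symbol level automatically promotes to a continuous linear map at the parametric level with the same decay exponent $d$. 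The slogan is therefore: once Lemma~\ref{lem:PsiDOs-parameter.usual-standard-symbol-restriction-extension-composition-is-continuous} is in hand, everything follows formally.

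For~(i), first I would apply Proposition~\ref{prop:Parameter.linear-map-symbols}(1) to the continuous linear map $\rho(\xi)\mapsto \tilde{\rho}(\xi)$ from $\stS^m(\R^n;\cA_\theta)$ into itself (the first half of Lemma~\ref{lem:PsiDOs-parameter.usual-standard-symbol-restriction-extension-composition-is-continuous}). This immediately yields that the pointwise-in-$\lambda$ extension $\tilde{\rho}(\xi;\lambda)$ belongs to $\Hol^d(\Lambda;\stS^m(\R^n;\cA_\theta))=\stS^{m,d}(\R^n\times\Lambda;\cA_\theta)$. Then, to obtain the smoothing remainder, I apply Proposition~\ref{prop:Parameter.linear-map-symbols}(1) again, this time to the continuous linear map $\rho(\xi)\mapsto \tilde{\rho}(\xi)-\rho(\xi)$, which Lemma~\ref{lem:PsiDOs-parameter.usual-standard-symbol-restriction-extension-composition-is-continuous} identifies as continuous from $\stS^m(\R^n;\cA_\theta)$ into $\cS(\R^n;\cA_\theta)$. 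This gives $\tilde{\rho}(\xi;\lambda)-\rho(\xi;\lambda)\in \Hol^d(\Lambda;\cS(\R^n;\cA_\theta))=\stS^{-\infty,d}(\R^n\times\Lambda;\cA_\theta)$. The only subtle point here is that the $\lambda$-dependence passes through unaltered: this is precisely what Proposition~\ref{prop:Parameter.linear-map-symbols} encodes, since a continuous linear map preserves both holomorphy and $\Hol^d$-growth bounds. Note that the holomorphy of $\tilde{\rho}(\xi;\lambda)$ in $\lambda$ is automatic because the extension formula~(\ref{eq:toroidal.trho}) is just a $\lambda$-independent linear combination, $\tilde{\rho}(\xi;\lambda)=\sum_{k}\phi(\xi-k)\rho(k;\lambda)$.

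For~(ii), the strategy is to write $\rho(\xi;\lambda)$ as the difference of two elements known to lie in $\stS^{-\infty,d}$. Under the hypothesis that the restriction $(\rho(k;\lambda))_{k\in\Z^n}$ lies in $\stS^{-\infty,d}(\Z^n\times\Lambda;\cA_\theta)$, Proposition~\ref{prop:PsiDOs-parameter.extension-of-toroidal-symbol-is-standard-symbol} (applied with $m=-\infty$) immediately gives $\tilde{\rho}(\xi;\lambda)\in \stS^{-\infty,d}(\R^n\times\Lambda;\cA_\theta)$. Combining this with the smoothing remainder from~(i) and writing
\[
 \rho(\xi;\lambda) = \tilde{\rho}(\xi;\lambda) - \bigl(\tilde{\rho}(\xi;\lambda)-\rho(\xi;\lambda)\bigr),
\]
we conclude that $\rho(\xi;\lambda)\in \stS^{-\infty,d}(\R^n\times\Lambda;\cA_\theta)$.

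The only potential obstacle is conceptual: one must make sure that Proposition~\ref{prop:Parameter.linear-map-symbols} really does apply to the map $\rho\mapsto\tilde{\rho}-\rho$ landing in $\cS(\R^n;\cA_\theta)$, rather than just in $\stS^m(\R^n;\cA_\theta)$. This is guaranteed by the second continuity statement in Lemma~\ref{lem:PsiDOs-parameter.usual-standard-symbol-restriction-extension-composition-is-continuous}, whose proof relies on the closed graph theorem. Once this is accepted, the entire proposition is a one-line consequence and requires no new estimates.
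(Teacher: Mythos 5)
Your proposal is correct and follows essentially the same route as the paper: part (i) is obtained by transporting the two continuity statements of Lemma~\ref{lem:PsiDOs-parameter.usual-standard-symbol-restriction-extension-composition-is-continuous} through Proposition~\ref{prop:Parameter.linear-map-symbols}, and part (ii) combines Proposition~\ref{prop:PsiDOs-parameter.extension-of-toroidal-symbol-is-standard-symbol} with the decomposition $\rho = \tilde{\rho} - (\tilde{\rho}-\rho)$. Your write-up is merely more explicit than the paper's terse ``follows from'' for part (i).
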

\begin{proof}
 The first part follows from Proposition~\ref{prop:Parameter.linear-map-symbols} and Lemma~\ref{lem:PsiDOs-parameter.usual-standard-symbol-restriction-extension-composition-is-continuous}. It remains to prove part (ii). Suppose that $(\rho(k;\lambda))_{k\in \Z^n}\in \stS^{-\infty,d}(\Z^n\times \Lambda; \cA_\theta)$. We then know by Proposition~\ref{prop:PsiDOs-parameter.extension-of-toroidal-symbol-is-standard-symbol} that $\tilde{\rho}(\xi;\lambda)$ is in $\stS^{-\infty,d}(\R^n\times \Lambda; \cA_\theta)$. As $\tilde{\rho}(\xi;\lambda)-\rho(\xi;\lambda)\in \stS^{-\infty,d}(\R^n\times \Lambda; \cA_\theta)$ by part (i), we then deduce that  $\rho(\xi;\lambda)\in \stS^{-\infty,d}(\R^n\times \Lambda; \cA_\theta)$. The proof is complete. 
\end{proof} 

Combining the 2nd part of Proposition~\ref{prop:PsiDOs-parameter.extension-minus-given-symbol-is-smooth} and~(\ref{eq:PsiDOs.rho(k)}) leads us to the following statement. 
\begin{corollary} \label{cor:PsiDOs-parameter.zero-operator-symbol}
 Let $\rho(\xi;\lambda)\in \stS^{m,d}(\R^n\times \Lambda; \cA_\theta)$ be such that $P_{\rho}(\lambda)=0$. Then $\rho(\xi;\lambda)$ is contained in $\stS^{-\infty,d}(\R^n\times \Lambda; \cA_\theta)$. 
\end{corollary}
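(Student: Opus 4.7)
The plan is to combine the pointwise identity~\eqref{eq:PsiDOs.rho(k)} with part~(ii) of Proposition~\ref{prop:PsiDOs-parameter.extension-minus-given-symbol-is-smooth}, reducing the claim to a triviality at the toroidal level.

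First, I would fix $\lambda \in \Lambda$ and regard $\rho(\cdot;\lambda)$ as a symbol in $\stS^m(\R^n;\cA_\theta)$ with associated \psido\ $P_\rho(\lambda)$. Applying the formula~\eqref{eq:PsiDOs.rho(k)} (valid for any standard symbol) pointwise in $\lambda$, I obtain
\begin{equation*}
\rho(k;\lambda) = P_\rho(\lambda)(U^k)(U^k)^{*} \qquad \text{for all}\ (k,\lambda)\in \Z^n\times \Lambda.
\end{equation*}
By hypothesis $P_\rho(\lambda)=0$ for every $\lambda \in \Lambda$, hence $\rho(k;\lambda)=0$ for all $k\in \Z^n$ and $\lambda \in \Lambda$.

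The restriction $(\rho(k;\lambda))_{k\in \Z^n}$ is therefore the zero toroidal symbol with parameter, which trivially belongs to $\stS^{-\infty,d}(\Z^n\times \Lambda;\cA_\theta)$ since all the defining estimates are satisfied with constant zero. Part~(ii) of Proposition~\ref{prop:PsiDOs-parameter.extension-minus-given-symbol-is-smooth} then immediately yields $\rho(\xi;\lambda)\in \stS^{-\infty,d}(\R^n\times \Lambda;\cA_\theta)$, proving the corollary.

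There is essentially no obstacle: all the work has been done in establishing Proposition~\ref{prop:PsiDOs-parameter.extension-minus-given-symbol-is-smooth} (which required the continuity of the restriction/extension maps via Lemma~\ref{lem:PsiDOs-parameter-restriction-extension-continuity} and the closed graph theorem). The only minor point worth verifying is that the identity~\eqref{eq:PsiDOs.rho(k)} holds with $\lambda$ as a parameter, but this is automatic since it is a pointwise statement for each fixed $\lambda$.
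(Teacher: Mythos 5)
Your proof is correct and follows essentially the same route as the paper, which derives the corollary by combining the pointwise identity~\eqref{eq:PsiDOs.rho(k)} with part~(ii) of Proposition~\ref{prop:PsiDOs-parameter.extension-minus-given-symbol-is-smooth}. You have simply spelled out the brief justification the paper leaves implicit.
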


\subsection{Smoothing operators with parameter}
We would like to have a $\Hol^d(\Lambda)$-version of the characterization of smoothing operators provided by Proposition~\ref{prop:PsiDOs.smoothing-operator-characterization}. Once again a smoothing operator is a linear operator on $\cA_\theta$ that extends to a continuous linear map from $\cA_\theta'$ to $\cA_\theta$. Thus, the space of smoothing operators is naturally identified with $\cL(\cA_\theta',\cA_\theta)$. 

We equip $\cL(\cA_\theta',\cA_\theta)$ with its strong topology, i.e., the topology of uniform convergence on bounded subsets of $\cA_\theta'$, where $\cA_\theta'$  itself is equipped with its strong topology. Given $s,t\in\R$, we denote by $\| \cdot \|_{s,t}$ the operator norm of $\cL(\cH_\theta^{(s)},\cH_\theta^{(t)})$.

By Proposition~\ref{prop:Sobolev.Hs-inclusion-cAtheta} the topology of $\cA_\theta$ is the projective limit of the $\cH_\theta^{(s)}$-topologies while their inductive limit yields the topology of $\cA_\theta'$. This enables us to understand the topology of $\cL(\cA_\theta', \cA_\theta)$ in terms of the norms $\| \cdot \|_{s,t}$. Namely, we have the following result.

\begin{proposition}\label{prop:topo-smoothing}
 The topology of $\cL(\cA_\theta',\cA_\theta)$ is generated by the norms $\| \cdot \|_{s,t}$, $s,t\in \R$. 
\end{proposition}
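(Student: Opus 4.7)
The plan is to prove the two topologies coincide by showing each one is finer than the other, using the duality between the projective description of $\cA_\theta$ and the inductive description of $\cA_\theta'$ provided by Proposition~\ref{prop:Sobolev.Hs-inclusion-cAtheta}. For $s,t\in\R$ set $B_s=\{u\in\cH_\theta^{(s)};\ \|u\|_s\le 1\}$ and $W_{t,\epsilon}=\{v\in\cA_\theta;\ \|v\|_t\le \epsilon\}$, and let $V(B,W)=\{R\in\cL(\cA_\theta',\cA_\theta);\ R(B)\subset W\}$ denote the basic strong neighborhoods.

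First I would show each norm $\|\cdot\|_{s,t}$ is continuous for the strong topology. The inclusion $\cH_\theta^{(s)}\hookrightarrow\cA_\theta'$ is continuous (by the universal property of the inductive limit in Proposition~\ref{prop:Sobolev.Hs-inclusion-cAtheta}(iii)), so $B_s$ is bounded in $\cA_\theta'$; and $W_{t,\epsilon}$ is a $0$-neighborhood in $\cA_\theta$ by Proposition~\ref{prop:Sobolev.Hs-inclusion-cAtheta}(ii). Thus $V(B_s,W_{t,\epsilon})$ is a strong $0$-neighborhood, and it equals $\{R;\ \|R\|_{s,t}\le\epsilon\}$, giving the desired continuity.

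Second, I would show every strong $0$-neighborhood contains one of the sets $\{R;\ \|R\|_{s,t}\le\epsilon\}$. Fix a basic strong neighborhood $V(B,W)$ where $B\subset\cA_\theta'$ is bounded and $W$ is a $0$-neighborhood in $\cA_\theta$. Using Proposition~\ref{prop:Sobolev.Hs-inclusion-cAtheta}(ii) (the projective description of $\cA_\theta$), we may shrink $W$ so that $W\supset W_{t,\epsilon}$ for some $t\in\R$ and $\epsilon>0$. The main ingredient needed is that $B$ is contained in a bounded subset of some $\cH_\theta^{(s)}$; this is exactly the characterization of bounded subsets of $\cA_\theta'$ stated as being proved in Appendix~\ref{app:topo-smoothing}, whose precise statement reads: a subset $B\subset\cA_\theta'$ is bounded if and only if $B\subset\cH_\theta^{(s)}$ for some $s\in\R$ and is norm-bounded in $\cH_\theta^{(s)}$. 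Granting this, after rescaling we get $B\subset B_s$, and hence $V(B,W)\supset V(B_s,W_{t,\epsilon})=\{R;\ \|R\|_{s,t}\le\epsilon\}$. This shows the seminorm topology is finer than the strong topology.

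Combining the two inclusions yields the claim. The only real difficulty is the characterization of bounded sets in $\cA_\theta'$; this is a standard fact about countable inductive limits of Hilbert spaces (the scale $(\cH_\theta^{(s)})_{s\in\R}$ is regular since the inclusions $\cH_\theta^{(t)}\hookrightarrow\cH_\theta^{(s)}$ for $t>s$ are compact by Proposition~\ref{prop:Sobolev.Sobolev-embedding}, so the inductive limit is even strict/regular in a strong sense), but since the paper defers the proof to Appendix~\ref{app:topo-smoothing} I would simply invoke it as a black box here.
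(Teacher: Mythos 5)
Your proposal is correct and follows essentially the same approach as the paper: both directions are the standard comparison of topologies, and the crux is that bounded subsets of $\cA_\theta'$ are exactly sets contained and bounded in some $\cH_\theta^{(s)}$. The one thing to be careful about is that the bounded-set characterization (Lemma~\ref{lem:Sobolev.inductive-limit-regularity}) is itself proved in the same Appendix~\ref{app:topo-smoothing} as the proposition you are proving, so ``invoking the appendix as a black box'' is slightly circular in phrasing — you should cite the lemma directly as the precise ingredient being used, which you in fact do.
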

\begin{proof}
 See Appendix~\ref{app:topo-smoothing}. 
\end{proof}

\begin{proposition} \label{prop:PsiDOs-parameter.symbol-of-smoothing-operators}
The following holds. 
\begin{enumerate}
 \item The linear map $\cS(\R^n;\cA_\theta) \ni \rho(\xi) \rightarrow P_\rho \in \cL(\cA_\theta',\cA_\theta)$ is continuous.
 
 \item There is a continuous linear map $ \cL(\cA_\theta',\cA_\theta)\ni R\rightarrow \rho_R(\xi)\in \cS(\R^n; \cA_\theta)$ such that $R=P_{\rho_R}$ for all $R\in \cL(\cA_\theta',\cA_\theta)$. 
\end{enumerate}
\end{proposition}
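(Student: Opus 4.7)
The plan is to handle the two parts in turn, with the bulk of the work in the second.

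For (i), I would observe that by Remark~\ref{rmk:Symbols.symbols-intersection}, $\cS(\R^n;\cA_\theta)=\bigcap_{m\in\R}\stS^m(\R^n;\cA_\theta)$ with the projective topology generated by the semi-norms of all the $\stS^m(\R^n;\cA_\theta)$. Proposition~\ref{prop:Sob-Mapping.rho-on-Hs} provides, for every $s,t\in\R$, a continuous linear map $\rho\mapsto P_\rho$ from $\stS^{t-s}(\R^n;\cA_\theta)$ into $\cL(\cH^{(s)}_\theta,\cH^{(t)}_\theta)$. Composing with the continuous inclusion $\cS(\R^n;\cA_\theta)\hookrightarrow\stS^{t-s}(\R^n;\cA_\theta)$ yields continuity of $\rho\mapsto P_\rho$ from $\cS(\R^n;\cA_\theta)$ into each $\cL(\cH^{(s)}_\theta,\cH^{(t)}_\theta)$. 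Since Proposition~\ref{prop:topo-smoothing} asserts that the operator norms $\|\cdot\|_{s,t}$ generate the topology of $\cL(\cA_\theta',\cA_\theta)$, this is exactly the desired continuity.

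For (ii), given $R\in\cL(\cA_\theta',\cA_\theta)$ I would define $\sigma_R(k):=R(U^k)(U^k)^{*}\in\cA_\theta$ for $k\in\Z^n$ and set $\rho_R(\xi):=\widetilde{\sigma_R}(\xi)$, the toroidal extension~(\ref{eq:toroidal.trho}) of $(\sigma_R(k))_{k\in\Z^n}$. The crux will be to show that $(\sigma_R(k))_{k\in\Z^n}\in\cS(\Z^n;\cA_\theta)$ with continuous dependence on $R$; composing with the continuous extension map from Lemma~\ref{lem:PsiDOs-parameter-restriction-extension-continuity} will then yield the desired continuous linear map $R\mapsto\rho_R(\xi)$ into $\cS(\R^n;\cA_\theta)$. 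For the key estimate, Leibniz's rule together with $\delta^\gamma((U^k)^{*})=(-k)^\gamma(U^k)^{*}$ gives
\begin{equation*}
\|\delta^\alpha\sigma_R(k)\|\leq\sum_{\beta\leq\alpha}\binom{\alpha}{\beta}|k|^{|\alpha|-|\beta|}\,\|\delta^\beta R(U^k)\|.
\end{equation*}
Fixing $t>n/2$, the Sobolev embedding $\cH^{(t)}_\theta\hookrightarrow A_\theta$ combined with the boundedness of $\delta^\beta:\cH^{(t+|\beta|)}_\theta\rightarrow\cH^{(t)}_\theta$ (visible directly from~(\ref{eq:Sobolev.Sobolev-norm-formula})) yields $\|\delta^\beta R(U^k)\|\leq C_{t,\beta}\|R(U^k)\|_{t+|\beta|}$. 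Then Proposition~\ref{prop:topo-smoothing} ensures that each $\|R\|_{-M,t+|\beta|}$ is a continuous semi-norm on $\cL(\cA_\theta',\cA_\theta)$, while $\|U^k\|_{-M}^2=(1+|k|^2)^{-M}$, so altogether
\begin{equation*}
\|\delta^\alpha\sigma_R(k)\|\leq C_{t,\alpha}(1+|k|)^{|\alpha|}(1+|k|^2)^{-M/2}\max_{|\beta|\leq|\alpha|}\|R\|_{-M,t+|\beta|},
\end{equation*}
which yields faster-than-polynomial decay as $M$ is taken arbitrarily large, with constants that are continuous semi-norms of $R$.

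It will remain to verify $R=P_{\rho_R}$. By construction $\rho_R(k)=\sigma_R(k)=R(U^k)(U^k)^{*}$ for every $k\in\Z^n$, and using~(\ref{eq:toroidal.Prhou-equation}) together with the unitarity $(U^k)^{*}U^k=1$ in $A_\theta$, for $u=\sum u_kU^k\in\cA_\theta$ one obtains $P_{\rho_R}(u)=\sum_k u_kR(U^k)(U^k)^{*}U^k=\sum_k u_kR(U^k)=R(u)$, the last equality being justified by convergence of the Fourier series in $\cA_\theta$ and continuity of $R$ there. Since both $R$ and $P_{\rho_R}$ are smoothing, they then agree on all of $\cA_\theta'$ by density. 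The main obstacle in this plan will not be any single inequality but rather the need to keep the constants in the decay bounds \emph{uniformly} expressed as continuous semi-norms on $\cL(\cA_\theta',\cA_\theta)$ with its strong topology; this is precisely what Proposition~\ref{prop:topo-smoothing} is tailored to deliver.
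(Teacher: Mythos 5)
Your proof is correct and follows essentially the same route as the paper: part (i) is identical, and for part (ii) you use the same toroidal symbol $(R(U^k)(U^k)^*)_{k\in\Z^n}$ and the same extension map~(\ref{eq:toroidal.trho}). The only stylistic difference is that you bound $\delta^\alpha[R(U^k)(U^k)^*]$ in one stroke via Leibniz's rule, whereas the paper first proves continuity of $R\mapsto (R(U^k))_{k\in\Z^n}$ into $\cS(\Z^n;\cA_\theta)$ and then cites a separate continuity result for $(\rho_k)_{k\in\Z^n}\mapsto(\rho_k(U^k)^*)_{k\in\Z^n}$; both rest on the same ingredients (Proposition~\ref{prop:topo-smoothing}, the norms $\|\cdot\|_{s,t}$, and $\|U^k\|_s=(1+|k|^2)^{s/2}$).
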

\begin{proof}
 Let $s,t\in \R$ and set $m=s-t$. The quantization map $\rho(\xi)\rightarrow P_\rho$ gives rise to a continuous linear map from $\stS^m(\R^n;\cA_\theta)$ to $\cL(\cH^{(t+m)}_\theta, \cH_\theta^{(t)})= \cL(\cH^{(s)}_\theta, \cH_\theta^{(t)})$. Combining this with the continuity of the inclusion of $\cS(\R^n;\cA_\theta)$ into $\stS^m(\R^n;\cA_\theta)$ shows that $\rho \rightarrow \|P_\rho\|_{s,t}$ is a continuous semi-norm on  $\cS(\R^n;\cA_\theta)$. As by Proposition~\ref{prop:topo-smoothing} the norms $\| \cdot \|_{s,t}$, $s,t\in \R$, generate the topology of $\cL(\cA_\theta',\cA_\theta)$, it then follows that  $ \rho(\xi) \rightarrow P_\rho$ is a continuous linear map from $\cS(\R^n;\cA_\theta)$ to $\cL(\cA_\theta',\cA_\theta)$. This proves the first part. 
 
It remains to prove the 2nd part.  Given any $R\in  \cL(\cA_\theta',\cA_\theta)$ we know by Proposition~\ref{prop:PsiDOs.smoothing-operator-characterization} there is $\rho_R(\xi)\in \cS(\R^n; \cA_\theta)$ such that $R$ is the \psido\ associated with $\rho_R(\xi)$. As the proof of~\cite[Proposition~6.30]{HLP:Part1} shows we may take $\rho_R(\xi)$ to be the extension~(\ref{eq:toroidal.trho}) of the toroidal symbol $(R[U^k](U^k)^*)_{k\in \Z^n}$ (this can be also seen by using~(\ref{eq:PsiDOs.rho(k)}) and Proposition~\ref{prop:PsiDOs.normalization-symbol}). 
\begin{claim*}
 The linear map $ \cL(\cA_\theta',\cA_\theta)\ni  R \rightarrow (R[U^k])_{k\in \Z^n}\in \cS(\Z^n; \cA_\theta)$ is continuous.  
\end{claim*}
 \begin{proof}[Proof of the Claim]
Given $N\in \N_0$ and $\alpha \in \N_0^n$, let $s<-\frac{n}{2}-N$.  Any unitary $U^k$, $k\in \Z^n$, is contained in $\cH_\theta^{(s)}$ and it follows from~(\ref{eq:Sobolev.Sobolev-norm-formula}) that $\|U^k\|_s=(1+|k|^2)^{\frac{s}2}$.  As $2s+2N<-n$, we have
\begin{equation*}
 \sum_{k\in \Z^n} \big\| (1+|k|)^N U^k\|_{s}^2 =  \sum_{k\in \Z^n} (1+|k|^2)^{s}  (1+|k|)^{2N}<\infty.
\end{equation*}
This shows that the family $\{(1+|k|)^N U^k; k\in \Z^n\}$ is bounded in $\cH_\theta^{(s)}$. 

Bearing this in mind, set $t=|\alpha|-s$. As $-s>\frac{n}2$ it can be shown (see, e.g., \cite[Appendix~A]{HLP:Part2}) that there is $C_{s}>0$ independent of $\alpha$ such that 
\begin{equation*}
 \big\| \delta^\alpha u\|\leq C_{s}\|u\|_{t} \qquad \forall u\in \cA_\theta. 
\end{equation*}
Therefore, for all $k\in \Z^n$, we have 
\begin{equation*}
 (1+|k|)^N \left\| \delta^\alpha \left[ R(U^k)\right]\right\|  \leq C_s (1+|k|)^N  \left\|R(U^k)\right\|_{t} 
  \leq C_s \big\| R\big\|_{s,t}  \left\| (1+|k|)^N U^k\right\|_{s}. 
\end{equation*}
As $\{(1+|k|)^N U^k; k\in \Z^n\}$ is bounded in $\cH_\theta^{(s)}$, we deduce there is $C_{N\alpha}>0$ such that
\begin{equation*}
 \sup_{k\in \Z^n} (1+|k|)^N \left\| \delta^\alpha \left[ R(U^k)\right]\right\| \leq C_{N\alpha} \big\| R\big\|_{s,t} \qquad \forall R\in  \cL(\cA_\theta',\cA_\theta).  
\end{equation*}
As $\|\cdot\|_{s,t}$ is a continuous semi-norm on $ \cL(\cA_\theta',\cA_\theta)$ this shows that $R \rightarrow (R[U^k])_{k\in \Z^n}$ is continuous linear map from $ \cL(\cA_\theta',\cA_\theta)$ to   $\cS(\Z^n; \cA_\theta)$. The claim is proved. 
\end{proof}

Bearing this in mind, it follows from the proof of~\cite[Lemma~6.29]{HLP:Part1} that we have a continuous linear map, 
\begin{equation*}
 \cS(\Z^n;\cA_\theta) \ni (\rho_k)_{k\in \Z^n} \longrightarrow (\rho_k(U^k)^*)_{k\in \Z^n}\in  \cS(\Z^n;\cA_\theta). 
\end{equation*}
Composing it with the extension map~(\ref{eq:toroidal.trho}) we get a continuous linear map from $\cS(\Z^n;\cA_\theta)$ to $\cS(\R^n;\cA_\theta)$. As $\rho_R(\xi;\lambda)$ is precisely the image of $(R[U^k])_{k\in \Z^n}$ under that map, by using the claim above we see that $R\rightarrow \rho_R(\xi)$ is a continuous linear map from $ \cL(\cA_\theta',\cA_\theta)$ to   $\cS(\R^n; \cA_\theta)$. This proves the 2nd part. The proof is complete. 
\end{proof}

We are now in a position to prove the following $\Hol^d(\Lambda)$-version of Proposition~\ref{prop:PsiDOs.smoothing-operator-characterization}. 

\begin{proposition} \label{prop:PsiDOs-parameter.smoothing-operators-with-parameter-characterization}
We have
\begin{equation*}
 \Psi^{-\infty,d}(\cA_\theta; \Lambda)= \Hol^d\big(\Lambda; \cL(\cA_\theta',\cA_\theta)\big)= \bigcap_{s,t\in \R}  \Hol^d\big( \Lambda; \cL\big(\cH_\theta^{(s)},\cH_\theta^{(t)}\big)\big). 
\end{equation*}
\end{proposition}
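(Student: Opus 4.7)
The plan is to establish the two equalities in turn, using the topological identifications already recorded in the paper together with Proposition~\ref{prop:topo-smoothing}. The key point is that $\stS^{-\infty,d}(\R^n\times \Lambda;\cA_\theta)\simeq \Hol^d(\Lambda;\cS(\R^n;\cA_\theta))$ (the identification in~(\ref{eq:Parameter.smoothing-parametric-symbols-identification})), so the whole problem reduces to transporting $\cS(\R^n;\cA_\theta)$-valued holomorphic families to $\cL(\cA_\theta',\cA_\theta)$-valued ones and vice versa.

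For the first equality, I would apply Proposition~\ref{prop:Parameter.linear-map} to the two continuous linear maps supplied by Proposition~\ref{prop:PsiDOs-parameter.symbol-of-smoothing-operators}: the quantization $\rho\mapsto P_\rho$ from $\cS(\R^n;\cA_\theta)$ to $\cL(\cA_\theta',\cA_\theta)$ and its right-inverse $R\mapsto \rho_R$ from $\cL(\cA_\theta',\cA_\theta)$ to $\cS(\R^n;\cA_\theta)$. Given $P(\lambda)\in \Psi^{-\infty,d}(\cA_\theta;\Lambda)$ with symbol $\rho(\xi;\lambda)\in \stS^{-\infty,d}(\R^n\times\Lambda;\cA_\theta)\simeq \Hol^d(\Lambda;\cS(\R^n;\cA_\theta))$, Proposition~\ref{prop:Parameter.linear-map} gives $P(\lambda)\in \Hol^d(\Lambda;\cL(\cA_\theta',\cA_\theta))$. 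Conversely, given $P(\lambda)\in \Hol^d(\Lambda;\cL(\cA_\theta',\cA_\theta))$, the same proposition applied to $R\mapsto \rho_R$ produces $\rho_{P(\lambda)}(\xi)\in \Hol^d(\Lambda;\cS(\R^n;\cA_\theta))\simeq \stS^{-\infty,d}(\R^n\times\Lambda;\cA_\theta)$, and $P(\lambda)=P_{\rho_{P(\lambda)}}(\lambda)$ by Proposition~\ref{prop:PsiDOs-parameter.symbol-of-smoothing-operators}(ii), so $P(\lambda)\in \Psi^{-\infty,d}(\cA_\theta;\Lambda)$.

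For the inclusion $\Hol^d(\Lambda;\cL(\cA_\theta',\cA_\theta))\subset \bigcap_{s,t}\Hol^d(\Lambda;\cL(\cH_\theta^{(s)},\cH_\theta^{(t)}))$, I would note that for any $s,t\in\R$ the inclusions $\cH_\theta^{(s)}\hookrightarrow \cA_\theta'$ and $\cA_\theta\hookrightarrow \cH_\theta^{(t)}$ (Proposition~\ref{prop:Sobolev.Hs-inclusion-cAtheta}) induce a continuous linear restriction map $\cL(\cA_\theta',\cA_\theta)\rightarrow \cL(\cH_\theta^{(s)},\cH_\theta^{(t)})$; applying Proposition~\ref{prop:Parameter.linear-map} gives the inclusion.

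The main obstacle is the reverse inclusion, and it is exactly here that Proposition~\ref{prop:topo-smoothing} is needed. Let $P(\lambda)\in \bigcap_{s,t}\Hol^d(\Lambda;\cL(\cH_\theta^{(s)},\cH_\theta^{(t)}))$. For each fixed $\lambda$, Proposition~\ref{prop:Sobolev.Hs-inclusion-cAtheta} together with the usual universal property of projective/inductive limits shows that the compatible family $P(\lambda)\in \cL(\cH_\theta^{(s)},\cH_\theta^{(t)})$ corresponds to a single continuous linear map $P(\lambda):\cA_\theta'\rightarrow \cA_\theta$. By Proposition~\ref{prop:topo-smoothing} the norms $\|\cdot\|_{s,t}$ generate the topology of $\cL(\cA_\theta',\cA_\theta)$, so the holomorphy of $\lambda\mapsto P(\lambda)$ in each $\cL(\cH_\theta^{(s)},\cH_\theta^{(t)})$ upgrades to holomorphy in $\cL(\cA_\theta',\cA_\theta)$: the difference quotient converges with respect to every $\|\cdot\|_{s,t}$, so it converges in the strong topology, and the common limit must come from a single operator in $\cL(\cA_\theta',\cA_\theta)$ by the same universal-property argument. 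Finally, the $\Hol^d$-bound~(\ref{eq:Parameter.vector-with-parameter-estimate}) in $\cL(\cA_\theta',\cA_\theta)$ needs only be checked against the generating semi-norms $\|\cdot\|_{s,t}$, and those bounds are precisely the assumption that $P(\lambda)\in \Hol^d(\Lambda;\cL(\cH_\theta^{(s)},\cH_\theta^{(t)}))$ for every $s,t$. This completes the proof.
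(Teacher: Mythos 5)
Your proof is correct and follows essentially the same route as the paper's: both arguments use the topological identification of $\stS^{-\infty,d}(\R^n\times\Lambda;\cA_\theta)$ with $\Hol^d(\Lambda;\cS(\R^n;\cA_\theta))$, transport it through the continuous quantization/symbol maps of Proposition~\ref{prop:PsiDOs-parameter.symbol-of-smoothing-operators} via Proposition~\ref{prop:Parameter.linear-map}, and then handle the intersection of Sobolev spaces by invoking Proposition~\ref{prop:topo-smoothing} that the norms $\|\cdot\|_{s,t}$ generate the topology of $\cL(\cA_\theta',\cA_\theta)$. You are slightly more explicit than the paper about how holomorphy in each $\cL(\cH_\theta^{(s)},\cH_\theta^{(t)})$ upgrades to holomorphy in $\cL(\cA_\theta',\cA_\theta)$, but this is a refinement of the same argument rather than a different one.
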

\begin{proof}
 It follows from Proposition~\ref{prop:Parameter.linear-map-symbols} and Proposition~\ref{prop:PsiDOs-parameter.symbol-of-smoothing-operators} that we have quantization and symbol maps,
\begin{gather}
\label{eq:PsiDOs-parameter.symbol-degree-minus-infinity-with-parameter-quantization-map}
 \stS^{-\infty,d}(\R^n\times \Lambda; \cA_\theta) \ni \rho(\xi;\lambda) \longrightarrow P_\rho(\lambda) \in \Hol^d\big( \Lambda; \cL(\cA_\theta',\cA_\theta)\big),\\\nonumber
 \Hol^d\big( \Lambda; \cL(\cA_\theta',\cA_\theta)\big) \ni R(\lambda)  \longrightarrow \rho_R(\xi;\lambda)\in  \stS^{-\infty,d}(\R^n\times \Lambda; \cA_\theta). 
\end{gather}
Furthermore, the symbol map is a right-inverse of the quantization map. 

If $R(\lambda)\in \Psi^{-\infty,d}(\cA_\theta; \Lambda)$, then $R(\lambda)=P_\rho(\lambda)$ for some $\rho(\xi;\lambda)\in \stS^{-\infty,d}(\R^n\times \Lambda; \cA_\theta)$, and so~(\ref{eq:PsiDOs-parameter.symbol-degree-minus-infinity-with-parameter-quantization-map}) ensures us that $R(\lambda)\in  \Hol^d( \Lambda; \cL(\cA_\theta',\cA_\theta))$. Conversely, if $R(\lambda)\in  \Hol^d( \Lambda; \cL(\cA_\theta',\cA_\theta))$, then $\rho_R(\xi;\lambda)\in  \stS^{-\infty,d}(\R^n\times \Lambda; \cA_\theta)$ and $R(\lambda)=P_{\rho_R}(\lambda)$, and hence $ R(\lambda)\in  \Psi^{-\infty,d}(\cA_\theta; \Lambda)$. This shows that $\Psi^{-\infty,d}(\cA_\theta; \Lambda)= \Hol^d( \Lambda; \cL(\cA_\theta',\cA_\theta))$. 
 
It remains to show that $ \Hol^d( \Lambda; \cL(\cA_\theta',\cA_\theta))= \bigcap_{s,t\in \R} \Hol^d( \Lambda; \cL(\cH_\theta^{(s)},\cH_\theta^{(t)}))$. The continuity of the inclusions of $\cL(\cA_\theta',\cA_\theta)$ into $\cL(\cH_\theta^{(s)},\cH_\theta^{(t)})$, $s,t\in \R$, and Proposition~\ref{prop:Parameter.linear-map} ensure us that
\begin{equation} \label{eq:PsiDOs-parameter.smoothing-operator-with-parameter-Sobolev-mapping-inclusion}
  \Hol^d\big( \Lambda; \cL(\cA_\theta',\cA_\theta)\big)\subset  \Hol^d\big( \Lambda; \cL\big(\cH_\theta^{(s)},\cH_\theta^{(t)}\big)\big) \qquad \text{for all $s,t\in \R$}. 
\end{equation}
Conversely, let $R(\lambda)\in  \bigcap_{s,t\in \R} \Hol^d( \Lambda; \cL(\cH_\theta^{(s)},\cH_\theta^{(t)}))$. Then, for every $\lambda\in \Lambda$, the operator $R(\lambda)$ is contained in all the spaces $\cL(\cH_\theta^{(s)},\cH_\theta^{(t)})$, $s,t\in \R$, and so this is a smoothing operator by Proposition~\ref{prop:PsiDOs.smoothing-operator-characterization}. Moreover, given any $s,t\in \R$, for every  pseudo-cone $\Lambda'\subsubset \Lambda$, there is $C_{\Lambda'st}>0$ such that
\begin{equation*}
 \big\| R(\lambda)\big\|_{s,t} \leq C_{\Lambda'st} (1+|\lambda|)^d \qquad \text{for all $s,t\in \Lambda'$}. 
\end{equation*}
As by Proposition~\ref{prop:topo-smoothing} the norms $\|\cdot \|_{s,t}$,  $s,t\in \R$, generate the topology of $ \cL(\cA_\theta',\cA_\theta)$, this shows that $R(\lambda)\in  \Hol^d( \Lambda; \cL(\cA_\theta',\cA_\theta))$. Combining this with~(\ref{eq:PsiDOs-parameter.smoothing-operator-with-parameter-Sobolev-mapping-inclusion}) shows that  $ \Hol^d( \Lambda; \cL(\cA_\theta',\cA_\theta))$ agrees with $\bigcap_{s,t\in \R} \Hol^d( \Lambda; \cL(\cH_\theta^{(s)},\cH_\theta^{(t)}))$. The proof is complete. 
\end{proof}

\begin{corollary} \label{cor:PsiDOs-parameter.intersection-of-psidos-of-all-orders}
$\Psi^{-\infty,d}(\cA_\theta; \Lambda)= \bigcap_{m\in \R} \Psi^{m,d}(\cA_\theta; \Lambda)$. 
\end{corollary}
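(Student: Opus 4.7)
The inclusion $\Psi^{-\infty,d}(\cA_\theta;\Lambda) \subset \bigcap_{m\in\R}\Psi^{m,d}(\cA_\theta;\Lambda)$ was already observed in Remark~\ref{rmk:PsiDOs-parameter.order-minus-infty-to-intersection-inclusion} and is immediate from the definitions, so the task is to establish the reverse inclusion. The plan is to route the argument through the Sobolev characterization of parametric smoothing families provided by Proposition~\ref{prop:PsiDOs-parameter.smoothing-operators-with-parameter-characterization}, namely
\begin{equation*}
\Psi^{-\infty,d}(\cA_\theta;\Lambda) = \bigcap_{s,t\in\R} \Hol^d\big(\Lambda;\cL(\cH_\theta^{(s)},\cH_\theta^{(t)})\big).
\end{equation*}

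Fix $P(\lambda)$ in the intersection $\bigcap_{m\in\R}\Psi^{m,d}(\cA_\theta;\Lambda)$ and fix arbitrary $s_0,t_0\in\R$. The goal is to show that $P(\lambda)\in \Hol^d(\Lambda;\cL(\cH_\theta^{(s_0)},\cH_\theta^{(t_0)}))$, and the idea is to exploit the freedom to choose the order $m$ as small as we wish. Consider first the case $d\geq 0$: by the first part of Proposition~\ref{prop:PsiDOs-parameter.classical-PsiDO-Sobolev-mapping-properties}, if $P(\lambda)\in \Psi^{m,d}(\cA_\theta;\Lambda)$ then $P(\lambda)\in \Hol^d(\Lambda;\cL(\cH_\theta^{(s+m)},\cH_\theta^{(s)}))$ for every $s\in\R$. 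Applying this with the specific choice $s=t_0$ and $m=s_0-t_0$ yields
\begin{equation*}
P(\lambda)\in \Hol^d\big(\Lambda;\cL(\cH_\theta^{(s_0)},\cH_\theta^{(t_0)})\big),
\end{equation*}
exactly what is needed.

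For the case $d<0$, the second part of Proposition~\ref{prop:PsiDOs-parameter.classical-PsiDO-Sobolev-mapping-properties} gives, for each order $m$, the mapping property $P(\lambda)\in \Hol^{d'}(\Lambda;\cL(\cH_\theta^{(s+m+w|d'|)},\cH_\theta^{(s)}))$ for every $s\in\R$ and every $d'\in[d,0]$. Taking $d'=d$, $s=t_0$, and $m=s_0-t_0-w|d|$ (which we are free to do since $P(\lambda)\in \Psi^{m,d}(\cA_\theta;\Lambda)$ for every $m\in\R$), we again land in $\Hol^d(\Lambda;\cL(\cH_\theta^{(s_0)},\cH_\theta^{(t_0)}))$. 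Since $s_0,t_0\in\R$ were arbitrary, the Sobolev characterization recalled above immediately concludes that $P(\lambda)\in \Psi^{-\infty,d}(\cA_\theta;\Lambda)$.

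I do not expect any real obstacle here: the two non-trivial ingredients (the Sobolev mapping properties of classical parametric \psidos\ and the Sobolev characterization of parametric smoothing operators) have already been established in Propositions~\ref{prop:PsiDOs-parameter.classical-PsiDO-Sobolev-mapping-properties} and~\ref{prop:PsiDOs-parameter.smoothing-operators-with-parameter-characterization}, and the corollary is just a matter of exploiting the freedom to send $m\to -\infty$. The only point to be mildly careful about is the $d<0$ case, where one must compensate for the loss of $w|d|$ derivatives by shifting the order $m$ accordingly, but this is a purely bookkeeping adjustment.
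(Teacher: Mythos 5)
Your proof is correct and follows essentially the same route as the paper: reduce to the Sobolev characterization of Proposition~\ref{prop:PsiDOs-parameter.smoothing-operators-with-parameter-characterization} and use the freedom to send the order $m\to-\infty$ to fill in all the $\Hol^d(\Lambda;\cL(\cH_\theta^{(s)},\cH_\theta^{(t)}))$ conditions. The only difference is cosmetic: the paper cites the Sobolev mapping result for \emph{standard} parametric symbols (Proposition~\ref{prop:PsiDOs-parameter.Sobolev-mapping-properties}), leaving implicit the passage from classical to standard symbols via Proposition~\ref{symbols:inclusion-classical-standard}, whereas you invoke Proposition~\ref{prop:PsiDOs-parameter.classical-PsiDO-Sobolev-mapping-properties} directly and spell out the $d\geq 0$ versus $d<0$ case split with the explicit shift $m=s_0-t_0-w|d|$; your version is a bit more explicit on the $d<0$ bookkeeping but the substance is identical.
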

\begin{proof}
 We know that $ \Psi^{-\infty,d}(\cA_\theta; \Lambda)\subset \bigcap_{m\in \R} \Psi^{m,d}(\cA_\theta; \Lambda)$ (\emph{cf}.\ Remark~\ref{rmk:PsiDOs-parameter.order-minus-infty-to-intersection-inclusion}). Conversely, if $R(\lambda)$ is in  $\Psi^{m,d}(\cA_\theta; \Lambda)$ for every $m\in \R$, then by Proposition~\ref{prop:PsiDOs-parameter.Sobolev-mapping-properties} it is in $ \Hol^d( \Lambda; \cL(\cH_\theta^{(s)},\cH_\theta^{(t)}))$ for all $s,t\in \R$. Proposition~\ref{prop:PsiDOs-parameter.smoothing-operators-with-parameter-characterization} then shows that $R(\lambda)\in  \Psi^{-\infty,d}(\cA_\theta; \Lambda)$. The proof is complete.
 \end{proof}

%

\section{The Resolvent of an Elliptic \psido} \label{sec:Resolvent}
In this section, we show that the pseudodifferential calculus with parameter from the previous two sections is a natural nest for the resolvent of elliptic operators.  

Throughout this section, we let $P:\cA_\theta\rightarrow\cA_\theta$ be an elliptic \psido\ of order $w>0$ with symbol $\rho(\xi)\sim\sum_{j\geq 0}\rho_{w-j}(\xi)$.
 
 \subsection{Parametrix construction} 
We start by constructing an explicit parametrix for $P-\lambda$. The first task is single out the relevant set of parameters for which we have a parametrix. 

Recall that by Proposition~\ref{prop:NCtori.invertibility-cAtheta} $\rho_w(\xi)$ is invertible if and only if it is invertible in $\cL(\cH_\theta)$.

\begin{definition}
The \emph{elliptic parameter set} of $P$ is 
\begin{align*}
\Theta(P) &= \C^*\setminus \biggl[ \bigcup_{\xi\in\Rn\setminus 0} \Sp(\rho_w(\xi))\biggr]\\
& = \big\{\lambda \in \C^*; \ \text{$\rho_w(\xi)-\lambda$ is invertible for all $\xi\in \R^n\setminus 0$}\big\}. 
\end{align*}
\end{definition}

\begin{remark}\label{rmk:resolvent.thetaP-cone}
 The fact that $\Sp{\rho_w(t\xi)} = \Sp{t^w\rho_w(\xi)} = t^w\Sp{\rho_w(\xi)}$ for all $t>0$ implies that $\Theta(P)$ is invariant under positive dilations. It then follows that $\Theta(P)$ is a cone in $\C^*$.  
\end{remark}

\begin{example}
 Suppose that $\rho_w(\xi)$ is selfadjoint for all $\xi\in \R^n\setminus \{0\}$ (e.g., $P$ is selfadjoint). Then $\Sp (\rho_w(\xi))\subset \R$ for all  $\xi\in \R^n\setminus \{0\}$, and so $\Theta(P) \supset \C\setminus \R$. 
\end{example}

\begin{example}
 If $\rho_w(\xi)$ is positive in the sense of~(\ref{eq:Elliptic.positivity-criterion}), then $\Sp (\rho_w(\xi))\subset (0,\infty)$, and so $\Theta(P)$ contains $\C\setminus [0,\infty)$. As $\Theta (P)$ is a cone and $\Sp (\rho_w(\xi))$ cannot be empty, we see that $\Theta(P)=\C\setminus [0,\infty)$. 
\end{example}

\begin{definition}
 We say that $P$ is \emph{elliptic with parameter} when $\Theta(P)\neq \emptyset$. 
\end{definition}

Throughout the rest of this section we assume that $P$ is elliptic with parameter.

\begin{lemma} \label{lem:Resolvent.open_angular-sector}
$\Theta(P)$ is an open cone in $\C^*$.
\end{lemma}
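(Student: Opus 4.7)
The plan is to show that the complement of $\Theta(P)$ in $\C$ is closed. Since Remark~\ref{rmk:resolvent.thetaP-cone} already establishes the cone property, the natural move is to exploit the homogeneity identity $\Sp(\rho_w(t\eta)) = t^w \Sp(\rho_w(\eta))$ in order to rewrite
\[
 \bigcup_{\xi\in\R^n\setminus 0} \Sp(\rho_w(\xi)) = (0,\infty)\cdot K, \qquad K:=\bigcup_{\eta\in\bS^{n-1}}\Sp(\rho_w(\eta)),
\]
so that the whole problem reduces to showing that the set $\{0\}\cup (0,\infty)\cdot K$ is closed in $\C$.

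First I would show that $K$ is a compact subset of $\C$. Boundedness follows from the spectral-radius inequality $|\mu|\leq \|\rho_w(\eta)\|$ in the $C^{*}$-algebra $A_\theta$ (where by Proposition~\ref{prop:NCtori.invertibility-cAtheta} the spectrum in $\cA_\theta$ coincides with the spectrum in $A_\theta$) combined with continuity of $\eta\mapsto \|\rho_w(\eta)\|$ on the compact set $\bS^{n-1}$. Closedness is the standard upper semicontinuity of the spectrum: if $\mu_n\in \Sp(\rho_w(\eta_n))$ with $\eta_n\to \eta$ in $\bS^{n-1}$ and $\mu_n\to \mu$, and if $\mu \notin \Sp(\rho_w(\eta))$, then $\rho_w(\eta)-\mu \in \cA_\theta^{-1}$; since $\cA_\theta^{-1}$ is open and $\rho_w(\eta_n)-\mu_n\to \rho_w(\eta)-\mu$ by continuity of $\rho_w$, this would force $\rho_w(\eta_n)-\mu_n$ to be invertible for large $n$, contradicting $\mu_n\in \Sp(\rho_w(\eta_n))$.

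Next I would invoke the ellipticity of $P$ to observe that $0\notin K$: each $\rho_w(\eta)$, $\eta\in \bS^{n-1}$, is invertible in $\cA_\theta$. Hence $K$ is a compact subset of $\C^{*}$, and $c:=\inf\{|\mu|:\mu\in K\}>0$. This is the key estimate that keeps the dilation parameter under control: if $\lambda_n=t_n\mu_n$ with $t_n\geq 0$, $\mu_n\in K$, and $\lambda_n\to\lambda$, then $t_n=|\lambda_n|/|\mu_n|\leq c^{-1}|\lambda_n|$ stays bounded, so after passing to a subsequence $t_n\to t\geq 0$ and $\mu_n\to \mu\in K$, yielding $\lambda=t\mu\in \{0\}\cup (0,\infty)\cdot K$. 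This shows the latter set is closed and hence $\Theta(P)=\C^{*}\setminus (0,\infty)\cdot K$ is open in $\C$ (and in $\C^{*}$).

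The one step to watch is where ellipticity is used: were $0$ allowed in $K$, the set $[0,\infty)\cdot K$ would typically fail to be closed because $t_n$ could escape to infinity along a sequence $\mu_n\to 0$. So the compactness of $K$ away from the origin is the essential input, and apart from recalling upper semicontinuity of the spectrum in the $C^{*}$-algebra $A_\theta$ there is no substantial technical difficulty.
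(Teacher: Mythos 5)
Your proof is correct and takes a genuinely different route from the paper's. The paper argues locally: given $\lambda_0\in\Theta(P)$, it uses the ray through $\lambda_0$, the bound $|\lambda|>\sup_{\bS^{n-1}}\|\rho_w(\xi)\|$ to handle large $|\lambda|$, and a finite cover of the compact set $\bS^{n-1}\times[0,r]$ by neighborhoods on which invertibility persists, to produce an open angular sector around $\lambda_0$ inside $\Theta(P)$. You instead argue globally that the complement is closed, by writing it as $(0,\infty)\cdot K$ with $K=\bigcup_{\eta\in\bS^{n-1}}\Sp(\rho_w(\eta))$, proving $K$ is compact (spectral-radius bound plus upper semicontinuity of the spectrum, the latter relying on openness of $\cA_\theta^{-1}$ exactly as the paper does), invoking ellipticity to get $0\notin K$, and then using the resulting lower bound $c=\inf_{\mu\in K}|\mu|>0$ to control the dilation parameter $t_n$ along a convergent sequence. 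The ingredients (compactness of $\bS^{n-1}$, openness of the group of invertibles, ellipticity, homogeneity) are the same, but your decomposition isolates a cleaner general principle — the dilation cone over a compact set missing the origin is closed once you adjoin the vertex — and in exchange requires the upper-semicontinuity-of-spectrum argument, which the paper's proof avoids by working directly with invertibility. Both are complete; yours is arguably more transparent about exactly where ellipticity enters (forcing $0\notin K$ so that $t_n$ cannot blow up), which you rightly flag at the end.
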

\begin{proof}
We know by Remark~\ref{rmk:resolvent.thetaP-cone} that $\Theta(P)$ is a cone in $C^*$. Let $\lambda_0\in \Theta(P)$, and set $\phi_0=\arg \lambda_0\in (-\pi,\pi]$. By homogeneity the whole ray $\{te^{i\phi_0}; t>0\}$ is contained in $\Theta(P)$.  Together with the ellipticity of $P$ this implies that $\rho_w(\xi)-te^{i\phi_0}$ is invertible for all $\xi\in \R^n\setminus 0$ and $t\geq 0$. In addition, set  $r_0 = \sup_{\xi\in \bS^{n-1}}\norm{\rho_w(\xi)}$. If $|\lambda|>r_0$ and $\xi\in \bS^{n-1}$, then $|\lambda|>\|\rho_w(\xi)\|$ for all $\xi\in \bS^{n-1}$, and so $\rho_w(\xi)-\lambda$ is invertible. 

Let $r>r_0$. As mentioned above $\rho_w(\xi)-te^{i\phi_0}\in \cA_\theta^{-1}$ for all $\xi\in \bS^{n-1}$ and $t\in [0,r]$. As $ \cA_\theta^{-1}$ is an open set, we see that every $(\xi_1,t_1)\in \bS^{n-1}\times [0,r]$ admits an open neighborhood $U_1\subset  \bS^{n-1}\times [0,r]$ such that there is $\delta>0$ for which $\rho_w(\xi)-te^{i\phi}\in \cA_\theta^{-1}$ for all $(\xi,t)\in U_1$ and $\phi \in (\phi_0-\delta, \phi_0+\delta)$. The compactness of $\bS^{n-1}\times [0,r]$ allows us to cover $\bS^{n-1}\times [0,r]$ by finitely many such open sets. Therefore, we can find $\delta>0$ such that $\rho_w(\xi)-te^{i\phi}\in \cA_\theta^{-1}$ for all $(\xi,t)\in \bS^{n-1}\times [0,r]$ and $\phi \in (\phi_0-\delta, \phi_0+\delta)$. As $r>r_0$ and $\rho_w(\xi)-\lambda\in \cA_\theta^{-1}$ when $\xi\in \bS^{n-1}$ and $|\lambda|>r_0$, we deduce that  
$\rho_w(\xi)-\lambda$ is invertible for all $\xi \in \bS^{n-1}$ and all $\lambda$ in the open angular sector $\Theta:=\{|\arg \lambda -\phi_0|<\delta\}$. By homogeneity this implies that $\rho_w(\xi)-\lambda$ is invertible for all $\xi \in \R^n\setminus 0$ and $\lambda \in \Theta$. That is, $\Theta$ is contained in $\Theta(P)$. As $\Theta$ is an open set this implies that $\Theta(P)$ is a neighborhood of $\lambda_0$. Thus,  $\Theta(P)$ is a neighborhood of each of its points, and so this is an open set. 
The proof is complete. 
\end{proof} 

In what follows, setting $c:=\inf\{\norm{\rho_w(\xi)^{-1}}^{-1};\ |\xi|=1\}$ we define 
\begin{equation}
\Omega_c(P) = \{ (\xi,\lambda)\in(\Rn\setminus 0)\times\C^* ; \ \text{$\lambda\in\Theta(P)$ or $|\lambda|<c|\xi|^w$} \} .
\label{eq:Resolvent.OmegacP}
\end{equation}
In the notation of~(\ref{eq:Parameter.parameter-set-for-homogeneous-symbols}) this is just the open set $\Omega_c(\Theta(P))$ associated with the open cone $\Theta(P)$.  

\begin{lemma} \label{lem:Resolvent.principal-symbol-minus-lambda-smooth-symbol-with-parameter}
The following holds. 
\begin{enumerate}
 \item $\rho_w(\xi)-\lambda$ is invertible for all $(\xi,\lambda)\in \Omega_c(P)$. 
 
 \item $(\rho_w(\xi)-\lambda)^{-1}\in S_{-w}^{-1}(\Omega_c(P);\cA_\theta)$. 
\end{enumerate}
\end{lemma}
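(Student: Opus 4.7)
The plan is to exploit the ellipticity of $P$ and the homogeneity of $\rho_w(\xi)$. For part~(i) I split according to the two alternatives defining $\Omega_c(P)$ in~(\ref{eq:Resolvent.OmegacP}). When $\lambda\in\Theta(P)$, invertibility is immediate from the definition of $\Theta(P)$. When $|\lambda|<c|\xi|^w$, I factor
\begin{equation*}
\rho_w(\xi)-\lambda = \bigl(1-\lambda\rho_w(\xi)^{-1}\bigr)\rho_w(\xi),
\end{equation*}
which makes sense because $\rho_w(\xi)$ is invertible by ellipticity of $P$. The homogeneity $\rho_w(\xi)^{-1}=|\xi|^{-w}\rho_w(|\xi|^{-1}\xi)^{-1}$ combined with the definition of $c$ gives $\|\rho_w(\xi)^{-1}\|\leq c^{-1}|\xi|^{-w}$, hence $\|\lambda\rho_w(\xi)^{-1}\|<1$, and the Neumann series yields the invertibility of $1-\lambda\rho_w(\xi)^{-1}$, hence of $\rho_w(\xi)-\lambda$. (Note that $c>0$ since $\xi\mapsto\|\rho_w(\xi)^{-1}\|$ is continuous on the compact set $\bS^{n-1}$.)

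For part (ii), three things must be verified: that $(\rho_w(\xi)-\lambda)^{-1}$ lies in $C^{\infty,\omega}(\Omega_c(P);\cA_\theta)$, is homogeneous of degree $-w$, and satisfies the estimates of Definition~\ref{def:Parameter.homogeneous-symbol} with $d=-1$. Joint smoothness in $\xi$ and holomorphy in $\lambda$ follow from the smoothness of inversion on the open set $\cA_\theta^{-1}$ and from the resolvent identity $\mu^{-1}[(a-\lambda-\mu)^{-1}-(a-\lambda)^{-1}]=(a-\lambda-\mu)^{-1}(a-\lambda)^{-1}$, which yields $\partial_\lambda(\rho_w(\xi)-\lambda)^{-1}=(\rho_w(\xi)-\lambda)^{-2}$. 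The homogeneity is immediate from $\rho_w(t\xi)-t^w\lambda = t^w(\rho_w(\xi)-\lambda)$ for $t>0$.

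The main work consists of the estimates. Fix a cone $\Theta'$ with $\overline{\Theta'}\setminus 0\subset\Theta(P)$, a compact $K\subset\R^n\setminus 0$, and multi-orders $\alpha,\beta$. I would split $\Theta'=(\Theta'\cap\{|\lambda|\leq R\})\cup(\Theta'\cap\{|\lambda|\geq R\})$ with $R\geq 2\sup_{\xi\in K}\|\rho_w(\xi)\|$. On $K\times(\overline{\Theta'}\cap\{|\lambda|\leq R\})$ the elements $\rho_w(\xi)-\lambda$ form a compact subset of $\cA_\theta^{-1}$: at $\lambda=0$ this uses ellipticity of $P$, and for $\lambda\in\overline{\Theta'}\setminus 0$ this uses $\overline{\Theta'}\setminus 0\subset\Theta(P)$. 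Joint continuity of $\delta^\alpha\partial_\xi^\beta(\rho_w(\xi)-\lambda)^{-1}$ then gives a uniform bound on this compact set, which converts to the required $(1+|\lambda|)^{-1}$-estimate since $(1+|\lambda|)^{-1}\geq(1+R)^{-1}$ there. For $|\lambda|\geq R$, the Neumann expansion
\begin{equation*}
(\rho_w(\xi)-\lambda)^{-1}=-\lambda^{-1}\bigl(1-\lambda^{-1}\rho_w(\xi)\bigr)^{-1}
\end{equation*}
is valid by the choice of $R$ and gives $\|(\rho_w(\xi)-\lambda)^{-1}\|\leq 2|\lambda|^{-1}$.

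The delicate point is the derivative estimates for $|\lambda|\geq R$. Iterating the identity $\partial(a^{-1})=-a^{-1}(\partial a)a^{-1}$ with $a=\rho_w(\xi)-\lambda$ (both $\delta_j$ and $\partial_{\xi_j}$ annihilate the constant $-\lambda$), one shows by induction on $|\alpha|+|\beta|$ that $\delta^\alpha\partial_\xi^\beta(\rho_w(\xi)-\lambda)^{-1}$ is a finite sum of products containing at least one factor $(\rho_w(\xi)-\lambda)^{-1}$ interlaced with derivatives $\delta^{\alpha_i}\partial_\xi^{\beta_i}\rho_w(\xi)$. Each derivative factor is uniformly bounded on $K$, and each resolvent factor is $O(|\lambda|^{-1})$ for $|\lambda|\geq R$, so the overall bound is at least $O(|\lambda|^{-1})$. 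Combining with the compact-set estimate yields $\|\delta^\alpha\partial_\xi^\beta(\rho_w(\xi)-\lambda)^{-1}\|\leq C_{\Theta'K\alpha\beta}(1+|\lambda|)^{-1}$ on all of $K\times\Theta'$, which is precisely the condition needed to conclude that $(\rho_w(\xi)-\lambda)^{-1}\in S^{-1}_{-w}(\Omega_c(P);\cA_\theta)$.
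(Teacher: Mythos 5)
Your proof is correct and follows essentially the same strategy as the paper's: a Neumann-series factorization of $\rho_w(\xi)-\lambda$ for part~(i), and for part~(ii) a split between $|\lambda|\geq R$ (handled by the Neumann expansion $(\rho_w(\xi)-\lambda)^{-1}=-\lambda^{-1}(1-\lambda^{-1}\rho_w(\xi))^{-1}$) and $|\lambda|\leq R$ (handled by compactness), together with the standard Leibniz-type formula for derivatives of the inverse. You are slightly more explicit than the paper about the compactness argument on $K\times(\overline{\Theta'}\cap\{|\lambda|\leq R\})$ — the paper glosses this with ``it then follows that'' — and you check directly that this set sits inside $\Omega_c(P)$ (the point $\lambda=0$ is covered by $0<c|\xi|^w$), which is a useful detail to record.
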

\begin{proof}
Set $\Omega_0(P)=(\R^n\setminus 0) \times \Theta(P)$. We have 
\begin{equation*}
 \Omega_c(P)=\Omega_0(P) \cup \{ (\xi,\lambda)\in(\Rn\setminus 0)\times\C^* ;\  |\lambda|<c|\xi|^w\}. 
\end{equation*}
The very definition of $\Theta(P)$ means that $\rho_w(\xi)-\lambda$ is invertible for all $(\xi,\lambda)\in \Omega_0(P)$. Let $(\xi,\lambda)\in(\Rn\setminus 0)\times\C^*$ be such that $|\lambda|<c|\xi|^w$. The definition of $c$ implies that $\|\rho_w(\eta)^{-1}\|\leq c^{-1} $ for all $\eta \in \bS^{n-1}$. In particular, the inequality holds for $\eta =|\xi|^{-1}\xi$. Thus,  
\begin{equation*}
\norm{\rho_w(\xi)^{-1}} =|\xi|^{-w} \norm{\rho_w\big(|\xi|^{-1}\xi\big)^{-1}} \leq |\xi|^{-w}c^{-1}<|\lambda|^{-1} . 
\end{equation*}
This implies that $\lambda^{-1}-\rho_w(\xi)^{-1}$ is invertible. As $\rho_w(\xi)-\lambda=\lambda\rho_w(\xi)(\lambda^{-1}-\rho_w(\xi)^{-1})$, we deduce that 
 $\rho_w(\xi)-\lambda$ is invertible when $0<|\lambda|<c|\xi|^w$. This proves the first part. 

The homogeneity of $\rho_w(\xi)$ implies that $(\rho_{w}(t\xi)-t^w\lambda)^{-1}=t^{-w}(\rho_w(\xi)-\lambda)^{-1}$ for all $(\xi,\lambda)\in \Omega_c(P)$ and $t>0$. 
Moreover, the differentiability of the inverse map $u \rightarrow u^{-1}$ of $\cA_\theta$ ensures us that $(\rho_w(\xi)-\lambda)^{-1}\in C^\infty(\Omega_c(P); \cA_\theta)$ (see~\cite[Lemma~11.2]{HLP:Part2}). Moreover, as $\partial_{\overline{\lambda}}(\rho_w(\xi)-\lambda)^{-1}=(\rho_w(\xi)-\lambda)^{-1}\partial_{\overline{\lambda}}(\lambda)(\rho_w(\xi)-\lambda)^{-1}=0$, we see that $(\rho_w(\xi)-\lambda)^{-1}$ is holomorphic with respect to $\lambda$, and hence $(\rho_w(\xi)-\lambda)^{-1}\in C^{\infty,\omega}(\Omega_c(P); \cA_\theta)$. 

To complete the proof it just remains to show that $(\rho_{w}(\xi)-\lambda)^{-1} \in C^{\infty,-1}(\Omega_0(P); \cA_\theta)$. Note that if $|\lambda|>\norm{\rho_w(\xi)}$, then  
\begin{equation} \label{eq:Resolvent.principal-symbol-minus-lambda-outside-spectral-radius}
 \norm{(\rho_{w}(\xi)-\lambda)^{-1}} = |\lambda|^{-1} \norm{(1-\lambda^{-1} \rho_{w}(\xi))^{-1}} \leq \frac{|\lambda|^{-1}}{1-|\lambda|^{-1}\norm{\rho_w(\xi)}}.
\end{equation}
Let $K\subset\Rn\setminus 0$ be compact. If $|\lambda|>2 \sup \{\norm{\rho_w(\xi)};\ \xi\in K\}$, then $\norm{\rho_w(\xi)}\leq \frac12 |\lambda|$ for all $\xi\in K$, and so by using~(\ref{eq:Resolvent.principal-symbol-minus-lambda-outside-spectral-radius}) we get $\norm{(\rho_{w}(\xi)-\lambda)^{-1}} \leq 2|\lambda|^{-1}$. It then follows that, for every pseudo-cone $\Theta'\subsubset \Theta(P)$, there is $C_{\Theta' K}>0$ such that 
\begin{equation} \label{eq:Resolvent.principal-symbol-minus-lambda}
  \norm{(\rho_{w}(\xi)-\lambda)^{-1}} \leq C_{\Theta' K} \big(1+|\lambda|\big)^{-1} \qquad \forall (\xi,\lambda)\in K\times \Theta'. 
\end{equation}
Given any multi-orders $\alpha$ and $\beta$ the multi-derivative $\delta^\alpha\partial_\xi^\beta(\rho_w(\xi)-\lambda)^{-1}$ is a linear combination of products of the form,
\begin{equation*}
 (\rho_w(\xi)-\lambda)^{-1}[\delta^{\alpha^1}\partial_\xi^{\beta^1}\rho_w(\xi)](\rho_w(\xi)-\lambda)^{-1} \cdots  
 [\delta^{\alpha^{k}}\partial_\xi^{\beta^{k}}\rho_w(\xi)](\rho_w(\xi)-\lambda)^{-1}. 
\end{equation*}
 Combining this with~(\ref{eq:Resolvent.principal-symbol-minus-lambda}) allows us to show that, for every pseudo-cone $\Theta'\subsubset \Theta(P)$, there is $C_{\Theta' K\alpha \beta}>0$ such that 
\begin{equation*}
  \norm{\delta^\alpha\partial_\xi^\beta(\rho_{w}(\xi)-\lambda)^{-1}} \leq C_{\Theta' K\alpha \beta} \big(1+|\lambda|\big)^{-1} \qquad \forall (\xi,\lambda)\in K\times \Theta'. 
\end{equation*}
 This shows $(\rho_{w}(\xi)-\lambda)^{-1} \in C^{\infty,-1}(\Omega_0(P); \cA_\theta)$. The proof is complete. 
\end{proof}

In what follows, given any $R>0$, we denote by  $\Lambda_R$ the open pseudo-cone defined by
\begin{equation*}
\Lambda_R = \Theta(P)\cup D(0,R), 
\end{equation*}
where $D(0,R)$ is the open disk of radius $R$ centered at the origin. 

\begin{theorem} \label{thm:Resolvent.parametrix-with-parameter}
Suppose that $P$ is elliptic with parameter. Then, for every $R>0$, the following holds. 
\begin{enumerate}
 \item $P-\lambda$ admits a parametrix $Q(\lambda)\in\Psi^{-w,-1}(\cA_\theta;\Lambda_R)$ in the sense that
\begin{equation} \label{eq:Resolvent.parametrix-with-parameter}
(P-\lambda)Q(\lambda) = Q(\lambda)(P-\lambda)=1  \quad \bmod \quad \Psi^{-\infty,0}(\cA_\theta;\Lambda_R) .
\end{equation}

\item Any parametrix $Q(\lambda)\in \Psi^{-w,-1}(\cA_\theta;\Lambda_R)$ as above has symbol $\sigma(\xi;\lambda)\sim \sum_{j\geq 0}  \sigma_{-w-j}(\xi;\lambda)$, where 
$ \sigma_{-w-j}(\xi;\lambda)\in S^{-1}_{-w-j}(\Omega_c(P);\cA_\theta)$ is given by
\begin{gather}
 \sigma_{-w}(\xi;\lambda) = \big(\rho_w(\xi)-\lambda\big)^{-1} , 
 \label{eq:Resolvent.parametrix-principal-symbol} \\
 \sigma_{-w-j}(\xi;\lambda) = -\sum_{\substack{k+l+|\alpha|=j \\ l<j}}\frac{1}{\alpha !}\big(\rho_w(\xi)-\lambda\big)^{-1} \partial_\xi^\alpha \rho_{w-k}(\xi) \delta^\alpha\sigma_{-w-l}(\xi;\lambda), \qquad j\geq 1 .
 \label{eq:Resolvent.parameterix-homogeneous-symbols}
\end{gather}
\end{enumerate}
\end{theorem}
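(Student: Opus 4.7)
The plan is to build the parametrix $Q(\lambda)$ by first specifying its classical symbol through the recursion~(\ref{eq:Resolvent.parametrix-principal-symbol})--(\ref{eq:Resolvent.parameterix-homogeneous-symbols}), and then confirming the parametrix identity with the help of the composition rule of Proposition~\ref{prop:Parameter.composition-PsiDOs}. I would first define the homogeneous components $\sigma_{-w-j}(\xi;\lambda)$ inductively on $\Omega_c(P)$, checking at each stage that $\sigma_{-w-j}\in S_{-w-j}^{-1}(\Omega_c(P);\cA_\theta)$. The base case $\sigma_{-w}=(\rho_w-\lambda)^{-1}$ is exactly Lemma~\ref{lem:Resolvent.principal-symbol-minus-lambda-smooth-symbol-with-parameter}. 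For the inductive step, every summand in~(\ref{eq:Resolvent.parameterix-homogeneous-symbols}) is the product of $(\rho_w-\lambda)^{-1}\in S_{-w}^{-1}$, the $\lambda$-independent factor $\partial_\xi^\alpha\rho_{w-k}\in S_{w-k-|\alpha|}^{0}$, and $\delta^\alpha\sigma_{-w-l}\in S_{-w-l}^{-1}$ coming from the induction hypothesis together with Remark~\ref{rem:Parameter.homogeneous-symbol-partial-derivative}. By Remark~\ref{rmk:Parameter.homogeneous-symbol-product}, the product is a homogeneous parametric symbol of degree $-w-j$ with parameter exponent $-2$, which is a fortiori in $S_{-w-j}^{-1}(\Omega_c(P);\cA_\theta)$ as required.

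Once the homogeneous components are in place, the parametric Borel lemma (Proposition~\ref{prop:Parameter.Borel-for-classical-symbol}) assembles them into a classical parametric symbol $\sigma(\xi;\lambda)\in S^{-w,-1}(\R^n\times\Lambda_R;\cA_\theta)$ with $\sigma\sim\sum_{j\geq 0}\sigma_{-w-j}$, and I set $Q(\lambda):=P_\sigma(\lambda)\in\Psi^{-w,-1}(\cA_\theta;\Lambda_R)$. By Example~\ref{ex:PsiDOs-parameter.usual-PsiDO-can-be-seen-as-PsiDO-with-parameter}, $P-\lambda$ is an element of $\Psi^{w,1}(\cA_\theta;\Lambda_R)$ whose homogeneous components are $(\rho-\lambda)_w=\rho_w-\lambda$ and $(\rho-\lambda)_{w-k}=\rho_{w-k}$ for $k\geq 1$. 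Formula~(\ref{eq:PsiDOs-parameter.composition-symbol-homogeneous-parts}) then gives $((\rho-\lambda)\sharp\sigma)_0=(\rho_w-\lambda)\sigma_{-w}=1$, and for each $j\geq 1$ the recursion~(\ref{eq:Resolvent.parameterix-homogeneous-symbols}) ensures that $((\rho-\lambda)\sharp\sigma)_{-j}=0$. Consequently $(\rho-\lambda)\sharp\sigma-1$ has vanishing classical expansion, which by the qualitative criterion in Remark~\ref{rmk:Parameter.classical-symbol-asymptotics-equivalent-conditions} places it in $\stS^{-\infty,0}(\R^n\times\Lambda_R;\cA_\theta)$; hence $(P-\lambda)Q(\lambda)-1\in\Psi^{-\infty,0}(\cA_\theta;\Lambda_R)$. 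To promote $Q(\lambda)$ to a two-sided parametrix I would run the mirror construction, producing some $Q'(\lambda)\in\Psi^{-w,-1}$ with $Q'(\lambda)(P-\lambda)-1\in\Psi^{-\infty,0}$, and combine them via $Q'\equiv Q'(P-\lambda)Q\equiv Q$ modulo $\Psi^{-\infty,0}$, invoking that $\Psi^{-\infty,\cdot}$ is a two-sided ideal under composition (a consequence of Proposition~\ref{prop:Parameter.composition-PsiDOs}). Part~(2) then follows from the uniqueness of the recursion: for any parametrix $\widetilde Q\in\Psi^{-w,-1}$ with symbol $\widetilde\sigma\sim\sum\widetilde\sigma_{-w-j}$, the identity $(\rho-\lambda)\sharp\widetilde\sigma\sim 1$ forces the homogeneous components to satisfy~(\ref{eq:Resolvent.parameterix-homogeneous-symbols}).

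The principal technical point is the uniform parameter-degree bound $d=-1$ throughout the recursion: the naive composition rule inflates this exponent with $j$, since each step contributes a fresh factor of $(\rho_w-\lambda)^{-1}$, but this is exactly where the permissive $\Hol^d$-framework of this paper pays off, as at each stage we freely weaken the sharper estimate back down to $d=-1$. A secondary subtlety is the compatibility of $\Omega_c(P)$ with the pseudo-cone $\Lambda_R$ for arbitrary $R>0$: since $c$ is determined intrinsically by the principal symbol of $P$, one must verify $(\R^n\setminus B(r))\times\Lambda_R\subset\Omega_c(P)$ for some $r>0$ in order to apply the Borel lemma on $\Lambda_R$, which is ensured by the observation preceding Definition~\ref{def:Parameter.classical-symbol}.
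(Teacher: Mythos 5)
Your proposal follows essentially the same route as the paper (inductive construction of the homogeneous components, parametric Borel lemma, composition formula, and a sandwich argument to upgrade to a two-sided parametrix), and your variant of landing each inductive product in $S^{-2}_{-w-j}(\Omega_c(P);\cA_\theta)$ and then weakening to $S^{-1}_{-w-j}$ is harmless (the paper instead widens $S^0$ to $S^1$ so the product exponent comes out as $-1$ directly). However, there is a genuine gap in the exponent bookkeeping of the sandwich step. You record only $Q'\equiv Q$ modulo $\Psi^{-\infty,0}(\cA_\theta;\Lambda_R)$, and that is not enough to close the argument: to turn the right-parametrix identity for $Q$ into a left one you must compose with $P-\lambda\in\Psi^{w,1}(\cA_\theta;\Lambda_R)$, and $\Psi^{-\infty,0}\cdot\Psi^{w,1}\subset\Psi^{-\infty,1}$, which overshoots the target class $\Psi^{-\infty,0}$. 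What actually makes the argument close -- and what the paper records carefully -- is the sharper fact $Q'-Q\in\Psi^{-\infty,-1}(\cA_\theta;\Lambda_R)$: since $Q,Q'\in\Psi^{-w,-1}$, both error terms $Q'R$ and $R'Q$ produced by the sandwich lie in $\Psi^{-\infty,-1}$, and this extra $-1$ is precisely what absorbs the $+1$ coming from $P-\lambda$. As written your argument does not yield~(\ref{eq:Resolvent.parametrix-with-parameter}); the fix is one line of exponent tracking.

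Two smaller issues. First, the aside that the composition rule ``inflates the exponent with $j$'' misdescribes the mechanism: the $l=0$ terms in~(\ref{eq:Resolvent.parameterix-homogeneous-symbols}) pin the parameter exponent at $-2$ for every $j\geq 1$, so nothing drifts with $j$; the weakening to $-1$ is simply the uniform choice needed to apply the Borel lemma. Second, in part~(2) you take $(\rho-\lambda)\sharp\widetilde\sigma\sim 1$ for granted, but establishing this from the parametrix identity requires Corollary~\ref{cor:PsiDOs-parameter.zero-operator-symbol} (zero operator implies symbol in $\stS^{-\infty,d}$); the paper instead shows $\sigma^{(1)}-\sigma\in\stS^{-\infty,-1}(\R^n\times\Lambda_R;\cA_\theta)$ using the two-sided argument and reads off the expansion directly, which sidesteps having to quote uniqueness of the homogeneous expansion.
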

\begin{proof}
We regard $P-\lambda$ as an element of $\Psi^{w,1}(\cA_\theta;\cA_\theta)$ (\emph{cf}.\ Example~\ref{ex:PsiDOs-parameter.usual-PsiDO-can-be-seen-as-PsiDO-with-parameter}). We claim that the formulas~(\ref{eq:Resolvent.parametrix-principal-symbol})--(\ref{eq:Resolvent.parameterix-homogeneous-symbols}) define maps $\sigma_{-w-j}(\xi;\lambda)\in S_{-w-j}^{-1}(\Omega_c(P);\cA_\theta)$ for $j=0,1,
\ldots$. We prove this by induction on $j$. For $j=0$ this is the contents of Lemma~\ref{lem:Resolvent.principal-symbol-minus-lambda-smooth-symbol-with-parameter}. Assume the result is true for $l<j$ with $j\geq 1$. Then $\sigma_{-w-j}(\xi;\lambda)$ is a linear combination of terms of the form, 
\begin{equation} \label{eq:Resolvent.homogeneous-parts-of-resolvent-components}
 \big(\rho_w(\xi)-\lambda\big)^{-1} \partial_\xi^\alpha \rho_{w-k}(\xi) \delta^\alpha\sigma_{-w-l}(\xi;\lambda),\qquad k+l+|\alpha|=j,\quad  l<j. 
\end{equation}
Here $(\rho_w(\xi)-\lambda)^{-1} \in S_{-w}^{-1}(\Omega_c(P);\cA_\theta)$. As mentioned in Example~\ref{ex:Parameter.homogeous-symbol-can-be-seen-as-homogeneous-symbol-with-parameter} we can regard $\partial_\xi^\alpha \rho_{w-k}(\xi)$ as an element of $S_{w-k-|\alpha|}^{0}(\Omega_c(P);\cA_\theta)\subset S_{w-k-|\alpha|}^{1}(\Omega_c(P);\cA_\theta)$. Moreover, by assumption 
$\sigma_{-w-l}(\xi;\lambda)\in S_{-w-l}^{-1}(\Omega_c(P);\cA_\theta)$, and so it follows from Remark~\ref{rem:Parameter.homogeneous-symbol-partial-derivative} that $ \delta^\alpha\sigma_{-w-l}(\xi;\lambda)$ is contained in $S_{-w-l}^{-1}(\Omega_c(P);\cA_\theta)$ as well. Therefore, by using Remark~\ref{rmk:Parameter.homogeneous-symbol-product} we see that each product of the form~(\ref{eq:Resolvent.homogeneous-parts-of-resolvent-components}) is contained in $S_{-w-j}^{-1}(\Omega_c(P);\cA_\theta)$, and hence  $\sigma_{-w-j}(\xi;\lambda)\in S_{-w-j}^{-1}(\Omega_c(P);\cA_\theta)$.  

By Proposition~\ref{prop:Parameter.Borel-for-classical-symbol} there is $\sigma(\xi;\lambda)\in S^{-w,-1}(\R^n\times \Lambda_R;\cA_\theta)$ such that $\sigma(\xi;\lambda)\sim  \sum_{j\geq 0}  \sigma_{-w-j}(\xi;\lambda)$. Set $Q(\lambda)=P_{\sigma}(\lambda)$. Then $Q(\lambda)\in \Psi^{-w,-1}(\cA_\theta; \Lambda_R)$. Moreover, by Proposition~\ref{prop:Parameter.composition-PsiDOs} the composite $(P-\lambda)Q(\lambda)$ is contained in $\Psi^{0,0}(\cA_\theta; \Lambda_R)$ and has symbol $[(\rho-\lambda)\sharp \sigma] (\xi;\lambda)$. Moreover, it follows from~(\ref{eq:PsiDOs-parameter.composition-symbol-homogeneous-parts}) and~(\ref{eq:Resolvent.parametrix-principal-symbol})--(\ref{eq:Resolvent.parameterix-homogeneous-symbols})
that $[(\rho-\lambda)\sharp \sigma] (\xi;\lambda)\sim  \sum_{j\geq 0} [(\rho-\lambda)\sharp \sigma]_{-j}(\xi;\lambda)$, where
\begin{equation*}
\big[(\rho-\lambda)\sharp \sigma\big]_{0}(\xi;\lambda)= \big(\rho_w(\xi)-\lambda\big) \sigma_{-w}(\xi;\lambda)=1,
 \end{equation*}
 and for $j\geq 1$ the homogeneous component $[(\rho-\lambda)\sharp \sigma]_{-j}(\xi; \lambda)$ is equal to
\begin{equation*}
 \big(\rho_w(\xi)-\lambda\big) \sigma_{-w-j}(\xi;\lambda) +\sum_{\substack{k+l+|\alpha|=j \\ l<j}}\frac{1}{\alpha !}\partial_\xi^\alpha \rho_{w-k}(\xi) \delta^\alpha\sigma_{-w-l}(\xi;\lambda)=0. 
\end{equation*}
This means that $(\rho-\lambda)\sharp \sigma (\xi;\lambda) \sim 1$, i.e., $(\rho-\lambda)\sharp \sigma (\xi;\lambda)-1\in \stS^{-\infty,0}(\R^n\times \Lambda_R; \cA_\theta)$. Thus, 
\begin{equation*}
 (P-\lambda) Q(\lambda)=P_{(\rho-\lambda)\sharp \sigma}(\lambda)=1\qquad  \bmod \Psi^{-\infty,0}(\cA_\theta; \Lambda_R). 
\end{equation*}

The invertibility of $\rho_w(\xi)-\lambda$ also enables us to construct $\tilde{\sigma}_{-w-j}(\xi;\lambda)\in S_{-w-j}^{-1}(\Omega_c(P);\cA_\theta)$, $j=0,1,\ldots$, such that in the notation of~(\ref{eq:PsiDOs-parameter.composition-symbol-homogeneous-parts}), for $j=0$, we have 
\begin{equation} \label{eq:Resolvent.left-parametrix-first-equation}
\big[\tilde{\sigma} \sharp (\rho-\lambda) \big]_{0}(\xi;\lambda)= \tilde{\sigma}_{-w}(\xi;\lambda)\big(\rho_w(\xi)-\lambda\big) =1,\\ 
\end{equation}
and  $[\tilde{\sigma} \sharp  (\rho-\lambda)]_{-j}(\xi; \lambda)$, $j\geq 1$, is equal to
\begin{equation} \label{eq:Resolvent.left-parametrix-equations}
 \tilde{\sigma}_{-w-j}(\xi;\lambda)  \big(\rho_w(\xi)-\lambda\big)  +\sum_{\substack{k+l+|\alpha|=j \\ k<j}}\frac{1}{\alpha !}
\partial_\xi^\alpha \tilde{\sigma}_{-w-k}(\xi;\lambda) \delta^\alpha\rho_{w-l}(\xi)=0. 
\end{equation}
By Proposition~\ref{prop:Parameter.Borel-for-classical-symbol} there is $\tilde{\sigma}(\xi;\lambda) \in S^{-w,-1}(\R^n\times \Lambda_R;\cA_\theta)$ such that $\tilde{\sigma}(\xi;\lambda)\sim \sum_{j\geq 0} \tilde{\sigma}_{-w-j}(\xi;\lambda)$. As above~(\ref{eq:Resolvent.left-parametrix-first-equation})--(\ref{eq:Resolvent.left-parametrix-equations}) implies that $\tilde{\sigma}\sharp (\rho-\lambda)-1$ is in $\stS^{-\infty,0}(\R^n\times \Lambda_R;\cA_\theta)$. 
Thus, if we set $\tilde{Q}(\lambda)=P_{\tilde{\sigma}}(\lambda)$, then $\tilde{Q}(\lambda)\in \Psi^{-w,-1}(\cA_\theta;\Lambda_R)$ and $\tilde{Q}(\lambda)(P-\lambda)=1$ modulo $\Psi^{-\infty,0}(\cA_\theta;\Lambda_R)$. 

Set $R(\lambda)= (P-\lambda) Q(\lambda)-1$ and $\tilde{R}(\lambda)=  \tilde{Q}(\lambda)(P-\lambda)-1$. Then we have 
\begin{gather*}
 \tilde{Q}(\lambda)(P-\lambda)Q(\lambda)= \tilde{Q}(\lambda)\big(1+R(\lambda)\big)=\tilde{Q}(\lambda)+\tilde{Q}(\lambda)R(\lambda),\\
\tilde{Q}(\lambda)(P-\lambda)Q(\lambda)= \big(1+\tilde{R}(\lambda)\big) Q(\lambda)= Q(\lambda) + \tilde{R}(\lambda) Q(\lambda).
\end{gather*}
As $\tilde{Q}(\lambda)R(\lambda)$ and $\tilde{R}(\lambda) Q(\lambda)$ are in $\Psi^{-\infty,-1}(\cA_\theta;\Lambda_R)$, we see that $Q(\lambda) -\tilde{Q}(\lambda)$ is contained in  $\Psi^{-\infty,-1}(\cA_\theta;\Lambda_R)$. Thus, 
\begin{equation*}
 Q(\lambda)(P-\lambda)= \tilde{Q}(\lambda)(P-\lambda)= 1 \quad \bmod \Psi^{-\infty,0}(\cA_\theta;\Lambda_R). 
\end{equation*}
This shows that $Q(\lambda)$ is a parametrix for $P-\lambda$ in the sense of~(\ref{eq:Resolvent.parametrix-with-parameter}).

The above considerations to compare $Q(\lambda)$ and $\tilde{Q}(\lambda)$ also shows that if $Q_1(\lambda)\in \Psi^{-w,-1}(\cA_\theta; \Lambda_R)$ is any other parametrix in the sense of~(\ref{eq:Resolvent.parametrix-with-parameter}), then $Q_1(\lambda)-Q(\lambda)$ is in $\Psi^{-\infty,-1}(\cA_\theta; \Lambda_R)$. Let $\sigma^{(1)}(\xi;\lambda)\in S^{-w,-1}(\R^n\times \Lambda_R;\cA_\theta)$ be the symbol of $Q_1(\lambda)$. Then Corollary~\ref{cor:PsiDOs-parameter.zero-operator-symbol} ensures us that $\sigma^{(1)}(\xi;\lambda)-\sigma(\xi;\lambda)$ is contained in $\stS^{-\infty,-1}(\R^n\times \Lambda_R; \cA_\theta)$, and so $\tilde{\sigma}(\xi;\lambda)\sim \sum \sigma_{-w-j}(\xi;\lambda)$, where the $\sigma_{-w-j}(\xi;\lambda)$ are given by~(\ref{eq:Resolvent.parametrix-principal-symbol})--(\ref{eq:Resolvent.parameterix-homogeneous-symbols}). This proves the 2nd part. The proof is complete. 
\end{proof}

\begin{remark}\label{rmk:Resolvent.Hol0} 
 The smoothing process of Proposition~\ref{prop:Parameter.Borel-for-classical-symbol} involves cutoffs by means of functions $\chi(\xi)\in C^\infty_c(\R^n)$. These are elements of $\stS^{-\infty,0}(\R^n\times \Lambda_R;\cA_\theta)$ which are by no mean unique. This creates an ambiguity with values in $\stS^{-\infty,-1}(\R^n\times \Lambda_R; \cA_\theta)$ in the construction of the symbol of the parametrix of $P-\lambda$. Note also that, in view of~(\ref{eq:Resolvent.parameterix-homogeneous-symbols}), the $\lambda$-decay of symbols $\sigma_{-w-j}(\xi;\lambda)$ need not decrease as $j$ becomes large, since we may have some non-zero contribution from terms like $(\rho_w(\xi)-\lambda)^{-1} \rho_{w-j+1}(\xi) \sigma_{-w-1}(\xi;\lambda)$ with $j$ arbitrary large. Because of all this we can't really expect obtaining better $\lambda$-decay for the remainder terms in~(\ref{eq:Resolvent.parametrix-with-parameter}). When we restrict the parameter set to $\Theta(P)$ we can improve the decay by adding some meromorphic singularities near $\lambda=0$ (see the proof of Theorem~\ref{thm:Resolvent.P-has-discrete-spectrum-resolvent-estimate} below). In some forthcoming work, we will explain that when $P$ is a differential operator and we also take the parameter set to $\Theta(P)$, then there is no need to use cut-off functions anymore and we then can modify the parametrix construction so as to have $\lambda$-decay of any order.
\end{remark}

\subsection{Localization of the spectrum and rays of minimal growth}
We shall now use Theorem~\ref{thm:Resolvent.parametrix-with-parameter} to look at the localization of the spectrum of $P$. 

\begin{theorem} \label{thm:Resolvent.P-has-discrete-spectrum-resolvent-estimate}
Suppose that $P$ is elliptic with parameter.
\begin{enumerate}
\item The spectrum of $P$ is an unbounded discrete subset of $\C$ consisting of eigenvalues with finite multiplicity.

\item For any cone  $\Theta'$ such that $\overline{\Theta'}\setminus \{0\} \subset \Theta(P)$ the following holds. 
\begin{enumerate}
 \item $\Theta'$ contains at most finitely many eigenvalues of $P$. 
 
 \item There are $r_0$ and $C>0$ such that 
\end{enumerate}
\begin{equation} \label{eq:Resolvent.resolvent-estimate}
\big\|(P-\lambda)^{-1} \big\| \leq C|\lambda|^{-1} \qquad \forall \lambda \in \Theta'\setminus D(0,r_0). 
\end{equation}
\end{enumerate}
\end{theorem}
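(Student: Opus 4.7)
My plan is to prove Part 2(b) first and deduce the remaining assertions from it. Once Part 2(b) is established, the existence of resolvent points shows $\Sp(P)\neq\C$, so Proposition~\ref{prop:Spectral.spectrum-P} gives Part 1. Part 2(a) follows because any eigenvalue of $P$ lying in $\Theta'$ must satisfy $|\lambda|\leq r_0$, and the compact set $\Theta'\cap\overline{D(0,r_0)}$ contains only finitely many points of the discrete spectrum.

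The starting point for Part 2(b) is the parametrix $Q(\lambda)\in\Psi^{-w,-1}(\cA_\theta;\Lambda_R)$ from Theorem~\ref{thm:Resolvent.parametrix-with-parameter}, together with Proposition~\ref{prop:PsiDOs-parameter.classical-PsiDO-Sobolev-mapping-properties}, which ensures $\|Q(\lambda)\|_{\cL(\cH_\theta)}=\op{O}(|\lambda|^{-1})$ and $\|Q(\lambda)\|_{\cL(\cH_\theta,\cH_\theta^{(w)})}=\op{O}(1)$. The direct obstruction to closing the argument via a Neumann series is that the raw remainder $R(\lambda):=(P-\lambda)Q(\lambda)-I\in\Psi^{-\infty,0}(\cA_\theta;\Lambda_R)$ has only $\op{O}(1)$ operator norm on $\cH_\theta$. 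I will overcome this by refining the parametrix on the smaller parameter set $\Theta'$, where $\overline{\Theta'}\setminus\{0\}\subset\Theta(P)$. On $\Theta'$ the homogeneous parametrix symbols $\sigma_{-w-j}(\xi;\lambda)$ from~(\ref{eq:Resolvent.parametrix-principal-symbol})--(\ref{eq:Resolvent.parameterix-homogeneous-symbols}) admit the sharper two-scale bound $\|\sigma_{-w-j}(\xi;\lambda)\|\leq C|\xi|^{-j}(|\xi|^w+|\lambda|)^{-1}$, obtained by combining their homogeneity with the uniform invertibility of $\rho_w(\xi)-\lambda$ on $\overline{\Theta'}\cap \bS^1$. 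A Borel-type summation adapted to this bound produces a parametrix $\tilde Q(\lambda)$ such that $(P-\lambda)\tilde Q(\lambda)-I\in\Psi^{-\infty,-N}(\cA_\theta;\Theta')$ for any prescribed $N\geq 0$. By Proposition~\ref{prop:PsiDOs-parameter.smoothing-operators-with-parameter-characterization} the refined remainder then has operator norm $\op{O}(|\lambda|^{-N})$ on $\cH_\theta$, so for $|\lambda|\geq r_0$ sufficiently large the operator $I+R(\lambda)$ is invertible with bounded inverse. Then $(P-\lambda)^{-1}=\tilde Q(\lambda)(I+R(\lambda))^{-1}$ is a right inverse satisfying the estimate~(\ref{eq:Resolvent.resolvent-estimate}), and a symmetric construction using a left-sided refined parametrix provides the matching left inverse, giving full invertibility $\cH_\theta^{(w)}\to\cH_\theta$.

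The main technical obstacle will be the refinement of the parametrix to achieve arbitrary $\lambda$-decay in the remainder, as alluded to in Remark~\ref{rmk:Resolvent.Hol0}. The naive Borel construction of Proposition~\ref{prop:Parameter.Borel-for-classical-symbol} yields only a remainder in $\Psi^{-\infty,0}$ with no $\lambda$-decay, because the $\xi$-cutoffs used to regularize the homogeneous symbols near $\xi=0$ contribute non-decaying $\op{O}(1)$ terms. The crucial gain from restricting the parameter set from $\Lambda_R$ to $\Theta'$ is that it removes $\lambda=0$ from consideration and makes $(\rho_w(\xi)-\lambda)^{-1}$ uniformly controlled on $(\R^n\setminus\{0\})\times \Theta'$, which in turn underpins the two-scale bound above and allows the refined Borel summation. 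Careful bookkeeping of these bounds through the composition formulas~(\ref{eq:PsiDOs-parameter.composition-symbol-homogeneous-parts}) of Proposition~\ref{prop:Parameter.composition-PsiDOs} then yields the improved decay of the remainder.
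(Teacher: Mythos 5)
Your overall strategy (establish the resolvent estimate 2(b) first, then deduce Part 1 from $\Sp(P)\neq\C$ via Proposition~\ref{prop:Spectral.spectrum-P}, and deduce 2(a) from the compactness of $\Theta'\cap\overline{D(0,r_0)}$) is the same as the paper's. You also correctly identify the central obstruction: the raw remainder $R(\lambda)=1-(P-\lambda)Q(\lambda)\in\Psi^{-\infty,0}$ has only $\op{O}(1)$ operator norm, which is not small enough for a Neumann series argument.

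However, the way you propose to overcome the obstruction is where your argument has a genuine gap, and it also differs from the route the paper actually takes. You propose refining the Borel summation using the two-scale bound $\|\sigma_{-w-j}(\xi;\lambda)\|\leq C|\xi|^{-j}(|\xi|^w+|\lambda|)^{-1}$ to produce a remainder in $\Psi^{-\infty,-N}$ for arbitrary $N$. That bound is indeed correct on $\Theta'$, but it does not deliver what you claim, for the following reason. The $\lambda$-decay exponent on the right-hand side is $-1$ and does \emph{not} improve as $j$ grows; only the $\xi$-decay improves. Moreover, the Borel cut-offs $\chi_{\epsilon_j}(\xi)$ are necessarily $\lambda$-independent (they must preserve holomorphy in $\lambda$), and the cut-off contributions $\chi_{\epsilon_j}(\xi)\sigma_{-w-j}(\xi;\lambda)$ are only $\Hol^{-1}$ in $\lambda$; composing with $\rho-\lambda$ (which is $\Hol^1$) then yields only a $\Hol^0$ remainder, exactly what Remark~\ref{rmk:Resolvent.Hol0} warns about. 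In fact, Remark~\ref{rmk:Resolvent.Hol0} explicitly says the cut-off-free refinement works only for \emph{differential} operators (finitely many homogeneous symbol components); in the general pseudodifferential case it amounts to reconstructing the Gilkey--Shubin or Grubb--Seeley calculus, which is precisely what the $\Hol^d$-approach is designed to sidestep, and what the remainder-term structure obstructs here.

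The paper's actual proof uses a much simpler algebraic device that costs nothing. Write $(P-\lambda)Q(\lambda)=1-R(\lambda)$ with $R(\lambda)\in\Psi^{-\infty,0}(\cA_\theta;\Theta(P))$, and set
\begin{equation*}
Q_1(\lambda)=Q(\lambda)-\lambda^{-1}R(\lambda),\qquad R_1(\lambda)=\lambda^{-1}PR(\lambda).
\end{equation*}
A direct computation gives $(P-\lambda)Q_1(\lambda)=1-R_1(\lambda)$. Since $\lambda^{-1}\in\Hol^{-1}(\Theta(P))$ and $PR(\lambda)$ is a smoothing family in $\Psi^{-\infty,0}$, one obtains $R_1(\lambda)\in\Psi^{-\infty,-1}(\cA_\theta;\Theta(P))$, and Proposition~\ref{prop:PsiDOs-parameter.smoothing-operators-with-parameter-characterization} then yields $\|R_1(\lambda)\|_{\cL(\cH_\theta)}=\op{O}(|\lambda|^{-1})$ on pseudo-cones $\subsubset\Theta(P)$. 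That is all that is needed; the Neumann series closes with $\|R_1(\lambda)\|\leq\frac12$ for $|\lambda|\geq r_0$, and the symmetric left-parametrix construction gives the two-sided inverse. The crucial point is that $\lambda^{-1}$ is available on $\Theta(P)$ precisely because it is a cone that excludes the origin; this is the ``meromorphic singularity near $\lambda=0$'' mentioned in Remark~\ref{rmk:Resolvent.Hol0}. This trick buys an $\Hol^{-1}$ remainder for arbitrary elliptic $\Psi$DOs with no refinement of the Borel construction, whereas the refined Borel summation you envision only applies in the differential case. I would replace the Borel-refinement step by this $\lambda^{-1}$ modification of the parametrix; the rest of your plan then goes through.
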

\begin{proof}
By Theorem~\ref{thm:Resolvent.parametrix-with-parameter} there are $Q(\lambda)\in \Psi^{-w,-1}(\cA_\theta; \Theta(P))$ and $R(\lambda)\in \Psi^{-\infty,0}(\cA_\theta; \Theta(P))$ such that $(P-\lambda)Q(\lambda)+R(\lambda)=1$. Set $Q_1(\lambda)=Q(\lambda)-\lambda^{-1}R(\lambda)$ and $R_1(\lambda)=\lambda^{-1}PR(\lambda)$. Then 
$Q_1(\lambda)\in \Psi^{-w,-1}(\cA_\theta; \Theta(P))$ and $R_1(\lambda)\in \Psi^{-\infty,-1}(\cA_\theta; \Theta(P))$. Moreover, we have 
\begin{equation} \label{eq:Resolvent.P-minus-lambda-right-inverse-construction}
 (P-\lambda)Q_1(\lambda)= (P-\lambda)Q(\lambda)-\lambda^{-1}PR(\lambda)+R(\lambda)=1-R_1(\lambda). 
\end{equation}

Let $\Theta'$ be a cone  such that $\overline{\Theta'}\setminus \{0\} \subset \Theta(P)$. This implies that $\Theta'\setminus D(0,r)\subsubset \Theta(P)$ for any $r>0$. Moreover, as $R_1(\lambda)$ is in $\Psi^{-\infty,-1}(\cA_\theta; \Theta(P))$, Proposition~\ref{prop:PsiDOs-parameter.smoothing-operators-with-parameter-characterization} allows us to regard it as an element of $\Hol^{-1}(\Theta(P); \cL(\cH_\theta))$. 
Therefore, there is $C_{\Theta'r}>0$ such that $\|R_1(\lambda)\|\leq C_{\Theta'r}|\lambda|^{-1}$ for all $\lambda \in \Theta'\setminus D(0,r)$. It then follows that there is $r_0>0$ such that $\|R_1(\lambda)\|\leq \frac12$ for all $\lambda \in \Theta'\setminus D(0,r_0)$. 
This ensures that, for every $\lambda \in \Theta'\setminus D(0,r_0)$, the operator $1-R_1(\lambda)$ is invertible in $\cL(\cH_\theta)$ and 
$\|(1-R_1(\lambda))^{-1}\|\leq (1-\|R_1(\lambda)\|)^{-1}\leq 2$. 

As $Q_1(\lambda)\in \Psi^{-w,-1}(\cA_\theta; \Theta(P))$, it follows from Proposition~\ref{prop:PsiDOs-parameter.classical-PsiDO-Sobolev-mapping-properties} that 
\begin{equation*}
 Q_1(\lambda) \in \Hol^{-1}\big(\Theta(P);\cL(\cH_\theta)\big)\cap \Hol^{0}\big(\Theta(P);\cL(\cH_\theta, \cH_\theta^{(w)})\big) . 
\end{equation*}
In particular, the equalities~(\ref{eq:Resolvent.P-minus-lambda-right-inverse-construction}) hold on $\cH_\theta$. Let $\lambda \in \Theta'\setminus D(0,r_0)$. Then $Q_1(\lambda)(1-R_1(\lambda))^{-1}$ is in $\cL(\cH_\theta,\cH_\theta^{(w)})$, and by using~(\ref{eq:Resolvent.P-minus-lambda-right-inverse-construction}) we see that on $\cL(\cH_\theta)$ we have
\begin{equation*}
 (P-\lambda) Q_1(\lambda)\big(1-R_1(\lambda)\big)^{-1}= (1-R_1(\lambda))\big(1-R_1(\lambda)\big)^{-1}= 1.  
\end{equation*}
That is, $Q_1(\lambda)(1-R_1(\lambda))^{-1}$ is a right inverse of $P-\lambda$. 

We can similarly construct $Q_2(\lambda)\in \Psi^{-w,-1}(\cA_\theta; \Theta(P))$ and $R_2(\lambda)\in \Psi^{-\infty,-1}(\cA_\theta; \Theta(P))$ such that $Q_2(\lambda) (P-\lambda)=1-R_2(\lambda)$. We regard $R_2(\lambda)$ as an element of $\Hol^{-1}(\Theta(P); \cL(\cH_\theta^{(w)}))$. In the same way as above, by taking $r_0$ large enough $1-R_2(\lambda)$ becomes invertible in $\cL(\cH_\theta^{(w)})$ for all $\lambda \in \Theta'\setminus D(0,r_0)$. It then can be shown that $(1-R_2(\lambda))^{-1}Q_2(\lambda)\in \cL(\cH_\theta,\cH_\theta^{(w)})$ is a left inverse of $P-\lambda$ on its domain $\cH_\theta^{(w)}$. Therefore, it must agree with $Q_1(\lambda)(1-R_1(\lambda))^{-1}$, and so $Q_1(\lambda)(1-R_1(\lambda))^{-1}$ is a bounded two-sided inverse of $P-\lambda$. 

All this shows that $\Theta'\setminus D(0,r_0)$ is contained in $\C\setminus \Sp(P)$. This implies that $\Sp(P)\neq \C$, and so by Proposition~\ref{prop:Spectral.spectrum-P} the spectrum of $P$ is an unbounded discrete subset of $\C$ consisting of eigenvalues with finite multiplicity. In particular, there are at most finitely many eigenvalues of $P$ in the disk $D(0,r_0)$. As there are no eigenvalues in $\Theta'\setminus D(0,r_0)$, we then see that $\Theta'$ may contain at most finitely many of them. 

Finally, as $Q_1(\lambda) \in \Hol^{-1}(\Theta(P);\cL(\cH_\theta))$ and $\Theta'\setminus D(0,r_0)\subsubset \Theta(P)$ there is $C_{\Theta'r_0}>0$ such that $\|Q_1(\lambda)\|\leq C_{\Theta'r_0}|\lambda|^{-1}$ for all $\lambda \in \Theta'\setminus D(0,r_0)$. Furthermore, as shown above, if $\lambda \in \Theta'\setminus D(0,r_0)$
then $(P-\lambda)^{-1}= Q_1(\lambda)(1-R_1(\lambda))^{-1}$ and $\| (1-R_1(\lambda))^{-1}\|\leq 2$. Therefore, 
for all $\lambda \in \Theta'\setminus D(0,r_0)$, we have 
\begin{equation*}
 \norm{(P-\lambda)^{-1}}= \norm{Q_1(\lambda)(1-R_1(\lambda))^{-1}} \leq \norm{Q_1(\lambda)}\norm{(1-R_1(\lambda))^{-1}}\leq 2C_{\Theta'r_0} |\lambda|^{-1}. 
\end{equation*}
This gives~(\ref{eq:Resolvent.resolvent-estimate}). The proof is complete. 
\end{proof}

\begin{definition}
 A ray $L\subset \C^*$ is called a \emph{ray of minimal growth} for $P$ when the following two conditions are satisfied:
 \begin{enumerate}
\item[(i)] $L$ does not contain any eigenvalue of $P$. 

\item[(ii)] $\|(P-\lambda)^{-1}\|=\op{O}(|\lambda|^{-1})$ as $\lambda$ goes to $\infty$ along $L$.  
\end{enumerate}
\end{definition}

\begin{remark}
 The existence of ray of minimal growths is a crucial ingredient in the construction of complex powers and logarithms of elliptic operators (see, e.g., \cite{Se:PSPM67}). 
\end{remark}

\begin{example}
 If $P$ is selfadjoint, then every ray $L\subset \C\setminus \R$ is a ray of minimal growth.
\end{example}

As immediate consequences of Theorem~\ref{thm:Resolvent.P-has-discrete-spectrum-resolvent-estimate} we get the following results. 

\begin{corollary}
For any cone  $\Theta'$ such that $\overline{\Theta'}\setminus \{0\} \subset \Theta(P)$, all but finitely many rays contained in $\Theta'$ are rays of minimal growth for $P$. 
\end{corollary}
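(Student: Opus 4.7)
The plan is to read the corollary directly off the two parts of Theorem~\ref{thm:Resolvent.P-has-discrete-spectrum-resolvent-estimate}(2), without any further analysis. First, I would fix a cone $\Theta'$ with $\overline{\Theta'}\setminus\{0\}\subset \Theta(P)$ and invoke part~(2)(a) to obtain a finite list $\lambda_1,\ldots,\lambda_N$ of eigenvalues of $P$ lying in $\Theta'$. Each $\lambda_j$ determines a single ray $L_j=\{t\lambda_j/|\lambda_j|;\ t>0\}$ through the origin, and any ray $L\subset\Theta'$ distinct from $L_1,\ldots,L_N$ necessarily avoids $\{\lambda_1,\ldots,\lambda_N\}$. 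Since by Theorem~\ref{thm:Resolvent.P-has-discrete-spectrum-resolvent-estimate}(1) the spectrum of $P$ consists only of these eigenvalues, such an $L$ automatically satisfies condition~(i) in the definition of a ray of minimal growth.

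Next, for condition~(ii), I would apply part~(2)(b) to the same cone $\Theta'$: there exist $r_0,C>0$ such that the estimate~(\ref{eq:Resolvent.resolvent-estimate}) holds uniformly on $\Theta'\setminus D(0,r_0)$. Restricting this estimate to a ray $L\subset\Theta'$ gives $\|(P-\lambda)^{-1}\|\leq C|\lambda|^{-1}$ for all $\lambda\in L$ with $|\lambda|>r_0$, which is precisely condition~(ii).

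Combining these two observations, every ray $L\subset\Theta'$ outside the finite collection $\{L_1,\ldots,L_N\}$ is a ray of minimal growth for $P$, which gives the claim. There is no real obstacle here: the content of the corollary is entirely contained in Theorem~\ref{thm:Resolvent.P-has-discrete-spectrum-resolvent-estimate}, and the only small point to notice is that a finite set of eigenvalues can lie on at most a finite number of rays through the origin.
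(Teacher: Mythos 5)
Your proof is correct and takes exactly the route the paper intends: the corollary is stated as an ``immediate consequence'' of Theorem~\ref{thm:Resolvent.P-has-discrete-spectrum-resolvent-estimate}, and you correctly unpack that by using part (2)(a) to exclude the finitely many rays passing through the finitely many eigenvalues in $\Theta'$, and part (2)(b) to furnish the $\op{O}(|\lambda|^{-1})$ bound uniformly on $\Theta'$ (hence on any ray in $\Theta'$). The one small bookkeeping step you rightly make explicit is that a ray through the origin is determined by any of its points, so the $N$ eigenvalues in $\Theta'$ can lie on at most $N$ rays.
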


\begin{corollary} \label{cor:Resolvent.ray-with-no-eigenvalue-is-a-ray-of-minimal-growth}
 Any ray $L\subset \Theta(P)$ that does not contain any eigenvalue of $P$ is a ray of miminal growth. 
\end{corollary}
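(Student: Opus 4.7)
The plan is to derive this corollary essentially as a direct application of Theorem~\ref{thm:Resolvent.P-has-discrete-spectrum-resolvent-estimate}, taking the cone $\Theta'$ in that theorem to be the ray $L$ itself. First, I would observe that a ray $L=\{te^{i\phi};\ t>0\}\subset\C^{*}$ is a cone in the sense used throughout this paper, since it is closed under positive dilations. Because $0\notin L$, we have $\overline{L}=L\cup\{0\}$, and hence $\overline{L}\setminus\{0\}=L\subset\Theta(P)$ by the hypothesis of the corollary. Thus the hypothesis $\overline{\Theta'}\setminus\{0\}\subset\Theta(P)$ required to invoke Theorem~\ref{thm:Resolvent.P-has-discrete-spectrum-resolvent-estimate}(2) is automatic for $\Theta'=L$.

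Applying part (2)(b) of that theorem then yields constants $r_0>0$ and $C>0$ such that $\|(P-\lambda)^{-1}\|\leq C|\lambda|^{-1}$ for every $\lambda\in L\setminus D(0,r_{0})$. This is exactly the asymptotic estimate $\|(P-\lambda)^{-1}\|=\op{O}(|\lambda|^{-1})$ as $\lambda\rightarrow\infty$ along $L$ required by condition~(ii) of the definition of a ray of minimal growth. Condition~(i)---that $L$ contains no eigenvalue of $P$---is given by assumption, so both defining conditions are verified and $L$ is a ray of minimal growth.

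There is essentially no obstacle at this stage: all the work has already been invested in the resolvent estimate of Theorem~\ref{thm:Resolvent.P-has-discrete-spectrum-resolvent-estimate}, which itself rests on the parametrix construction of Theorem~\ref{thm:Resolvent.parametrix-with-parameter}. The only mild subtlety worth flagging is that part~(2)(a) of Theorem~\ref{thm:Resolvent.P-has-discrete-spectrum-resolvent-estimate} a priori only guarantees that $L$ contains at most finitely many eigenvalues; the strengthened assumption here (no eigenvalue at all on $L$) ensures that the resolvent $(P-\lambda)^{-1}$ exists along the full ray, so that the asymptotic estimate is meaningful everywhere and not merely outside a finite exceptional set.
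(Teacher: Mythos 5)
Your proof is correct and follows exactly the route the paper intends: the corollary is stated as an immediate consequence of Theorem~\ref{thm:Resolvent.P-has-discrete-spectrum-resolvent-estimate}, and taking $\Theta'=L$ (a cone in $\C^*$ with $\overline{L}\setminus\{0\}=L\subset\Theta(P)$) so that part~(2)(b) supplies the estimate $\|(P-\lambda)^{-1}\|=\op{O}(|\lambda|^{-1})$ along $L$ is precisely the intended argument. Your closing remark about why the no-eigenvalue hypothesis is needed (so that $(P-\lambda)^{-1}$ exists on all of $L$, not just outside a finite set) is a sensible clarification consistent with the paper.
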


\subsection{Analysis of the resolvent} 
In what follows we set
\begin{equation*}
\check{\Theta}(P) = \Theta(P)\setminus \{ t\lambda; \ t>0, \ \lambda\in\Sp{P} \} .
\end{equation*}
That is,  $\check{\Theta}(P)$ is obtained from $\Theta(P)$ by deleting all the rays through eigenvalues of $P$. In particular, this is a cone. Moreover, in view of Corollary~\ref{cor:Resolvent.ray-with-no-eigenvalue-is-a-ray-of-minimal-growth}, every ray contained in $\check{\Theta}(P)$ is a ray of minimal growth. 

\begin{lemma}\label{lem:Resolvent.checkThetaP-open}
 $\check{\Theta}(P)$ is a non-empty open cone in $\C^*$. 
\end{lemma}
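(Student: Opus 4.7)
The plan is to reduce everything to the local finiteness statement provided by Theorem~\ref{thm:Resolvent.P-has-discrete-spectrum-resolvent-estimate}(2)(a). Although $\Sp(P)$ is merely discrete and can have infinitely many eigenvalues accumulating at infinity, the crucial fact is that inside any angular sector $\Theta'$ with $\overline{\Theta'}\setminus\{0\}\subset \Theta(P)$ there are only finitely many eigenvalues, hence only finitely many rays through eigenvalues meet $\Theta'$. This rigidifies the ``bad set'' locally and makes both openness and non-emptiness easy.

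For openness, let $\lambda_0\in \check\Theta(P)$. Since $\Theta(P)$ is open (Lemma~\ref{lem:Resolvent.open_angular-sector}), I can choose an open angular sector $\Theta'\subset \Theta(P)$ containing $\lambda_0$ with $\overline{\Theta'}\setminus\{0\}\subset\Theta(P)$. Because $\Theta'$ is a cone, a ray $\{t\mu:t>0\}$ through an eigenvalue $\mu$ meets $\Theta'$ if and only if $\mu\in\Theta'$. By Theorem~\ref{thm:Resolvent.P-has-discrete-spectrum-resolvent-estimate}(2)(a) there are only finitely many such $\mu$, say $\mu_1,\ldots,\mu_N$; hence the set $F=\bigcup_{j=1}^N\{t\mu_j:t>0\}$ is a finite union of rays, and in particular a relatively closed subset of $\Theta'$. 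Since $\lambda_0\notin F$, there is an open neighborhood of $\lambda_0$ in $\Theta'$ disjoint from $F$, which is therefore contained in $\check\Theta(P)$. This proves that $\check\Theta(P)$ is a neighborhood of each of its points.

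For non-emptiness, pick any $\lambda_1\in \Theta(P)$ (possible since $P$ is elliptic with parameter) and a small open angular sector $\Theta'\ni \lambda_1$ with $\overline{\Theta'}\setminus\{0\}\subset\Theta(P)$. Applying Theorem~\ref{thm:Resolvent.P-has-discrete-spectrum-resolvent-estimate}(2)(a) again, $\Theta'$ contains only finitely many eigenvalues, and so only finitely many rays of $\Theta'$ are deleted in passing to $\check\Theta(P)$. Because an open angular sector contains a continuum of rays, $\Theta'\cap\check\Theta(P)$ is non-empty.

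Finally, the relation $ \lambda \in \check\Theta(P) \Rightarrow t\lambda \in \check\Theta(P) $ for $t>0$ is immediate from the corresponding invariance of $\Theta(P)$ (Remark~\ref{rmk:resolvent.thetaP-cone}) and from the fact that the set $\{t\lambda:t>0,\,\lambda\in\Sp(P)\}$ removed from $\Theta(P)$ is itself a union of rays, hence invariant under positive dilations. The main conceptual point (and the only subtlety) is recognizing that local finiteness of the eigenvalues in proper subcones of $\Theta(P)$---which is not obvious from discreteness of $\Sp(P)$ alone---is exactly what lets us treat the deleted rays as a locally finite, locally closed object; once that is in hand, the proof is routine.
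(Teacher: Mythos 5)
Your proof is correct and follows essentially the same route as the paper: in both cases, the key ingredient is the local finiteness of eigenvalues in proper subcones $\Theta'\subsubset\Theta(P)$ from Theorem~\ref{thm:Resolvent.P-has-discrete-spectrum-resolvent-estimate}(2)(a), combined with the observation that a cone $\Theta'$ can only be intersected by rays through eigenvalues lying in $\Theta'$ itself. Your treatment of the cone property and the finite-ray-deletion argument mirrors the paper's, and your added remark that the deleted set is relatively closed in $\Theta'$ is a fine minor supplement.
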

\begin{proof}
 Let $\lambda_0\in \Theta(P)$.  As $\Theta(P)$ is an open cone, there is an open cone $\Theta'$ containing $\lambda_0$ and such that $\overline{\Theta'}\setminus \{0\}\subset \Theta(P)$. Such a cone can be obtained for instance as the cone generated by a compact neighborhood of $\lambda_0$ in $\Theta(P)$. 
  Thanks to Theorem~\ref{thm:Resolvent.P-has-discrete-spectrum-resolvent-estimate} we know that $\Theta'$ contains at most finitely many eigenvalues of $P$. Let $L_1,\ldots, L_N$ be the rays contained in $\Theta'$ that contain eigenvalues of $P$. Set $\check{\Theta}':=\Theta' \setminus (L_1\cup \cdots \cup L_N)$. Then $\check{\Theta}'$ is a non-empty open cone which is contained in $\check{\Theta}(P)$. In particular, this implies that $\check{\Theta}(P)\neq \emptyset$. Moreover, if $\lambda_0\in \check{\Theta}(P)$, then $\check{\Theta}'$ is an open neighborhood of $\lambda_0$ contained in $\check{\Theta}(P)$. This shows that $\check{\Theta}(P)$ is a neighborhood of each of its point, and hence this is an open set. The proof is complete.  
\end{proof}

\begin{definition} \label{def:Resolvent.Agmon-pseudo-cone}
Set $R_0=\op{dist}(0,\Sp(P)\cap \C^*)$. The pseudo-cone $\Lambda(P)$ is defined by
\begin{equation*}
 \Lambda(P) = \left\{
\begin{array}{cl}
 \check{\Theta}(P)\cup D(0,R_0) & \text{if $0\not\in \Sp(P)$},\medskip \\
 \check{\Theta}(P)\cup \big[D(0,R_0)\setminus \{0\}\big]  & \text{if $0\in \Sp(P)$}.
\end{array}\right. 
\end{equation*}
\end{definition}

\begin{proposition} \label{prop:Resolvent.resolvent-gives-rise-to-bounded-operators}
$(P-\lambda)^{-1}$ is contained in $\Hol^{-1}(\Lambda(P); \cL(\cH_\theta))$. 
\end{proposition}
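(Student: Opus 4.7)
The plan is to verify three properties in turn: (a) the resolvent $(P-\lambda)^{-1}$ exists as an element of $\cL(\cH_\theta)$ for every $\lambda\in\Lambda(P)$; (b) the family $\lambda\mapsto (P-\lambda)^{-1}$ is holomorphic on $\Lambda(P)$ with values in $\cL(\cH_\theta)$; and (c) for every pseudo-cone $\Lambda'\subsubset\Lambda(P)$ there is $C_{\Lambda'}>0$ such that $\|(P-\lambda)^{-1}\|\leq C_{\Lambda'}(1+|\lambda|)^{-1}$ on $\Lambda'$. Once these three items are in place the conclusion follows directly from the definition of $\Hol^{-1}(\Lambda(P);\cL(\cH_\theta))$.

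For (a), note that by definition $\check{\Theta}(P)$ is disjoint from every ray through a point of $\Sp(P)\cap\C^*$, and in particular from $\Sp(P)\cap\C^*$. The disk portion of $\Lambda(P)$ is chosen precisely so that $D(0,R_0)$ (or $D(0,R_0)\setminus\{0\}$ when $0\in\Sp(P)$) excludes every non-zero eigenvalue of $P$, and also $0$ when $0\in\Sp(P)$. Hence $\Lambda(P)\cap\Sp(P)=\emptyset$, and Remark~\ref{rmk:PsiDOs.resolvent-boundedness} furnishes $(P-\lambda)^{-1}\in\cL(\cH_\theta,\cH_\theta^{(w)})\hookrightarrow\cL(\cH_\theta)$ on all of $\Lambda(P)$. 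For (b), Proposition~\ref{prop:Spectrum.closedness} ensures that $P$ is a closed operator on $\cH_\theta$, so the standard resolvent identity $(P-\lambda)^{-1}-(P-\mu)^{-1}=(\lambda-\mu)(P-\lambda)^{-1}(P-\mu)^{-1}$ shows that $(P-\lambda)^{-1}$ is operator-norm holomorphic on the resolvent set, hence in particular on $\Lambda(P)$.

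The substantive point is (c), and the main obstacle is packaging the estimate so that it extends all the way down into the small-disk portion that has been glued in by Definition~\ref{def:Resolvent.Agmon-pseudo-cone}. Fix $\Lambda'\subsubset\Lambda(P)$ with conical part $\Theta'$. Since the conical part of $\Lambda(P)$ is $\check{\Theta}(P)$, the inclusion $\Lambda'\subsubset\Lambda(P)$ forces $\overline{\Theta'}\cap\bS^1$ to be a relatively compact subset of $\check{\Theta}(P)\cap\bS^1$, so that $\overline{\Theta'}\setminus\{0\}\subset\check{\Theta}(P)\subset\Theta(P)$. Theorem~\ref{thm:Resolvent.P-has-discrete-spectrum-resolvent-estimate} then provides $r_0,C>0$ with
\begin{equation*}
 \|(P-\lambda)^{-1}\|\leq C|\lambda|^{-1}\qquad\text{for all } \lambda\in\Theta'\setminus D(0,r_0).
\end{equation*}
Because $\Lambda'$ and $\Theta'$ coincide off a fixed disk, this handles all but a bounded portion of $\Lambda'$, yielding $\|(P-\lambda)^{-1}\|\leq 2C(1+|\lambda|)^{-1}$ there.

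It remains to control the bounded portion $K:=\overline{\Lambda'\cap D(0,r_0+1)}$. Since $\op{Int}(\Lambda(P))=\Lambda(P)$ (both $\check{\Theta}(P)$ and the disk are open, and removing the single point $0$ in the case $0\in\Sp(P)$ preserves openness), the relation $\Lambda'\subsubset\Lambda(P)$ gives that $K$ is a compact subset of the resolvent set of $P$. By the continuity of $\lambda\mapsto(P-\lambda)^{-1}$ established in (b), $M:=\sup_{\lambda\in K}\|(P-\lambda)^{-1}\|<\infty$, and since $|\lambda|\leq r_0+1$ on $K$ we obtain $\|(P-\lambda)^{-1}\|\leq M(r_0+2)(1+|\lambda|)^{-1}$ there. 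Combining with the cone estimate yields (c) and completes the argument; the proposition is essentially a synthesis of Theorem~\ref{thm:Resolvent.P-has-discrete-spectrum-resolvent-estimate} with the careful choice of pseudo-cone in Definition~\ref{def:Resolvent.Agmon-pseudo-cone}.
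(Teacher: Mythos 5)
Your proof is correct and follows essentially the same route as the paper: establish norm-holomorphy of the resolvent on $\Lambda(P)$, obtain the $\op{O}(|\lambda|^{-1})$ bound on the conical part from Theorem~\ref{thm:Resolvent.P-has-discrete-spectrum-resolvent-estimate}, and handle the bounded remainder of $\Lambda'$ by observing it has compact closure in $\Lambda(P)$ and invoking continuity. (One small point worth making explicit: you should enlarge $r_0$ so that $D(0,r_0+1)$ also swallows the disk on which $\Lambda'$ and $\Theta'$ differ, which is harmless since Theorem~\ref{thm:Resolvent.P-has-discrete-spectrum-resolvent-estimate} remains valid for any larger $r_0$.)
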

\begin{proof}
 As $\Lambda(P)\subset \C\setminus \Sp(P)$, we have a holomorphic map $\lambda \rightarrow P-\lambda$ from $\Lambda(P)$ to the invertible elements of $\cL(\cH_\theta^{(w)},\cH_\theta)$. By taking inverses we then get a holomorphic map from $\Lambda(P)$ to $\cL(\cH_\theta,\cH_\theta^{(w)})$, i.e.,
 $(P-\lambda)^{-1}\in \Hol(\Lambda(P); \cL(\cH_\theta))$. 
 
Let $\Lambda'$ be a pseudo-cone such that $\Lambda'\subsubset \Lambda(P)$. Denote by $\Theta'$ its conical component. Then $\overline{\Theta'}\setminus \{0\}$ is contained in $\check{\Theta}(P)\subset \Theta(P)$, and so by Theorem~\ref{thm:Resolvent.P-has-discrete-spectrum-resolvent-estimate} the estimate~(\ref{eq:Resolvent.resolvent-estimate}) holds on $\Theta(P)\setminus D(0,r_0)$ for $r_0$ large enough. Note that $\Lambda' \setminus [\Theta(P)\setminus D(0,r_0)]$ is pre-compact in $\Lambda(P)$, since this is a bounded set whose closure is contained in $\Lambda(P)$. It then follows there is $C_{\Lambda'}>0$ such that
\begin{equation*}
 \big\| (P-\lambda)^{-1}\big \| \leq C_{\Lambda'}(1+|\lambda|)^{-1} \qquad \forall \lambda \in \Lambda'. 
\end{equation*}
This shows that $(P-\lambda)^{-1}\in \Hol^{-1}(\Lambda(P); \cL(\cH_\theta))$. The proof is complete. 
\end{proof}

We are now in a position to prove the main result of this paper. 

\begin{theorem} \label{thm:Resolvent.resolvent-is-psido-with-parameter}
Suppose that $P$ is elliptic with parameter. Then the following holds. 
\begin{enumerate}
 \item The resolvent $(P-\lambda)^{-1}$ is contained in $\Psi^{-w,-1}(\cA_\theta;\Lambda(P))$. 

\item $(P-\lambda)^{-1}$ has symbol $\sigma(\xi;\lambda)\sim \sum \sigma_{-w-j}(\xi;\lambda)$, where $\sigma_{-w-j}(\xi;\lambda)$ is given by~(\ref{eq:Resolvent.parametrix-principal-symbol})--(\ref{eq:Resolvent.parameterix-homogeneous-symbols}). 
\end{enumerate}
\end{theorem}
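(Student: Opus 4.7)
The plan is to compare the resolvent with the parametrix produced in Theorem~\ref{thm:Resolvent.parametrix-with-parameter} and show that the difference is a parametric smoothing family. Fix $R\geq R_0$, so that $\Lambda(P)\subset\Lambda_R$, and let $Q(\lambda)\in\Psi^{-w,-1}(\cA_\theta;\Lambda_R)$ be the parametrix from Theorem~\ref{thm:Resolvent.parametrix-with-parameter}, with remainders $R_1(\lambda), R_2(\lambda)\in\Psi^{-\infty,0}(\cA_\theta;\Lambda_R)$ satisfying
\[
(P-\lambda)Q(\lambda)=1-R_1(\lambda),\qquad Q(\lambda)(P-\lambda)=1-R_2(\lambda).
\]
On $\Lambda(P)$ the operator $P-\lambda:\cH_\theta^{(w)}\to\cH_\theta$ is boundedly invertible, so applying $(P-\lambda)^{-1}$ to the first identity gives
\[
(P-\lambda)^{-1}=Q(\lambda)+(P-\lambda)^{-1}R_1(\lambda).
\]
Since smoothing parametric symbols contribute nothing to a classical asymptotic expansion, the task reduces to proving that $(P-\lambda)^{-1}R_1(\lambda)\in\Psi^{-\infty,-1}(\cA_\theta;\Lambda(P))$: the symbol asymptotic of $(P-\lambda)^{-1}$ will then be identical to that of $Q(\lambda)$, which Theorem~\ref{thm:Resolvent.parametrix-with-parameter} records as~(\ref{eq:Resolvent.parametrix-principal-symbol})--(\ref{eq:Resolvent.parameterix-homogeneous-symbols}).

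To establish that membership, I would use Proposition~\ref{prop:PsiDOs-parameter.smoothing-operators-with-parameter-characterization} to reduce to showing that $(P-\lambda)^{-1}R_1(\lambda)$ is a $\Hol^{-1}(\Lambda(P))$-family in $\cL(\cH_\theta^{(s)},\cH_\theta^{(t)})$ for every $s,t\in\R$. The key trick is to feed the second parametrix identity back in as $(P-\lambda)^{-1}=Q(\lambda)+R_2(\lambda)(P-\lambda)^{-1}$, which gives
\[
(P-\lambda)^{-1}R_1(\lambda)=Q(\lambda)R_1(\lambda)+R_2(\lambda)(P-\lambda)^{-1}R_1(\lambda).
\]
The first summand lies in $\Psi^{-w,-1}\cdot\Psi^{-\infty,0}\subset\Psi^{-\infty,-1}(\cA_\theta;\Lambda_R)$ by Proposition~\ref{prop:Parameter.composition-PsiDOs} and Corollary~\ref{cor:PsiDOs-parameter.intersection-of-psidos-of-all-orders}, and hence sits in the desired intersection of Sobolev-mapping spaces. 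For the second summand, Proposition~\ref{prop:PsiDOs-parameter.smoothing-operators-with-parameter-characterization} yields $R_1(\lambda)\in\Hol^{0}(\Lambda_R;\cL(\cH_\theta^{(s)},\cH_\theta))$ and $R_2(\lambda)\in\Hol^{0}(\Lambda_R;\cL(\cH_\theta,\cH_\theta^{(t)}))$, while Proposition~\ref{prop:Resolvent.resolvent-gives-rise-to-bounded-operators} provides $(P-\lambda)^{-1}\in\Hol^{-1}(\Lambda(P);\cL(\cH_\theta))$; composing these three $\Hol$-families produces the required $\Hol^{-1}$-family in $\cL(\cH_\theta^{(s)},\cH_\theta^{(t)})$.

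The main obstacle is that, a priori, the resolvent only gains a factor $|\lambda|^{-1}$ as an operator on $\cH_\theta$ (Proposition~\ref{prop:Resolvent.resolvent-gives-rise-to-bounded-operators}); there is no direct corresponding estimate on an arbitrary Sobolev space $\cH_\theta^{(s)}$, and a naive factorization of $(P-\lambda)^{-1}R_1(\lambda)$ would only yield a $\Hol^0$-family. The two-sided parametrix manipulation above circumvents this by using the smoothing factors $R_1$ and $R_2$ as bridges between the arbitrary Sobolev spaces $\cH_\theta^{(s)}$, $\cH_\theta^{(t)}$ and $\cH_\theta$, so that the resolvent is only ever invoked on $\cH_\theta$, where the $|\lambda|^{-1}$ decay is already available. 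Combining everything delivers $(P-\lambda)^{-1}\in\Psi^{-w,-1}(\cA_\theta;\Lambda(P))$ with the asymptotic expansion claimed in the theorem.
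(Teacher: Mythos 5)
Your proof is correct, and it follows the same strategy as the paper --- two-sided parametrix identities plus the $\Hol^{-1}(\Lambda(P);\cL(\cH_\theta))$ estimate on the resolvent from Proposition~\ref{prop:Resolvent.resolvent-gives-rise-to-bounded-operators} and the smoothing characterization of Proposition~\ref{prop:PsiDOs-parameter.smoothing-operators-with-parameter-characterization} --- but the bookkeeping is organized differently. The paper proceeds by a bootstrap: it first uses the identity $(P-\lambda)^{-1}=Q(\lambda)+R_1(\lambda)(P-\lambda)^{-1}$ together with the chain $\cA_\theta\hookrightarrow\cH_\theta\hookrightarrow\cA_\theta'\xrightarrow{R_1}\cA_\theta$ to promote the $\cL(\cH_\theta)$-estimate to $(P-\lambda)^{-1}\in\Hol^{-1}(\Lambda(P);\cL(\cA_\theta))$, and then feeds this upgraded regularity into the second identity $(P-\lambda)^{-1}=Q(\lambda)+(P-\lambda)^{-1}R_2(\lambda)$ to conclude $(P-\lambda)^{-1}R_2(\lambda)\in\Hol^{-1}(\Lambda(P);\cL(\cA_\theta',\cA_\theta))=\Psi^{-\infty,-1}$. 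You instead expand once more to exhibit the explicit three-factor sandwich
\[
(P-\lambda)^{-1}R_1(\lambda)=Q(\lambda)R_1(\lambda)+R_2(\lambda)(P-\lambda)^{-1}R_1(\lambda),
\]
so the resolvent appears only between two smoothing factors and need never be estimated on any space other than $\cH_\theta$; no intermediate upgrade is required. If one unwinds the paper's bootstrap one step further, one obtains exactly your sandwich, so the two arguments are logically equivalent, but your version makes the mechanism more transparent and avoids the detour through $\cL(\cA_\theta)$. One cosmetic remark: for the first summand, rather than invoking Proposition~\ref{prop:Parameter.composition-PsiDOs} and Corollary~\ref{cor:PsiDOs-parameter.intersection-of-psidos-of-all-orders}, it is a touch cleaner to note directly that $Q(\lambda)\in\Hol^{-1}(\Lambda(P);\cL(\cA_\theta))$ and $R_1(\lambda)\in\Hol^{0}(\Lambda(P);\cL(\cA_\theta',\cA_\theta))$ compose to give an element of $\Hol^{-1}(\Lambda(P);\cL(\cA_\theta',\cA_\theta))=\Psi^{-\infty,-1}(\cA_\theta;\Lambda(P))$, matching the style of estimation you already use for the triple product.
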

\begin{proof}
By construction $\Lambda(P)\subset \Theta(P)\cup D(0,R_0)$. Therefore, by Theorem~\ref{thm:Resolvent.parametrix-with-parameter} there is $Q(\lambda)$ in $\Psi^{-w,-1}(\cA_\theta; \Lambda(P))$ such that
\begin{equation} \label{eq:Resolvent.parametrix-with-parameter-R1-and-R2}
 Q(\lambda)(P-\lambda)=1-R_1(\lambda) \qquad \text{and} \qquad (P-\lambda)Q(\lambda)=1-R_2(\lambda),
\end{equation}
where $R_1(\lambda)$ and $R_2(\lambda)$ are in $\Psi^{-\infty,0}(\cA_\theta;\Lambda(P))$. Multiplying the first equality by $(P-\lambda)^{-1}$ gives $Q(\lambda)= (1-R_1(\lambda))(P-\lambda)^{-1}$, i.e., 
\begin{equation} \label{eq:Resolvent.inverse-of-P-lambda-written-by-parametrix-and-R_1}
(P-\lambda)^{-1} = Q(\lambda) + R_1(\lambda)(P-\lambda)^{-1} .
\end{equation}
Proposition~\ref{prop:PsiDOs-parameter.PsiDO-gives-rise-to-family-of-continuous-operators-on-cAtheta} and Proposition~\ref{prop:PsiDOs-parameter.smoothing-operators-with-parameter-characterization} ensure us that $Q(\lambda)$ and $R_1(\lambda)$ are in $\Hol^{-1}(\Lambda(P); \cL(\cA_\theta))$ and $\Hol^0(\Lambda(P); \cL(\cA_\theta',\cA_\theta))$, respectively. Moreover, thanks to Proposition~\ref{prop:Resolvent.resolvent-gives-rise-to-bounded-operators} we also know that $(P-\lambda)^{-1}$ is contained in $\Hol^{-1}(\Lambda(P); \cL(\cH_\theta))$, and so it can be regarded as an element of $\Hol^{-1}(\Lambda(P);  \cL(\cA_\theta,\cA_\theta'))$. Therefore, the composition $R_1(\lambda)(P-\lambda)^{-1}$ gives rise to an element of  $\Hol^{-1}(\Lambda(P); \cL(\cA_\theta))$. Combining all this with~(\ref{eq:Resolvent.inverse-of-P-lambda-written-by-parametrix-and-R_1}) we deduce that $(P-\lambda)^{-1}$ is contained in $\Hol^{-1}(\Lambda(P); \cL(\cA_\theta))$.

Multiplying by $(P-\lambda)^{-1}$ the 2nd equality in~(\ref{eq:Resolvent.parametrix-with-parameter-R1-and-R2}) gives $Q(\lambda)=(P-\lambda)^{-1} (1-R_2(\lambda))$. That is, 
\begin{equation} \label{eq:Resolvent.inverse-of-P-lambda-written-by-parametrix-and-R_2}
(P-\lambda)^{-1} = Q(\lambda) + (P-\lambda)^{-1}R_2(\lambda) .
\end{equation}
As $(P-\lambda)^{-1}\in\Hol^{-1}(\Lambda(P); \cL(\cA_\theta))$ and $R_2(\lambda)$ is in $\Hol^0(\Lambda(P); \cL(\cA_\theta',\cA_\theta))$ by Proposition~\ref{prop:PsiDOs-parameter.smoothing-operators-with-parameter-characterization} the composition $(P-\lambda)^{-1}R_2(\lambda) $ is in $\Hol^{-1}(\Lambda(P); \cL(\cA_\theta',\cA_\theta))$. Therefore, by Proposition~\ref{prop:PsiDOs-parameter.smoothing-operators-with-parameter-characterization} this is an element of $\Psi^{-\infty,-1}(\cA_\theta; \Lambda(P))$. Combining this with~(\ref{eq:Resolvent.inverse-of-P-lambda-written-by-parametrix-and-R_2}) then shows that $(P-\lambda)^{-1}$ is contained in $\Psi^{-w,-1}(\cA_\theta;\Lambda(P))$. This proves the 1st part. The 2nd part follows from the 2nd part of Theorem~\ref{thm:Resolvent.parametrix-with-parameter} and the fact that $(P-\lambda)^{-1}$ is a parametrix. The proof is complete. 
\end{proof}

\begin{remark}
 Suppose that $0\in \Sp(P)$. In this case $0\not \in \Lambda(P)$, and so Theorem~\ref{thm:Resolvent.resolvent-is-psido-with-parameter} asserts that $(P-\lambda)^{-1}$ is a $\Hol^{-1}$-family of \psidos\ off $\lambda=0$. By using analytic Fredholm theory it can be shown that at $\lambda=0$ we actually have a meromorphic singularity in $\cL(\cH_\theta)$ with finite rank poles (see, e.g., \cite[Appendix~C]{DZ:scattering}). It can be actually shown that the poles are smoothing operators and by removing the meromorphic singularity we actually get a $\Hol^{-1}$-family of \psidos\ near $\lambda=0$ (see~\cite{LP:Powers}). 
\end{remark}

We mention a few consequences of Theorem~\ref{thm:Resolvent.resolvent-is-psido-with-parameter}. First, we have the following refinement of Proposition~\ref{prop:PsiDOs-parameter.classical-PsiDO-Sobolev-mapping-properties}. 

\begin{proposition}\label{prop:Resolvent.Sobolev} 
 For every $s\in \R$, we have
\begin{equation*}
 (P-\lambda)^{-1} \in \bigcap_{0\leq a \leq 1} \Hol^{-1+a}\left(\Lambda(P); \cL\big(\cH_\theta^{(s)}, \cH_\theta^{(s+aw)}\big)\right). 
\end{equation*}
\end{proposition}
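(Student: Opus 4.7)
The plan is to deduce this proposition directly from the parametric Sobolev mapping properties of classical \psidos\ (Proposition~\ref{prop:PsiDOs-parameter.classical-PsiDO-Sobolev-mapping-properties}) applied to the resolvent, whose membership in $\Psi^{-w,-1}(\cA_\theta; \Lambda(P))$ was just established in Theorem~\ref{thm:Resolvent.resolvent-is-psido-with-parameter}. No further analytic work should be required; it is purely a matter of translating parameters.

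More precisely, by Theorem~\ref{thm:Resolvent.resolvent-is-psido-with-parameter} we have $(P-\lambda)^{-1}\in \Psi^{m,d}(\cA_\theta;\Lambda(P))$ with $m=-w$ and $d=-1$. As $d<0$, part~(ii) of Proposition~\ref{prop:PsiDOs-parameter.classical-PsiDO-Sobolev-mapping-properties} applies and gives, for every $t\in \R$ and every $d'\in [-1,0]$,
\begin{equation*}
 (P-\lambda)^{-1}\in \Hol^{d'}\bigl(\Lambda(P); \cL\bigl(\cH_\theta^{(t-w+w|d'|)}, \cH_\theta^{(t)}\bigr)\bigr).
\end{equation*}
Now I would make the substitution $a=1+d'$, so that $a$ ranges over $[0,1]$ as $d'$ ranges over $[-1,0]$, with $|d'|=1-a$ and $-w+w|d'|=-aw$. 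The display above becomes
\begin{equation*}
 (P-\lambda)^{-1}\in \Hol^{-1+a}\bigl(\Lambda(P); \cL\bigl(\cH_\theta^{(t-aw)}, \cH_\theta^{(t)}\bigr)\bigr).
\end{equation*}
Given any $s\in \R$, setting $t=s+aw$ then yields
\begin{equation*}
 (P-\lambda)^{-1}\in \Hol^{-1+a}\bigl(\Lambda(P); \cL\bigl(\cH_\theta^{(s)}, \cH_\theta^{(s+aw)}\bigr)\bigr)
\end{equation*}
for every $a\in [0,1]$, which is the desired conclusion.

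There is no real obstacle here: the content of the statement is entirely contained in the pseudodifferential characterization of the resolvent (the hard part, done in Theorem~\ref{thm:Resolvent.resolvent-is-psido-with-parameter}) combined with the Sobolev mapping properties of elements of $\Psi^{m,d}$ with $d<0$. The only thing worth double-checking is the bookkeeping of the order indices: the gain of $w|d'|$ units of Sobolev regularity obtained by trading $\lambda$-decay for smoothing — a consequence of Proposition~\ref{symbols:inclusion-classical-standard}(ii) — is precisely what allows $a$ to vary continuously between the ``no-smoothing, optimal $\lambda$-decay'' endpoint ($a=0$) and the ``maximal $w$-unit smoothing, no $\lambda$-decay'' endpoint ($a=1$).
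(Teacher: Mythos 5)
Your proof is correct and follows the paper's argument exactly: both cite Theorem~\ref{thm:Resolvent.resolvent-is-psido-with-parameter} to place $(P-\lambda)^{-1}$ in $\Psi^{-w,-1}(\cA_\theta;\Lambda(P))$, then invoke part~(ii) of Proposition~\ref{prop:PsiDOs-parameter.classical-PsiDO-Sobolev-mapping-properties} and perform the substitution $d'=-1+a$, $t=s+aw$. The reindexing is worked out carefully and matches the paper's (more terse) presentation.
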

\begin{proof}
 We know by Theorem~\ref{thm:Resolvent.resolvent-is-psido-with-parameter} that $(P-\lambda)^{-1} \in\Psi^{-w,-1}(\cA_\theta; \Lambda(P))$. Therefore, by using Proposition~\ref{prop:PsiDOs-parameter.classical-PsiDO-Sobolev-mapping-properties} we see that, for every $s\in \R$, we have 
\begin{equation*}
 (P-\lambda)^{-1} \in \bigcap_{-1\leq d' \leq 0} \Hol^{d'}\left(\Lambda(P); \cL\big(\cH_\theta^{(s-w+w|d'|)}, \cH_\theta^{(s)}\big)\right).
\end{equation*}
Substituting $-1+a$ for $d'$ and $s+aw$ for $s$ with $a\in [0,1]$ gives the result. 
\end{proof}

In another direction we have the following Schatten class properties of the resolvent. 
\begin{proposition}\label{prop:Resolvent.Schatten}
 Set $p=nw^{-1}$. Then 
\begin{equation*}
 (P-\lambda)^{-1} \in \bigcap_{q\geq p} \Hol^{-1+pq^{-1}}\Big(\Lambda(P);\cL^{(q,\infty)}\Big). 
\end{equation*}
\end{proposition}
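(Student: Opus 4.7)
The plan is to apply the Schatten-class result for classical parametric \psidos\ (Proposition~\ref{prop:PsiDOs-parameter.Schatten-classical}) directly to the resolvent, once the latter is identified as a parametric \psido\ via Theorem~\ref{thm:Resolvent.resolvent-is-psido-with-parameter}.

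More precisely, I would first invoke Theorem~\ref{thm:Resolvent.resolvent-is-psido-with-parameter} to record that $(P-\lambda)^{-1}\in \Psi^{-w,-1}(\cA_\theta;\Lambda(P))$. Thus, in the notation of Proposition~\ref{prop:PsiDOs-parameter.Schatten-classical}, we are in the situation $m=-w$ and $d=-1$. Note that $-n\leq -w<0$ requires $w\leq n$; if $w>n$ one first replaces $P$ by a suitable power or, more directly, uses that $\Psi^{-w,-1}(\cA_\theta;\Lambda(P))\subset \Psi^{-n,-1}(\cA_\theta;\Lambda(P))$ together with the inclusion of Schatten classes, so that the case $w\leq n$ suffices (one gets the sharper exponents from the actual order $-w$). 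For the proposition as stated we can assume $w\leq n$, which gives $p=n|m|^{-1}=nw^{-1}\geq 1$, matching the definition of $p$ in the statement.

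Second, I would verify that with $m=-w$ and $d=-1$ one has $mw^{-1}=-1=d$, so that we land precisely in case (3) of Proposition~\ref{prop:PsiDOs-parameter.Schatten-classical}. That proposition then yields
\begin{equation*}
 (P-\lambda)^{-1}\in \bigcap_{q\geq p} \Hol^{d(q)}\bigl(\Lambda(P);\cL^{(q,\infty)}\bigr),
\end{equation*}
where $d(q)=(m+nq^{-1})w^{-1}=(-w+nq^{-1})w^{-1}=-1+nw^{-1}q^{-1}=-1+pq^{-1}$, using $p=nw^{-1}$. This is exactly the conclusion we want.

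There is no genuine obstacle: the argument is a direct substitution into a previously established result. The only minor point to verify is the computation $d(q)=-1+pq^{-1}$ and the fact that $d=mw^{-1}$ places us in the boundary case $d\leq mw^{-1}$ of Proposition~\ref{prop:PsiDOs-parameter.Schatten-classical}(3), which is the case that yields an inclusion uniform over all $q\geq p$. If desired, one may also note as a sanity check that at $q=p$ one recovers $d(p)=0$, i.e., $(P-\lambda)^{-1}\in\Hol^0(\Lambda(P);\cL^{(p,\infty)})$, which reflects the fact that compactness in the weak Schatten class $\cL^{(p,\infty)}$ is already present without any $\lambda$-decay, while for $q>p$ we gain the genuine decay rate $|\lambda|^{-1+pq^{-1}}$ as $\lambda\to\infty$ in $\Lambda(P)$.
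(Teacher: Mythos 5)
Your main argument is exactly the paper's: invoke Theorem~\ref{thm:Resolvent.resolvent-is-psido-with-parameter} to place $(P-\lambda)^{-1}$ in $\Psi^{-w,-1}(\cA_\theta;\Lambda(P))$, observe that $mw^{-1}=-1=d$ puts you in case (3) of Proposition~\ref{prop:PsiDOs-parameter.Schatten-classical}, and compute $d(q)=-1+pq^{-1}$. That is precisely the proof the paper gives.

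Your additional remark about the hypothesis $-n\leq m<0$ (i.e.\ $w\leq n$) identifies an implicit assumption the paper glosses over, but the workaround you propose is flawed. The classes $\Psi^{m,d}(\cA_\theta;\Lambda)$ are built from \emph{classical} parametric symbols $S^{m,d}$, which, unlike the standard classes $\stS^{m,d}$, are not nested in $m$: a symbol in $S^{-w,-1}$ has leading homogeneous component of degree $-w$, so it belongs to $S^{-n,-1}$ only when $w-n\in\N_0$ (the degrees $-w-j$ must all occur among the $-n-k$). Hence $\Psi^{-w,-1}(\cA_\theta;\Lambda(P))\subset\Psi^{-n,-1}(\cA_\theta;\Lambda(P))$ fails in general. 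The correct repair is simpler: when $w>n$ one has $p=nw^{-1}<1$, and since $\cL^{(q,\infty)}$ is defined only for $q\geq 1$, the intersection in the statement effectively runs over $q\geq 1$. For such $q$ the constraint $m\geq -n$ is not needed at all: Proposition~\ref{symbols:inclusion-classical-standard} with $d'=d(q)\in[-1,0]$ puts the resolvent's symbol in $\stS^{m-wd(q),\,d(q)}(\R^n\times\Lambda(P);\cA_\theta)$, and one computes $m-wd(q)=-nq^{-1}$, which lies in $[-n,0)$ exactly when $q\geq 1$; Proposition~\ref{prop:PsiDOs-parameter.Schatten-standard} then applies directly. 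Since this is the only step in the proof of Proposition~\ref{prop:PsiDOs-parameter.Schatten-classical} where $m\geq -n$ is invoked, the argument goes through unchanged for every $w>0$.
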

\begin{proof}
 We know by Theorem~\ref{thm:Resolvent.resolvent-is-psido-with-parameter} that $(P-\lambda)^{-1} \in\Psi^{-w,-1}(\cA_\theta; \Lambda(P))$. We thus are in the setting of Proposition~\ref{prop:PsiDOs-parameter.Schatten-classical}  with $d=-1$ and $m=-w$. Here $mw^{-1}=-1=d$, and so~(\ref{eq:PsiDOs-parameter.classical2}) gives
\begin{equation*}
 (P-\lambda)^{-1} \in \bigcap_{q\geq p} \Hol^{d(q)}\left(\Lambda(P);\cL^{(q,\infty)}\right).
\end{equation*}
 where $d(q)=(-w+nq^{-1})w^{-1}=-1+(nw^{-1})q^{-1}=-1+pq^{-1}$. This proves the result. 
\end{proof}

One approach to derive spectral asymptotics is to use resolvent-type expansions. This often leads us to estimate traces of operators of the form, 
\begin{equation*}
 Q(A_0,\ldots, A_N)(\lambda)= A_0(P-\lambda)^{-1}A_1 \cdots A_{N-1}(P-\lambda)^{-1}A_N, \qquad  A_j\in \Psi^{a_j}(\cA_\theta). 
\end{equation*}
For instance, when $A_1=\cdots A_N=1$ we get the operator $A_0(P-\lambda)^{-N}$. Note that by Proposition~\ref{prop:Parameter.composition-PsiDOs} and Theorem~\ref{thm:Resolvent.resolvent-is-psido-with-parameter}  $Q(A_0,\ldots, A_N)(\lambda)$ is contained in $\Psi^{m,-N}(\cA_\theta; \Lambda(P))$ where $m=a-Nw$ and we have set $a=a_0+\cdots +a_N$. In particular, we obtain a family of trace-class operators when $a-Nw<-n$. 

\begin{proposition}\label{prop:Resolvent.trace-Q}
Let $A_j\in \Psi^{a_j}(\cA_\theta)$, $a_j\in \R$, $j=0,\ldots, N$. Set $a=a_0+\cdots +a_N$, and assume that $-Nw+a<-n$.  
\begin{enumerate}
 \item If $a<-n$, then $Q(A_0,\ldots, A_N)(\lambda)\in \Hol^{-N}(\Lambda(P);\cL^1)$ and, for any pseudo-cone $\Lambda \subsubset \Lambda(P)$, there is $C_{\Lambda}>0$ such that 
          \begin{equation*}
             \Big| \Tra\big[Q(A_0,\ldots, A_N)(\lambda)\big] \Big|\leq  C_{\Lambda} (1+|\lambda|)^{-N} \qquad \forall \lambda \in \Lambda.  
         \end{equation*}

\item If $a\geq -n$ and $\delta>(a+n)w^{-1}$, then $Q(A_0,\ldots, A_N)(\lambda)\in \Hol^{-N+\delta}(\Lambda(P);\cL^1)$ and, for any pseudo-cone $\Lambda \subsubset \Lambda(P)$, there is $C_{\Lambda\delta}>0$ such that 
          \begin{equation*}
             \Big| \Tra\big[Q(A_0,\ldots, A_N)(\lambda)\big] \Big|\leq  C_{\Lambda \delta} (1+|\lambda|)^{-N+\delta} \qquad \forall \lambda \in \Lambda.  
             \end{equation*}
\end{enumerate}
\end{proposition}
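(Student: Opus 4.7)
The plan is to deduce this statement directly from Proposition~\ref{prop:Parameter.composition-PsiDOs} and Proposition~\ref{prop:PsiDOs-parameter.trace-class}, using Theorem~\ref{thm:Resolvent.resolvent-is-psido-with-parameter} to place the resolvent in the parametric calculus. First, by Example~\ref{ex:PsiDOs-parameter.usual-PsiDO-can-be-seen-as-PsiDO-with-parameter} each $A_j\in \Psi^{a_j}(\cA_\theta)$ can be viewed as an element of $\Psi^{a_j,0}(\cA_\theta;\Lambda(P))$, and by Theorem~\ref{thm:Resolvent.resolvent-is-psido-with-parameter} the resolvent $(P-\lambda)^{-1}$ lies in $\Psi^{-w,-1}(\cA_\theta;\Lambda(P))$. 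Iterated application of Proposition~\ref{prop:Parameter.composition-PsiDOs} then shows that $Q(A_0,\ldots,A_N)(\lambda)$ is in $\Psi^{a-Nw,-N}(\cA_\theta;\Lambda(P))$; the total order is the sum of the orders of the $N+1$ factors $A_j$ and of the $N$ resolvent factors (namely $a-Nw$), and the total $\lambda$-exponent is the sum of the exponents $0$ coming from each $A_j$ and $-1$ coming from each resolvent (namely $-N$). The standing hypothesis $-Nw+a<-n$ is exactly the condition $m:=a-Nw<-n$ that allows us to invoke Proposition~\ref{prop:PsiDOs-parameter.trace-class} with $d=-N$.

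Second, I feed this into Proposition~\ref{prop:PsiDOs-parameter.trace-class}. The critical threshold appearing there is $(m+n)w^{-1}=(a+n)w^{-1}-N$. In Case~(1) of the statement ($a<-n$), this threshold is strictly less than $-N=d$, so part~(1) of Proposition~\ref{prop:PsiDOs-parameter.trace-class} yields $Q(\lambda)\in \Hol^{-N}(\Lambda(P);\cL^1)$ together with the trace estimate $|\Tra Q(\lambda)|\leq C_{\Lambda}(1+|\lambda|)^{-N}$. In Case~(2) of the statement ($a\geq -n$), the threshold $(m+n)w^{-1}\geq -N=d$, so we are in the setting of part~(2) of Proposition~\ref{prop:PsiDOs-parameter.trace-class}: setting $d'=-N+\delta$, the hypothesis $\delta>(a+n)w^{-1}$ is precisely $d'>(m+n)w^{-1}$, yielding the desired $\Hol^{-N+\delta}$ membership and trace bound $C_{\Lambda\delta}(1+|\lambda|)^{-N+\delta}$.

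The only technical point to address is that part~(2) of Proposition~\ref{prop:PsiDOs-parameter.trace-class} requires $d'\leq 0$, i.e., $\delta\leq N$. The assumption $-Nw+a<-n$ forces $(a+n)w^{-1}<N$, so one can always choose $\delta$ strictly between $(a+n)w^{-1}$ and $N$ if needed. In the remaining case $\delta\geq N$, I would simply apply Proposition~\ref{prop:PsiDOs-parameter.trace-class} with some $\delta_0\in\bigl((a+n)w^{-1},N\bigr)$ in place of $\delta$; this produces a strictly stronger bound $C_{\Lambda}(1+|\lambda|)^{-N+\delta_0}$ that a fortiori implies $C_{\Lambda}(1+|\lambda|)^{-N+\delta}$ on any $\Lambda\subsubset \Lambda(P)$, and the $\Hol^{-N+\delta}$ membership follows by the obvious inclusion $\Hol^{-N+\delta_0}\subset \Hol^{-N+\delta}$. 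This is a harmless case split and there is no substantive obstacle here: the proposition is essentially a direct corollary, with the heavy lifting already done in Proposition~\ref{prop:Parameter.composition-PsiDOs}, Theorem~\ref{thm:Resolvent.resolvent-is-psido-with-parameter}, and Proposition~\ref{prop:PsiDOs-parameter.trace-class}.
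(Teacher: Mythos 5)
Your proof is correct and follows essentially the same route as the paper: compose the factors via Proposition~\ref{prop:Parameter.composition-PsiDOs} and Theorem~\ref{thm:Resolvent.resolvent-is-psido-with-parameter} to place $Q(A_0,\ldots,A_N)(\lambda)$ in $\Psi^{a-Nw,-N}(\cA_\theta;\Lambda(P))$, then feed this into Proposition~\ref{prop:PsiDOs-parameter.trace-class} with $d=-N$, $m=a-Nw$, and observe that the threshold $(m+n)w^{-1}=-N+(a+n)w^{-1}$ makes the case split $a<-n$ versus $a\geq -n$ match parts (1) and (2) of that proposition. Your additional remark handling $\delta\geq N$ (where the hypothesis $d'\leq 0$ of Proposition~\ref{prop:PsiDOs-parameter.trace-class} fails for $d'=-N+\delta$) is a legitimate edge case that the paper's proof passes over in silence; the fix you propose is the right one and costs nothing.
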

\begin{proof}
As mentioned above $Q(A_0,\ldots, A_N)(\lambda)\in \Psi^{m,-N}(\cA_\theta;\Lambda(P))$ with $m=a-Nw$. We thus are in the framework of Proposition~\ref{prop:PsiDOs-parameter.trace-class} with $d=-N$. Here $(m+n)w^{-1}=(a-Nw+n)w^{-1}=-N+(a+n)w^{-1}$, and so the condition $d>(m+n)w^{-1}$ becomes $-N>-N+(a+n)w^{-1}$, i.e., $a<-n$. The result then follows from Proposition~\ref{prop:PsiDOs-parameter.trace-class}. 
\end{proof}


\section{Complex Powers of Positive Elliptic Operators}\label{sec:square-roots}
In this section, we use the results of the previous section to show that the complex powers of positive elliptic \psidos\ are \psidos. We refer to~\cite{LP:Powers} for a more comprehensive account on complex powers of elliptic \psidos\ on noncommutative tori. 

In what follows, we let $P\in \Psi^{w}(\cA_\theta)$ be an elliptic \psido\ with symbol $\rho(\xi)\sim \sum_{j\geq 0} \rho_{w-j}(\xi)$. We make the following positivity assumptions: 
\begin{itemize}
 \item[(P1)]  The principal symbol $\rho_{w}(\xi)$ is positive, i.e., $\rho_{w}(\xi)$ is selfadjoint and has positive spectrum for all $\xi\neq 0$. 
 
 \item[(P2)] $P$ is formally selfadjoint and has non-negative spectrum. 
\end{itemize}
Note that we do not assume $P$ to be a differential operator. The ellipticity of $P$ and (P2) ensure us that $P$ with domain $\cH_\theta^{(w)}$ is a selfadjoint Fredholm operator on $\cH_\theta$ with non-negative (discrete) spectrum. In addition, (P1) and (P2) imply that $\check{\Theta}(P)=\Theta(P)=\C\setminus [0,\infty)$, and so we have 
\begin{equation*}
 \Lambda(P)=\C\setminus [r_0,\infty), \qquad \text{where}\ r_0:=\dist\big(0,\Sp(P)\setminus 0\big)=\|P^{-1}\|^{-1}. 
\end{equation*}

Given $z\in \C$ and $\lambda\in \C\setminus (-\infty,0]$, we put $\lambda^z=e^{z \log \lambda}$, where $\lambda \rightarrow \log \lambda$ is the continuous determination of the logarithm on $\C\setminus (-\infty,0]$ with values in the stripe $\{|\Re \lambda|<\pi\}$. In particular, the power functions $\lambda \rightarrow\lambda^z$ are holomorphic functions on $\C\setminus (-\infty,0]$. Moreover, they agree with the standard power functions on $(0,\infty)$. 

For $z\in \C$ we define the operator $P^z$ by standard Borel functional calculus, with the convention that $P^z=0$ on $\ker P$ (i.e.,  
 $\lambda^z=0$ for $\lambda=0$). Alternatively, if $(e_\ell)_{\ell \geq 0}$ is any orthonormal eigenbasis of $P$ with $Pe_\ell=\lambda_\ell e_\ell$, then we have 
\begin{equation*}
 P^ze_\ell = \lambda_\ell^z e_\ell \qquad \forall \ell\geq 0. 
\end{equation*}
For $\Re z \leq 0$ we obtain a bounded operator on $\cH_\theta$. For $\Re z>0$ we get a closed unbounded operator with domain $\cH^{(w\Re z)}_\theta$. In particular, the domain of $P^z$ always contains $\cA_\theta$. We also have the group property, 
\begin{equation}
 P^{z_1+z_2}u=P^{z_1} P^{z_2}u \qquad \forall u\in \cA_\theta \quad  \forall z_i\in \C.
 \label{eq:Powers.group} 
\end{equation}

In addition, for $\Re z<0$ we have the integral formula, 
\begin{equation*}
 P^z = \frac1{2i\pi}\int_\Gamma \lambda^z (P-\lambda)^{-1}d\lambda, 
\end{equation*}
where the integral converges in $\cL(\cH_\theta)$ and $\Gamma$ is a downward-oriented keyhole contour of the form $\Gamma =\partial \Lambda(r)$, with
\begin{equation*}
 \Lambda(r):= \left\{\lambda \in \C^*; \ \Re \lambda\leq 0 \ \text{or} \ |\lambda|\leq r\right\}, \qquad 0<r<r_0. 
\end{equation*}
Note that $\Lambda(r)$ is a pseudo-cone and $\Lambda(r)\subsubset \Lambda(P)$.

\begin{figure}[h]
\centering{\def\svgwidth{0.35\columnwidth}
\begingroup%
  \makeatletter%
  \providecommand\color[2][]{%
    \errmessage{(Inkscape) Color is used for the text in Inkscape, but the package 'color.sty' is not loaded}%
    \renewcommand\color[2][]{}%
  }%
  \providecommand\transparent[1]{%
    \errmessage{(Inkscape) Transparency is used (non-zero) for the text in Inkscape, but the package 'transparent.sty' is not loaded}%
    \renewcommand\transparent[1]{}%
  }%
  \providecommand\rotatebox[2]{#2}%
  \newcommand*\fsize{\dimexpr\f@size pt\relax}%
  \newcommand*\lineheight[1]{\fontsize{\fsize}{#1\fsize}\selectfont}%
  \ifx\svgwidth\undefined%
    \setlength{\unitlength}{283.46456693bp}%
    \ifx\svgscale\undefined%
      \relax%
    \else%
      \setlength{\unitlength}{\unitlength * \real{\svgscale}}%
    \fi%
  \else%
    \setlength{\unitlength}{\svgwidth}%
  \fi%
  \global\let\svgwidth\undefined%
  \global\let\svgscale\undefined%
  \makeatother%
  \begin{picture}(1,1)%
    \lineheight{1}%
    \setlength\tabcolsep{0pt}%
    \put(0.55088001,0.54775318){\color[rgb]{0,0,0}\makebox(0,0)[lt]{\lineheight{1.25}\smash{\begin{tabular}[t]{l}$r$\end{tabular}}}}%
    \put(0,0){\includegraphics[width=\unitlength,page=1]{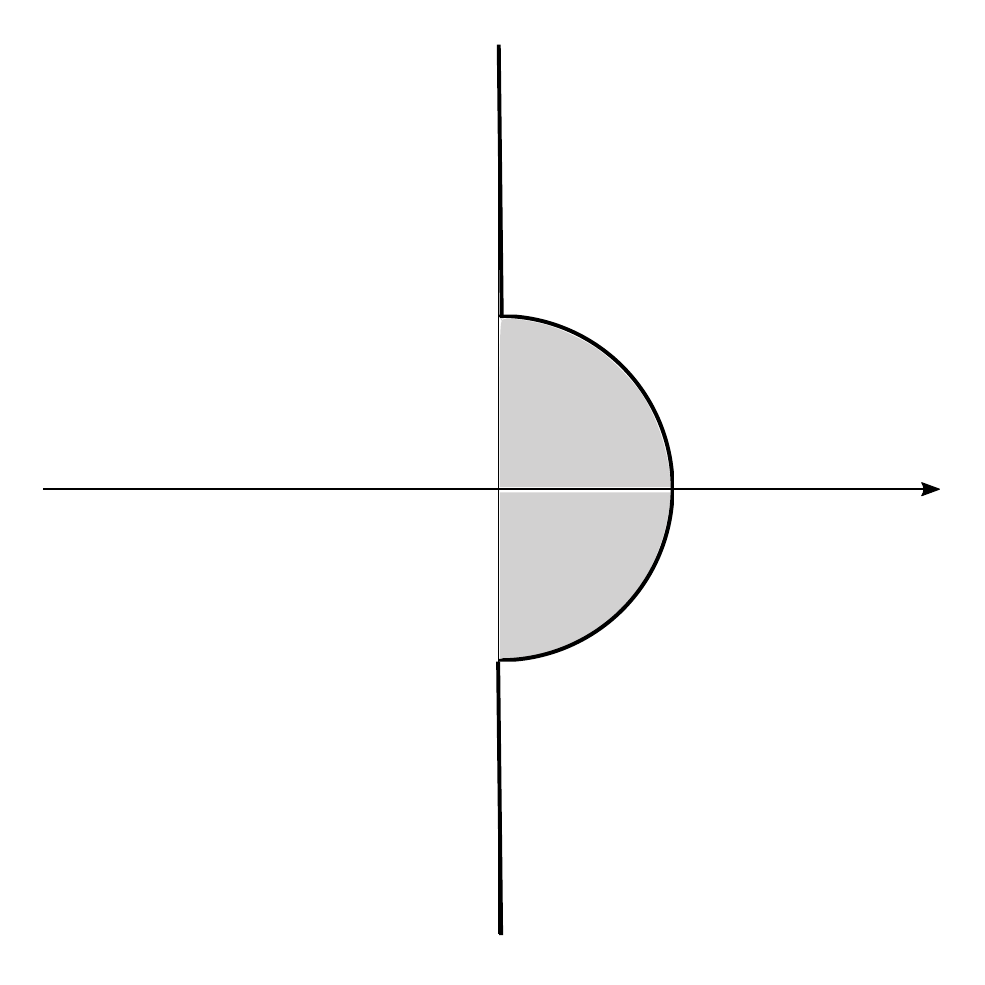}}%
    \put(0.14627678,0.60841667){\color[rgb]{0,0,0}\makebox(0,0)[lt]{\lineheight{1.25}\smash{\begin{tabular}[t]{l}$\Lambda(r)$\end{tabular}}}}%
    \put(0,0){\includegraphics[width=\unitlength,page=2]{lambdar2.pdf}}%
    \put(0.52614292,0.89983631){\color[rgb]{0,0,0}\makebox(0,0)[lt]{\lineheight{1.25}\smash{\begin{tabular}[t]{l}$\Gamma$\end{tabular}}}}%
  \end{picture}%
\endgroup%
}
\caption{Pseudo-Cone $\Lambda(r)$ and Contour $\Gamma=\partial\Lambda(r)$}
\end{figure} 

The main aim of this section is to show that the powers $P^z$ are \psidos. To reach this end we need a couple of lemmas. 

\begin{lemma}\label{lem:Powers.integration-standard-symbols}
Suppose that $\Lambda$ is an open pseudo-cone containing $\Gamma$. Let $\sigma(\xi; \lambda)\in \stS^{m,d}(\R^n\times \Lambda;\cA_\theta)$ with $m\in \R$ and $d<-1$. 
\begin{enumerate}
 \item[(i)] The integral $\rho(\xi)=\int_\Gamma \sigma(\xi;\lambda)d\lambda$ converges in $\stS^{m}(\R^n;\cA_\theta)$. 
 
 \item[(ii)] We have
                 \begin{equation*}
                      \int_\Gamma  P_{\sigma}(\lambda) d\lambda= P_{\rho},
                \end{equation*}
                where the integral converges in $\cL(\cA_\theta)$. 
\end{enumerate}
\end{lemma}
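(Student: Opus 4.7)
The plan is to realize $\sigma(\xi;\lambda)$, via the identification~(\ref{eq:Parameter.parametric-symbols-identification}) of Remark~\ref{rmk:Parameter.standard-symbols-with-parameter-identification}, as an $\Hol^d(\Lambda)$-family in the Fr\'echet space $\stS^m(\R^n;\cA_\theta)$, and then to prove that this family is absolutely integrable along $\Gamma$ in every seminorm of $\stS^m(\R^n;\cA_\theta)$. Once this is done, part~(i) follows from the completeness of $\stS^m(\R^n;\cA_\theta)$, and part~(ii) is obtained by transporting the integral through the continuous quantization map $\rho\mapsto P_\rho$.

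The first task is to produce a pseudo-cone $\Lambda'$ with $\Gamma\subset \Lambda'\subsubset \Lambda$. Since $\Lambda$ is an open pseudo-cone containing the unbounded curve $\Gamma$, whose two unbounded branches escape to $\pm i\infty$, the conical part $\Theta$ of $\Lambda$ must contain the directions $\pm i$. I would pick an open cone $\Theta'$ with $\overline{\Theta'}\setminus \{0\}\subset \Theta$ still containing $\pm i$, and a bounded open set $V$ covering the compact part of $\Gamma$ (the arc and the bounded portions of the two rays) with $\overline{V}\subset \Lambda$; then $\Lambda':= \Theta'\cup V$ is a connected open set agreeing with $\Theta'$ off a large enough disk, hence a pseudo-cone, and by construction $\Gamma\subset \Lambda'\subsubset \Lambda$.

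With such a $\Lambda'$ in hand, the estimate~(\ref{eq:Parameter.standard-symbol-with-parameter-estimate}), read through the isomorphism $\stS^{m,d}(\R^n\times \Lambda; \cA_\theta)\simeq \Hol^d(\Lambda;\stS^m(\R^n;\cA_\theta))$, says that for every continuous seminorm $p$ on $\stS^m(\R^n;\cA_\theta)$ there is a constant $C_p>0$ with $p[\sigma(\cdot;\lambda)]\leq C_p(1+|\lambda|)^d$ for all $\lambda\in\Lambda'$, and in particular on $\Gamma$. Parametrizing $\Gamma$ directly gives
\begin{equation*}
 \int_\Gamma (1+|\lambda|)^d\,|d\lambda| \,\leq\, 2\int_r^{\infty}(1+y)^d\,dy + \pi r (1+r)^d \,<\, \infty,
\end{equation*}
where finiteness is ensured by the hypothesis $d<-1$. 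Hence $\lambda\mapsto\sigma(\cdot;\lambda)$ is absolutely integrable along $\Gamma$ with values in the Fr\'echet space $\stS^m(\R^n;\cA_\theta)$ (see Appendix~\ref{sec:Improper-Integrals}), and the limit $\rho(\xi):=\int_\Gamma \sigma(\xi;\lambda)\,d\lambda$ lies in $\stS^m(\R^n;\cA_\theta)$, which proves~(i).

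For~(ii), the quantization map $\stS^m(\R^n;\cA_\theta)\ni \rho\mapsto P_\rho\in\cL(\cA_\theta)$ is continuous when $\cL(\cA_\theta)$ is given its strong topology. Since continuous linear maps commute with absolutely convergent integrals in locally convex spaces, applying this map to the integral from~(i) yields $P_\rho = \int_\Gamma P_\sigma(\lambda)\,d\lambda$ with convergence in $\cL(\cA_\theta)$. The only mildly non-routine ingredient in the whole argument is the geometric construction of $\Lambda'$; everything else reduces to standard Bochner-type integration in Fr\'echet spaces.
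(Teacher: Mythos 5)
Your argument matches the paper's proof: both realize $\sigma(\xi;\lambda)$ as an $\Hol^d(\Lambda)$-family in $\stS^m(\R^n;\cA_\theta)$ via the identification~(\ref{eq:Parameter.parametric-symbols-identification}), use $d<-1$ to get absolute integrability along $\Gamma$ and hence convergence in the Fr\'echet space $\stS^m(\R^n;\cA_\theta)$, and push the result through the continuous quantization map for part~(ii) --- the paper simply bundles these steps into a citation of Proposition~\ref{prop:AppendixA.Hold}. Your construction of an intermediate pseudo-cone $\Lambda'$ is superfluous: since the keyhole contour $\Gamma$ is itself a closed pseudo-cone and $\Lambda$ is open, the relation $\Gamma\subsubset\Lambda$ holds automatically, so the $\Hol^d$ bound along $\Gamma$ is already one of the defining semi-norms of $\Hol^d(\Lambda;\stS^m(\R^n;\cA_\theta))$.
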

\begin{proof}
The first part follows from the 1st part of Proposition~\ref{prop:AppendixA.Hold} and the fact that $\stS^{m}(\R^n;\cA_\theta)$ is a Fr\'echet space. The 2nd part follows from the 2nd part of Proposition~\ref{prop:AppendixA.Hold} and the continuity of the linear map $\stS^m(\R^n;\cA_\theta)\ni \rho \rightarrow P_\rho \in \cL(\cA_\theta)$. This completes the proof. 
\end{proof}

Define 
\begin{equation*}
 c=\inf_{|\xi|=1} \left\|\rho_w(\xi)^{-1}\right\|^{-1} \quad \text{and} \quad c'=\sup_{|\xi|=1} \left\|\rho_w(\xi)\right\|. 
\end{equation*}
The ellipticity of $P$ ensures us that $c>0$. Moreover, as $\rho_w(\xi)$ has positive spectrum, its spectrum is contained in the interval 
$[\|\rho_w(\xi)^{-1}\|^{-1}, \|\rho_w(\xi)\|]$ for every $\xi\neq 0$. As by homogeneity $\rho_w(\xi)=|\xi|^w\rho_w(|\xi|^{-1}\xi)$, we deduce that
\begin{equation}
 \Sp\left( \rho_w(\xi)\right) \subset \left[c|\xi|^w,c'|\xi|^w\right]\qquad \forall \xi\in \R^n\setminus 0. 
 \label{eq:Powers.spectrum-rhow}
\end{equation}
In what follows we define $\Omega_c(P)$ as in~(\ref{eq:Resolvent.OmegacP}). Namely, as $\Theta(P)=\C\setminus [0,\infty)$, we have
\begin{equation*}
 \Omega_c(P)=\left\{ (\xi,\lambda)\in (\R^n\setminus 0)\times \C; \ \lambda\in \C\setminus [c|\xi|^w,\infty)\right\}. 
\end{equation*}

\begin{lemma}\label{lem:Powers.integration-homogeneous-symbols}
 Let $z\in \C$, $\Re z <0$, and $\sigma(\xi; \lambda) \in S_{m}^{-1}(\Omega_c(P); \cA_\theta)$, $m\in \R$. In addition, let $\Gamma$ be a contour of the form $\Gamma=\partial \Lambda(r)$ with $0<r<c$. Then the integral 
\begin{equation*}
 \rho(\xi):= \int_{|\xi|^w\Gamma} \lambda^z \sigma(\xi;\lambda)d\lambda, \qquad \xi \neq 0, 
\end{equation*}
 converges in $\cA_\theta$ and defines a homogeneous symbol in $S_{m+w(z+1)}(\R^n;\cA_\theta)$.
\end{lemma}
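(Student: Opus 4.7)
The plan is to reduce the problem to an integral over the fixed contour $\Gamma$ by a rescaling of the contour variable, and then exploit the homogeneity of $\sigma$ together with the decay estimate provided by $d=-1$.

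First, I would verify that the integral makes sense, i.e.\ that the contour $|\xi|^w\Gamma$ lies in the domain of definition of $\sigma$ and avoids the cut of $\lambda\mapsto\lambda^z$. Because $\Gamma=\partial\Lambda(r)$ with $r<c$, every $\mu\in\Gamma$ satisfies either $\mu\in \C\setminus[0,\infty)=\Theta(P)$ (the part on the imaginary axis) or $|\mu|\leq r<c$ (the arc of radius $r$), so $(\eta,\mu)\in\Omega_c(P)$ for every $\eta\in\bS^{n-1}$. Rescaling gives $|\xi|^w\Gamma\subset\{\lambda\in\C^*; \lambda\in\Theta(P)\ \text{or}\ |\lambda|\leq r|\xi|^w<c|\xi|^w\}$, so $(\xi,\lambda)\in\Omega_c(P)$ for every $\lambda\in|\xi|^w\Gamma$. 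In addition $\Gamma\subset \C\setminus(-\infty,0]$, so the factor $\lambda^z$ is holomorphic in a neighbourhood of $|\xi|^w\Gamma$.

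Next I would perform the change of variable $\lambda=|\xi|^w\mu$ with $\mu\in\Gamma$. Using $d\lambda=|\xi|^w d\mu$ and the homogeneity $\sigma(\xi;|\xi|^w\mu)=|\xi|^m\sigma(|\xi|^{-1}\xi;\mu)$ we arrive at
\begin{equation*}
 \rho(\xi)=|\xi|^{m+w(z+1)}\int_\Gamma \mu^z\,\sigma\bigl(|\xi|^{-1}\xi;\mu\bigr)\,d\mu.
\end{equation*}
The convergence of this integral (in $\cA_\theta$) now follows from the two pointwise bounds $|\mu^z|\leq C|\mu|^{\Re z}$ on $\Gamma$ (since $\arg\mu$ is bounded on $\Gamma$) and $\|\sigma(\eta;\mu)\|\leq C(1+|\mu|)^{-1}$ uniformly for $\eta\in\bS^{n-1}$ (property (ii) of Definition~\ref{def:Parameter.homogeneous-symbol} applied to the compact set $K=\bS^{n-1}$ and the full cone $\Theta(P)$). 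The integrand is therefore $\op{O}(|\mu|^{\Re z-1})$ at infinity, and $\Re z<0$ makes this integrable; on the compact arc it is bounded. Similar bounds on $\delta^\alpha\partial_\xi^\beta\sigma(\eta;\mu)$ obtained from the same property let us differentiate under the integral sign, proving $\rho\in C^\infty(\R^n\setminus 0;\cA_\theta)$.

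Once the formula above is established, the homogeneity is immediate: repeating the substitution with $t\xi$ in place of $\xi$, or simply reading off the factor $|\xi|^{m+w(z+1)}$ and noting that $\xi\mapsto |\xi|^{-1}\xi$ is degree $0$, gives $\rho(t\xi)=t^{m+w(z+1)}\rho(\xi)$ for all $t>0$. Combined with the smoothness, this yields $\rho(\xi)\in S_{m+w(z+1)}(\R^n;\cA_\theta)$.

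The main point to handle carefully is the first step: checking that $|\xi|^w\Gamma$ lies in $\Omega_c(P)$ for every $\xi\neq 0$ and that $\lambda^z$ is well defined there. This is exactly where the assumption $r<c$ enters. Everything else is a direct consequence of the homogeneity of $\sigma$ and of the decay built into the exponent $d=-1$.
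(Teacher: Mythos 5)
Your argument is correct and reaches the conclusion by a genuinely different route from the paper's. The paper works directly with the $\xi$-dependent contour $|\xi|^w\Gamma$: it first establishes convergence in $\cA_\theta$ for each fixed $\xi$ by viewing the integrand as a $\Hol^{-1+\Re z}(\Lambda'(|\xi|);\cA_\theta)$-family and invoking Proposition~\ref{prop:AppendixA.Hold}; it then obtains smoothness by deforming the contour to a fixed $R^w\Gamma$ on $U_R=\{|\xi|>R\}$ and integrating in the Fr\'echet space $C^\infty(U_R;\cA_\theta)$; and only at the end does it verify homogeneity through Proposition~\ref{prop:Integrals.scaling-contour}. You instead rescale immediately, factor the integral as
\begin{equation*}
\rho(\xi)=|\xi|^{m+w(z+1)}\,F\bigl(|\xi|^{-1}\xi\bigr),\qquad F(\eta)=\int_\Gamma \mu^z\,\sigma(\eta;\mu)\,d\mu,
\end{equation*}
so that the homogeneity degree is read off directly and the analytic content reduces to showing that $F$ is a smooth $\cA_\theta$-valued map on a neighbourhood of $\bS^{n-1}$. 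This is a cleaner and slightly shorter route; it buys you an explicit factorization and makes the homogeneity statement essentially tautological, at the cost of having to justify the scaling of an improper contour integral (which Proposition~\ref{prop:Integrals.scaling-contour} covers, provided you first establish convergence of $\int_\Gamma$, which you do).

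Two small points to tighten. First, Definition~\ref{def:Parameter.homogeneous-symbol}(ii) gives uniform $(1+|\mu|)^{-1}$ bounds only on $K\times\Theta'$ for cones $\Theta'$ with $\overline{\Theta'}\setminus\{0\}\subset\Theta(P)$; you cannot quote it ``for the full cone $\Theta(P)$.'' Since the two rays of $\Gamma$ lie on the imaginary axis, it suffices (and one should say so) to take, e.g., $\Theta'=\{\pi/4<|\arg\lambda|<3\pi/4\}$, exactly the subcone the paper introduces, which contains the rays and is compactly contained in $\C\setminus[0,\infty)$. Second, you need $F$ to be smooth on an \emph{open} neighbourhood of $\bS^{n-1}$, not just on $\bS^{n-1}$ itself; this holds because for $\eta$ with $|\eta|>(r/c)^{1/w}$ the arc of $\Gamma$ lies in $\{|\mu|<c|\eta|^w\}$, so $\{\eta\}\times\Gamma\subset\Omega_c(P)$, and the condition $r<c$ guarantees that this neighbourhood contains $\bS^{n-1}$. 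With those clarifications, the argument is complete.
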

\begin{proof}
As $\sigma(\xi;\lambda)$ is in $S^{-1}_m(\Omega_c(P);\cA_\theta)$ this is a $C^{\infty,\omega}$-map from $\Omega_c(P)$ to $\cA_\theta$. Set 
\begin{equation*}
 \Theta=\bigg\{\lambda\in \C^*; \frac{\pi}4<|\arg \lambda|<\frac{3\pi}{4}\bigg\}.
\end{equation*}
We get an open cone containing $i\R\setminus 0$ such that $\overline{\Theta}\setminus 0$ is contained in $\C\setminus [0,\infty)=\Theta(P)$. Thus, by the very definition of the space $S_{m}^{-1}(\Omega_c(P);\cA_\theta)$, for any compact $K\subset \R^n\setminus 0$ and multi-orders $\alpha$ and $\beta$, there is $C_{K\alpha\beta}>0$ such that 
\begin{equation}
 \big\| \delta^\alpha \partial_\xi^\beta \sigma(\xi;\lambda) \big\| \leq C_{K\alpha \beta} \big(1+|\lambda|\big)^{-1} \qquad \forall (\xi,\lambda)\in K\times \Theta.
 \label{eq:Powers.estimates-sigma-homogeneous}  
\end{equation}

For $s>0$ we also set
\begin{equation*}
 \Lambda'(s) := \Theta \cup \bigg\{\lambda \in \C^*; |\arg \lambda|\leq \frac{\pi}{4}\ \text{and}\ |\lambda|<cs^w\bigg\}.  
\end{equation*}
Thus, $\Lambda'(s)$ is an open pseudo-cone with conical part  $\Theta$. Note that $\lambda^z \in \Hol^{\Re z}(\Lambda(s))$. We also observe that the contour  $|\xi|^w\Gamma$ is contained in $\Lambda'(s)$ for $s\geq |\xi|$. Furthermore, if $s\leq |\xi|$ and $\lambda \in \C^*$ is such that $|\lambda|<cs^w$, then $|\lambda|<c|\xi|^w$, and so $(\xi,\lambda)\in \Omega_c(P)$. Thus, 
\begin{equation*}
 \{\xi\} \times \Lambda'(s) \subset \Omega_c(P) \qquad \text{for all $s\in (0,|\xi|]$}. 
\end{equation*}

\begin{figure}[h]
\centering{\def\svgwidth{0.35\columnwidth}
\begingroup%
  \makeatletter%
  \providecommand\color[2][]{%
    \errmessage{(Inkscape) Color is used for the text in Inkscape, but the package 'color.sty' is not loaded}%
    \renewcommand\color[2][]{}%
  }%
  \providecommand\transparent[1]{%
    \errmessage{(Inkscape) Transparency is used (non-zero) for the text in Inkscape, but the package 'transparent.sty' is not loaded}%
    \renewcommand\transparent[1]{}%
  }%
  \providecommand\rotatebox[2]{#2}%
  \newcommand*\fsize{\dimexpr\f@size pt\relax}%
  \newcommand*\lineheight[1]{\fontsize{\fsize}{#1\fsize}\selectfont}%
  \ifx\svgwidth\undefined%
    \setlength{\unitlength}{283.46456693bp}%
    \ifx\svgscale\undefined%
      \relax%
    \else%
      \setlength{\unitlength}{\unitlength * \real{\svgscale}}%
    \fi%
  \else%
    \setlength{\unitlength}{\svgwidth}%
  \fi%
  \global\let\svgwidth\undefined%
  \global\let\svgscale\undefined%
  \makeatother%
  \begin{picture}(1,1)%
    \lineheight{1}%
    \setlength\tabcolsep{0pt}%
    \put(0,0){\includegraphics[width=\unitlength,page=1]{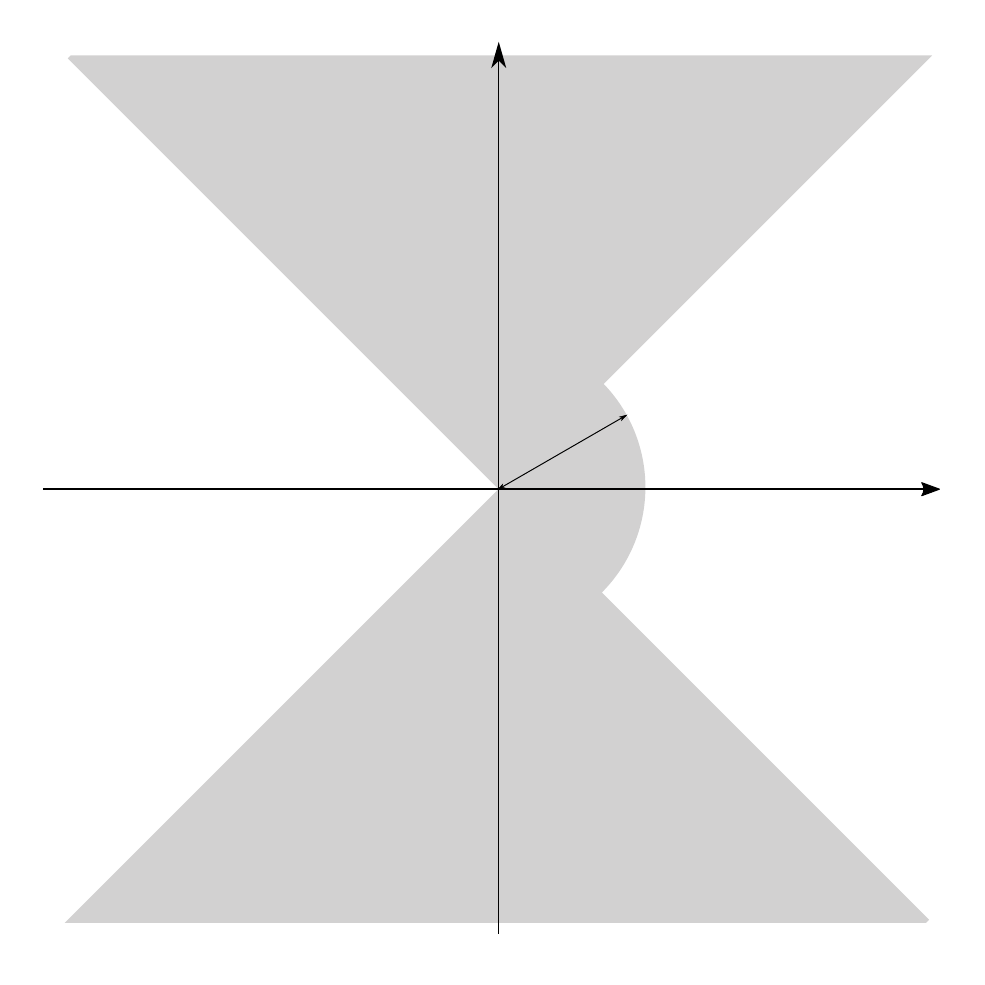}}%
    \put(0.55378361,0.55313503){\color[rgb]{0,0,0}\makebox(0,0)[lt]{\lineheight{1.25}\smash{\begin{tabular}[t]{l}$cs^w$\end{tabular}}}}%
    \put(0,0){\includegraphics[width=\unitlength,page=2]{lambdaprimes.pdf}}%
    \put(0.60854172,0.83747025){\color[rgb]{0,0,0}\makebox(0,0)[lt]{\lineheight{1.25}\smash{\begin{tabular}[t]{l}$\Lambda'(s)$\end{tabular}}}}%
  \end{picture}%
\endgroup%
}
\caption{Pseudo-Cone $\Lambda'(s)$}
\end{figure}

Let $\xi\in \R^n\setminus 0$. As $\{\xi\}\times \Lambda'(|\xi|)\subset \Omega_c(P)$ the map $\lambda \rightarrow \sigma(\xi;\lambda)$ is  holomorphic map from $\Lambda'(|\xi|)$ to $\cA_\theta$. Combining this with the estimates~(\ref{eq:Powers.estimates-sigma-homogeneous}) for  $K=\{\xi\}$  and $\beta=0$ we see that this is a $\Hol^{-1}(\Lambda(|\xi|))$-map, and so the map $ \lambda \rightarrow \lambda^z\sigma(\xi;\lambda)$ is in $\Hol^{-1+\Re z}(\Lambda'(|\xi|);\cA_\theta)$. Recall that $|\xi|^w \Gamma \subset \Lambda'(|\xi|)$ and $\cA_\theta$ is a Fr\'echet space. Therefore, by the 1st~part of Proposition~\ref{prop:AppendixA.Hold}, for every $\xi\neq 0$, the integral $\rho(z;\xi):=  \int_{|\xi|^w\Gamma} \lambda^z \sigma(\xi;\lambda)d\lambda$ converges in $\cA_\theta$. Furthermore, by the 3rd~part of Proposition~\ref{prop:AppendixA.Hold} we may replace 
the contour $|\xi|^w\Gamma$ by $s^w\Gamma$ with any $s\in(0,|\xi|]$.

Let $R>0$ and set $U_{R}=\{\xi\in \R^n; \ |\xi|>R\}$. If $\xi\in U_{R}$, then $R^w<|\xi|^w$, and so we have 
\begin{equation*}
 \rho(\xi) =  \int_{R^w\Gamma} \lambda^z \sigma(\xi;\lambda)d\lambda \qquad \forall \xi \in U_{R}.
\end{equation*}
In addition, if $0<|\lambda|<cR^w$, then $|\lambda|<c|\xi|^w$, and so $(\xi,\lambda)\in \Omega_c(P)$. Therefore, we see that $U_R\times \Lambda'(R)\subset \Omega_c(P)$. 
Combining this with the estimates~(\ref{eq:Powers.estimates-sigma-homogeneous}) shows that $\sigma(\xi;\lambda)\in C^{\infty,-1}(U_R\times \Lambda'(R);\cA_\theta)$, and hence $\lambda^z\sigma(\xi;\lambda)\in C^{\infty,-1+\Re z}(U_R\times \Lambda'(R);\cA_\theta)$. As $C^\infty(U_{R};\cA_\theta)$ is a Fr\'echet space (see, e.g., \cite[Proposition~C.23]{HLP:Part1}), we may apply 1st part of Proposition~\ref{prop:AppendixA.Hold} to see that the integral $\int_{R^w\Gamma} \lambda^z \sigma(\xi;\lambda)d\lambda$ converges in $C^\infty(U_{R}; \cA_\theta)$. It then follows that $\rho(\xi)$ is a $C^\infty$-map from $U_{R}$ to $\cA_\theta$ for every $R>0$, and hence it lies in $C^\infty(\R^n\setminus 0; \cA_\theta)$. 

Let us show that $\rho(\xi)$ is homogeneous. Let $\xi\in \R^n\setminus 0$ and $t>0$. By Proposition~\ref{prop:Integrals.scaling-contour} we have
\begin{equation*}
  \rho(t\xi)= \int_{t^w|\xi|^w\Gamma} \lambda^z \sigma(t\xi;\lambda)d\lambda =  \int_{|\xi|^w\Gamma}t^{wz+w} \lambda^z \sigma(t\xi;t^w\lambda)d\lambda. 
\end{equation*}
As $\sigma(t\xi;t^w\lambda)=t^m \sigma(\xi;\lambda)$ we obtain
\begin{equation*}
  \rho(t\xi)=  t^{wz+w+m}\int_{|\xi|^w\Gamma}\lambda^z \sigma(\xi;\lambda)d\lambda= t^{m+w(z+1)} \rho(\xi). 
\end{equation*}
This shows that $\rho(\xi)$ is homogeneous of degree $m+w(z+1)$, and so $\rho(\xi)\in S_{m+w(z+1)}(\R^n; \cA_\theta)$. The proof is complete. 
\end{proof}

It follows from Theorem~\ref{thm:Resolvent.resolvent-is-psido-with-parameter} that $(P-\lambda)^{-1}$ is in $\Psi^{-w,-1}(\cA_\theta; \Lambda(P))$ and has symbol $\sigma(\xi; \lambda) \sim \sum_{j\geq 0} \sigma_{-w-j}(\xi;\lambda)$, where 
\begin{gather}
 \sigma_{-w}(\xi;\lambda)= \big(\rho_{w}(\xi)-\lambda\big)^{-1},
 \label{eq:Powers.symbol-resolvent1}\\
 \sigma_{-w-j}(\xi;\lambda) = -\sum_{\substack{k+l+|\alpha|=j \\ l<j}}\frac{1}{\alpha !}\big(\rho_w(\xi)-\lambda\big)^{-1} \partial_\xi^\alpha \rho_{w-k}(\xi) \delta^\alpha\sigma_{-w-l}(\xi;\lambda), \qquad j\geq 1 .
  \label{eq:Powers.symbol-resolvent2}
\end{gather}
For every $j\geq 0$ the symbol $\sigma_{-w-j}(\xi;\lambda)$ is in $S_{-w-j}^{-1}(\Omega_c(P); \cA_\theta)$, where $\Omega_c(P)$ is defined as above. 

We are now in a position to state and prove the main result of this section. 

\begin{theorem}\label{thm:Powers.powers}
Assume that \textup{(P1)}  and \textup{(P2)} hold. For every $z\in \C$, the operator $P^z$ is in $\Psi^{wz}(\cA_\theta)$ and has symbol $\rho(z;\xi)\sim \sum_{j\geq 0} \rho_{wz-j}(z;\xi)$, with 
\begin{equation}
 \rho_{wz-j}(z;\xi) :=\frac{1}{2i\pi} \int_{\gamma_{\xi}} \lambda^{z} \sigma_{-w-j}(\xi; \lambda)d\lambda,\qquad \xi\neq 0, 
  \label{eq:Powers.homogeneous-symbols}
\end{equation}
where $\sigma_{-w-j}(\xi;\lambda)$ is given by~(\ref{eq:Powers.symbol-resolvent1})--(\ref{eq:Powers.symbol-resolvent2}) and $\gamma_{\xi}$ is any clockwise-oriented Jordan curve in $\C\setminus (-\infty,0]$ that encloses $\Sp(\rho_w(\xi))$. In particular, the principal symbol of $P^{z}$ is $(\rho_{w}(\xi))^{z}$.  
\end{theorem}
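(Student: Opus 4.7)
The plan is to proceed in two stages: first prove the statement for $\Re z<0$ by writing $P^z$ as a contour integral of resolvents and bringing the integrand into the parametric pseudodifferential calculus of Section~\ref{sec:parametric-PsiDOs}; then extend to arbitrary $z\in\C$ using the group property~(\ref{eq:Powers.group}) together with analytic continuation in~$z$.

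For $\Re z<0$, I would start from $P^z=\frac{1}{2i\pi}\int_\Gamma \lambda^z(P-\lambda)^{-1}d\lambda$ with $\Gamma=\partial\Lambda(r)$, $0<r<r_0$. The contour $\Gamma$ avoids the branch cut $(-\infty,0]$ of $\lambda^z$ and lies inside the pseudo-cone $\Lambda':=\C\setminus\bigl((-\infty,0]\cup[r_0,\infty)\bigr)\subset\Lambda(P)$. By Theorem~\ref{thm:Resolvent.resolvent-is-psido-with-parameter} the resolvent restricts to a $\Psi$DO with parametric symbol $\sigma(\xi;\lambda)\in S^{-w,-1}(\R^n\times\Lambda';\cA_\theta)$, $\sigma\sim\sum_{j\geq0}\sigma_{-w-j}$, while $\lambda^z\in\Hol^{\Re z}(\Lambda')$, so $\lambda^z\sigma(\xi;\lambda)\in\stS^{-w,\Re z-1}(\R^n\times\Lambda';\cA_\theta)$ with $\Re z-1<-1$. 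Lemma~\ref{lem:Powers.integration-standard-symbols} then identifies $P^z$ with $P_{\rho(z;\cdot)}$ for $\rho(z;\xi):=\frac{1}{2i\pi}\int_\Gamma\lambda^z\sigma(\xi;\lambda)d\lambda\in\stS^{-w}(\R^n;\cA_\theta)$.

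To obtain the claimed asymptotic expansion I would split $\sigma=\sum_{j<N}(1-\chi)\sigma_{-w-j}+r_N$ with $r_N\in\stS^{-w-N,-1}$ (Remark~\ref{rmk:Parameter.classical-symbol-asymptotics-equivalent-conditions}) and integrate termwise: the remainder produces an $\stS^{-w-N}$-term by Lemma~\ref{lem:Powers.integration-standard-symbols}, and each $\frac{1}{2i\pi}\int_\Gamma\lambda^z\sigma_{-w-j}(\xi;\lambda)d\lambda$ coincides with the integral in~(\ref{eq:Powers.homogeneous-symbols}) by contour deformation: $\sigma_{-w-j}(\xi;\cdot)$ extends holomorphically to $\C\setminus\Sp(\rho_w(\xi))\supset\C\setminus[c|\xi|^w,c'|\xi|^w]$ by~(\ref{eq:Powers.spectrum-rhow}), and both $\Gamma$ and $\gamma_\xi$ are clockwise curves in $\C\setminus(-\infty,0]$ enclosing $\Sp(\rho_w(\xi))$. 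A separate deformation to the scaled contour $|\xi|^w\partial\Lambda(r')$ with $0<r'<c$ puts us in the hypothesis of Lemma~\ref{lem:Powers.integration-homogeneous-symbols}, which yields $\rho_{wz-j}(z;\xi)\in S_{wz-j}(\R^n;\cA_\theta)$. The case $j=0$ is the holomorphic functional calculus in $\cA_\theta$ applied to the positive element $\rho_w(\xi)$, producing $\rho_{wz}(z;\xi)=\rho_w(\xi)^z$. This establishes $\rho(z;\cdot)\in S^{wz}(\R^n;\cA_\theta)$ and hence $P^z\in\Psi^{wz}(\cA_\theta)$ together with the expansion~(\ref{eq:Powers.homogeneous-symbols}) for $\Re z<0$.

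For general $z\in\C$, choose $k\in\N_0$ with $\Re(z-k)<0$; then~(\ref{eq:Powers.group}) gives $P^z=P^kP^{z-k}$ on $\cA_\theta$. Iterating $P\in\Psi^w(\cA_\theta)$ via Proposition~\ref{prop:Composition.composition-PsiDOs} gives $P^k\in\Psi^{wk}(\cA_\theta)$, and composing with $P^{z-k}\in\Psi^{w(z-k)}(\cA_\theta)$ from the previous stage yields $P^z\in\Psi^{wz}(\cA_\theta)$. The main obstacle is verifying that the homogeneous symbols of this composition reproduce~(\ref{eq:Powers.homogeneous-symbols}) for arbitrary $z$. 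I would handle this by analytic continuation in~$z$: for each fixed $\xi\neq 0$ the integral defining $\rho_{wz-j}(z;\xi)$ in~(\ref{eq:Powers.homogeneous-symbols}) depends holomorphically on $z$ (the integrand does, and $\gamma_\xi$ can be chosen $z$-independently), and the explicit formula~(\ref{eq:PsiDOs-parameter.composition-symbol-homogeneous-parts}) expresses the homogeneous components of the composite symbol of $P^kP^{z-k}$ as a finite polynomial expression in the homogeneous symbols of $P$ and those of $P^{z-k}$, the latter being entire in $z$ by the previous stage. Both $\cA_\theta$-valued entire functions of $z$ already coincide on $\Re z<0$, where $P^kP^{z-k}=P^z$ has a single well-defined symbol modulo $\cS(\R^n;\cA_\theta)$ by Remark~\ref{rem:PsiDOs.symbol-uniqueness}, so they must agree for all $z\in\C$.
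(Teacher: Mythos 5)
Your proposal follows essentially the same approach as the paper: write $P^z$ as a contour integral over $\Gamma=\partial\Lambda(r)$ for $\Re z<0$, apply Lemma~\ref{lem:Powers.integration-standard-symbols} to transport the integral to the symbol level, split the resolvent symbol into cutoff homogeneous terms plus a remainder and integrate term by term, use contour deformation to pass to $\gamma_\xi$, invoke Lemma~\ref{lem:Powers.integration-homogeneous-symbols} for the homogeneity, and then extend to all $z$ via the group property and analytic continuation. One small technical slip: from Proposition~\ref{symbols:inclusion-classical-standard} applied to $\sigma(\xi;\lambda)\in S^{-w,-1}$ you do not get $\sigma\in\stS^{-w,-1}$ and hence $\lambda^z\sigma\in\stS^{-w,\Re z-1}$ as you claim; the sharp conclusions available are $\sigma\in\stS^{0,-1}$ (yielding $\lambda^z\sigma\in\stS^{0,\Re z-1}$, which is what the paper uses) or $\sigma\in\stS^{-w,0}$ (yielding $\lambda^z\sigma\in\stS^{-w,\Re z}$, only usable when $\Re z<-1$). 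This only mislabels the preliminary symbol class for $\rho(z;\cdot)$ as $\stS^{-w}$ rather than $\stS^0$; since the final order $w\Re z$ is obtained from the asymptotic expansion anyway, the argument still goes through.
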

\begin{proof}
 Let $z\in \C$, $\Re z<0$. As mentioned above $(P-\lambda)^{-1}=P_{\sigma}(\lambda)$ with $\sigma(\xi; \lambda) \in S^{-w,-1}(\R^n\times \Lambda(P); \cA_\theta)$ such that $\sigma(\xi;\lambda)\sim \sum \sigma_{-w-j}(\xi;\lambda)$, where $\sigma_{-w-j}(\xi;\lambda)$ is given by~(\ref{eq:Powers.symbol-resolvent1})--(\ref{eq:Powers.symbol-resolvent2}). In particular, by using Proposition~\ref{symbols:inclusion-classical-standard} we see that $\sigma(\xi; \lambda)\in \stS^{0,-1}(\R^n\times \Lambda(P); \cA_\theta)$.   
It then follows from Lemma~\ref{lem:Powers.integration-standard-symbols} that the integral $\rho(z;\xi)=(2i\pi)^{-1}\int_\Gamma \lambda^{z} \sigma(\xi;\lambda) d\lambda$ converges in $\stS^{0}(\R^n;\cA_\theta)$, and on $\cA_\theta$ we have 
\begin{equation*}
 P^{z}= \frac{1}{2i\pi} \int_\Gamma \lambda^{z} \frac{d\lambda}{P-\lambda}= \frac1{2i\pi} \int_\Gamma \lambda^{z} P_\sigma(\lambda)d\lambda= P_{\rho(z;\cdot)}, 
\end{equation*}
where the 2nd integral converges in $\cL(\cA_\theta)$. 

Without any loss of generality we may assume that $\Gamma=\partial\Lambda(r)$ with $0<r<\min(r_0,c)$. Given any $j\geq 0$, as $\sigma_{-w-j}(\xi;\lambda)$ is contained in $S_{-w-j}^{-1}(\Omega_c(P); \cA_\theta)$, it follows from Lemma~\ref{lem:Powers.integration-homogeneous-symbols} that we define a homogeneous symbol in $S_{m(j)}(\R^n;\cA_\theta)$ with $m(j)=(-w-j)+w(z+1)=wz-j$ by letting
\begin{equation}
 \rho_{wz-j}(z;\xi):= \frac{1}{2i\pi}\int_{|\xi|^w\Gamma} \lambda^z \sigma_{-w-j}(\xi;\lambda) d\lambda, \qquad \xi \neq 0.
 \label{eq:Powers.rhowj-formula-xiwGamma} 
\end{equation}

Let $\xi\in \R^n\setminus 0$. Recall that $\sigma_{-w}(\lambda;\xi)=(\rho_w(\xi)-\lambda)^{-1}$, and so $\lambda \rightarrow \sigma_{-w}(\xi;\lambda)$ is a holomorphic map from $\C\setminus \Sp(\rho_w(\xi))$ to $\cA_\theta$. Using~(\ref{eq:Powers.symbol-resolvent2}) and arguing by induction then shows that, for every $j\geq 0$, the map $\lambda \rightarrow \sigma_{-w-j}(\xi;\lambda)$ is holomorphic on $\C\setminus \Sp(\rho_w(\xi))$. 
We also know from~(\ref{eq:Powers.spectrum-rhow}) that $\Sp(\rho_w(\xi))\subset [c|\xi|^w,c'|\xi|^w]$. As $|\xi|^w\Gamma \subset \C\setminus (r|\xi|^w,\infty)\subset\C \setminus [c|\xi|^w,\infty)$, we see that the contour $|\xi|^w\Gamma$ is contained in $\C\setminus \Sp(\rho_w(\xi))$. Therefore, by the 3rd part of Proposition~\ref{prop:AppendixA.Hold}, 
in the formula~(\ref{eq:Powers.rhowj-formula-xiwGamma}) we may replace the contour $|\xi|^w\Gamma$ by any clockwise-oriented Jordan curve $\gamma_{\xi}$ in $\C\setminus (-\infty,0]$ whose interior contains $\Sp(\rho_w(\xi))$. That is, we have
\begin{equation}
 \rho_{wz-j}(z;\xi)=  \frac{1}{2i\pi}\int_{\gamma_{\xi}} \lambda^z \sigma_{-w-j}(\xi;\lambda) d\lambda, \qquad j\geq 0.
 \label{eq:Powers.rhowz-integral-gammaxi} 
\end{equation}
In particular, for $j=0$ we get 
\begin{equation*}
 \rho_{wz}(z;\xi)= \frac{1}{2i\pi}\int_{\gamma_{\xi}} \lambda^z \big(\rho_{w}(\xi)-\lambda\big)^{-1} d\lambda= \rho_w(\xi)^z.
 \end{equation*}

Let us show that $\rho(z;\xi) \sim \sum_{j\geq 0} \rho_{wz-j}(z;\xi)$.  By Remark~\ref{rmk:Parameter.classical-symbol-asymptotics-equivalent-conditions} the asymptotic expansion 
$\sigma(\xi;\lambda)\sim \sum_{j\geq 0} \sigma_{-w-j}(\xi;\lambda)$ implies that, for all $N\geq 0$ and $J\geq N+w$, there is $r_{NJ}(\xi; \lambda)$ in $\stS^{-w-N,-1}(\R^n\times \Lambda(P); \cA_\theta)$ such that 
\begin{equation*}
 \sigma(\xi;\lambda) = \sum_{j<J}\big(1-\chi(\xi)\big)\sigma_{-w-j}(\xi;\lambda) + r_{NJ}(\xi; \lambda), 
\end{equation*}
where $\chi(\xi)\in C^\infty_c(\R^n)$ is as in Lemma~\ref{lem:Parameter.homogeneous-symbol-estimate}. In our setting this means that $\chi(\xi)=1$ for $|\xi|<(c^{-1}R)^{\frac1w}$ for some $R\geq r_0$. Moreover, it follows from Lemma~\ref{lem:Parameter.homogeneous-symbol-estimate} that $(1-\chi(\xi))\sigma_{-w-j}(\xi)$ is contained in $\stS^{-j,-1}(\R^n\times \Lambda(P);\cA_\theta)$. In any case, for all $\xi \in \R^n$, we have 
\begin{align*}
 \rho(z;\xi) &= \frac{1}{2i\pi} \int_\Gamma \lambda^{z} \sigma(\xi;\lambda) d\lambda\\ 
 & = \sum_{j<J}\frac{1}{2i\pi} \int_\Gamma \lambda^{z}  \big(1-\chi(\xi)\big)\sigma_{-w-j}(\xi;\lambda) d\lambda +  
\frac{1}{2i\pi} \int_\Gamma \lambda^{z}   r_{NJ}(\xi; \lambda)d\lambda.
 \end{align*}
 
 It follows from Lemma~\ref{lem:Powers.integration-standard-symbols} that the integral $\int_\Gamma \lambda^{z}   r_{NJ}(\xi; \lambda)d\lambda$ converges in $\stS^{-w-N}(\R^n; \cA_\theta)$. In addition, let $j\geq 0$ and $\xi\in \R^n$. We claim that
 \begin{equation}
 \frac{1}{2i\pi} \int_\Gamma \lambda^{z}  \big(1-\chi(\xi)\big)\sigma_{-w-j}(\xi;\lambda) d\lambda = \left(1-\chi(\xi)\right)\rho_{wz-j}(z;\xi).
 \label{eq:Powers.cut-off-symbols-integral} 
\end{equation}
 If $|\xi|\leq (c^{-1}R)^{\frac1w}$, then both sides of the equation are zero, and so the equation holds. Suppose now that $|\xi|>(c^{-1}R)^{\frac1w}$. This implies that 
 $c|\xi|^w>R\geq r_0>r$. As $\Sp(\rho_w(\xi))\subset [c|\xi|^w,c'|\xi|^w]$ we see that $\Gamma$ is contained in $\C\setminus \Sp(\rho_w(\xi))$. Therefore, in the same way as above, in the integral in~(\ref{eq:Powers.cut-off-symbols-integral}) we may replace the contour $\Gamma$ by any Jordan curve $\gamma_{\xi}$ as in~(\ref{eq:Powers.rhowz-integral-gammaxi}). Thus, 
\begin{align*}
  \frac{1}{2i\pi} \int_\Gamma \lambda^{z}  \big(1-\chi(\xi)\big)\sigma_{-w-j}(\xi;\lambda) d\lambda & = 
  \frac{1}{2i\pi} \int_{\gamma_{\xi}}  \lambda^{z}  \big(1-\chi(\xi)\big)\sigma_{-w-j}(\xi;\lambda) d\lambda\\
    & = \big(1-\chi(\xi)\big) \cdot \frac{1}{2i\pi} \int_{\gamma_{\xi}}  \lambda^{z}  \sigma_{-w-j}(\xi;\lambda) d\lambda\\
 & = \left(1-\chi(\xi)\right)\rho_{wz-j}(z;\xi). 
\end{align*}
This confirms our claim. 

It follows from all this that, for all $N\geq 1$ and $J\geq N+w$, we have 
\begin{equation*}
 \rho(z;\xi) =  \sum_{j<J}  \left(1-\chi(\xi)\right)\rho_{wz-j}(z;\xi) \quad \bmod \stS^{-w-N}(\R^n;\cA_\theta). 
\end{equation*}
In the same way as in the proof of Lemma~\ref{lem:Parameter.standard-symbol-asymptotics-qualitative-equivalence} this implies that 
$\rho(z;\xi) \sim \sum_{j \geq 0}  (1-\chi(\xi)) \rho_{wz-j}(z;\xi)$ in the sense that $\rho(z;\xi)-\sum_{j <N}  (1-\chi(\xi)) \rho_{wz-j}(z;\xi) \in \stS^{w\Re z-N}(\R^n;\cA_\theta)$ for all $N\geq 0$. 
It then follows that $\rho(z;\xi) \sim \sum_{j \geq 0}\rho_{wz-j}(z;\xi)$ in the sense of~(\ref{eq:Symbols.classical-estimates}) (\emph{cf}.~\cite[Remark~3.21]{HLP:Part1}). In particular, this shows that $\rho(z;\xi)\in S^{wz}(\R^n;\cA_\theta)$, and so the power $P^z=P_{\rho(z;\cdot)}$ is an operator in $\Psi^{wz}(\cA_\theta)$. This proves the theorem when $\Re z<0$. 

Suppose now that $\Re z\geq 0$, and let $m\in \N$ be such that $\Re z -m<0$. The group property~(\ref{eq:Powers.group}) implies that $P^z=P^mP^{z-m}$ on $\cA_\theta$. Here $P^m \in \Psi^{mw}(\cA_\theta)$, and, as  $\Re z -m<0$, the first part of the proof shows that $P^{z-m}\in \Psi^{w(z-m)}(\cA_\theta)$. It then follows from Proposition~\ref{prop:Composition.composition-PsiDOs} that $P^{z}\in \Psi^{wz}(\cA_\theta)$.

It remains to show that the homogeneous symbols $\rho_{wz-j}(z;\xi)$ of $P^z$ are given by the formula~(\ref{eq:Powers.homogeneous-symbols}). By the first part of the proof the formula holds true when $\Re z<0$.  Let $\xi\in \R^n\setminus 0$. We observe that the integrals $(2i\pi)^{-1}\int_{\gamma_{\xi}} \lambda^z \sigma_{-w-j}(\xi;\lambda)d\lambda$ give rise to holomorphic maps from $\C$ to $\cA_\theta$. 
In particular, this implies that $z\rightarrow \rho_{wz-j}(z;\xi)$ is a holomorphic map from the half-space $\{\Re z<0\}$ to $\cA_\theta$ for every~$j\geq 0$. 
Furthermore,  the equality $P^z=P^{m}P^{z-m}$ and Proposition~\ref{prop:Composition.composition-PsiDOs} show that, for any $m\in \N$ and $\Re z <m$, we have 
\begin{equation*}
 \rho_{wz-j}(z;\xi) = \sum_{k+l+|\alpha|=j} \frac{1}{\alpha!} \partial_\xi^\alpha \rho_{mw-k}(m;\xi)\delta^\alpha \rho_{w(z-m)-l}(z-m;\xi).
\end{equation*}
It then follows that the map $z\rightarrow  \rho_{wz-j}(z;\xi)$ is holomorphic on every half-space $\{\Re z<m\}$, $m\in \N$, and hence it is holomorphic on all $\C$. 
Therefore, if we fix $\xi \in \R^n\setminus 0$, then both sides of~(\ref{eq:Powers.homogeneous-symbols}) are holomorphic maps on $\C$. As they agree on the half-space $\{\Re z<0\}$ they agree everywhere. The proof is complete. 
\end{proof}

\begin{remark}
 It was mentioned without proof in~\cite{FGK:MPAG17} that complex powers of positive elliptic \psidos\ on noncommutative 2-tori are \psidos. Therefore, Theorem~\ref{thm:Powers.powers} confirms the claim of~\cite{FGK:MPAG17}.
\end{remark}

We end the section with the following consequence of Theorem~\ref{thm:Powers.powers}. 

\begin{theorem}\label{thm:Powers.powers-absolute-value}
 Let $P\in \Psi^{w}(\cA_\theta)$ be elliptic and have principal symbol $\rho_w(\xi)$. 
\begin{enumerate}
 \item The absolute value $|P|:=\sqrt{P^*P}$ is in $\Psi^{w}(\cA_\theta)$ and has principal symbol $|\rho_w(\xi)|$. 
 
 \item For every $z\in \C$, the operator $|P|^z$ is in $\Psi^{wz}(\cA_\theta)$ and has principal symbol $|\rho_w(\xi)|^z$. 
\end{enumerate}
\end{theorem}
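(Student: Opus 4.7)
The plan is to reduce to Theorem~\ref{thm:Powers.powers} by working with $Q:=P^*P$. First, by Proposition~\ref{prop:Adjoints.adjoint-classical-pdos} the formal adjoint $P^*$ lies in $\Psi^w(\cA_\theta)$ with principal symbol $\rho_w(\xi)^*$. Proposition~\ref{prop:Composition.composition-PsiDOs} then yields $Q\in \Psi^{2w}(\cA_\theta)$ with principal symbol $\rho_w(\xi)^*\rho_w(\xi)$. I will then check that $Q$ satisfies conditions \textup{(P1)} and \textup{(P2)} of Section~\ref{sec:square-roots}.

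For \textup{(P1)}: the element $\rho_w(\xi)^*\rho_w(\xi)\in\cA_\theta$ is self-adjoint, and the ellipticity of $P$ gives $\rho_w(\xi)\in \cA_\theta^{-1}$ for $\xi\neq 0$, so $\rho_w(\xi)^*\rho_w(\xi)$ is invertible. Combined with self-adjointness and the non-negativity $(\rho_w^*\rho_w\eta|\eta)=\|\rho_w\eta\|_0^2\geq 0$ for all $\eta\in\cH_\theta$, this forces $\Sp(\rho_w^*\rho_w)\subset(0,\infty)$. For \textup{(P2)}: $Q$ is formally self-adjoint since $(P^*P)^*=P^*P$, and for any $u\in\cA_\theta$ we have $(Qu|u)=(Pu|Pu)=\|Pu\|_0^2\geq 0$; combined with the ellipticity of $Q$ and Proposition~\ref{prop:Spectrum.closedness}, the operator $Q$ is self-adjoint on $\cH_\theta^{(2w)}$ with $\Sp(Q)\subset[0,\infty)$. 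Hence Theorem~\ref{thm:Powers.powers} applies to $Q$.

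Applying that theorem with exponent $z=1/2$ gives $Q^{1/2}\in\Psi^{w}(\cA_\theta)$ with principal symbol $(\rho_w(\xi)^*\rho_w(\xi))^{1/2}=|\rho_w(\xi)|$. Since $Q\geq 0$ as an unbounded self-adjoint operator on $\cH_\theta$, the Borel functional-calculus square root $Q^{1/2}$ coincides with $\sqrt{P^*P}=|P|$ by uniqueness of the positive square root. This proves~(1).

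For~(2), I would reapply Theorem~\ref{thm:Powers.powers} to $|P|$ itself: from part~(1) the operator $|P|\in\Psi^{w}(\cA_\theta)$ is elliptic with principal symbol $|\rho_w(\xi)|$, which is self-adjoint with positive spectrum (ellipticity of $P$ again prevents $0$ from being in the spectrum), and $|P|$ is self-adjoint with $\Sp(|P|)\subset[0,\infty)$ by construction. Thus $|P|$ satisfies \textup{(P1)}--\textup{(P2)}, and Theorem~\ref{thm:Powers.powers} yields $|P|^z\in\Psi^{wz}(\cA_\theta)$ with principal symbol $|\rho_w(\xi)|^z$ for every $z\in\C$. (Alternatively, one could invoke Theorem~\ref{thm:Powers.powers} on $Q=P^*P$ with exponent $z/2$; the group property~(\ref{eq:Powers.group}) together with $(Q^{z/2})^2=Q^z=(|P|^2)^z=|P|^{2z}$ and the positivity of $Q^{z/2}$ for real $z$ again identifies the two constructions.) The main substantive point throughout is the verification of \textup{(P1)} and \textup{(P2)} for $P^*P$ (resp.~$|P|$); everything else is an invocation of the already established Theorem~\ref{thm:Powers.powers} and of the composition and adjoint rules for classical \psidos.
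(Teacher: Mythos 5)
Your proof is correct and follows essentially the same route as the paper: apply Theorem~\ref{thm:Powers.powers} to $P^*P$ after verifying (P1)--(P2), obtaining $|P|\in\Psi^w(\cA_\theta)$ with principal symbol $|\rho_w(\xi)|$, then reapply Theorem~\ref{thm:Powers.powers} to $|P|$ for part~(2). You supply a few more details than the paper does (the explicit spectral argument for $\rho_w^*\rho_w$ and the identification of $Q^{1/2}$ with $|P|$ via uniqueness of the positive square root), but these are elaborations of the same argument rather than a different approach.
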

\begin{proof}
By Proposition~\ref{prop:Composition.composition-PsiDOs} and Proposition~\ref{prop:Adjoints.rhostar-formal-adjoint} the operator $P^*P$ is in $\Psi^{2w}(\cA_\theta)$ and has principal symbol $\rho_{w}(\xi)^*\rho_w(\xi)$. 
In particular, its principal symbol is positive and invertible, i.e., $P^*P$ is elliptic and it satisfies (P1). It also satisfies (P2), since $P^*P$ is formally selfadjoint and $\acoup{P^*Pu}{u}= \acoup{Pu}{Pu}\geq 0$ for all $u \in \cA_\theta$, which implies that the eigenvalues of $P^*P$ must be non-negative. 
Therefore, by Theorem~\ref{thm:Powers.powers} the absolute value $|P|:=\sqrt{P^*P}$ is in $\Psi^{w}(\cA_\theta)$ and has principal symbol $\sqrt{\rho_w(\xi)^*\rho_w(\xi)}=|\rho_w(\xi)|$. 
Note that $|P|$ satisfies (P1) and (P2) as well, and so by Theorem~\ref{thm:Powers.powers} again, for every $z\in \C$, the operator $|P|^z$ is in $\Psi^{wz}(\cA_\theta)$ and has principal symbol $|\rho_w(\xi)|^z$. The proof is complete. 
\end{proof}

\appendix

\section{Improper Integrals with Values in Locally Convex Spaces} \label{sec:Improper-Integrals} 
In this appendix we gather a few facts about convergence of improper integrals with values in some locally convex space. 

In what follows we let $\sE$ be a locally convex space. Given any continuous map $f:[a,b]\rightarrow \sE$ its Riemann integral 
$\int_a^b f(\lambda)d\lambda$ is defined in the same way as the Riemann integral of scalar-valued functions (see, e.g., \cite{FJ:JDE68, Ha:BAMS82}; see also~\cite[Appendix~B]{HLP:Part1}). 

Given a continuous map $f:[a,\infty) \rightarrow \sE$ we say that the integral $\int_a^\infty f(\lambda) d\lambda$ \emph{converges in} $\sE$ when $\lim_{b\rightarrow \infty}\int_a^bf(\lambda)d\lambda$ exists. In this case we set
\begin{equation*}
 \int_a^\infty f(\lambda) d\lambda = \lim_{b\rightarrow \infty}\int_a^bf(\lambda)d\lambda. 
\end{equation*}

\begin{proposition}\label{prop:Integration.mapping}
 Suppose that $T:\sE\rightarrow \sF$ is a continuous linear map with values in some locally convex space $\cF$. Let $f:[a,\infty) \rightarrow \sE$ be a continuous map whose integral $\int_a^\infty f(\lambda)d\lambda$ converges in $\sE$. Then the integral $\int_a^\infty T[f(\lambda)]d\lambda$ converges in $\sF$, and we have 
\begin{equation*}
  \int_a^\infty T\big[f(\lambda)\big] d\lambda = T \Big( \int_a^\infty f(\lambda)d\lambda\Big). 
\end{equation*}
\end{proposition}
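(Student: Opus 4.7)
The plan is to reduce the improper-integral statement to its finite-interval counterpart and then exploit the continuity of $T$ to commute it with the limit that defines the improper integral. So I would first establish the finite-interval identity: for every $b\geq a$,
\begin{equation*}
 \int_a^b T\big[f(\lambda)\big]\, d\lambda = T\Big( \int_a^b f(\lambda)\, d\lambda\Big).
\end{equation*}
Since $f:[a,b]\rightarrow \sE$ is continuous and $T:\sE\rightarrow \sF$ is continuous and linear, $T\circ f:[a,b]\rightarrow \sF$ is continuous, so both Riemann integrals exist in their respective spaces (as recalled at the start of the appendix and in~\cite[Appendix~B]{HLP:Part1}). The identity then follows by approximating $\int_a^bf(\lambda)d\lambda$ by Riemann sums $\sum_i (\lambda_{i+1}-\lambda_i) f(\lambda_i^*)$ which converge in $\sE$; applying the linear map $T$ sends such a sum to the corresponding Riemann sum for $T\circ f$, and the continuity of $T$ allows one to pass to the limit of the Riemann sums in $\sF$.

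Granted the finite-interval identity, I would finish as follows. By hypothesis the net $\int_a^b f(\lambda)d\lambda$ converges to $\int_a^\infty f(\lambda)d\lambda$ in $\sE$ as $b\rightarrow \infty$. Since $T$ is continuous, we have
\begin{equation*}
 \lim_{b\rightarrow \infty} T\Big(\int_a^b f(\lambda)d\lambda\Big) = T\Big(\int_a^\infty f(\lambda)d\lambda\Big) \ \text{in $\sF$}.
\end{equation*}
Combining this with the finite-interval identity shows that $\lim_{b\rightarrow \infty}\int_a^b T[f(\lambda)]d\lambda$ exists in $\sF$ and equals $T(\int_a^\infty f(\lambda)d\lambda)$. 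By definition this means exactly that $\int_a^\infty T[f(\lambda)]d\lambda$ converges in $\sF$ to $T(\int_a^\infty f(\lambda)d\lambda)$, which is the claim.

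There is no real obstacle here; the only point that deserves care is the finite-interval identity. One does not need to develop a full Bochner-type theory: for Riemann integrals with values in a locally convex space the approximation by Riemann sums is built into the very definition, so the commutation with a continuous linear map is essentially automatic. The rest is a one-line application of the continuity of $T$ to pass the limit $b\rightarrow \infty$ through $T$.
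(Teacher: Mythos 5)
Your proof is correct and takes essentially the same route as the paper: commute $T$ with the Riemann integral over $[a,b]$ and then pass to the limit $b\to\infty$ via the continuity of $T$. The only difference is that you sketch the Riemann-sum argument for the finite-interval identity, whereas the paper cites it directly from \cite[Proposition~B.5]{HLP:Part1}.
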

\begin{proof}
 The composition $T[f(\lambda)]$ is a continuous map from $[a,\infty)$ to $\sF$. Moreover, we have $\int_a^b T[f(\lambda)]d\lambda = T(\int_a^b f(\lambda)d\lambda)$ for $0<a<b$ (see, e.g.,~\cite[Proposition~B.5]{HLP:Part1}). Thus, in $\sF$ we have 
\begin{equation*}
 \lim_{b\rightarrow \infty} \int_a^b T\big[f(\lambda)\big] d\lambda =  \lim_{b\rightarrow \infty} T \Big( \int_a^b f(\lambda)d\lambda\Big) = 
 T \Big( \int_a^\infty f(\lambda)d\lambda\Big). 
\end{equation*}
This proves the result. 
 \end{proof}

\begin{remark}
 In the special case $\cE=\C$ the above results shows that, for every $\varphi\in \sE'$, the integral $\int_0^\infty \varphi[f(\lambda)] d\lambda$ is convergent, and we have 
 \begin{equation}
 \int_a^\infty \varphi\big[f(\lambda)\big] d\lambda = \varphi \Big( \int_a^\infty f(\lambda)d\lambda\Big). 
 \label{eq:Integrals.linear-forms}
\end{equation}
Note that this property uniquely determines the value of $ \int_a^\infty f(\lambda)d\lambda$. 
\end{remark}

\begin{proposition}\label{prop:Integrals.Change-variable}
 Let $f:[a,\infty)\rightarrow \sE$ be a continuous map such that $\int_a^\infty f(\lambda)d\lambda$ converges in $\sE$. Let $\phi:[a',\infty)\rightarrow [a,
 \infty)$ be a $C^1$-diffeomorphism. Then $\int_{a'}^\infty \phi'(\lambda) (f\circ \phi)(\lambda) d\lambda$ converges in $\sE$, and we have 
 \begin{equation*}
 \int_{a'}^\infty \phi'(\lambda) (f\circ \phi)(\lambda) d\lambda = \int_a^\infty f(\lambda)d\lambda. 
\end{equation*}
\end{proposition}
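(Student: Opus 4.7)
Since $\phi$ is a $C^1$-diffeomorphism from $[a',\infty)$ onto $[a,\infty)$, it is continuous and bijective, hence strictly monotone. The codomain being a half-line unbounded to the right forces $\phi$ to be strictly increasing with $\phi(a')=a$ and $\lim_{b'\to\infty}\phi(b')=\infty$; in particular $\phi'>0$. The plan is to pass from the usual scalar change-of-variables formula to its $\sE$-valued analogue on each compact interval $[a',b']$, and then take $b'\to\infty$.

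First I would prove the finite-interval identity
\begin{equation*}
\int_{a'}^{b'}\phi'(\lambda)(f\circ\phi)(\lambda)\,d\lambda \;=\; \int_{a}^{\phi(b')}f(\mu)\,d\mu \qquad \text{in $\sE$},
\end{equation*}
for every $b'>a'$. Both integrands are continuous $\sE$-valued maps on compact intervals, so the Riemann integrals in $\sE$ are well defined. To obtain the equality, I would apply an arbitrary continuous linear form $\varphi\in\sE'$ to both sides: by~(\ref{eq:Integrals.linear-forms}) (or rather its proper-integral version; see also Proposition~\ref{prop:Integration.mapping}), this reduces the identity to the ordinary scalar change-of-variables formula applied to the continuous function $\varphi\circ f$ on $[a,\phi(b')]$, which is classical. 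Since $\sE$ is Hausdorff (a standing assumption for locally convex spaces here), $\sE'$ separates points and the vector identity follows.

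Next, the hypothesis that $\int_a^\infty f(\mu)\,d\mu$ converges in $\sE$ means that
\begin{equation*}
\lim_{b\to\infty}\int_a^b f(\mu)\,d\mu = \int_a^\infty f(\mu)\,d\mu \qquad \text{in $\sE$}.
\end{equation*}
Setting $b=\phi(b')$ and letting $b'\to\infty$, we have $b\to\infty$ as well, so the right-hand side of the finite-interval identity converges in $\sE$ to $\int_a^\infty f(\mu)\,d\mu$. Consequently the left-hand side converges to the same limit, which is precisely the statement that $\int_{a'}^\infty \phi'(\lambda)(f\circ\phi)(\lambda)\,d\lambda$ converges in $\sE$ and equals $\int_a^\infty f(\mu)\,d\mu$.

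The only non-routine step is the vector-valued change-of-variables formula on a compact interval; I expect this to be straightforward via the duality argument using~(\ref{eq:Integrals.linear-forms}), since the corresponding scalar result is classical. Everything else is a routine passage to the limit, using that $b=\phi(b')\to\infty$ iff $b'\to\infty$ because $\phi$ is a strictly increasing diffeomorphism onto $[a,\infty)$.
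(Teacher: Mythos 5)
Your proof is correct and takes essentially the same approach as the paper: establish the finite-interval change-of-variables identity for $\sE$-valued Riemann integrals, then let $b'\to\infty$ using that $\phi(b')\to\infty$. The only difference is that the paper simply cites~\cite[Proposition~B.7]{HLP:Part1} for the finite-interval formula, whereas you re-derive it via the duality argument (apply $\varphi\in\sE'$, use the scalar change of variables, and invoke Hahn--Banach separation); both are fine.
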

\begin{proof}
 The assumption that $\phi$ is a diffeomorphism ensures us that $\phi$ is increasing, $\phi(a')= a$,  and $
 \lim_{\lambda \rightarrow \infty}\phi(\lambda)=\infty $. Therefore, by using \cite[Proposition~B.7]{HLP:Part1} we get 
\begin{equation*}
 \int_{a'}^b \phi'(\lambda) (f\circ \phi)(\lambda) d\lambda = \int_{a}^{\phi(b)} f(\lambda)d\lambda \longrightarrow  \int_{a}^{\infty}f(\lambda)d\lambda \qquad 
 \text{as $b \rightarrow \infty$}. 
\end{equation*}
This proves the result. 
\end{proof}

It is convenient to interpret the above notion of convergence in term of Lebesgue integrations of maps with values in locally convex spaces. A natural setting for this notion of integration is the setting of quasi-complete locally convex Suslin spaces (see~\cite{Th:TAMS75}; see also~\cite[Appendix~B]{HLP:Part1}). This encompasses a large class of locally convex spaces, including separable Fr\'echet spaces such as $\cA_\theta$. However, in the precise setting of this paper, we can bypass this as follows. 

In what follows, given any Borel set $Y\subset \C$, we say that a measurable map $f:Y\rightarrow \sE$  is \emph{absolutely integrable} when
\begin{equation}
 \int_Y \mathfrak{p}\big[f(\lambda)\big] d\lambda <\infty \qquad \text{for every continuous semi-norm $\mathfrak{p}$ on $\sE$}. 
 \label{eq:Integrals.integrability-condition}
\end{equation}

Recall that a locally convex space is quasi-complete when every bounded Cauchy net is convergent. For instance, Fr\'echet spaces and their weak duals, as well as inductive limits of Fr\'echet spaces are quasi-complete (see, e.g., \cite{Tr:AP67}). 

\begin{proposition}
Assume that $\sE$ is quasi-complete. Then, for every absolutely integrable continuous map $f:[a,\infty)\rightarrow \sE$, the integral $\int_a^\infty f(\lambda) d\lambda$ converges in $\sE$. 
\end{proposition}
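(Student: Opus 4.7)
The plan is to exploit quasi-completeness by showing that the net $F(b):=\int_a^b f(\lambda)d\lambda$, $b\geq a$, is a bounded Cauchy net in $\sE$ as $b\to\infty$. Since every bounded Cauchy net in a quasi-complete space converges, this will yield the desired conclusion.

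The main ingredient will be the standard seminorm estimate for Riemann integrals in locally convex spaces, namely that for every continuous seminorm $\mathfrak{p}$ on $\sE$ and every continuous map $g:[\alpha,\beta]\to \sE$ one has
\begin{equation*}
 \mathfrak{p}\Big(\int_\alpha^\beta g(\lambda)d\lambda\Big)\leq \int_\alpha^\beta \mathfrak{p}\bigl[g(\lambda)\bigr]d\lambda.
\end{equation*}
This is proved in~\cite[Appendix~B]{HLP:Part1}, so I would just quote it. Applied to $f$ on an interval $[b',b]$ with $a\leq b'<b$, and using $F(b)-F(b')=\int_{b'}^b f(\lambda)d\lambda$, it gives
\begin{equation*}
 \mathfrak{p}\bigl[F(b)-F(b')\bigr]\leq \int_{b'}^b \mathfrak{p}\bigl[f(\lambda)\bigr]d\lambda.
\end{equation*}

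Now, fix a continuous seminorm $\mathfrak{p}$ on $\sE$. The absolute integrability assumption~(\ref{eq:Integrals.integrability-condition}) tells us that $\int_a^\infty \mathfrak{p}[f(\lambda)]d\lambda<\infty$. By the usual Cauchy criterion for convergent scalar integrals, for every $\epsilon>0$ there is $b_0\geq a$ such that $\int_{b'}^b \mathfrak{p}[f(\lambda)]d\lambda<\epsilon$ whenever $b\geq b'\geq b_0$. Combined with the estimate above, this shows that $\mathfrak{p}[F(b)-F(b')]<\epsilon$ for all $b,b'\geq b_0$. Thus $(F(b))_{b\geq a}$ is a Cauchy net in $\sE$. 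Boundedness is then immediate from the same estimate applied on $[a,b]$, which yields $\mathfrak{p}[F(b)]\leq \int_a^\infty \mathfrak{p}[f(\lambda)]d\lambda<\infty$ for every continuous seminorm $\mathfrak{p}$ and every $b\geq a$.

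I expect no real obstacle here: the argument is the locally convex version of the classical scalar-valued Cauchy criterion for improper integrals, with absolute integrability read off seminorm by seminorm. The only point one must be careful about is invoking the seminorm bound on the Riemann integral; once that is granted, the passage to quasi-completeness is automatic.
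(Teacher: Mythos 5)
Your proof is correct and takes essentially the same route as the paper: both show that the partial integrals $\int_a^b f(\lambda)\,d\lambda$ form a bounded Cauchy net by applying the seminorm estimate for Riemann integrals and the vanishing of the tail $\int_b^\infty \mathfrak{p}[f(\lambda)]\,d\lambda$, and then invoke quasi-completeness.
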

\begin{proof}
Let $f:[a,\infty)\rightarrow \sE$ be an absolutely integrable continuous map.  Let $\fp$ be a continuous semi-norm on $\sE$. Given any $b\geq a$ we have
\begin{equation*}
 \fp\Big[ \int_a^{b} f(\lambda)d\lambda\Big] \leq   \int_a^{b}\fp\big[ f(\lambda) \big]d\lambda \leq   \int_a^{\infty}\fp\big[ f(\lambda) \big]d\lambda <\infty. 
\end{equation*}
Similarly, given any $c\geq 0$, we have 
\begin{equation*}
 \fp\Big[ \int_a^{b+c} f(\lambda)d\lambda-  \int_a^{b} f(\lambda)d\lambda\Big] = \fp\Big[ \int_b^{b+c} f(\lambda)d\lambda\Big] \leq  
  \int_b^{b+c}\fp\big[ f(\lambda) \big]d\lambda \leq  \int_b^{\infty}\fp\big[ f(\lambda) \big]d\lambda. 
\end{equation*}
The absolute-integrability condition~(\ref{eq:Integrals.integrability-condition}) ensures us that $ \int_b^{\infty}\fp\big[ f(\lambda) \big]d\lambda \rightarrow 0$ as $b\rightarrow \infty$. Therefore, we see that, for every continuous semi-norm $\fp$ on $\sE$, we have 
\begin{equation*}
  \sup_{c\geq 0}\fp\Big[ \int_a^{b+c} f(\lambda)d\lambda-  \int_a^{b} f(\lambda)d\lambda\Big]\longrightarrow 0 \qquad \text{as $b\rightarrow \infty$}. 
\end{equation*}
This shows that $\{ \int_a^{b} f(\lambda)d\lambda; b\geq a\}$ is a bounded Cauchy net in $\sE$. As $\cE$ is quasi-complete, it then follows that $\lim_{b\rightarrow \infty}  \int_a^{b} f(\lambda)d\lambda$ exists in $\sE$. That is, the integral $\int_a^\infty f(\lambda) d\lambda$ converges in $\sE$. The proof is complete. 
\end{proof}

As an immediate consequence we have the following dominated convergence result. 

\begin{proposition}\label{prop:Integrals.dominated-convergence} 
 Suppose that $\sE$ is quasi-complete. Let $f:[a,\infty) \rightarrow \sE$ be a continuous map such that, for every continuous semi-norm  $\fp$ on $\sE$, there is  a function $g_\fp(\lambda) \in L^1[a,\infty)$ such that 
\begin{equation*}
 \fp\big[ f(\lambda)\big] \leq g_\fp(\lambda) \qquad \forall \lambda \geq a. 
\end{equation*}
Then $f(\lambda)$ is absolutely integrable, and the integral $\int_a^\infty f(\lambda) d\lambda$ converges in $\sE$. 
\end{proposition}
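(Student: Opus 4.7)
The proof is essentially a two-line deduction from the immediately preceding proposition, so my plan is mostly to spell out the verification of the hypotheses rather than to carry out substantive new work.

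First I would verify that the map $f:[a,\infty)\to\sE$ is absolutely integrable in the sense of~(\ref{eq:Integrals.integrability-condition}). Let $\fp$ be an arbitrary continuous semi-norm on $\sE$. By hypothesis we can select $g_\fp\in L^1[a,\infty)$ with $\fp[f(\lambda)]\leq g_\fp(\lambda)$ for all $\lambda\geq a$. Since $\lambda\mapsto f(\lambda)$ is continuous and $\fp$ is continuous on $\sE$, the composition $\lambda\mapsto \fp[f(\lambda)]$ is a non-negative continuous (hence Borel-measurable) scalar function on $[a,\infty)$. The pointwise domination therefore yields
\begin{equation*}
\int_a^\infty \fp\big[f(\lambda)\big]\,d\lambda \;\leq\; \int_a^\infty g_\fp(\lambda)\,d\lambda \;<\;\infty.
\end{equation*}
As $\fp$ was arbitrary this establishes~(\ref{eq:Integrals.integrability-condition}), so $f$ is absolutely integrable.

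Second, I would simply invoke the preceding proposition: under the standing assumption that $\sE$ is quasi-complete, any absolutely integrable continuous map from $[a,\infty)$ to $\sE$ has a convergent integral in $\sE$. Applying this to our $f$ gives convergence of $\int_a^\infty f(\lambda)\,d\lambda$ in $\sE$, which is the remaining assertion.

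There is no real obstacle here; the only point worth noting is that one must not forget the quasi-completeness hypothesis on $\sE$, since it is exactly what allows the Cauchy-net argument in the prior proposition to produce a limit of the partial integrals $\int_a^b f(\lambda)\,d\lambda$ as $b\to\infty$. Everything else is a mechanical transfer of the scalar domination to a semi-norm domination.
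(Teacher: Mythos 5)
Your proposal is correct and follows exactly the route the paper intends: the paper presents this proposition as ``an immediate consequence'' of the preceding result on absolutely integrable continuous maps, and your argument---checking the integrability condition via the semi-norm domination and then invoking that result under the quasi-completeness hypothesis---is precisely that deduction. Nothing is missing.
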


The above considerations extend \emph{verbatim} to integrals along rays $L_\phi(a)=e^{i\phi}[a,\infty)$ with $a\geq 0$ and $0\leq \phi< 2\pi$. If $f: L_\phi(a)\rightarrow \sE$ is a continuous map, then the integral $\int_{L_\phi(a)} f(\lambda)d\lambda$ converges in $\sE$ when $\lim_{b\rightarrow \infty} \int_a^b f(e^{i\phi}\lambda) d(e^{i\phi}\lambda)$ exists in $\sE$. In this case, and if $L_\phi(a)$ is outward-oriented (i.e., it is directed toward $\infty$), then we set 
\begin{equation*}
 \int_{L_\phi(a)} f(\lambda)d\lambda = \lim_{b\rightarrow \infty} \int_a^b f(e^{i\phi}\lambda) d(e^{i\phi}\lambda). 
\end{equation*}
There is a change of sign when $L_\phi(a)$ is inward-oriented. 

More generally, we can consider \emph{keyhole contours}, by which we mean contours of the form, 
\begin{equation*}
 \Gamma = L_{\phi_1}(a) \cup C_{\phi_1,\phi_2}(a) \cup L_{\phi_2}(a), \qquad a\geq 0, \quad \phi_1>\phi_2 \geq \phi_1-2\pi,
\end{equation*}
where $C_{\phi_1,\phi_2}(a)$ is the arc of circle $\{ae^{i\phi}; \ \phi_1\geq \phi \geq \phi_2\}$. The orientation of $\Gamma$ is chosen so that it agrees with the clockwise-orientation on  $C_{\phi_1,\phi_2}(a)$. Thus, $L_{\phi_1}(a)$ is inward-oriented, whereas $L_{\phi_2}(a)$ is outward-oriented. For instance when $\phi_1=\pi$, $\phi_2=0$ and $a=0$ the contour $\Gamma$ is just the real line $(-\infty,\infty)$.

\begin{figure}[h]
\begin{minipage}{0.35\linewidth}
\centering{\def\svgwidth{\columnwidth}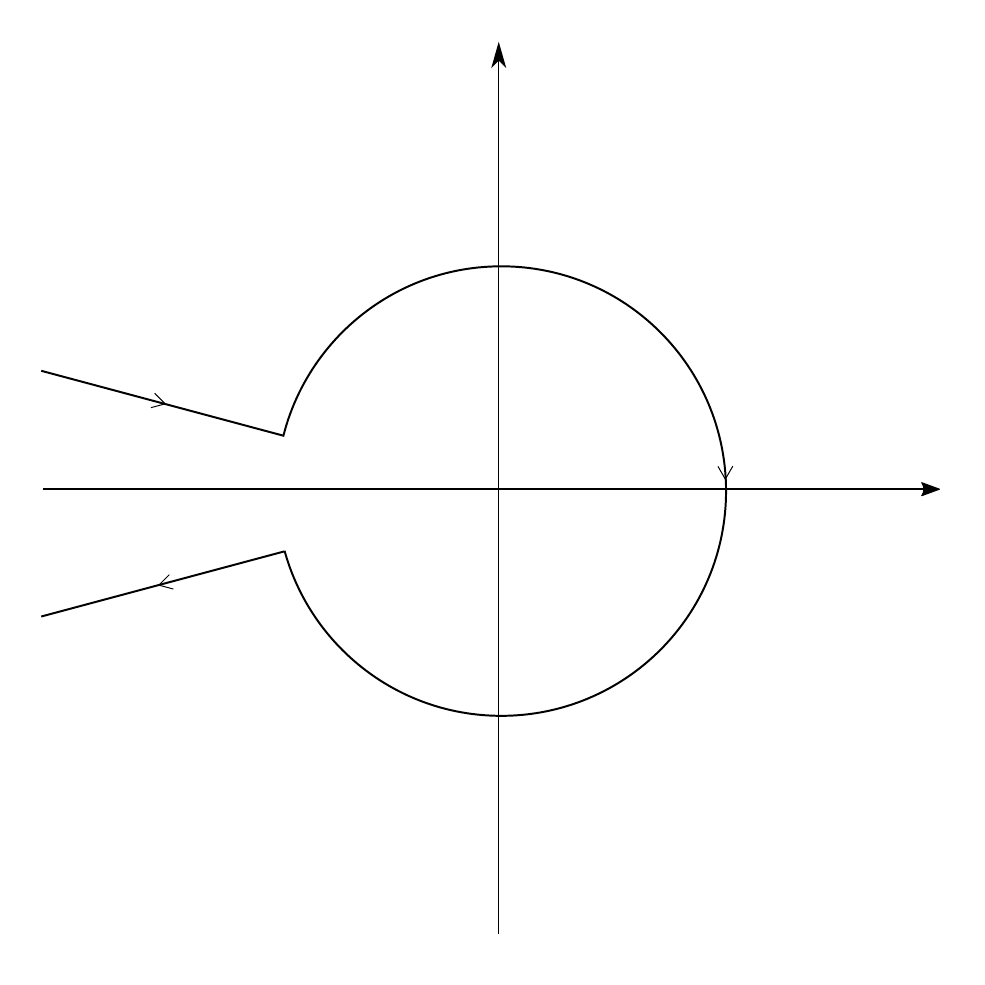}
\end{minipage}
\begin{minipage}{0.35\linewidth}
\centering{\def\svgwidth{\columnwidth}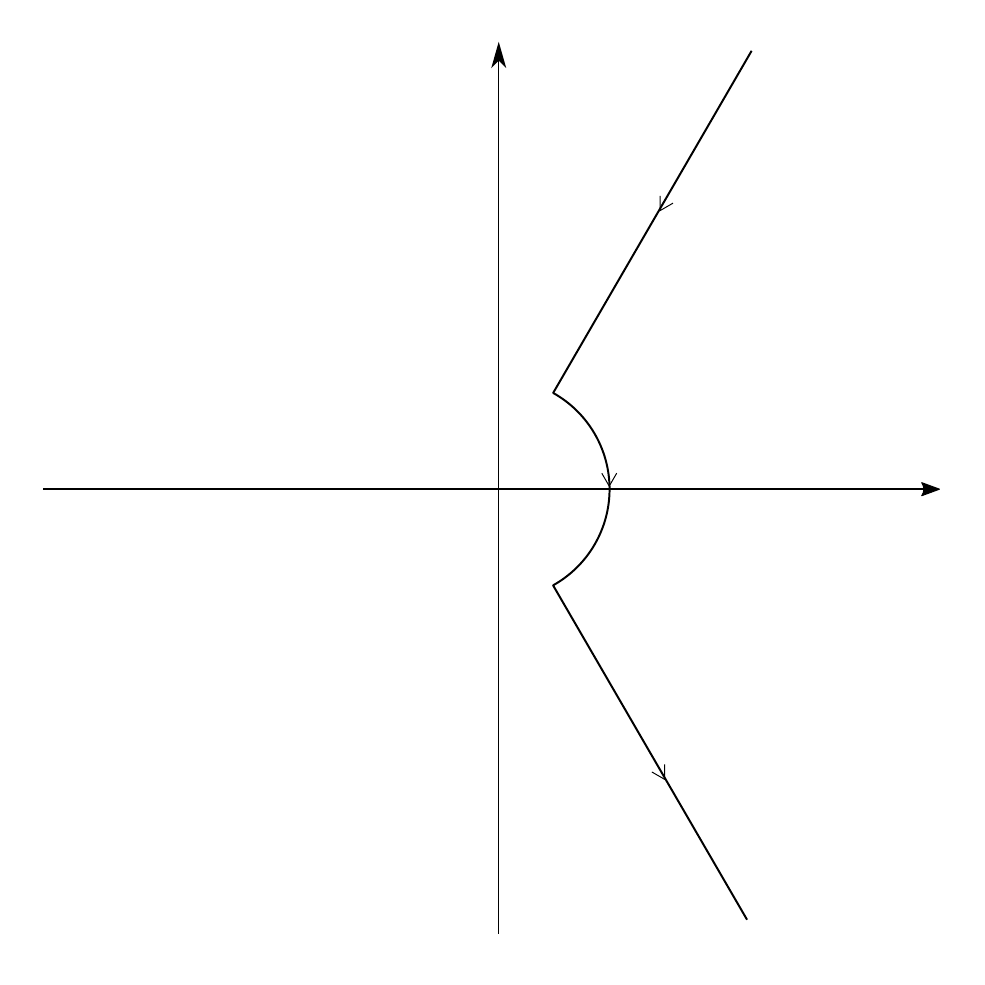}
\end{minipage}
\caption{Examples of Keyhole Contours}
\end{figure} 

Given a continuous map $f:\Gamma \rightarrow \sE$ we say that the contour integral $\int_\Gamma f(\lambda) d\lambda$ converges in $\sE$ when 
the integrals $\int_{L_{\phi_1}(a)} f(\lambda)d\lambda$ and  $\int_{L_{\phi_2}(a)} f(\lambda)d\lambda$ both converge in $\sE$. In this case we set
\begin{equation*}
 \int_\Gamma f(\lambda)d\lambda = \int_{L_{\phi_1}(a)} f(\lambda)d\lambda + \int_{C_{\phi_1,\phi_2}(a)} f(\lambda)d\lambda + \int_{L_{\phi_2}(a)} f(\lambda)d\lambda,
\end{equation*}
where the middle integral in the r.h.s.~is defined as a Riemann integral.

At the exception of Proposition~\ref{prop:Integrals.Change-variable} all the previous results of this section hold \emph{verbatim} for this kind of improper integrals. We also have the following version of Proposition~\ref{prop:Integrals.Change-variable}. 

\begin{proposition}\label{prop:Integrals.scaling-contour}
 Suppose that $\Gamma$ is a keyhole contour, and let $f:\Gamma\rightarrow \sE$ be a continuous map whose integral $\int_\Gamma f(\lambda)d\lambda$ converges in $\cE$. Then, for every $t>0$, the integral $\int_{t\Gamma} f(t^{-1}\lambda)d\lambda$ converges in $\sE$, and we have
\begin{equation*}
  \int_{t\Gamma} f(t^{-1}\lambda)d\lambda = t\int_{\Gamma} f(\lambda)d\lambda. 
\end{equation*}
\end{proposition}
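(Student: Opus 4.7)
The plan is a direct change-of-variable argument, applied piece by piece to the three parts of the keyhole contour. Write $\Gamma = L_{\phi_1}(a) \cup C_{\phi_1,\phi_2}(a) \cup L_{\phi_2}(a)$, so that $t\Gamma = L_{\phi_1}(ta) \cup C_{\phi_1,\phi_2}(ta) \cup L_{\phi_2}(ta)$. By the definition of contour integral convergence, I would break the claim into three statements, one for each piece, and assemble them at the end.

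First, I would treat the outward-oriented ray $L_{\phi_2}(a)$. Parametrize it by $\lambda = e^{i\phi_2}\sigma$ with $\sigma \geq a$ and set $g(\sigma):=f(e^{i\phi_2}\sigma)e^{i\phi_2}$; by hypothesis $\int_a^\infty g(\sigma)\,d\sigma$ converges in $\sE$. Applying Proposition~\ref{prop:Integrals.Change-variable} to $g$ with the $C^1$-diffeomorphism $\phi:[ta,\infty)\to[a,\infty)$ defined by $\phi(\sigma)=t^{-1}\sigma$, one obtains
\[
\int_{ta}^\infty t^{-1}g(t^{-1}\sigma)\,d\sigma = \int_a^\infty g(u)\,du,
\]
which after multiplying by $t$ and unwinding the parametrization gives $\int_{L_{\phi_2}(ta)} f(t^{-1}\lambda)\,d\lambda = t\int_{L_{\phi_2}(a)} f(\lambda)\,d\lambda$. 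The same computation handles the inward-oriented ray $L_{\phi_1}(a)$, with only an overall sign coming from orientation (which matches on both sides of the identity).

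Second, the arc $C_{\phi_1,\phi_2}(ta)$ is compact, so the corresponding integral is a Riemann integral. The standard parametrization $\lambda = ta\, e^{i\phi}$, $\phi_1\geq \phi\geq\phi_2$, together with the substitution on the original contour $\mu = a e^{i\phi}$, yields
\[
\int_{C_{\phi_1,\phi_2}(ta)} f(t^{-1}\lambda)\,d\lambda = \int_{\phi_1}^{\phi_2} f(a e^{i\phi})\,i\,ta\, e^{i\phi}\,d\phi = t\int_{C_{\phi_1,\phi_2}(a)}f(\mu)\,d\mu,
\]
using the elementary properties of the Riemann integral of continuous maps into $\sE$ (linearity and change of variable; see, e.g., \cite[App.~B]{HLP:Part1}). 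Summing the three identities gives the convergence of $\int_{t\Gamma} f(t^{-1}\lambda)\,d\lambda$ in $\sE$ and the equality $\int_{t\Gamma} f(t^{-1}\lambda)\,d\lambda = t\int_\Gamma f(\lambda)\,d\lambda$.

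The whole argument is essentially bookkeeping; no step requires anything beyond Proposition~\ref{prop:Integrals.Change-variable} and the standard change of variable for the compact arc. The only mild subtlety to watch out for is the orientation convention on keyhole contours (inward on $L_{\phi_1}$, outward on $L_{\phi_2}$, clockwise on the arc), which however appears identically on both sides of the desired identity and therefore cancels out.
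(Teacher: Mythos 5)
Your proof is correct and complete; the paper actually states this proposition without a proof, and the argument you give is the natural one: decompose the keyhole contour into its two rays and the arc, apply Proposition~\ref{prop:Integrals.Change-variable} with the dilation $\phi(\sigma)=t^{-1}\sigma$ to each ray, and handle the compact arc by a direct parametrization. Your parenthetical remark about orientations is the only point one could trip over, and you have handled it correctly.
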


We are mostly interested in the case where $f(\lambda)$ is an $\Hol^d(\Lambda)$-map, where $\Lambda$ is some open pseudo-cone containing the contour $\Gamma$. Note that keyhole contours themselves are pseudo-cones. 

\begin{proposition}\label{prop:AppendixA.Hold} 
 Assume that $\sE$ is quasi-complete.  Let $f(\lambda)\in \Hol^d(\Lambda;\sE)$, $d<-1$, where $\Lambda$ is some open pseudo-cone. In addition, let $\Gamma$ be a keyhole contour contained in $\Lambda$. 
 \begin{enumerate}
 \item The contour integral $\int_\Gamma f(\lambda) d\lambda$ converges in $\sE$. 
 
 \item Let $T:\sE\rightarrow \sF$ be a continuous linear map with values in some other locally convex space $\cF$. Then, the integral $\int_\Gamma T[f(\lambda)] d\lambda$ converges in $\sF$, and we have 
 \begin{equation*}
  \int_\Gamma T\big[f(\lambda)\big] d\lambda = T \Big( \int_\Gamma f(\lambda)d\lambda\Big). 
\end{equation*}

\item Let $\Gamma'\subset \Lambda$ be either a keyhole contour or a clockwise-oriented Jordan curve, and assume 
there is a domain $\Omega\subset \Lambda$ such that $\partial \Omega = \Gamma \cup \Gamma'$. Then, we have 
\begin{equation*}
 \int_{\Gamma}  f(\lambda)d\lambda =  \int_{\Gamma'}  f(\lambda)d\lambda. 
\end{equation*}
\end{enumerate}
\end{proposition}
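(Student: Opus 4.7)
My plan is to exploit the polynomial decay $d<-1$ along the two rays of $\Gamma$ to obtain absolute integrability in Parts~(1)-(2), then reduce Part~(3) to the scalar Cauchy theorem via Hahn--Banach.

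For Part~(1), since $\Gamma\subset\Lambda$ and $\Lambda$ is open, I would first construct a closed pseudo-cone $\Lambda'$ with $\Gamma\subset\Lambda'\subsubset\Lambda$: thicken each of the two rays $L_{\phi_j}(a)$ into a thin closed angular sector contained in $\Lambda$, join them via a compact neighborhood of the arc $C_{\phi_1,\phi_2}(a)$, and check that the result is a closed pseudo-cone sitting inside $\op{Int}(\Lambda)$. The definition of $\Hol^d(\Lambda;\sE)$ then supplies, for every continuous semi-norm $\fp$ on $\sE$, a constant $C_{\fp}$ with
\begin{equation*}
\fp[f(\lambda)]\leq C_{\fp}(1+|\lambda|)^d, \qquad \lambda\in\Lambda'.
\end{equation*}
Since $d<-1$, the scalar majorant is integrable along each ray, so Proposition~\ref{prop:Integrals.dominated-convergence} (applicable because $\sE$ is quasi-complete) shows that $\int_{L_{\phi_j}(a)}f(\lambda)d\lambda$ converges in $\sE$. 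Adding the ordinary Riemann integral over the compact arc $C_{\phi_1,\phi_2}(a)$ yields Part~(1). Part~(2) then follows by applying Proposition~\ref{prop:Integration.mapping} to each ray integral (and using linearity of $T$ on the arc) to commute $T$ past the integrals.

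For Part~(3), I would reduce to the scalar Cauchy theorem via duality. Since $\sE$ is Hausdorff (implicit), $\sE'$ separates points of $\sE$; for any $\varphi\in\sE'$, the composition $g:=\varphi\circ f$ is a scalar holomorphic function on $\Lambda$ satisfying $|g(\lambda)|\leq C_{\varphi}(1+|\lambda|)^d$ on $\Lambda'$. By Part~(2) (or equivalently~(\ref{eq:Integrals.linear-forms})), it suffices to prove the scalar identity $\int_{\Gamma}g\,d\lambda=\int_{\Gamma'}g\,d\lambda$ for every such $\varphi$. For this, I would truncate: for $R$ sufficiently large, $\Omega_R:=\Omega\cap D(0,R)$ is a bounded domain whose boundary decomposes, with compatible orientations, into $\Gamma\cap\overline{D(0,R)}$, $\Gamma'\cap\overline{D(0,R)}$, and finitely many arcs of $\partial D(0,R)\cap\overline{\Omega}$. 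The scalar Cauchy theorem applied to $g$ on $\overline{\Omega_R}\subset\Lambda$ then yields, up to signs determined by orientation,
\begin{equation*}
\int_{\Gamma\cap\overline{D(0,R)}}g\,d\lambda - \int_{\Gamma'\cap\overline{D(0,R)}}g\,d\lambda = -\int_{\partial D(0,R)\cap\overline{\Omega}}g\,d\lambda.
\end{equation*}
The boundary-arc integral is bounded by $O(R)\cdot O(R^d)=O(R^{d+1})$, which tends to $0$ as $R\to\infty$ since $d<-1$. Passing to the limit and invoking Part~(1) on both $\Gamma$ and $\Gamma'$ yields the scalar identity, hence Part~(3).

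The main obstacle I anticipate is the geometric bookkeeping in Part~(3): one must verify, uniformly across the two possibilities for $\Gamma'$ (clockwise Jordan curve versus keyhole contour), that for all sufficiently large $R$ the truncation $\Omega_R$ is a bounded domain with piecewise smooth boundary and that the natural orientations of $\Gamma$ and $\Gamma'$ match the boundary orientation of $\Omega_R$ on the truncated pieces. Once this geometry is pinned down, the polynomial decay $d<-1$ automatically handles the rest of the analysis in both the ray-convergence step and the vanishing of the arc contribution.
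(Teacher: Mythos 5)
Your proposal is correct and follows essentially the same route as the paper: absolute integrability via the $\Hol^d$ estimate and Proposition~\ref{prop:Integrals.dominated-convergence} for Part~(1), Proposition~\ref{prop:Integration.mapping} for Part~(2), and a Hahn--Banach reduction to the scalar Cauchy theorem for Part~(3). Two small remarks. First, in Part~(1) your construction of a thickened closed pseudo-cone $\Lambda'$ containing $\Gamma$ is unnecessary: the paper points out just before the proposition that a keyhole contour is itself a pseudo-cone, and since $\Gamma$ is closed and $\Lambda$ is open, $\Gamma\subset\Lambda$ already gives $\Gamma\subsubset\Lambda$, so one can feed $\Lambda'=\Gamma$ directly into the definition of $\Hol^d(\Lambda;\sE)$. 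Second, your truncation-by-$D(0,R)$ argument in Part~(3) is actually \emph{more} detailed than the paper's, which merely asserts that $\varphi\circ f$ is ``an integrable holomorphic function on $\Omega$'' and applies the Cauchy theorem to the unbounded domain without spelling out the limiting argument; your explicit estimate showing the arc contribution over $\partial D(0,R)\cap\overline{\Omega}$ is $O(R^{d+1})\to 0$ is exactly what justifies that assertion. So your proof is a sound rendering of the paper's argument, slightly more laborious in Part~(1) and slightly more careful in Part~(3).
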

\begin{proof}
 As $\Gamma$ is a pseudo-cone~$\subsubset \Lambda$, for every continuous semi-norm $\fp$ on $\sE$, there is $C_{\Gamma \fp}>0$ such that 
\begin{equation*}
 \fp\big[ f(\lambda)\big] \leq \left(1+|\lambda|\right)^d \qquad \forall \lambda \in \Gamma.   
\end{equation*}
 As $d<-1$ and $\sE$ is quasi-complete, it follows from Proposition~\ref{prop:Integrals.dominated-convergence} that $f(\lambda)$ is absolutely integrable and the integral $\int_\Gamma f(\lambda)d\lambda$ converges in $\sE$. This proves the first part. Combining it with Proposition~\ref{prop:Integration.mapping} gives the 2nd part. 
 
 It remains to prove the 3rd part. Let $\Gamma'\subset \Lambda$ be either a keyhole contour or a clockwise-oriented Jordan curve, and assume   
there is a domain $\Omega\subset \Lambda$ such that $\partial \Omega = \Gamma \cup \Gamma'$. In addition, let $\varphi \in \sE'$. The composition $\varphi \circ f(\lambda)$ is contained in $\Hol^d(\Lambda)$, and so this is an integrable holomorphic function on $\Omega$. Therefore, if we orient $\partial \Omega$ suitably, then we get
\begin{equation*}
 0 = \int_{\partial \Omega} \varphi\left[f(\lambda)\right] d\lambda = \int_{\Gamma}\varphi\left[f(\lambda)\right] - \int_{\Gamma'} \varphi\left[f(\lambda)\right]d\lambda. 
\end{equation*}
 Combining this with~(\ref{eq:Integrals.linear-forms}) we obtain
 \begin{equation*}
  \varphi \Big( \int_{\Gamma} f(\lambda)d\lambda\Big) =   \varphi \Big( \int_{\Gamma'} f(\lambda)d\lambda\Big) \qquad \forall \varphi \in \sE'. 
\end{equation*}
 As $\cE'$ separates the point of $\sE$ by Hahn-Banach theorem, we see that $ \int_{\Gamma} f(\lambda)d\lambda=  \int_{\Gamma'} f(\lambda)d\lambda$. This proves the 3rd part. The proof is complete. 
\end{proof}

\section{Proof of Proposition~\ref{prop:topo-smoothing}} \label{app:topo-smoothing}
In this Appendix, we include a proof of Proposition~\ref{prop:topo-smoothing}. Recall that the topology of $\cL(\cA_\theta',\cA_\theta)$ is generated by the semi-norms,
\begin{equation*} 
R\longrightarrow \sup_{u\in \sB}\| \delta^\alpha(Ru) \| , \qquad \sB\subset\cA_\theta'\text{ bounded} , \quad \alpha\in \N_0^n.
\end{equation*}
We have the following description of bounded sets in $\cA_\theta'$. 

\begin{lemma} \label{lem:Sobolev.inductive-limit-regularity}
A subset $\sB\subset \cA_\theta'$ is bounded in $\cA_\theta'$ if and only if there $s\in \R$ such that $\sB$ is contained and bounded in $\cH_\theta^{(s)}$. 
\end{lemma}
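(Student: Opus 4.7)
\medskip

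\noindent\emph{Proof plan.} The ``if'' direction is essentially immediate: by Proposition~\ref{prop:Sobolev.Hs-inclusion-cAtheta}(3) the inclusion $\cH_\theta^{(s)}\hookrightarrow \cA_\theta'$ is continuous for every $s\in\R$ (it is one of the defining maps of the inductive limit), so any set that is bounded in $\cH_\theta^{(s)}$ will be bounded in $\cA_\theta'$. The content of the lemma is the ``only if'' direction, and the natural strategy is a Banach--Steinhaus argument combined with the description of the topology of $\cA_\theta$ in terms of Sobolev norms given by Proposition~\ref{prop:Sobolev.Hs-inclusion-cAtheta}(2).

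The plan is as follows. Suppose $\sB\subset \cA_\theta'$ is bounded with respect to the strong topology. Since singleton subsets of $\cA_\theta$ are bounded, evaluation at any fixed $u\in \cA_\theta$ shows that $\sup_{v\in\sB}|\acou{v}{u}|<\infty$, so the family $\sB$ is pointwise bounded on $\cA_\theta$. As $\cA_\theta$ is a Fr\'echet space (hence barrelled), the Banach--Steinhaus theorem applies and $\sB$ is equicontinuous: there exist a continuous semi-norm $p$ on $\cA_\theta$ and a constant $C>0$ such that
\begin{equation*}
 |\acou{v}{u}|\leq C\, p(u) \qquad \forall v\in \sB, \ \forall u\in \cA_\theta.
\end{equation*}
By Proposition~\ref{prop:Sobolev.Hs-inclusion-cAtheta}(2), the topology of $\cA_\theta$ is generated by the Sobolev norms $\|\cdot\|_s$, $s\in\R$, and hence there exist $s\in\R$ and $C'>0$ with $p(u)\leq C'\|u\|_s$ for all $u\in\cA_\theta$. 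Thus every $v\in \sB$ defines a continuous linear form on $(\cA_\theta,\|\cdot\|_s)$, with norm bounded by $CC'$, and so extends uniquely to a continuous linear form on $\cH_\theta^{(s)}$ with the same bound.

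The last step is to identify this family of bounded linear forms with a bounded subset of a Sobolev space. Using Proposition~\ref{prop:Sobolev.Hs-inclusion-cAtheta} together with the Fourier series expansion $v=\sum v_k U^k$ in $\cA_\theta'$ from~(\ref{eq:NCtori.Fourier-series}) and the explicit formula~(\ref{eq:Sobolev.Sobolev-norm-formula}) for the Sobolev norms, the pairing $\acou{v}{u}$ pairs the Fourier coefficients of $v$ and $u$, and the duality bracket $(\cH_\theta^{(s)})'\simeq \cH_\theta^{(-s)}$ is realized isometrically. It then follows that every $v\in \sB$ belongs to $\cH_\theta^{(-s)}$ with $\|v\|_{-s}\leq CC'$ (up to the constant coming from this identification), which yields the desired uniform containment and boundedness.

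The main point of the argument is simply Banach--Steinhaus together with Proposition~\ref{prop:Sobolev.Hs-inclusion-cAtheta}(2); no obstacle arises provided one has verified that the pairing $\acou{\cdot}{\cdot}$ realizes $(\cH_\theta^{(s)})'$ as $\cH_\theta^{(-s)}$, a fact already built into the formula~(\ref{eq:Sobolev.Sobolev-norm-formula}). In spirit, this lemma reflects the fact that $\cA_\theta'$ is a~(DFS)-space: the compact Sobolev embeddings of Proposition~\ref{prop:Sobolev.Sobolev-embedding} make its inductive-limit structure very well-behaved, so that bounded sets must sit inside one of the defining Hilbert spaces.
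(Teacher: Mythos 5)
Your proposal is correct, and it takes a genuinely different route from the paper's. The paper proves the ``only if'' direction by contradiction: assuming $\sB$ is bounded in $\cA_\theta'$ but bounded in no $\cH_\theta^{(s)}$, it extracts a sequence $(u^{(\ell)})\subset \sB$ with divergent truncated Sobolev seminorms, builds from it an explicit convex balanced set $\sU$ which it then verifies is a neighbourhood of the origin for the inductive limit topology, and derives a contradiction from $t\sB\not\subset \sU$ for all $t>0$. You instead invoke Banach--Steinhaus: $\cA_\theta$ is Fr\'echet hence barrelled, so a strongly bounded (a fortiori pointwise bounded) family in $\cA_\theta'$ is equicontinuous; by Proposition~\ref{prop:Sobolev.Hs-inclusion-cAtheta}(2) and monotonicity of the Sobolev norms, equicontinuity is with respect to a single norm $\|\cdot\|_s$; and then the Hilbert-space duality $(\cH_\theta^{(s)})'\simeq \cH_\theta^{(-s)}$ places $\sB$ in a fixed bounded ball of $\cH_\theta^{(-s)}$. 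What the paper's argument buys is self-containment at the level of the inductive limit: it proves the ``regularity'' property of the LB-structure directly from the definition of the topology, without appealing to barrelledness or to a dual-pairing isomorphism. What yours buys is brevity and conceptual clarity -- it is essentially the standard Banach--Steinhaus argument for why bounded sets of $\cS'(\R^n)$ lie in a fixed negative Sobolev space -- and it makes transparent that the role of Proposition~\ref{prop:Sobolev.Hs-inclusion-cAtheta}(2) is to convert equicontinuity into a single Sobolev estimate. One small point you should spell out is the last step: the identification $(\cH_\theta^{(s)})'\simeq\cH_\theta^{(-s)}$ via the pairing $\acou{\cdot}{\cdot}$ is not stated in the paper, and the naive estimate $|v_k|=|\acou{v}{(U^k)^*}|\leq C\|(U^k)^*\|_s=C\brak{k}^{s}$ is useless. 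The correct argument is to test $|\acou{v}{u}|\leq C\|u\|_s$ against trigonometric polynomials $u=\sum_{|k|\leq N}c_kU^{-k}$ with $c_k$ proportional to $\overline{v_k}\brak{k}^{-2s}$ (with the appropriate unimodular phase coming from $(U^k)^*=e^{i\psi(k)}U^{-k}$), which yields $\sum_{|k|\leq N}\brak{k}^{-2s}|v_k|^2\leq C^2$ and hence $\|v\|_{-s}\leq C$ on letting $N\to\infty$. With that detail filled in, your proof is complete and sound. Finally, your closing remark is apt, with one refinement: the compact embeddings of Proposition~\ref{prop:Sobolev.Sobolev-embedding} are what make $\cA_\theta'$ a (DFS)-space, but your argument does not actually use compactness; barrelledness of the predual already suffices.
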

\begin{proof}
This a a special case of a general result for inductive limits of compact inclusions of Banach spaces, but we shall give a proof for reader's convenience.
By Proposition~\ref{prop:Sobolev.Hs-inclusion-cAtheta} the strong topology of $\cA_\theta'$ agrees with the inductive limit of the $\cH_\theta^{(s)}$-topologies. Equivalently, this is the strongest locally convex topology with respect to which the inclusion of $\cH_\theta^{(s)}$ into $\cA_\theta'$ is continuous for every $s\in \R$. Thus, a basis of neighborhoods of the origin in $\cA_\theta'$ consists of all convex balanced sets $\cU$ such that $\cU\cap \cH_\theta^{(s)}$ is a neighborhood of the origin in $\cH_\theta^{(s)}$ for every $s\in \R$. 

Bearing this in mind, a subset $\sB\subset \cA_\theta'$ is bounded when, given any neigborhood $\sU$ of the origin in $\cA_\theta'$, there is $t>0$ such that $t\sB \subset \sU$. As the inclusion of  $\cH_\theta^{(s)}$, $s\in \R$, into $\cA_\theta'$ is continuous, it is immediate that any bounded set of $\cH^{(s)}_\theta$ is bounded in $\cA_\theta'$. 

Conversely, let $\sB$ be a bounded set of $\cA_\theta'$. With a view toward contradiction assume that $\sB$ is not a bounded set in any $\cH_\theta^{(s)}$. Given $s\in \R$ and $r>0$ let us denote by $B^{(s)}(r)$ the open ball in $\cH_\theta^{(s)}$ of radius $r$ about the origin. The above assumption means that, for every $s\in \R$, we cannot find $r>0$ such that $\sB \subset B^{(s)}(r)$. In particular, there is $u^{(0)}\in \sB$ such that $u^{(0)}\not\in B^{(0)}(1)$, i.e., $\sum |u_k^{(0)}|^2>1$, and so there is an integer $N_0$ such that $\sum_{|k|\leq N_0}  |u_k^{(0)}|^2>1$. Likewise, there is $u^{(1)}\in \sB$ such that $\sum \brak{k}^{-2} |u_k^{(1)}|^2>2 \sum_{|k|\leq N_0}  |u_k^{(0)}|^2$, and so there is also an integer $N_1$ such that $\sum_{|k|\leq N_1}  \brak{k}^{-2}|u_k^{(1)}|^2> 2 \sum_{|k|\leq N_0}  |u_k^{(0)}|^2$. Repeating this argument allows us to construct sequences $(N_\ell)_{\ell \geq 0}\subset \N_0$ and $(u^{(\ell)})_{\ell \geq 0}\subset \sB$ such that 
\begin{equation} \label{eq:Appendix.sequence-construction}
       \sum_{|k|\leq N_{\ell+1}}  \brak{k}^{-2(\ell+1)}\big|u_k^{(\ell+1)}\big|^2> 2  \sum_{|k|\leq N_{\ell}}  \brak{k}^{-2\ell}\big|u_k^{(\ell)}\big|^2 \qquad \text{for all $\ell\geq 0$}. 
\end{equation}

For $\ell\geq 0$ set $\mu_\ell = 2^{-\ell}  \sum_{|k|\leq N_{\ell}}  \brak{k}^{-2\ell}|u_k^{(\ell)}|^2$, and define 
\begin{equation*}
 \sU= \bigcap_{\ell \geq 0} \biggl\{u \in \cA_\theta'; \  \sum_{|k|\leq N_{\ell}}  \brak{k}^{-2\ell}|u_{k}|^2<\mu_\ell \biggr\}. 
\end{equation*}
Note that~(\ref{eq:Appendix.sequence-construction}) ensures us that $(\mu_\ell)_{\ell\geq 0}$ is an increasing sequence.  

\begin{claim*}
 $\sU$ is a neighborhood of the origin in $\cA_\theta'$.  
\end{claim*}
\begin{proof}[Proof of the Claim]
 For $\ell\geq 0$ set $p_\ell(u)=  (\sum_{|k|\leq N_{\ell}}  \brak{k}^{-2\ell}|u_{k}|^2)^{\frac12}$, $u\in \cA_\theta'$. This defines semi-norms on $\cA_\theta'$. In particular, we see that $\sU$ is a convex balanced set. Therefore, in order to prove the claim it is enough to show that $\sU\cap \cH_\theta^{(s)}$ is a neighborhood of the origin in $\cH_\theta^{(s)}$ for every $s\in \R$. In fact, as the inclusion of $\cH_\theta^{(s)}$ into $\cH_\theta^{(s')}$ is continuous for $s>s'$, it is enough to show this for $s=-\ell_0$ with $\ell_0\in \N$. 
 
 Given $\ell_0\in \N$, let $\epsilon \in (0,\sqrt{\mu_{\ell_0}})$. We observe that if $u\in B^{(-\ell_0)}(\epsilon)$ and $\ell\geq \ell_0$, then 
\begin{equation*}
\sum_{|k|\leq N_\ell} \brak{k}^{-2\ell} |u_k|^2 \leq \sum_{k\in \Z^n} \brak{k}^{-2\ell_0} |u_k|^2<\epsilon^2<\mu_{\ell_0}\leq \mu_\ell.   
\end{equation*}
This shows that $B^{(-\ell_0)}(\epsilon)\subset \{u\in \cA_\theta'; \ p_\ell(u)<\sqrt{\mu_\ell}\}$ for $\ell\geq \ell_0$. Thus, 
\begin{equation*}
 \cU\cap B^{(-\ell_0)}(\epsilon) = \bigcap_{\ell < \ell_0}\left\{ u \in B^{(-\ell_0)}(\epsilon); \ p_\ell(u)<\sqrt{\mu_\ell}\right\}. 
\end{equation*}
As the semi-norms $p_\ell$ are continuous on $\cH_\theta^{(-\ell_0)}$, it then follows that $ \cU\cap B^{(-\ell_0)}(\epsilon)$ is an open set of $\cH_\theta^{(-\ell_0)}$, and so $\sU\cap \cH_\theta^{(-\ell_0)}$ is a neighborhood of the origin in $\cH_\theta^{(-\ell_0)}$ for every $\ell_0\in \N$. The proof is complete. 
\end{proof}

The above claim leads us to a contradiction as follows. As $\sB$ is bounded in $\cA_\theta'$ and $\sU$ is a neighborhood of the origin in $\cA_\theta'$, there is $t>0$ such that $t \sB\subset \sU$. In particular, $tu^{(\ell)}\in \sU$ for all $\ell \geq 0$. Now, choose $\ell$ so that $2^{-\ell}\leq t^2$. Then we have
\begin{equation*}
 \sum_{|k|\leq N_\ell}\brak{k}^{-2\ell} t^2|u_k^{(\ell)}|^2  \geq \sum_{|k|\leq N_\ell} 2^{-\ell}  \brak{k}^{-2\ell}  |u_k^{(\ell)}|^2 =\mu_\ell.
\end{equation*}
Thus,  $tu^{(\ell)}$ cannot be contained in $\sU$. This is a contradiction. Therefore, if $\sB$ is a bounded set of $\cA_\theta'$, then it must be a bounded set of $\cH_\theta^{(s)}$ for some $s\in \R$. This proves the result. 
\end{proof}

We are now in a position to prove Proposition~\ref{prop:topo-smoothing}. 

\begin{proof}[Proof of Proposition~\ref{prop:topo-smoothing}]
Given $s,t\in \R$,  the continuity of the inclusions $\cH_\theta^{(s)}\subset\cA_\theta'$ and $\cA_\theta\subset\cH_\theta^{(t)}$ implies that the natural embedding of $\cL(\cA_\theta',\cA_\theta)$ into  $\cL(\cH_\theta^{(s)},\cH_\theta^{(t)})$ is continuous, and so $\| \cdot \|_{s,t}$ is a continuous semi-norm on 
$\cL(\cA_\theta',\cA_\theta)$. 

Moreover, we know by Proposition~\ref{prop:Sobolev.Hs-inclusion-cAtheta} that the topology of $\cA_\theta$ is generated by the Sobolev norms $\|\cdot \|_{t}$, $t\in \R$. Therefore, the topology of 
$\cL(\cA_\theta',\cA_\theta)$ is generated by the semi-norms. 
\begin{equation*} 
R\longrightarrow \sup_{u\in \sB}\| Ru \|_t , \qquad \sB\subset\cA_\theta'\ \text{bounded} , \quad t\in \R.
\end{equation*}
Bearing this in mind, let $t\in \R$ and let $\sB$ be a bounded subset of $\cA_\theta'$.  By Lemma~\ref{lem:Sobolev.inductive-limit-regularity} there is $s\in \R$ such that $\sB$ is a bounded subset of $\cH_\theta^{(s)}$, i.e., there is $\epsilon>0$ such that $\sB\subset \{u\in \cH_\theta^{(s)};\ \|u\|_s \leq \epsilon\}$. Thus, for all $R\in \cL(\cA_\theta',\cA_\theta)$, we have 
\begin{equation*}
 \sup_{u \in \sB} \|Ru\|_t \leq \sup_{\substack{u \in \cH_\theta^{(s)}\\ \|u\|_s\leq \epsilon}} \|Ru\|_{t} 
 \leq \epsilon \sup_{\substack{u \in \cH_\theta^{(s)}\\ \|u\|_s\leq 1}} \|Ru\|_{t}= \epsilon \|R\|_{s,t}. 
\end{equation*}
All this ensures us that the norms $\| \cdot \|_{s,t}$, $s,t\in \R$, generate the topology of $\cL(\cA_\theta',\cA_\theta)$. The proof is complete. 
\end{proof}


\begin{thebibliography}{BMRS}
\bibitem{Ar:Springer81} Arveson, W.: \emph{An invitation to $C$*-algebras}. Springer-Verlag, 1981.

\bibitem{Ba:CRAS88} Baaj, S.: \emph{Calcul pseudo-diff\'erentiel et produits crois\'es de $C^*$-alg\'ebres. I, II}. 
C.\ R.\ Acad.\ Sc.\ Paris, s\'er.~I, \textbf{307} (1988), 581--586, 663--666.

\bibitem{Co:CRAS80} Connes, A.: \emph{C*-alg\`ebres et g\'eom\'etrie differentielle}.  C.\ R.\ Acad.\ Sc.\ Paris, s\'er.~A, \textbf{290} (1980), 599--604.

\bibitem{Co:AdvM81} Connes, A.: \emph{An analogue of the Thom isomorphism for crossed products of a $C^*$-algebra by an
action of $\R$}. Adv.\ Math.\ \textbf{39} (1981), 31--55.

\bibitem{Co:NCG} Connes, A.: \emph{Noncommutative geometry}. Academic Press,\ Inc.,\ San Diego, CA, 1994.

\bibitem{CF:MJM19} Connes, A.; Fathizadeh, F.: \emph{The term $a_4$ in the heat kernel expansion of noncommutative tori}. 
M\"unster J.\ Math.\ \textbf{12} (2019), 239--410. 

\bibitem{CM:JAMS14} Connes, A.; Moscovici, H.: \emph{Modular curvature for noncommutative two-tori}. J.\ Amer.\ Math.\ Soc.\ \textbf{27} (2014), no.\ 3, 639--684.

\bibitem{CT:Baltimore11} Connes, A.; Tretkoff, P.: \emph{The Gauss-Bonnet theorem for the noncommutative two torus}. Noncommutative geometry, arithmetic, and related topics, pp.\ 141--158, Johns Hopkins Univ.\ Press, Baltimore, MD, 2011.

\bibitem{DS:SIGMA15} Dabrowski, L.;  Sitarz, A.: \emph{An asymmetric noncommutative torus}. SIGMA
Symmetry Integrability Geom.\ Methods Appl.\ \textbf{11} (2015), Paper 075, 11pp.. 


\bibitem{DZ:scattering} Dyatlov, S.; Zworski, M.: \emph{Mathematical theory of scattering resonances}. Graduate Studies in Mathematics, Vol.~200, AMS, 2019, 634 pp..

\bibitem{FJ:JDE68} Falb, P.L.; Jacobs, M.Q.: \emph{On differentials in locally convex spaces}.
J.\ Differential Equations \textbf{4} (1968),  444--459.

\bibitem{FGK:MPAG17} Fathi, A.; Ghorbanpour, M.; Khalkhali, M.:
\emph{The curvature of the determinant line bundle on the noncommutative two torus}. J.\ Math.\ Phys.\ Anal.\ Geom.\ \textbf{20} (2017), 1--20.


%
\bibitem{FK:JNCG12} Fathizadeh, F.; Khalkhali, M.: \emph{The Gauss-Bonnet theorem for noncommutative two tori with a general conformal structure}. J.\ Noncommut.\ Geom.\ \textbf{6} (2012), no.\ 3, 457--480.

%

\bibitem{FK:JNCG15} Fathizadeh, F.; Khalkhali, M.: \emph{Scalar curvature for noncommutative four-tori}. 
J.\ Noncommut.\ Geom.\ \textbf{9} (2015), no.\ 2, 473--503. 

\bibitem{FGK:arXiv16} Floricel, R.; Ghorbanpour, A.; Khalkhali, M.: \emph{The Ricci curvature in noncommutative geometry}. 
Preprint \texttt{arXiv:1612.06688}, 26 pp.. To appear in J.\ Noncommut.\ Geom..

\bibitem{Gi:CRC95} Gilkey, P.B.:
\emph{Invariance theory, the heat equation, and the Atiyah-Singer index theorem}. 2nd edition. Studies in Advanced Mathematics. CRC Press, Boca Raton, FL, 1995.

\bibitem{GK:AMS69} Gohberg, I.C.; Krein, M.G.:
\emph{Introduction to the theory of linear nonselfadjoint operators}. Translations of Mathematical Monographs, Vol.\ 18. American Mathematical Society, Providence, R.I., 1969.

\bibitem{GJP:MAMS17} Gonz\'alez-P\'erez, A.M.; Junge, M.; Parcet, J.: \emph{Singular integrals in quantum Euclidean spaces}. 
Preprint \texttt{arXiv:1705.01081}, 87 pp.. To appear in Mem.\ Amer.\ Math.\ Soc..

\bibitem{Gr:Birkhauser96} Grubb, G.: \emph{Functional calculus of pseudodifferential boundary problems}. Second edition. Progress in Mathematics, 65. Birkh\"auser Boston, Inc., Boston, MA, 1996. x+522 pp.

\bibitem{GS:IM95} Grubb, G.; Seeley, R.: \emph{Weakly parametric pseudodifferential operators and Atiyah-Patodi-Singer boundary problems}. Invent.\ Math.\ \textbf{121} (1995), 481--530.

 \bibitem{GS:JGA96} Grubb, G.; Seeley, R.: \emph{Zeta and eta functions for Atiyah-Patodi-Singer operators}. J.\ Geom.\ Anal.\ \textbf{6} (1996), 31--77.

\bibitem{HLP:Part1} Ha, H.; Lee, G.; Ponge, R.:
\emph{Pseudodifferential calculus on noncommutative tori, I. Oscillating integrals}. 
Int.\ J.\ Math.\ \textbf{30} (2019), 1950033 (74 pages). 

\bibitem{HLP:Part2} Ha, H.; Lee, G.; Ponge, R.:
\emph{Pseudodifferential calculus on noncommutative tori, II. Main properties}.
Int.\ J.\ Math.\ \textbf{30} (2019), 1950034 (73 pages). 

\bibitem{HP:Laplacian} Ha, H.; Ponge, R.: \emph{Laplace-Beltrami operators on noncommutative tori}.
Preprint \texttt{arXiv:1905.09048}, 28 pages. 
 
\bibitem{Ha:BAMS82} Hamilton, R.S.: \emph{The inverse function theorem of Nash and Moser}. Bull.\ Trans.\ Amer.\ Math.\ Soc.\ \textbf{7} (1982), 65--222. 

\bibitem{Ho:Springer85} H\"ormander, L.:
\emph{The analysis of linear partial differential operators III}. Springer-Verlag, Berlin-Heidelberg-New York-Tokyo, 1985.

\bibitem{LP:Powers} Lee, G.; Ponge, R.:
\emph{Complex powers of elliptic operators on noncommutative tori}. In preparation. 

\bibitem{LM:GAFA16} Lesch, M.; Moscovici, H.: \emph{Modular curvature and Morita equivalence}. 
Geom.\ Funct.\ Anal.\ \textbf{26} (2016), no.\ 3, 818--873. 

\bibitem{LNP:TAMS16} L\'evy, C.; Neira-Jim\'enez, C.; Paycha, S.:
\emph{The canonical trace and the noncommutative residue on the noncommutative torus}. Trans. Amer. Math. Soc. 368 (2016), no. 2, 1051--1095.


\bibitem{Liu:arXiv18b} Liu, Y.: \emph{Hypergeometric function and modular curvature II. Connes-Moscovici functional relation after Lesch's work}. 
Preprint, \texttt{arXiv:1811.07967}, 29 pp.. 

\bibitem{Po:PJM06} Polishchuk, A.: \emph{Analogues of the exponential map associated with complex structures on noncommutative two-tori}.
Pacific J.\ Math.\ \textbf{226} (2006), 153--178.

\bibitem{Po:PhD} Ponge, R.: \emph{Calcul hypoelliptique sur les vari\'et\'es de Heisenberg, r\'esidu non commutatif et g\'eom\'etrie pseudo-hermitienne}. PhD Thesis, Univ.\ Paris-Sud (Orsay), December 2000.

\bibitem{Po:CRAS01} Ponge, R.: \emph{Calcul fonctionnel sous elliptique et r\'esidu non commutatif pour les vari\'et\'es de Heisenberg}. C.\ R.\ Acad.\ Sc.\ Paris.\ s\'er.\ I, \textbf{332} (2001), 611-614.

\bibitem{Po:Heat} Ponge, R.: \emph{Heat kernel asymptotics for elliptic operators on noncommutative tori}. In preparation. 

\bibitem{Ri:PJM81} Rieffel, M.: \emph{$C^*$-algebras associated with irrational rotations}. Pacific J.\ Math.\ \textbf{93} (1981), 415--429.

\bibitem{Ri:CM90} Rieffel, M.: \emph{Noncommutative tori-A case study of non-commutative differentiable manifolds}.
Contemp.\ Math., 105, Amer.\ Math.\ Soc.\., Providence, RI, 1990, pp.~191--211.

\bibitem{Ri:MAMS93} Rieffel, M.: \emph{Deformation quantization for actions of $\R^d$}. Mem.\ Amer.\ Math.\ Soc.\ \textbf{106} (1993) no.\ 506, x+98 pp..

\bibitem{Ro:SIGMA13} Rosenberg, J.: \emph{Levi-Civita's theorem for noncommutative tori}. SIGMA \textbf{9} (2013), 071, 9 pages.

\bibitem{RT:Birkhauser10} Ruzhansky, M.: Turunen, V.:
\emph{Pseudodifferential operators and symmetries. Background analysis and advanced topics}. Pseudo-differential operators. Theory and applications, 2, Birkh\"auser, Basel, 2010.

\bibitem{Se:PSPM67} Seeley, R.T.: \emph{Complex powers of an elliptic operator}. Proc.\ Sympos.\ Pure Math.,\ Vol.\ X, pp.~288--307. American Mathematical Society, Providence, RI, 1967.

\bibitem{Sh:Springer01} Shubin, M.A.: \emph{Pseudodifferential operators and spectral theory}, 2nd edition. Springer, Berlin, 2001.

\bibitem{Si:AMS05} Simon, B.: \emph{Trace ideals and their applications}. Second edition. 
Mathematical Surveys and Monographs, 120. American Mathematical Society, Providence, RI, 2005.

\bibitem{Sp:Padova92} Spera, M.: \emph{Sobolev theory for noncommutative tori}. Rend.\ Sem.\ Mat.\ Univ.\ Padova \textbf{86} (1992), 143--156.

\bibitem{Ta:JPCS18} Tao, J.: \emph{The theory of pseudo-differential operators on the noncommutative $n$-torus}. J.\ Phys.: Conf.\ Ser.\ \textbf{965} 012042 (2018), 1--12.

\bibitem{Th:TAMS75} Thomas, G.E.F.: \emph{Integration of functions with values in locally convex Suslin spaces}. Trans.\ Amer.\ Math.\ Soc.\ \textbf{212} (1975), 61--81.

\bibitem{Tr:AP67} Tr\`eves, F.: \emph{Topological vector spaces, distributions and kernels}. Academic Press, New York-London, 1967.

\bibitem{XXY:MAMS18} Xiong, X.; Xu, Q.; Yin, Z.: \emph{Sobolev, Besov and Triebel-Lizorkin spaces on quantum tori}. 
Mem.\ Amer.\ Math.\ Soc.\ \textbf{252} (2018) no.\ 1203, 86 pages. 
\end{thebibliography}
\end{document}